\documentclass[reqno]{amsart}

\usepackage{todonotes}
\usepackage{a4wide}
\usepackage{mathrsfs}
\usepackage{mathtools}
\usepackage{tikz-cd}
\usepackage{amsmath}
\usepackage{amssymb}
\usepackage{amsthm}
\usepackage{amsfonts}
\usepackage{bbm}
\usepackage{bm}
\usepackage{comment}
\numberwithin{equation}{section}
\usepackage[colorlinks,citecolor=green,linkcolor=red]{hyperref}
\usepackage{hyphenat}
\usepackage[inline]{enumitem}
\usepackage{stmaryrd}

\usepackage{graphicx} 
\usepackage{placeins}
\usepackage{float}
\usepackage{subcaption}
\usepackage{soul}

\newcommand{\N}{\mathbb{N}}
\newcommand{\R}{\mathbb{R}}

\renewcommand{\d}{{\mathrm d}}
\newcommand{\restr}[1]{\lower3pt\hbox{\(|_{#1}\)}}

\newcommand{\nchi}{{\raise.3ex\hbox{\(\chi\)}}}

\newcommand{\X}{{\rm X}}

\newcommand{\XX}{\mathbb{X}}

\newcommand{\inv}{^{-1}}
\newcommand{\ud}{\mathrm {d}}
\newcommand{\M}{\mathbb{M}}

\newtheorem{theorem}{Theorem}[section]
\newtheorem{corollary}[theorem]{Corollary}
\newtheorem{lemma}[theorem]{Lemma}
\newtheorem{proposition}[theorem]{Proposition}
\theoremstyle{definition}
\newtheorem{definition}[theorem]{Definition}

\newtheorem{remark}[theorem]{Remark}

\title{Limits of mapping packages and Preiss's phenomenon}
\author{Milica Cakovi\'{c}}
\address{Department of Mathematics and Statistics,
P.O.\ Box 35, FI-40014 University of Jyvaskyla;
Department of Mathematics and Informatics, Faculty of Sciences, University of Novi Sad, Trg Dositeja Obradovi\'ca 3, 21000 Novi Sad}
\email{milica.m.cakovic@jyu.fi}

\author{Elefterios Soultanis}
\address{Department of Mathematics and Statistics,
P.O.\ Box 35, FI-40014 University of Jyvaskyla}
\email{elefterios.e.soultanis@jyu.fi}

\thanks{Both authors were supported by Research Council of Finland grant no. 355122. The work in this manuscript was also partially supported by the Simons Foundation grant (award no. SFI-MPS-T-Institutes-00010825) and from State Treasury funds as part of a task commissioned by the Minister of Science and Higher Education under the project “Organization of the Simons Semesters at the Banach Center - New Energies in 2026-2028” (agreement no. MNiSW/2025/DAP/491).}

\keywords{Measured Gromov--Hausdorff convergence, tangent spaces, ultralimits, Haj\l asz--Sobolev maps, Preiss's phenomenon}
\subjclass{ 30L99, 51F99, 28A33, 53C23}

\begin{document}
\date{\today} 
\begin{abstract}
We show the existence of ultralimits of sequences of Haj\l asz--Sobolev maps $f_i:X_i\to Y_i$ when $X_i$ converges to a limit space in the pointed measured Gromov (pmG) sense, partially extending recent results in \cite{IW26}. We moreover demonstrate that the graphs $G(f_i)$ of the mappings pmG-converge to the graph of the ultralimit in a suitable sense. The latter fact stems from a suitable Arzela--Ascoli theorem in the context of pmG-convergence.

As an application, we establish a version of Preiss's phenomenon for mapping packages $f:(X,\mu)\to V$ into arbitrary Banach spaces. Besides extending it to maps into infinite dimensional targets, our result generalizes existing versions of Preiss's phenomenon \cite{DG2015,GMR15} by establishing it for pointed measured Gromov--Hausdorff tangents without a doubling assumption.
\end{abstract}
\maketitle
\tableofcontents

\section{Introduction}

Since the introduction of Gromov--Hausdorff (GH) convergence of metric spaces \cite{Gromov1981}, different variants have become ubiquitous in the literature, including \emph{pointed} GH convergence (pGH), \emph{pointed measured} GH convergence (pmGH), and \emph{pointed measured Gromov convergence} (pmG), as well as (the somewhat different) ultralimits \cite{vanDenDriWil1984}. This diversity of notions reflects the extreme usefulness of taking limits of metric spaces in a great variety of settings, and has led to a vast number of applications across geometric analysis. Analogous notions for mappings between metric spaces likewise play a crucial role in many applications, among which we mention the theory of metric tangents and Preiss's phenomenon\footnote{Iterated and shifted tangents are tangents.} (e.g. \cite{LeDonne2011,GMR15,Bate22} for spaces and \cite{DG2015} for mappings), as well as the recent work \cite{IW26} on ultralimits of Sobolev maps, both of which are the focus of the present article. Metric tangents and Preiss's phenomenon play an important role in understanding differentiability of Lipschitz functions \cite{DG2015,CheKleSchi2016,Schioppa2016}, while ultralimits of Sobolev maps are useful for proving the stability of isoperimetric inequalities, which stem from minimal surface theory, geometric group theory and metric geometry, see \cite{IW26} and the references therein.

The basic limit theorems for maps -- the Arzela--Ascoli theorem and its progeny -- require suitable compactness in the target. This is not available in many situations of geometric interest, for example for mappings into infinite dimensional targets. The ultralimit construction remedies this but is often too large to be measure theoretically well-behaved, or to allow e.g. Preiss's phenomenon for tangents.

In this article, we construct the ultralimit of a sequence $f_i:X_i\to Y_i$ of Sobolev maps from varying domains $X_i$ which are (pointed) metric measure spaces, and show that the graph of the ultralimit is a pmG-limit of the graphs $G(f_i)$. This partially extends the result of Ikonen--Wenger \cite{IW26} to maps from varying domains, and establishes a link between ultralimits and pmG-limits (cf. Definition \ref{def:equi-lip-maps-conv}). Our results give rise to a notion of graphical tangents (cf. Definition \ref{def:graph-tan}), which exist for Lipschitz maps into arbitrary Banach spaces. As an application of our results, we prove Preiss's phenomenon for graphical pmG-tangents of Lipschitz mapping packages into arbitrary Banach spaces. We give examples showing that graphical and standard (in the sense of Definition \ref{def:equi-lip-maps-conv}; cf. \cite[Section 2]{DG2015}) pmG-limits need not agree, but are in a bi-Lipschitz correspondence.

Finally, we provide a proof of a folklore result, namely Preiss's phenomenon for pmGH-tangents of Lipschitz maps from metric measure spaces into Euclidean space, under the assumption that the measure vanishes on porous sets. In this context it is worth remarking that Preiss's phenomenon fails for pmGH-tangents if the measure is merely pointwise doubling \cite[Remark 3.4]{LeDonne2011}; vanishing on porous sets is a slightly stronger assumption and allows us to pass from pmG to pmGH-tangents.

\subsection{Ultralimit of Sobolev maps and graphical convergence} A Borel map $f:(X,\mu)\to Y$ from a metric measure space $(X,\mu)$ to a complete metric space $Y$ is a \emph{Haj\l asz--Sobolev map}, if there exists a function $g\in L^p_{loc}(\mu)$ satisfying
\begin{align}\label{eq:haj}
    d(f(x),f(y))\le d(x,y)[g(x)+g(y)],\quad x,y\in X\setminus N
\end{align}
for some $\mu$-null set $N\subset X$. Any such $g$ is called a \emph{Haj\l asz gradient} of $f$, and the collection of Haj\l asz--Sobolev maps $X\to Y$ is denoted by $M^{1,p}_{loc}(X,Y)$. The \emph{graph} of $f$ is the metric measure space $(G(f),\mu_f)$, where
\begin{align*}
    \bar f=x\mapsto (x,f(x)):X\to X\times Y,\quad \mu_f=\bar f_\ast\mu\quad\mathrm{and}\quad G(f)=\operatorname{spt}\mu_f\subset X\times Y.
\end{align*}
If $(X,\mu,\bar x)$ is a pointed metric measure space, we equip $G(f)$ with the basepoint $\bar f(\bar x)$.

In our first main result, we construct an ultralimit $f_\omega$ for a sequence of Haj\l asz--Sobolev maps $f_i:X_i\to Y_i$, and moreover relate $f_\omega$ to the ultralimit of the graphs $G(f_i)$ in the complete separable metric space $(\XX,d_{pmG})$\footnote{Recall that any precompact sequence in a complete metric space has an ultralimit.}. Here, $\XX$ consists of (equivalence classes of) pointed metric measure spaces and $d_{pmG}$ is a complete separable metric on $\XX$ metrizing pointed measured Gromov (pmG) convergence (see Definition \ref{def:pmG_conv}). In the statement below, we fix a non-principal ultrafilter $\omega$ on $\N$.

\begin{theorem}\label{thm:ultralimit+graph}
Suppose $(X_i=(X_i,\mu_i,\bar x_i))$ is precompact in $\XX$ with $\omega$-limit $X_\omega=(X_\omega,\mu_\omega,\bar x_\omega)$, and $f_i:X_i\to Y_i$ are Haj\l asz--Sobolev maps into complete pointed metric spaces $Y_i$ with Haj\l asz gradients $g_i\in L_{loc}^p(\mu_i)$ satisfying $\bar x_i\in {\{g_i\le t\}}$ for some $t$ and
\begin{align}\label{eq:energy-unif-bdd}
\sup_i
\int_{B(\bar x_i,R)}g_i^p\ud\mu_i<\infty\quad \mathrm{for\ all }\ R>0.
\end{align}
Then there exists $f_\omega\in M^{1,p}_{loc}(X_\omega,Y_\omega)$ and $g_\omega\in L^p_{loc}(\mu_\omega)$ such that
\begin{itemize}
    \item[(i)] $g_\omega$ is a Haj\l asz gradient of $f_\omega$;
    \item[(ii)] $\displaystyle\int_{B(\bar x_\omega,R)}g_\omega^p\ud\mu_\omega\le \lim_\omega\int_{B(\bar x_i,R)}g_i^p\ud\mu_i$ for all $R>0$;
    \item[(iii)] $\lim_\omega G(f_i)=G(f_\omega)$ in the metric space $(\XX,d_{pmG})$.
\end{itemize}
If $\iota_i:X_i\to (W,w)$, $i\in\N\cup\{\omega\}$ are isometric embeddings for which $\lim_\omega\iota_{i\ast}\mu_i=\iota_{\omega\ast}\mu_\omega$, then for every $\lambda>0$ the following holds: for $\mu_\omega$-a.e. $x\in \{g_\omega < \lambda\}$ we have 
    \begin{align}\label{eq:ultralimit+graph}
        f_\omega(x)=[f_i( x_i)]_\omega,\quad\mathrm{whenever}\quad  x_i\in \{g_i\le \lambda\},\quad \lim_\omega\iota_i( x_i)=\iota_\omega(x).
    \end{align}
\end{theorem}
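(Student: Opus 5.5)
We combine a Lipschitz-truncation argument on the sublevel sets $\{g_i\le\lambda\}$ with ultralimits of the targets. Fix isometric embeddings $\iota_i:X_i\to(W,w)$, $i\in\N\cup\{\omega\}$, with $\lim_\omega\iota_{i\ast}\mu_i=\iota_{\omega\ast}\mu_\omega$ (weakly, locally finite). Such embeddings exist by the standard characterization of pmG-convergence via a common complete separable space. Set $Y_\omega=\lim_\omega(Y_i,f_i(\bar x_i))$, the pointed ultralimit; the hypothesis $\bar x_i\in\{g_i\le t\}$ is what makes the basepoints compatible. For each $\lambda\ge t$ let $E_i^\lambda=\{g_i\le\lambda\}$. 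By \eqref{eq:haj}, after discarding the null set, $f_i|_{E_i^\lambda}$ is $2\lambda$-Lipschitz. Chebyshev and \eqref{eq:energy-unif-bdd} give $\mu_i(B(\bar x_i,R)\setminus E_i^\lambda)\le\lambda^{-p}C_R$.

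\emph{Step 1: the limiting sets and maps.} Let $K^\lambda\subset W$ be the set of limit points $\lim_\omega\iota_i(x_i)$ with $x_i\in E_i^\lambda$, i.e.\ the $\omega$-Kuratowski limit of $\iota_i(\overline{E_i^\lambda})$. On $K^\lambda$ we define $f^\lambda(x)=[f_i(x_i)]_\omega$. This is well defined and $2\lambda$-Lipschitz, since two approximating sequences have $d(f_i(x_i),f_i(x_i'))\le 2\lambda d(x_i,x_i')\to0$. It takes values in $Y_\omega$, because $d(f_i(x_i),f_i(\bar x_i))\le 2\lambda d(x_i,\bar x_i)$ is bounded. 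The maps are compatible across $\lambda$, since $E^\lambda_i\subset E^{\lambda'}_i$ for $\lambda<\lambda'$. Upper semicontinuity of mass on closed sets under weak convergence gives, for the weak limit $\nu^\lambda=\lim_\omega\iota_{i\ast}(\mu_i|_{E_i^\lambda})$, that $\nu^\lambda$ is concentrated on $K^\lambda\cap\iota_\omega(X_\omega)$ and $\nu^\lambda\le\mu_\omega$. Moreover $\mu_\omega(B_R\setminus K^\lambda)\le\liminf\ldots\le\lambda^{-p}C_R$, using that the complements have small mass (lower semicontinuity on open balls, up to slightly enlarging $R$). Hence $\bigcup_\lambda K^\lambda$ has full $\mu_\omega$-measure, and we define $f_\omega=f^\lambda$ on $K^\lambda$.

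\emph{Step 2: gradient.} Define $g_\omega$ via the limit densities. Let $h_i=g_i$ viewed as measures $\iota_{i\ast}(g_i^p\mu_i)$. These have locally bounded mass, so they have a weak $\omega$-limit $\sigma\le$ (by (ii) for balls) the bound. Set $g_\omega(x)=\inf\{\lambda: x\in K^\lambda\}$, or, to obtain the energy estimate, take $g_\omega^p\le d\sigma/d\mu_\omega$. The cleanest route is to compare distribution functions: $\mu_\omega(\{g_\omega>\lambda\}\cap B_R)\le\lim_\omega\mu_i(\{g_i>\lambda\}\cap B(\bar x_i,R+\epsilon))$, by the complement estimate above. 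Integrating in $\lambda$ via the layer-cake formula gives (ii). Property (i) holds because for $x\in K^\lambda,y\in K^{\lambda'}$ with $\lambda\le\lambda'$ we need $d(f_\omega(x),f_\omega(y))\le d(x,y)(\lambda+\lambda')$. This follows from the inequality $d(f_i(x_i),f_i(y_i))\le d(x_i,y_i)(g_i(x_i)+g_i(y_i))$ passed to the $\omega$-limit, applied to approximating $x_i\in E_i^\lambda$, $y_i\in E_i^{\lambda'}$. The last claim \eqref{eq:ultralimit+graph} is built in: a.e.\ point of $\{g_\omega<\lambda\}$ lies in $K^{\lambda}$, and $f^\lambda$ is independent of the approximating sequence.

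\emph{Step 3: graphs, the main obstacle.} For (iii) we embed $G(f_i)$ and $G(f_\omega)$ into $W\times\mathcal Y$, where $\mathcal Y$ is a common space containing $Y_i$ and $Y_\omega$ in which $f_i(x_i)\to[f_i(x_i)]_\omega$ along $\omega$. The difficulty is that $Y_\omega$ is nonseparable and not a pmG-limit of the $Y_i$, so we cannot do this globally. Instead we use that $\mu_{f_\omega}$ is carried by the separable set $f_\omega(\bigcup K^\lambda)$. We then build, for each $\lambda$ and $R$, a metric gluing of $G(f_i|_{E_i^\lambda\cap B_R})$ and $G(f^\lambda)$ via the $\omega$-almost isometries coming from finite nets of $K^\lambda\cap \bar B_R$, using the Lipschitz bounds. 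Combining this with the mass estimate $\mu_i(B_R\setminus E_i^\lambda)\le C_R\lambda^{-p}$, we get $\lim_\omega d_{pmG}(G(f_i),G(f_\omega))\lesssim$ a quantity that tends to $0$ as $\lambda\to\infty$. This is the Arzelà--Ascoli-type step, and we expect it to require the most care: the Gromov–Prokhorov-type distance must be controlled by the net approximations uniformly in $i$ along $\omega$.
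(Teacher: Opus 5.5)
Your Step 1 is essentially the paper's construction (compare Corollary \ref{cor:omega-set}). Step 2 is a genuinely different and more elementary route to (i), (ii) and \eqref{eq:ultralimit+graph}, and it works.

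The paper takes $g_\omega$ to be the weak $\omega$-limit of the densities $g_i\mu_i$ (Lemma \ref{lem:omega-limit-p-int-funct}, via duality). It reads off (ii) from lower semicontinuity of the norm, and needs Lemma \ref{lem:omega-ineq}, Lusin's theorem and the $\rho\to 1$ trick to get (i) and \eqref{eq:ultralimit+graph}, the latter only $\mu_\omega$-a.e.

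Your level function $g_\omega(x)=\inf\{\lambda: x\in K^\lambda\}$ avoids all of this:
\begin{itemize}
\item Its sublevel sets $\{g_\omega\le\lambda\}=\bigcap_{\lambda'>\lambda}K^{\lambda'}$ are closed, so it is lower semicontinuous.
\item The inclusion $\{g_\omega<\lambda\}\subset K^\lambda$ gives \eqref{eq:ultralimit+graph} at every point.
\item For (i), pass $d(f_i(x_i),f_i(y_i))\le d(x_i,y_i)(a+b)$, with $x_i\in E_i^a$, $y_i\in E_i^b$, to the limit and let $a\downarrow g_\omega(x)$, $b\downarrow g_\omega(y)$. This gives the Haj\l asz inequality on all of $\bigcup_\lambda K^\lambda$.
\item For (ii), use $\{g_\omega>\lambda\}\subset W\setminus K^\lambda$, your complement estimate, and the layer-cake formula.
\end{itemize}
Two small repairs. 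First, exchanging $\lim_\omega$ with $\int\ud\lambda$ is not a consequence of Fatou's lemma, since pointwise ultralimits need not even be measurable. It does hold here because $\lambda\mapsto\mu_i(\bar B(\bar x_i,R)\cap\{g_i>\lambda\})$ is nonincreasing, so lower Riemann sums suffice. Second, to get (ii) exactly rather than with an $\epsilon$-enlargement, work with closed balls of radius $R'<R$ and let $R'\uparrow R$. The detour through $\sigma$ is unnecessary, and $\sigma$ need not be absolutely continuous with respect to $\mu_\omega$.

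The genuine gap is (iii), which your Step 3 only outlines. The outline is missing two things.

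\begin{itemize}
\item \textbf{Nets need compactness.} Finite nets of $K^\lambda\cap\bar B_R$ need not exist: $W$ is not proper and this set need not be totally bounded. You must first restrict to compact sets carrying all but $\epsilon$ of the mass, using bounded tightness of $(\iota_{i\ast}\mu_i)$, as the paper does with the sets $C^m$ in Theorem \ref{thm:arzela-ascoli-ultra}.
\item \textbf{Measures must be matched, not just geometry.} This is the more serious omission. Almost-isometries of nets plus the truncation bound $\mu_i(B_R\setminus E_i^\lambda)\le C_R\lambda^{-p}$ control only the geometry and the discarded mass, but $d_{pmG}$ is governed by the measures. You also need, for each cell $A_j$ of a partition with $\mu_\omega(\partial A_j)=0$, that the $\mu_{f_i}$-mass over $\iota_i^{-1}(A_j)\cap E_i^\lambda$ is $\omega$-close to the $\mu_{f_\omega}$-mass over $A_j\cap K^\lambda$. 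In other words, the limit of the graph measures must equal the graph measure of the limit map. In the paper this is the pushforward identity $h_{\omega\ast}\mu_\omega=\nu_\omega$ of Theorem \ref{thm:haj-arzela-ascoli}(i), applied to the graph maps $h_i=(\operatorname{id},f_i)$, which are automatically co-uniform. The remark after Theorem \ref{thm:arzela-ascoli-ultra} shows such identities fail without co-uniformity.
\end{itemize}

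Your Step 1 already contains the needed ingredients. You have $\nu^\lambda(A_j)=\lim_\omega\mu_i(\iota_i^{-1}(A_j)\cap E_i^\lambda)\le\mu_\omega(A_j\cap K^\lambda)$, and the total defect over the cells is at most $(\mu_\omega-\nu^\lambda)(\bar B_R)\le C_R\lambda^{-p}$. With a correspondence gluing at scale $\delta\sqrt{1+4\lambda^2}$, your route can plausibly be completed, but none of this is in the proposal.

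The paper avoids explicit gluing altogether. It shows $(G(f_i))$ is precompact in $\XX$, via UBF and BMTB using $B(x,r/\sqrt{1+4\rho^{2k}})\cap C_i(k,R)\subset\bar f_i^{-1}B(\bar f_i(x),r)$, and takes the abstract $\omega$-limit $G_\omega$. It then obtains $h_\omega$ from Theorem \ref{thm:haj-arzela-ascoli}. Finally it identifies $G(f_\omega)$ with $G_\omega$ through $(x,f_\omega(x))\mapsto h_\omega(x)$, which is an isometry because $d^2(h_\omega(x),h_\omega(y))=\lim_\omega[d_i^2(x_i,y_i)+d_{Y_i}^2(f_i(x_i),f_i(y_i))]$. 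This sidesteps exactly the lack of a common ambient space for the $Y_i$ and $Y_\omega$ that you identified.
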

Theorem \ref{thm:ultralimit+graph} generalizes the construction in \cite[Theorem 1.1]{IW26} to sequences of Sobolev maps from varying domains. Note that the Sobolev spaces $W^{1,p}(\Omega,Y)$ considered in \cite{IW26} coincide with $M^{1,p}(\Omega,Y)$ when $\Omega$ is a bounded Lipschitz domain. While the construction of an ultralimit of equi-Lipschitz maps $f_i:X\to Y_i$ is an easy consequence of the construction of ultralimits of spaces, Theorem \ref{thm:ultralimit+graph} is new even for equi-Lipschitz sequences of maps.

Theorem \ref{thm:ultralimit+graph} also motivates the following notion of \emph{graphical convergence} of \emph{mapping packages}, that is maps $f:X\to Y$ from a pointed metric measure space $X\in\XX$ to an arbitrary complete space $Y$. We refer to Section \ref{sec:mappings_ms} for the precise definition of mapping packages.
\begin{definition}\label{def:graphical-conv}
A sequence $f_i:X_i\to Y_i$ of mapping packages converges to a mapping package $f_\infty:X_\infty\to Y_\infty$ graphically, denoted $f_i\stackrel{Gr}{\longrightarrow}f_\infty$, if $X_i\stackrel{pmG}{\longrightarrow}X_\infty$ and $G(f_i)\stackrel{pmG}{\longrightarrow}G(f_\infty)$.
\end{definition}
There is a natural analogous notion for pmGH-convergence, but we do not pursue this here. One of the corollaries of Theorem \ref{thm:ultralimit+graph} is that graphical limits of equi-Lipschitz mapping packages can be identified with the ultralimit given by Theorem \ref{thm:ultralimit+graph}. Graphical convergence is in general a weaker notion than pointed measured Gromov (pmG) convergence of maps (see Definition \ref{def:graphical-conv} and \ref{def:equi-lip-maps-conv} for further discussion) but, in contrast with pmG-convergence, enjoys compactness results that are independent of the target space. As an application of this phenomenon, we establish a variant of Guy C. David's Preiss phenomenon \cite[Proposition 3.1]{DG2015} for graphical tangents of maps into arbitrary Banach targets.

\subsection{Preiss's phenomenon} Suppose $(X,\mu)$ is a metric measure space and $f:(X,\mu)\to V$ a Bochner measurable map (essentially separably valued, $\mu$-measurable map) into a Banach space $V$. Given $x\in \operatorname{spt}\mu$ and $r>0$, the rescalings $T_{x,r}X$ and $T_{x,r}f$ are defined by 
\begin{align*}
    T_{x,r}X=\Big(r\inv X,\frac{\mu}{\varphi_\mu(x,r)},x\Big),\quad T_{x,r}f=\frac{f-f(x)}{r}:T_{x,r}X\to V.
\end{align*}
Here, $\displaystyle\varphi_\mu(x,r)=\int\varphi\Big(\frac{d(y,x)}{r}\Big)\ud\mu(y)$ is a normalizing factor given by a choice of a continuous function $\varphi:[0,\infty)\to [0,1]$ with $\varphi(0)=1$ supported on $[0,1]$. Moreover, let $\delta_0>0$ be such that $\varphi\ge \frac 12$ on $[0,\delta_0]$. E.g. in \cite{GMR15} the choice $\varphi(t)=(1-t)_+$ is used. We fix $\varphi$ for the remainder of this introduction.

\begin{definition}\label{def:graph-tan}
Let $f:(X,\mu)\to V$ be a Bochner measurable map into a Banach space $V$. A map $f_\infty:X_\infty\to Y$ into a Banach space $Y$ is a graphical tangent of $f$ at $x\in\operatorname{spt}\mu$ if there exists a sequence $r_i\to 0$ so that $T_{x,r_i}f\stackrel{Gr}{\longrightarrow}f_\infty$. The collection of all graphical tangents of $f$ at $x$ is denoted by $\operatorname{Tan}_{Gr}(f,x)$. 
\end{definition}
Note that if $V$ is infinite dimensional, requiring tangent maps to have the same target space is too restrictive (we do not know if any exist with this additional condition). However, by Theorem \ref{thm:ultralimit+graph} we may always consider the target $Y$ to be the ultrapower $V^\omega$ of $V$ for a given non-principal ultrafilter $\omega$ (if $f$ is e.g. Lipschitz); in particular, we may always assume that tangent maps have a linear target space.

\begin{theorem}\label{thm:Gr-preiss}
    Let $(X,\mu)$ be a pointwise doubling metric measure space, $V$ a Banach space, and $f:(X,\mu)\to V$ a Lipschitz map. Then for $\mu$-a.e. $x\in X$ the following hold.
\begin{itemize}
    \item[(i)] $\operatorname{Tan}_{Gr}(f,x)$ is non-empty and compact;
    \item[(ii)] if $f_\infty:(Y,\nu,o)\to W$ belongs to $\operatorname{Tan}_{Gr}(f,x)$ and $y\in Y$, $\rho>0$, then $\frac{f_\infty-f_\infty(y)}{\rho}:\Big(\rho\inv Y,\frac{\nu}{\varphi_\nu(y,\rho)},y\Big)\to W$ belongs to $\operatorname{Tan}_{Gr}(f,x)$.
\end{itemize}

\end{theorem}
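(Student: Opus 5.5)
We reduce everything to the known Preiss phenomenon for pmG-tangents of the spaces themselves, applied to a \emph{lifted} measure. The lifting is realised on the graph, in the spirit of Theorem \ref{thm:ultralimit+graph}. Since $f$ is Lipschitz, the map $\bar f:X\to G(f)\subset X\times V$ is bi-Lipschitz onto its image, so $(G(f),\mu_f)$ is again pointwise doubling. Rescaling commutes with taking graphs: $G(T_{x,r}f)$ is, up to isometry, $T_{\bar f(x),r}G(f)$. The normalisation is not literally the same, since $\varphi_{\mu_f}(\bar f(x),r)\neq\varphi_\mu(x,r)$. However, pointwise doubling and bi-Lipschitz comparability give $c\le \varphi_{\mu_f}(\bar f(x),r)/\varphi_\mu(x,r)\le C$ for small $r$ (using $\varphi\ge\frac12$ on $[0,\delta_0]$). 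Hence, after passing to subsequences, the two normalised measures differ by a bounded constant factor, which converges.

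The plan then runs as follows. Let $E\subset X$ be the full-measure set where the Preiss phenomenon of \cite{GMR15} (Preiss's phenomenon for pmG-tangents of pointwise doubling spaces, which we assume available) holds simultaneously for $X$ at $x$ and for $G(f)$ at $\bar f(x)$. The latter is a full-measure condition because $\mu_f=\bar f_\ast\mu$. Doubling at $x$ also gives uniform bounds on the rescaled measures of balls.

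For (i), take any $r_i\to0$. The rescalings $T_{x,r_i}X$ are precompact in $\XX$ by pointwise doubling, and the maps $T_{x,r_i}f$ are $L$-Lipschitz with gradient $g_i\equiv L$ and $T_{x,r_i}f(x)=0$. Theorem \ref{thm:ultralimit+graph} therefore yields $f_\omega:X_\omega\to V^\omega$ with $G(T_{x,r_i}f)\to G(f_\omega)$ along $\omega$. Extracting a genuinely convergent subsequence via separability of $(\XX,d_{pmG})$ gives graphical convergence, so $\operatorname{Tan}_{Gr}(f,x)\ne\emptyset$. For compactness, the collection of pairs $(X_\infty,G(f_\infty))$ sits in the product of the tangent sets $\operatorname{Tan}(X,x)\times\operatorname{Tan}(G(f),\bar f(x))$, which are compact by \cite{GMR15}. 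A diagonal argument shows closedness: a limit of graphical tangents is realised along a diagonal sequence of radii. Closedness also requires that a pmG-limit of graphs of $L$-Lipschitz maps with linear targets is again such a graph. This follows from the identification in Theorem \ref{thm:ultralimit+graph}(iii), since we take the target in an ultrapower and the limit is the ultralimit map. Here compactness is understood modulo the equivalence that identifies mapping packages with isometric graphs.

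For (ii), let $f_\infty:(Y,\nu,o)\to W$ be a graphical tangent, $y\in Y$ and $\rho>0$. The rescaled graph $T_{\bar f_\infty(y),\rho}G(f_\infty)$, with its own normalisation, is a pmG-tangent-type rescaling of a tangent of $G(f)$. By Preiss's phenomenon for $G(f)$ at $\bar f(x)$, it is a pmG-tangent of $G(f)$ realised along some radii $s_i\to0$. The same holds simultaneously for $X$ via the projection. The key point is to choose $s_i$ so that both $T_{x,s_i}X\to T_{y,\rho}Y$ and the graphs converge along the \emph{same} radii. To arrange this, we do not apply \cite{GMR15} twice. Instead we apply its proof directly to the graph: there, shifted tangents arise as limits of $T_{z_i,\rho r_i}$ at points $z_i$ near $x$, which by density are good points, followed by a diagonal argument. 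The projection $G(f)\to X$ is $1$-Lipschitz and pushes $\mu_f$ to $\mu$, so convergence of graphs forces convergence of the projected spaces. The remaining discrepancy is again only the normalising constant, which is harmless.

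The main obstacle is the normalisation mismatch in (ii). The basepoint shift to $y$ produces a measure normalised by $\varphi_\nu(y,\rho)$ on the domain, whereas graph rescaling normalises by the graph measure, and constants must match exactly for the limit to be a graphical tangent. I would handle this by working throughout with the graph measure pushed back to $X$, namely $\mu$ itself, since $\mu_f=\bar f_\ast\mu$. One must then check that graph balls and domain balls give normalising factors whose ratio converges along the chosen subsequence; a further subsequence extraction fixes this.
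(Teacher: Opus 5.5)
Part (i) is fine and essentially follows the paper's route: precompactness of $\{T_{x,r}X\}$ at a.e.\ point, plus Theorem \ref{thm:ultralimit+graph} and Corollary \ref{cor:graph-precpt}, guarantees that $d_{Gr}$-limits of the $L$-Lipschitz rescalings are again graphs of maps into $V^\omega$.

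\textbf{The gap is in (ii).} Your reduction rests on the claim that, since $p_X:G(f)\to X$ is $1$-Lipschitz with $p_{X\ast}\mu_f=\mu$, convergence of the rescaled graphs along radii $s_i$ forces convergence of the rescaled domains. This is false. pmG-convergence of $G(T_{x,s_i}f)$ only concerns isometry classes of the graphs, and an abstract graph does not remember its product structure. For example, two unit masses at distance $\sqrt 2$ form both the graph of $t\mapsto t$ on $\{0,1\}\subset\R$ and the graph of the zero map on $\{0,\sqrt 2\}$. Remark \ref{rmk:graph} shows that even on a fixed domain, a graph isometry need not come from a domain isometry. This is exactly why $d_{Gr}$ carries the separate term $d_{pmG}(X_0,X_1)$.

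Consequently, the radii supplied by Preiss's phenomenon for $(G(f),\mu_f)$ at $\bar f(x)$ say nothing about the limit of $T_{x,s_i}X$, which may subsequentially be a tangent of $X$ other than $T_{y,\rho}Y$. Conversely, radii obtained from Preiss for $X$ need not make the maps converge to $g_{y,\rho}$. Your proposed fix of the normalisation fails for the same reason. A further extraction makes $\varphi_{\mu_f}(\bar f(x),s_i)/\varphi_\mu(x,s_i)$ converge to \emph{some} $\lambda$. But identifying $\lambda$ with the required ratio $\varphi_{\bar g_\ast\nu}(\bar g(y),\rho)/\varphi_\nu(y,\rho)$ amounts to integrating $\varphi(d_X(p_X(\cdot),x)/s_i)$ against the graph-normalised measures. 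That again requires domain distances to pass to the limit compatibly with the graph convergence, which pmG-convergence of graphs does not provide. (Also note that the GMR15 argument does not construct such radii. It is a contradiction argument on a measurable bad set.)

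What is missing is simultaneous control of domain and graph. The paper does not combine Preiss for $X$ and for $G(f)$. Instead, it reruns the GMR15 density/contradiction argument directly in the space $(\M,d_{Gr})$ of mapping packages, which requires graph-level versions of every ingredient:
\begin{itemize}
\item Borel measurability of $x\mapsto\operatorname{Tan}_{Gr}(f,x)$ (Theorem \ref{thm:tan-meas+cpt});
\item continuity of $x\mapsto T_{x,r}f$ and lower hemicontinuity of the shift correspondence $\Phi$ in $d_{Gr}$ (Lemmas \ref{lem:T-cont} and \ref{lem:Phi-lhc}). These use Proposition \ref{prop:Gr=ultra} to realise a graphical limit as the ultralimit, so that a point $y$ of the limit domain can be approximated compatibly in domain and graph;
\item Borelness of the graph of $\overline{\Phi\circ\operatorname{Tan}_{Gr}}$ (Lemma \ref{lem:PhicircTan-wm});
\item the density-point Lemma \ref{lem:density-pt-tangent} for graphical tangents.
\end{itemize}
If by ``apply its proof directly to the graph'' you meant running that argument on pairs (domain, graph), that is the right route. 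It cannot, however, be justified by the projection argument, and it needs these lemmas.
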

An analogous statement for pointed measured Gromov (pmG) tangents (Definition \ref{def:pmG_tan}) holds when $V$ is finite dimensional, see Theorem \ref{thm:preiss-phenomenon}. We moreover prove that Preiss's phenomenon holds for pointed measured Gromov--Hausdorff (pmGH) tangents under the additional assumption that the measure vanishes on porous sets (we refer to Definition \ref{def:porous_set} and remark here that measures vanishing on porous sets are always pointwise doubling). This establishes David's Preiss phenomenon \cite[Proposition 3.1]{DG2015} (see also \cite{GEB24}) for Lipschitz differentiability spaces. We refer to Definition \ref{def:equi-lip-maps-conv} and Remark \ref{rmk:pmGH-vs-guy} for the definition of and discussion on pmGH-convergence.

\begin{theorem}\label{thm:pmGH-preiss}
Let $(X,\mu)$ be a metric measure space where $\mu$ vanishes on porous sets, and let $f:(X,\mu)\to V$ be a Lipschitz map into a finite dimensional Banach space $V$. Then for $\mu$-a.e. $x\in X$ the following holds. 
\begin{itemize}
    \item[(i)] $\operatorname{Tan}_{pmGH}(f,x)=\operatorname{Tan}_{pmG}(f,x)$ is non-empty and compact;
    \item[(ii)] if $f_\infty:(Y,\nu,o)\to V$ belongs to $\operatorname{Tan}_{pmGH}(f,x)$ and $y\in Y$, $\rho>0$, then $\frac{f_\infty-f_\infty(y)}{\rho}:(\rho\inv Y,\frac{\nu}{\varphi_\nu(y,\rho)},y)\to V$ belongs to $\operatorname{Tan}_{pmGH}(f,x)$.
\end{itemize}
\end{theorem}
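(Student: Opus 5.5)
The plan is to reduce Theorem \ref{thm:pmGH-preiss} to its pmG counterpart, Theorem \ref{thm:preiss-phenomenon}, which holds because $V$ is finite dimensional and $\mu$ is pointwise doubling (measures vanishing on porous sets are always pointwise doubling). The only new content is the equality $\operatorname{Tan}_{pmGH}(f,x)=\operatorname{Tan}_{pmG}(f,x)$ for $\mu$-a.e. $x$. Once that is known, (i) and (ii) transfer directly from the pmG statement:

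\begin{itemize}
\item non-emptiness and compactness pass over, provided the pmGH and pmG topologies also coincide on the tangent set, which we check below;
\item the shifted and rescaled tangent of (ii) is a pmG-tangent, hence a pmGH-tangent.
\end{itemize}

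The inclusion $\operatorname{Tan}_{pmGH}\subset\operatorname{Tan}_{pmG}$ is automatic, since pmGH-convergence implies pmG-convergence.

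For the reverse inclusion, recall the standard fact that pmG-convergence upgrades to pmGH-convergence when the limit measure has full support and the approximating spaces have no large regions devoid of measure. In terms of the realization $\iota_i:X_i\to W$, $\iota_{i\ast}\mu_i\to\iota_{\infty\ast}\mu_\infty$, the requirement is that $\iota_i(B(\bar x_i,R))$ lies in a small neighbourhood of $\iota_\infty(\operatorname{spt}\mu_\infty)$. This is precisely a uniform non-porosity statement for the rescaled spaces $r_i^{-1}X$ near $x$.

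So the key step is to show the following. Since $\mu$ vanishes on porous sets, for $\mu$-a.e. $x$, for every $R,\epsilon>0$ there is $r_0$ such that for $r<r_0$ every $y\in B(x,Rr)$ satisfies $\mu(B(y,\epsilon r))>0$. Equivalently, $B(x,Rr)\subset B(\operatorname{spt}\mu,\epsilon r)$ at all small scales.

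To prove this, I would use the fact that for a closed set $E$ and a density point argument, the set $P_{R,\epsilon}$ of points $x$ where this fails along a sequence $r\to0$ is porous. Here I would restrict from the start to $X=\operatorname{spt}\mu$ as a metric space, but holes are measured in $X$ itself. One must be careful that porosity is defined relative to $X$ (Definition \ref{def:porous_set}). At points of $\operatorname{spt}\mu$ the relevant holes are points $y$ in $X$ with $B(y,\epsilon r)\cap\operatorname{spt}\mu=\emptyset$, and these witness porosity of $\operatorname{spt}\mu$, hence of $P_{R,\epsilon}$, at $x$ with constant comparable to $\epsilon/R$. Taking a countable union over rational $R,\epsilon$ gives a $\mu$-null set. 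Off this null set I will show each pmG-tangent is realized as a pmGH-tangent. Take a pmG-convergent sequence $T_{x,r_i}f\to f_\infty$, realized in some $W$ with maps converging in the sense of Definition \ref{def:equi-lip-maps-conv}. By non-porosity, every point of $r_i^{-1}B(x,Rr_i)$ is $\epsilon$-close to a point of positive measure, whose image accumulates on $\operatorname{spt}\mu_\infty$. This yields Hausdorff convergence of the balls, and the maps $T_{x,r_i}f$ remain equi-Lipschitz, so the function part of the pmGH convergence follows.

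The main obstacle I expect is this last upgrade step, including matching the exact pmGH definition in Definition \ref{def:equi-lip-maps-conv}. Two points need care: the balls are of radius $R$ around basepoints, and measure of balls must be controlled from below in order to say that points near $y$ carry mass that cannot vanish in the limit. For the latter I would use pointwise doubling together with weak convergence. A small ball $B(y,\epsilon)$ with positive rescaled measure might still have measure tending to zero, so I would strengthen the porosity argument to a quantitative lower density: a point $y$ with $\mu(B(y,\epsilon r))\le\delta\,\mu(B(x,r))$ should also create porosity in a measure-theoretic sense. I would try a Lebesgue-density-type argument to make $\delta$ uniform. Finally, compactness in the pmGH topology follows from its equivalence with pmG on this tangent set, established by the same argument.
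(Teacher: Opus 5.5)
\textbf{There is a genuine gap: you postpone the step that is the real content of the proof.}

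Your reduction to the pmG statement is how the paper concludes (Theorem \ref{thm:preiss-phenomenon}(ii) together with Theorem \ref{thm:tan-meas+cpt}(ii)). Your qualitative claim is also correct: the points at which $B(x,Rr)\not\subset B(\operatorname{spt}\mu,\epsilon r)$ along some $r\to 0$ are points at which $\operatorname{spt}\mu$ is porous, so they form a $\mu$-null set. But this is the wrong non-porosity for the upgrade.

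To rule out that $B(y,\epsilon)\subset r_i\inv X$ sits in a region where the pmG-limit has no mass, you need some $y'$ near $y$ with $\mu(B(y',\epsilon r_i))\ge c\,\varphi_\mu(x,r_i)$, where $c>0$ is independent of $i$ and $y$. Positivity of $\mu(B(y,\epsilon r_i))$ is lost after normalisation, and pointwise doubling at $x$ controls only balls centred at $x$.

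These ingredients really do not suffice at a given point. Take $\mu=w\,\mathcal L^1$ on $\R$ with $w=e^{-k}$ on $[2^{-k},\tfrac32 2^{-k}]$ ($k\ge 1$) and $w=1$ elsewhere. Then $\mu$ vanishes on porous sets, $\operatorname{spt}\mu=\R$ and $C_\mu(0)<\infty$, so $0$ survives every exclusion you make. Yet along $r_i=2^{-i}$ the pmG-tangent of $(\R,\mu)$ at $0$ is a multiple of $\mathcal L^1$ restricted to $\R\setminus\bigcup_{j\in\mathbb Z}(2^j,\tfrac32 2^j)$. The point $t=5/4$ of $r_i\inv\R\cong\R$ has positive mass around it for every $i$, but stays at distance $1/4$ from the support of the limit, while $r_i\inv\R$ converges to $\R$ in the Hausdorff sense. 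So you need an additional exclusion that depends on the behaviour of $\mu$ at points other than $x$. ``A Lebesgue-density-type argument to make $\delta$ uniform'' does not supply one.

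\textbf{The missing idea is uniform doubling along large closed sets.} By \cite[Lemmas 2.2, 2.3]{Bate2018}, up to a $\mu$-null set $X=\bigcup_n A_n$, where each $A_n$ is closed and $\mu$ is $(C_n,R_n)$-doubling along $A_n$. The paper applies non-porosity to $A_n$ rather than to $\operatorname{spt}\mu$ (Remark \ref{rem:non-porosity}, Lemma \ref{lem:porosity}). Then every $y\in B(x,Rr)$ lies within $\epsilon r/2$ of some $z\in A_n$. Doubling at $z$ on all scales up to $R_n$ gives, via \cite[Lemma 2.1]{Bate2018}, $\mu(B(z,\epsilon r/2))\gtrsim_{C_n,\epsilon,R}\mu(B(x,(R+\epsilon)r))$. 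This contradicts the fact that the exceptional sets of the weak approximations have rescaled measure at most $\varepsilon_i\to 0$ (Step 1 of Proposition \ref{prop:pmGpmGH}).

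\textbf{You also treat the reverse Hausdorff inclusion as a ``standard fact''.} Full support of $\mu_\infty$ alone does not give uniform approximation of $B(o,R)\subset X_\infty$ by points of $r_i\inv X$: bounded parts of a support need not be totally bounded, and masses of small balls need not be bounded below. The paper gets this from the doubling of pmG-tangents at a.e.\ point \cite[Lemma 5.6]{Bate22} (Step 2). That lemma also gives $X_\infty=\operatorname{spt}\mu_\infty$, which is needed to pass to the maps in Corollary \ref{cor:pmGpmGH}. With these two inputs in place, your equi-Lipschitz argument for the maps goes through.
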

By considering the constant function $f=0$, we obtain the analogous statements also for $\operatorname{Tan}_{pmGH}(X,\mu,x)$ and $\operatorname{Tan}_{pmG}(X,\mu,x)$.

By the equivalence of pmG and pmGH-convergence on uniformly doubling spaces \cite[Theorem IV]{GMS15}, the claim of Theorem \ref{thm:pmGH-preiss} follows from Theorem \ref{thm:preiss-phenomenon} for doubling metric measure spaces. Note, however, that Theorem \ref{thm:pmGH-preiss} fails for pointwise doubling spaces without additional assumptions \cite[Remark 3.4]{LeDonne2011}. Theorem \ref{thm:pmGH-preiss} will be used in upcoming work \cite{CEBS26} of the second named author.

\subsubsection*{Outline} After covering the preliminary material in Section \ref{sec:preliminaries}, we establish Arzela--Ascoli type theorems for mappings between pointed metric measure spaces in Section \ref{sec:AR-ASC}, and use them to prove Theorem \ref{thm:ultralimit+graph}. In Section \ref{sec:mapping-package} we discuss the collection of mapping packages together with pmG and graphical convergence. Theorems \ref{thm:Gr-preiss} and \ref{thm:pmGH-preiss} are proved in Sections \ref{sec:Tangents1} and \ref{sec:Tangents2}, and \ref{sec:Tangents3}, respectively. 

\section{Preliminaries}
\label{sec:preliminaries}
\subsection{Ultralimits} 
\label{sec:ultralimits}
A \textit{non-principal filter} on $\N$ is a collection $\omega\subset 2^{\N}$ of subsets of natural numbers that satisfy the following properties:
\begin{itemize}
    \item [(i)] If $A, B\in \omega$, then $A\cap B\in \omega$.
    \item [(ii)] If $A\subset B$ and $A\in \omega$, then $B\in \omega$.
    \item[(iii)] If $A\subset \N$, then either $A\in \omega$ or $\N\setminus A\in \omega$.
    \item[(iv)] The collection $\omega$ does not contain singletons.   
\end{itemize}
A collection $\omega\subset 2^\N$ is a non-principal ultrafilter if and only if the characteristic function $\chi_\omega$ of $\omega$ is a non-trivial, finitely additive measure on $2^\N$, such that $\chi_\omega(\{n\})=0$ for all $n\in \N$. Hence, we write $\omega(A)=1$ and $\omega(A)=0$ if $A\in \omega$ and $A\notin\omega$, respectively. We moreover occasionally say that a property (P) holds $\omega$-a.e. if $\{i\in\N:\ (P)\quad\mathrm{holds}\}\in\omega$.

Suppose $(X,d)$ is a metric space and $\omega$ a non-principal ultrafilter on $\N$. If $(x_i)\subset X$, we say that $x\in X$ is the \emph{ultralimit} (or $\omega$-\emph{limit}) of $(x_i)$, denoted $x=\lim_\omega x_i$, if \[
\{i\in\N\,:\,d(x_\omega,x_i)<\varepsilon\}\in\omega,\quad\textnormal{for all } \varepsilon>0.
\]
In a compact metric space $(X,d)$ any sequence $(x_i)\subset X$ has the ultralimit $x_\omega\in X$. In particular, any precompact sequence in a metric space $(X,d)$ has an ultralimit in $X$.

Let $(X_i,d_i,\bar x_i)$, $i\in\N$ be a sequence of pointed metric spaces. A sequence $(x_i)\in \prod_i X_i$, is said to be \textit{bounded} if 
\[
\sup_i d_i(x_i,\bar x_i)<+\infty.
\]
On the set of all bounded sequences we define an equivalence relation as follows: two bounded sequences $(x_i),(y_i)\in \prod_i X_i$ are \textit{equivalent} if 
\[
\lim_\omega d_i(x_i,y_i)=0.
\]
By $[x_i]_\omega$ we denote the equivalence class of the sequence $(x_i)$. We denote the set of all equivalence classes by $X_\omega$. Define
\[
d_\omega([x_i]_\omega,[y_i]_\omega)\coloneqq \lim_\omega d_i(x_i,y_i),
\]
for all bounded sequences $(x_i),(y_i)\in \prod_i X_i$.
Then $d_\omega$ is a metric on $X_\omega$. Metric space $(X_\omega,d_\omega, x_\omega)$, where $x_\omega\coloneqq [\bar x_i]_\omega$ is said to be the \textit{ultralimit (or $\omega$-limit)} of the sequence of pointed metric spaces $(X_i,d_i,\bar x_i)$. In particular, for a Banach space $V$ we denote by $V^\omega$ the $\omega$-limit (or \textit{ultrapower}) of the constant sequence $(X_i,\bar x_i)=(V,0)$, $i\in \N$, which is itself a Banach space. We refer the interested reader to \cite{DK2018,IW26,SP21} for more details and applications on ultralimits.

\subsection{Metric and measure theoretic notions}

\subsubsection*{Notation and conventions} \label{sec:MMTN} Let $(X,d)$ be a metric space, $x\in \X$, and $r>0$. We denote the \emph{open} and \emph{closed balls} of radius $r$ centered at $x$, respectively, by $B(x,r)$ and $\bar B(x,r)$. Moreover, $S(x,r)\coloneqq \bar B(x,r)\setminus B(x,r)$. The \emph{distance between sets} $A,B\subset X$ is denoted $\operatorname{dist}_X(A,B)$ or $d(A,B)$ and ${\rm dist}(x,B):=d(\{x\},B)$. For $\varepsilon>0$, we let $A^\varepsilon$ or $B(A,\varepsilon)$ denote the \emph{$\varepsilon$-neighbourhood} of $A$, defined as $\{y\in X: d(y,A)<\varepsilon\}$. We say that $A$ is \textit{$\varepsilon$-dense} in a set $B$ if $A\subset B^\varepsilon$. If $(X,d_X)$ and $(Y,d_Y)$ are two metric spaces, a map $f:(X,d_X) \to (Y,d_Y)$ is said to be 
\textit{Lipschitz} if there exists $L>0$ such that 
\begin{align}\label{eq:lip}
d_Y(f(x),f(\tilde x))\leq L d(x,\tilde x),\quad \textnormal{for every }x,\tilde x\in X.
\end{align}
We denote by $\operatorname{LIP}(f)$ the smallest constant in \eqref{eq:lip}.

Let $X=(X,d)$ be a metric space. We denote by $\mathscr{B}(X)$ the  \textit{Borel $\sigma$-algebra} on $\X$, generated by the topology of $(X,d)$. A Borel measure $\mu$ on $X$ is called \textit{boundedly finite} if $\mu(B(x,r))<+\infty$ for every $x
\in \X$ and $r>0$. Every boundedly finite Borel measure on a complete separable metric space is Radon, cf. \cite[Theorem 7.1.7]{Bogachev2018}. 

\subsubsection*{Porosity} Next, we recall the definition of a porous subset of a metric space, given in \cite{Bate2018}.

\begin{definition}[Porous set] \label{def:porous_set} Let \((X,d)\) be a metric space and $\eta>0$. We say that a set \(S\subset X\) is $\eta$-\textit{porous at} \(x_0\in S\) if there exists a sequence \((x_n)\subseteq X\) such that \(x_n\to x_0\) and 
\[
S\cap B(x_n,\eta\,{\rm d}(x_n,x_0))=\varnothing.
\]
A set \(S\) is said to be \textit{porous at} $x_0\in S$ if it is $\eta$-porous at $x_0$ for some $\eta>0$. Moreover, a set $S\subset X$ is said to be \textit{porous} if it is porous at each \(x\in S\).
\end{definition}

This notion is equivalent to porosity in the sense of Mor\'an--Preiss--Zaj\'icek \cite[Definition 1.1]{MMPZ2003}.

\begin{lemma}
Let $(X,d)$ be a metric space. A set $S\subset X$ is porous at $x_0$ in $S$ if and only if $\limsup_{r\to 0}\frac{\gamma(S,x_0,r)}{r}>0$, where 
\[
\gamma(S,x_0,r)=\sup\{s>0 : (\exists z\in \X) (\d(z,x_0)+s\leq r \,\land\, B(z,s)\cap S=\varnothing)\}.
\]
\end{lemma}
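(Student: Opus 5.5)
The plan is to prove the two implications separately, working directly from Definition~\ref{def:porous_set} and the definition of $\gamma(S,x_0,r)$. Throughout, $x_0\in S$ is fixed.

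\emph{Porosity implies the limsup condition.} Suppose $S$ is $\eta$-porous at $x_0$, witnessed by $x_n\to x_0$ with $S\cap B(x_n,\eta\, d(x_n,x_0))=\varnothing$. Since $x_0\in S$, we have $x_n\neq x_0$, so $d_n:=d(x_n,x_0)>0$ and $d_n\to 0$. We may assume $\eta\le 1$, since shrinking $\eta$ keeps the balls disjoint from $S$. Set $r_n:=(1+\eta)d_n$, $z=x_n$ and $s=\eta d_n$. Then $d(z,x_0)+s=r_n$ and $B(z,s)\cap S=\varnothing$, so $\gamma(S,x_0,r_n)\ge \eta d_n$. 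Dividing,
\[
\frac{\gamma(S,x_0,r_n)}{r_n}\ge\frac{\eta}{1+\eta}.
\]
Since $r_n\to0$, this gives $\limsup_{r\to0}\gamma(S,x_0,r)/r\ge \eta/(1+\eta)>0$.

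\emph{The limsup condition implies porosity.} Suppose $\limsup_{r\to 0}\gamma(S,x_0,r)/r>2c>0$. Then there are $r_n\to0$ with $\gamma(S,x_0,r_n)>2c r_n$. By the definition of the supremum, for each $n$ there exist $z_n\in X$ and $s_n>c r_n$ such that
\[
d(z_n,x_0)+s_n\le r_n\quad\text{and}\quad B(z_n,s_n)\cap S=\varnothing .
\]
Because $x_0\in S$ and $x_0\notin B(z_n,s_n)$, we get $d(z_n,x_0)\ge s_n>0$. Together with $d(z_n,x_0)\le r_n\to 0$ this gives $z_n\to x_0$. Moreover $d(z_n,x_0)\le r_n< s_n/c$, so
\[
B\bigl(z_n, c\, d(z_n,x_0)\bigr)\subset B(z_n,s_n),
\]
and this ball misses $S$. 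Hence $S$ is $c$-porous at $x_0$, witnessed by the sequence $(z_n)$.

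There is no real obstacle. The two points needing care are the following: $x_0\in S$ is what forces $x_n\neq x_0$ in the first direction and $d(z_n,x_0)\ge s_n$ in the second, and in the second direction the supremum must be replaced by an actual admissible pair $(z_n,s_n)$, which is why the strict margin $2c$ versus $c$ is used. The normalization $\eta\le1$ in the first direction is harmless and not actually needed for the estimate.
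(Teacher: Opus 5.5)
Your proof is correct and follows the paper's argument essentially step for step. The forward direction uses the same choice $r_n=(1+\eta)d(x_n,x_0)$, and the converse extracts an admissible pair $(z_n,s_n)$ in the same way; the only harmless difference is that you bound $d(z_n,x_0)$ by $r_n$ and get porosity constant $c$, whereas the paper uses $d(z_n,x_0)\le(1-\eta)s_n$ to get $\eta/(1-\eta)$.

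One small correction concerns the forward direction. $x_0\in S$ does not by itself force $x_n\neq x_0$, since $x_n=x_0$ gives $B(x_0,0)=\varnothing$, which meets the defining condition trivially. So $d(x_n,x_0)>0$ has to be read into the definition of porosity; the paper's proof also uses this tacitly, and without it every set would be porous everywhere and the lemma would fail. Your use of $x_0\in S$ in the converse, to get $d(z_n,x_0)\ge s_n>0$, is correct.
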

\begin{proof}
Let \(S\subset \X\) be porous in the sense of Definition \ref{def:porous_set} and \(x_0\in S\) be an arbitrary point with a corresponding constant \(\eta\) and a sequence \((x_n)\subset \X\). Let us define \(r_n\coloneqq (1+\eta) d(x_n,x_0)\), \(n\in\N\). Clearly, \(r_n\to 0\). Then we have 
\[
\gamma(S,x_0,r_n)\geq \eta d(x_n,x_0)=\frac{\eta}{1+\eta}r_n,\quad n\in \N.
\]
Thus, \(\limsup_{r\to 0} \frac{\gamma(S,x_0,r)}{r}\geq \frac{\eta}{1+\eta}>0\) and \(S\) is porous at \(x_0\) in the sense of \cite{MMPZ2003}. Conversely, suppose that \(\limsup_{r\to 0} \frac{\gamma(S,x_0,r)}{r}>0\). Then there exist a sequence \(s_n\searrow 0\) and a constant \(\eta\in (0,1)\) such that \(\gamma(S,x_0,s_n)>\eta s_n\), \(n\in \N\). Therefore, for every \(n\in \N\) there exists \(z_n\in \X\) such that \(\d(z_n,x_0)+\eta s_n\leq s_n\) and \(B(z_n,\eta s_n)\cap S=\varnothing\). Hence, there exist a sequence \((z_n)\subset \X\) and a constant \(\tilde\eta\coloneqq\frac{\eta}{1-\eta}>0\) such that \(\d(z_n,x_0)\leq (1-\eta)s_n\searrow 0\) and \(B(z_n,\tilde\eta d(z_n,x_0))\cap S=\varnothing\) for every \(n\in \N\). Therefore, \(S\) is porous at \(x_0\) in the sense of Definition \ref{def:porous_set}.
\end{proof}

\subsubsection*{Metric measure spaces and doubling conditions}

\begin{definition}[(Pointed) Metric measure space] A \textit{metric measure space} $(X,\mu)$ consists of a complete separable metric space $X=(X,d)$ and a boundedly finite Borel regular measure $\mu$ on $X$. A \textit{pointed metric measure space} is a triple $(X,\mu,\bar x)$, where $(X,\mu)$ is a metric measure space and $\bar x\in \X$. We denote by $\XX$ the collection of all pointed metric measure spaces. 
\end{definition}

A metric measure space $(X,\mu)$ is said to be \textit{doubling}, if there exists $C>0$ such that
\begin{align}\label{eq:doubl}
0<\mu(B(x,2r))\le C\mu(B(x,r))
\end{align}
for all $x\in X$ and $r>0$. We moreover say that $\mu$ is \textit{$(C,R)$-doubling along a subset} $A\subset X$ if \eqref{eq:doubl} holds for all $x\in A$ and $r\in (0,R]$.\footnote{In \cite{Bate2018} the terminology \emph{uniformly pointwise $(C,R)$-doubling at each point of $A$} is used.} The quantity
\begin{align}\label{eq:pt-doubl-const}
C_\mu(x)=\limsup_{r\to 0}\frac{\mu(B(x,2r))}{\mu(B(x,r))},\quad x\in \operatorname{spt}\mu
\end{align}
is called the pointwise doubling constant of $\mu$ at $x$, and $(X,\mu)$ is called \textit{pointwise doubling} if $C_\mu(x)<\infty$ for $\mu$-a.e. $x\in X$. If $(X,\mu)$ is pointwise doubling, then the Lebesgue differentiation theorem holds on $(X,\mu)$; in particular, for any $\mu$-measurable $E\subset X$ we have
\begin{align}\label{eq:leb-dens}
    \lim_{r\to 0}\frac{\mu(B(x,r)\cap E)}{\mu(B(x,r))}=1
\end{align}  
for $\mu$-a.e. $x\in E$. A point satisfying \eqref{eq:leb-dens} is called a \textit{Lebesgue density point} of the set $E$. If all porous sets in a metric measure space are \(\mu\)-null, then $\mu$ is pointwise doubling \cite[Theorem 3.6]{MMPZ2003}. 

\subsubsection*{Hausdorff convergence and weak convergence of measures}
Let $X$ be a complete separable metric space. We denote by $\mathcal M_{loc}(X)$ the space of boundedly finite Borel regular measures on $X$, and by $C_{bbs}(X)$ the space of bounded, continuous functions on $X$ with bounded support. A sequence $(\mu_i)\subset \mathcal M_{loc}(X)$ is said to \textit{converge weakly} to $\mu\in \mathcal M_{loc}(X)$, denoted $\mu_i\rightharpoonup \mu$, if
\begin{align}\label{def:weak*_conv}
\int_{\X} f\,d\mu_n\stackrel{n\to\infty}{\longrightarrow} \int_\X f\,d\mu
\end{align}
for every $f\in C_{bbs}(X)$.

Let $x\in X$, $L,r>0$. According to \cite[Definition 2.12]{Bate22}, for $\mu,\nu\in\mathcal{M}_{loc}(X)$, we denote
\begin{align}
    F_{(X,x)}^{L,r}(\mu,\nu)\coloneqq \sup\left\{\int g\,\d\mu-\int g\,\d\nu\,\left|\right.\,g:\X\to [-1,1], L\textnormal{-Lipschitz},\,{\rm spt}g\subset B(x,r)\right\}.
\end{align}
In addition, we have
\begin{align}  
\label{eq:F_x}
F_{(X,x)}(\mu,\nu)\coloneqq \inf\{\varepsilon\in (0,1/2)\,:\,F_{(X,x)}^{1/\varepsilon,1/\varepsilon}(\mu,\nu)<\varepsilon\},
\end{align}
if the above set is non-empty, and $F_{(X,x)}(\mu,\nu)=1/2$ is no such $\varepsilon\in (0,1/2)$ exists. When $X$ is a complete and separable metric space, $F_x$ is a complete and separable metric that metrizes weak convergence of measures. For the proof, see \cite[Lemma 2.14]{Bate22}. 

We say that a family $\mathcal F\subset \mathcal{M}_{loc}(\X)$ is \textit{boundedly tight}, provided that 
\begin{itemize}
   \item[(i)] $\sup_{\mu\in \mathcal F}\mu(B(x,R))<+\infty$, and
	\item[(ii)] there exists a compact set $K\subset X$ so that $\sup_{\mu\in\mathcal F}\mu(B(x,R)\setminus K)\le \varepsilon$
\end{itemize}
for all $\varepsilon, R>0$ and every $x\in \X$. Next, we recall Prokhorov's theorem characterizing the precompactness of the family $\mathcal F \subset \mathcal M_{loc}(X)$, with respect to convergence in duality with $C_{bbs}(X)$ (see \cite[Theorem 2.11]{Bate2018} or \cite[Theorem 2.3.4]{Bogachev2018}).

\begin{theorem}[Prokhorov]
\label{thm:prokhorov}
    Let $X$ be a complete and separable metric space. A family $\mathcal F\subset \mathcal{M}_{loc}(X)$ is precompact with respect to weak convergence (that is, every sequence of measures in $\mathcal F$ has a weakly convergent subsequence in $\mathcal{M}_{loc}(X)$) if and only if $\mathcal{F}$ is boundedly tight. 
\end{theorem}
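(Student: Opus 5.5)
The statement is Prokhorov's theorem for boundedly finite measures on a complete separable metric space. I would deduce it from the classical Prokhorov theorem for finite measures, applied ball by ball, and then patch the local limits together with a diagonal argument. Fix a basepoint $x\in X$. It suffices to check bounded tightness at this single $x$, since every ball $B(y,R)$ lies in $B(x,R+d(x,y))$.

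\emph{Necessity.} Suppose $\mathcal F$ is precompact but condition (i) fails for some $R$. Choose $\mu_n\in\mathcal F$ with $\mu_n(B(x,R))\to\infty$. Pass to a subsequence converging weakly to some $\mu\in\mathcal M_{loc}(X)$. Test against a function $g\in C_{bbs}(X)$ with $0\le g\le 1$ and $g=1$ on $B(x,R)$; then $\int g\,\ud\mu_n\to\int g\,\ud\mu<\infty$, a contradiction.

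For condition (ii), fix $R$ and $\varepsilon$. Use a cutoff $g_R$ equal to $1$ on $B(x,R)$ and supported in $B(x,R+1)$. The family of finite measures $\{g_R\mu:\mu\in\mathcal F\}$ is then precompact in the weak topology of finite measures on $X$. The step needing a little care is upgrading this to precompactness in duality with \emph{all} bounded continuous functions. This holds because the masses converge: $\int g_R\,\ud\mu_n\to\int g_R\,\ud\mu$. Classical Prokhorov for finite measures on a Polish space (see \cite[Theorem 2.3.4]{Bogachev2018}) then gives a compact $K$ with $g_R\mu(X\setminus K)\le\varepsilon$ for all $\mu\in\mathcal F$. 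This yields $\mu(B(x,R)\setminus K)\le\varepsilon$.

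\emph{Sufficiency.} Let $(\mu_n)\subset\mathcal F$. For each $k\in\N$, choose a continuous cutoff $\psi_k$ with $\psi_k=1$ on $B(x,k)$ and $\operatorname{spt}\psi_k\subset B(x,k+1)$. The finite measures $\psi_k\mu_n$ have uniformly bounded mass by (i), and they are uniformly tight by (ii) applied with $R=k+1$. Classical Prokhorov therefore yields a weakly convergent subsequence for each fixed $k$.

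A diagonal argument gives a single subsequence $(\mu_{n_j})$ and finite measures $\nu_k$ with $\psi_k\mu_{n_j}\rightharpoonup\nu_k$ for every $k$. Compatibility holds: for $\ell>k$ we have $\psi_k\psi_\ell=\psi_k$, so $\psi_k\nu_\ell=\nu_k$. The justification is that multiplying by the bounded continuous function $\psi_k$ preserves weak convergence of finite measures.

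Define $\mu$ on $B(x,k)$ by $\mu|_{B(x,k)}=\nu_k|_{B(x,k)}$. This is consistent across $k$ and produces a boundedly finite Borel measure, which is Radon on the Polish space $X$. Finally, for $f\in C_{bbs}(X)$ pick $k$ with $\operatorname{spt}f\subset B(x,k)$. Then $\int f\,\ud\mu_{n_j}=\int f\,\ud(\psi_k\mu_{n_j})\to\int f\,\ud\nu_k=\int f\,\ud\mu$.

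The main obstacle is bookkeeping rather than depth. One has to be careful that the local limits agree, which the identity $\psi_k\psi_\ell=\psi_k$ settles. In the necessity direction, one has to transfer weak convergence against $C_{bbs}$ to weak convergence of the localized finite measures; here the mass convergence of $g_R\mu_n$ is the key point.
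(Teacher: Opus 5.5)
Your proposal is correct. The paper gives no proof of this statement; it quotes it from the literature (\cite[Theorem 2.11]{Bate2018}, \cite[Theorem 2.3.4]{Bogachev2018}) and uses it as a black box.

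What you have written is a self-contained derivation of the boundedly finite version from the classical Prokhorov theorem for finite measures. It uses the cutoffs $\psi_k$, a diagonal extraction, and the compatibility identity $\psi_k\nu_\ell=\nu_k$ coming from $\psi_k\psi_\ell=\psi_k$. This is the standard localization route, and the steps check out.

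Compared with the bare citation, your argument shows where the hypotheses on $X$ are used:
\begin{itemize}
\item Sufficiency uses only the direct half of classical Prokhorov (uniformly bounded and uniformly tight implies relatively compact) together with uniqueness of weak limits. Both hold on any metric space.
\item Completeness and separability enter only in the necessity direction, through the converse half of classical Prokhorov, which genuinely requires a Polish space.
\end{itemize}

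One simplification: in the necessity step you do not need the mass-convergence argument. For every bounded continuous $h$, the product $hg_R$ already lies in $C_{bbs}(X)$. So $g_R\mu_n\rightharpoonup g_R\mu$ in duality with all bounded continuous functions follows directly from $\mu_n\rightharpoonup\mu$ in $\mathcal M_{loc}(X)$.
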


The following elementary lemma establishes the existence of sequences converging to points in the support of a limit measure, and will be useful in the sequel.
\begin{lemma}
\label{lemma:support_point}
    Let \((X,d)\) be a complete and separable metric space. Let \((\mu_i)_{i\in\N\cup\{\infty\}}\) be a sequence of boundedly finite Borel regular measures concentrated on sets \(A_i\subset \X\), \(i\in\N\cup\{\infty\}\), respectively. Suppose that \(\mu_i\rightharpoonup \mu_\infty\). Then, if \(x\in {\rm spt}\mu_\infty\), there exists a sequence \(x_i\in A_i\) such that \(x_i\to x\), as \(i\to\infty\).
\end{lemma}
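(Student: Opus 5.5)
The plan is to show that every ball around $x$ eventually carries positive $\mu_i$-mass, and then to select points of $A_i$ inside shrinking balls by a diagonal argument.

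Fix $x\in{\rm spt}\mu_\infty$ and $k\in\N$. Since $x$ lies in the support, $\mu_\infty(B(x,1/k))>0$. Choose a continuous function $\psi_k:X\to[0,1]$ with $\psi_k=1$ on $B(x,1/(2k))$ and $\operatorname{spt}\psi_k\subset B(x,1/k)$; for instance $\psi_k(y)=\min\{1,(2-2k\,d(x,y))_+\}$. Then $\psi_k\in C_{bbs}(X)$ and
\[
\int\psi_k\,\d\mu_\infty\ge\mu_\infty(B(x,1/(2k)))>0 .
\]
This is positive because $x\in{\rm spt}\mu_\infty$, applied with radius $1/(2k)$. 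By weak convergence, $\int\psi_k\,\d\mu_i\to\int\psi_k\,\d\mu_\infty>0$. Hence there is $N_k$ such that $\mu_i(B(x,1/k))\ge\int\psi_k\,\d\mu_i>0$ for all $i\ge N_k$. We may take $N_1<N_2<\cdots$ strictly increasing.

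Next I choose the points. Since $\mu_i$ is concentrated on $A_i$, meaning $\mu_i(X\setminus A_i)=0$, positivity of $\mu_i(B(x,1/k))$ gives $\mu_i(B(x,1/k)\cap A_i)>0$. In particular $B(x,1/k)\cap A_i\neq\varnothing$. For $N_k\le i<N_{k+1}$, pick $x_i\in A_i\cap B(x,1/k)$. For $i<N_1$, pick $x_i\in A_i$ arbitrarily, which requires $A_i\neq\varnothing$. If some $A_i$ is empty for $i<N_1$, then $\mu_i=0$; since only finitely many indices are affected, we simply discard them, or note that the statement only concerns $i$ large. Then $d(x_i,x)<1/k$ whenever $i\ge N_k$, so $x_i\to x$.

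The only delicate point is measurability: "concentrated on $A_i$" should be read as $\mu_i$ having a null complement (with respect to the outer measure if $A_i$ is not Borel). Then $\mu_i(B)>0$ forces $B\cap A_i\neq\varnothing$, since otherwise $B\subset X\setminus A_i$ would be null. With this reading the argument requires no further care, and I do not expect a real obstacle.
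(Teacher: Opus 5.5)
Your proof is correct and is essentially the paper's argument, run directly instead of by contradiction: the paper invokes $\mu_\infty(B(x,\varepsilon))\le\liminf_k\mu_{i_k}(B(x,\varepsilon))$, which your bump function $\psi_k$ supplies by hand, and your diagonal choice of $x_i$ spells out the step the paper leaves implicit when it negates the conclusion. Your remark on possibly empty $A_i$ for finitely many $i$ points to a degeneracy in the statement itself, which the paper's proof silently ignores. Also, $\operatorname{spt}\psi_k$ lies in $\bar B(x,1/k)$ rather than in $B(x,1/k)$, but this changes nothing since $\psi_k$ vanishes off $B(x,1/k)$.
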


\begin{proof}
Fix any \(x\in {\rm spt}\mu_\infty\). Suppose the opposite, that there is no sequence \(x_i\in A_i\), \(i\in\N\) converging to \(x\). Then there exist \(\varepsilon>0\) and a subsequence \(A_{i_k}\), \(k\in\N\) such that \(B(x,\varepsilon)\cap A_{i_k}=\varnothing\), \(k\in\N\). Moreover, by weak convergence of measures, we have
\[
0<\mu_\infty(B(x,\varepsilon))\leq  \liminf_{k\to\infty}\mu_{i_k}(B(x,\varepsilon))=\liminf_{k\to \infty}\mu_{i_k}(B(x,\varepsilon)\cap A_{i_k})=0,
\]
so we get a contradiction. Hence, there exists a sequence \(x_i\in A_i\) such that \(x_i\to x\).
\end{proof}

\subsection{Convergence and tangents of metric measure spaces}
\label{sec:conv_tangents_mms}
For two pointed metric measure spaces \((X,\mu,\bar x)\) and \((Y,\nu,\bar y)\), we say that they are \textit{isomorphic} if there exists an isometric embedding \(\iota: {\rm spt}\mu\cup\{\bar x\}\to {\rm spt}\nu\cup\{\bar y\}\) such that \(\iota_*\mu=\nu\) and \(\iota(\bar x)=\bar y\). 
We write $(X,\mu,\bar x)\simeq (Y,\nu,\bar y)$. The equivalence class of the space \((X,\mu,\bar x)\) is denoted by \([X,\mu,\bar x]\).

 We denote by \(\XX_{pmG}\) the collection of all equivalence classes of pointed metric measure spaces. In \cite[Corollary 4.13]{Bate22}, it has been shown that there exists a complete and separable metric \(d_{pmG}\) on \(\XX_{pmG}\). More precisely, 
 for two given pointed metric measure spaces $(X,\mu_X,\bar x)$ and $(Y,\mu_Y,\bar y)$ we have 
 \begin{align}
 \label{eq:metric_d_pmG}
 d_{pmG}((X,\mu_X,\bar x),(Y,\mu_Y,\bar y))\coloneqq \inf_{(Z,\bar z)}F_{(Z,\bar z)}(\iota_\ast\mu_X,
\tilde\iota_\ast\mu_Y),
 \end{align}
where the infimum is taken among all pointed metric spaces $(Z,\bar z)$ and all isometric embeddings $\iota: (X,\bar x)\to (Z,\bar z)$ and $\tilde \iota:(Y,\bar y)\to (Z,\bar z)$. Here $F_{(Z,\bar z)}$ denotes the Prokhorov type metric on $\mathcal{M}_{loc}(Z)$ defined in \eqref{eq:F_x}, which metrizes weak convergence in $\mathcal{M}_{loc}(Z)$ if $Z$ is a complete and separable metric space. The quantity $d_{pmG}$ is a pseudometric on $\XX$ and two pointed metric measure spaces $(X,\mu,\bar x)$ and $(Y,\nu,\bar y)$ are isomorphic if and only if $d_{pmG}((X,\mu,\bar x),(Y,\nu,\bar y))=0$, so $d_{pmG}$ is a metric on $\XX_{pmG}$.
 
 The metric $d_{pmG}$ mentioned above metrizes the so-called pmG-convergence of pointed metric measure spaces introduced in \cite{GMS15}. The equivalence of convergence in $d_{pmG}$ and pmG-convergence has been proven in \cite[Proposition 2.20, Lemma 2.22]{Bate22}.  We recall the definition of pmG-convergence below. Note that even if the base point of each space in the sequence belongs to the support of the corresponding measure, the base point of the limit space does not need to belong to the support of the limiting measure.

\begin{definition}[Pointed measured Gromov convergence] \label{def:pmG_conv} A sequence \((X_i,\mu_i,\bar x_i)\) of pointed metric measure spaces converges to a pointed metric measure space $(X_\infty,\mu_\infty,\bar x_\infty)$ in the pointed measures Gromov (pmG) sense, if there exist a complete separable metric space \((W,d_W)\) and a sequence of isometric embeddings \(\iota_i\,:\, X_i\to W\), \(i\in \N\cup\{\infty\}\) such that 
\begin{align}
&\lim_{i\to\infty}\iota_i(\bar x_i)=\iota_\infty(\bar x_{\infty})\\
&\iota_{i\ast}\mu_i\rightharpoonup \iota_{\infty\ast}\mu_\infty\quad \text{as } i\to \infty.
\end{align}
We write $(X_i,\mu_i,\bar x_i)\overset{pmG}\longrightarrow (X_\infty,\mu_\infty,\bar x_\infty)$.
\end{definition}

\begin{remark}\label{rem:precompactinXX}
By \cite[Theorem 11.4]{SP21}, a sequence \(((X_i,\mu_i,\bar x_i))\subset\XX\) is precompact with respect to pmG-convergence if and only if it is
\begin{enumerate}
    \item[(i)] \textit{uniformly boundedly finite} (UBF): for any $R>0$ we have $\sup_{i\in\N}\mu_i(B(\bar x_i,R))<+\infty$, and
    \item[(ii)] \textit{boundedly measure-theoretically totally bounded} (BMTB): for every $R,r,\varepsilon>0$ there exist $N\in\N$ and points $(x_n^i)_{n=1}^N\subset X_i$ such that $\sup_{i\in\N}\mu_i(B(\bar x_i,R)\setminus \bigcup_{n=1}^N B(x_n^i,r))\leq \varepsilon$. 
\end{enumerate}

Equivalently, in the terminology of \cite[Corollary 3.22]{GMS15}, a sequence of spaces with the base point in the support of the corresponding measure is precompact if and only if it is UBF, BMTB, and $\inf_{i\in\N}\mu_i(B(\bar x_i,R))>0$, for every $R>0$. The last \textit{bounded away from zero} condition is equivalent to the fact that the base point in the limit space is in ${\rm spt}\mu_\infty$.
\end{remark}

\begin{definition}[Pointed measured Gromov-Hausdorff convergence] 
\label{def:pmGH-conv}
The sequence $(X_i,\mu_i,\bar x_i)$, $i\in \N$ of pointed metric measure spaces converges in the \textit{pointed measured Gromov-Hausdorff} sense (in short, pmGH-sense) to the pointed metric measure space $(X_\infty,\mu_\infty,\bar x_\infty)$ if there exists complete separable pointed metric space $(Z,\bar z)$ and isometric embeddings $\iota_i:(X_i,\bar x_i)\to (Z,\bar z)$, $i\in \N\cup\{\infty\}$ such that $\iota_{i\ast}\mu_i\rightharpoonup \iota_{\infty\ast}\mu_\infty$ and $H_{(Z,\bar z)}(X_i,X_\infty)\overset{i\to\infty}{\longrightarrow} 0$, where $H_{(Z,\bar z)}$ is infimum among all $\varepsilon\in (0,1/2)$ 
such that
\[
\iota_i(X_i)\cap B(\bar z,1/\varepsilon)\subset B(\iota_\infty(X_\infty),\varepsilon)\,\land\, \iota_\infty(X_\infty)\cap B(\bar z,1/\varepsilon)\subset B(\iota_i(X_i),\varepsilon),
\]
if such $\varepsilon(0,1/2)$ exists, and $H_{(Z,\bar z)}(X_i,X_\infty)=1/2$ if no such $\varepsilon\in (0,1/2)$ exists.
\end{definition}

In \cite[Proposition 3.30]{GMS15} it is proven that $(X_i,\mu_i,\bar x_i)\overset{pmGH}{\longrightarrow} (X_\infty,\mu_\infty,\bar x_\infty)$ implies the convergence $(X_i,\mu_i,\bar x_i)\overset{pmG}{\longrightarrow} (X_\infty,\mu_\infty,\bar x_\infty)$. If we assume that the spaces $(X_i,\mu_i,\bar x_i)$, $i\in \N$ are doubling with a common doubling constant and ${\rm spt}\mu_\infty=X_\infty$, then we also have the opposite implication, i.e. the two types of convergence are equivalent (see \cite[Proposition 3.33]{GMS15}).

\subsubsection*{Tangents}
 We fix a continuous function $\varphi:[0,\infty)\to [0,1]$ supported in $[0,1]$. Let $\varphi(0)=1$ and $\delta_0>0$ be such that $\varphi\ge \frac 12$ on $[0,\delta_0]$. In \cite{GMR15}, see also \cite{Schioppa2016}, the choice $\varphi(t)=(1-t)_+$ is used. Given a metric measure space $X=(X,\mu)$ and $x\in X$, $r>0$, denote 
\begin{align*}
\mu_{ x,r}:=\frac{\mu}{\varphi_\mu( x,r)},\quad \mathrm{where}\quad \varphi_\mu( x,r)=\int\varphi\Big(\frac{d( x,y)}{r}\Big)\ud\mu(y).
\end{align*}
We denote $T_{x,r}X:=(r\inv X,\mu_{x,r},x)$, where $r\inv X$ is the metric space $(X,r\inv d)$.

Observe that $\frac 12\mu(B(x,\delta_0 r))\le \varphi_\mu(x,r)\le \mu(B(x,r))$ so that 
\begin{align}\label{eq:normalizing-const}
    1\le \limsup_{r\to 0}\frac{\mu(B(x,r))}{\varphi_\mu(x,r)}\le 2C_\mu(x)\delta_0^{-\log_2 C_\mu(x)}
\end{align}
whenever $\mu$ is pointwise doubling at $x$, and where $C_\mu(x)$ is the pointwise doubling constant at $x$, defined in \eqref{eq:pt-doubl-const}. That is, on pointwise doubling points, $\varphi_\mu(x,r)$ is comparable to $\mu(B(x,r))$ for small $r$. 

\begin{definition}[pmG-tangent] \label{def:pmG_tan} Let \((X,\mu, \bar x)\) be a pointed metric measure space. A pointed metric measure space \((Y,\nu,o)\) is said to be a \textit{pointed measured Gromov tangent} (pmG-tangent in short) of \((X,\mu,\bar x)\) if there exists a sequence of scalars \(r_i\searrow 0\) such that \((Y,\nu,o)\) is the pmG-limit of \(T_{\bar x,r_i}X\), \(i\in\N\). We denote by \({\rm Tan}_{pmG}(X,\mu,\bar x)\) the family of all pmG-tangents of \((X,\mu,\bar x)\).
\end{definition}

\begin{definition}[pmGH-tangent] 
Let \((X,\mu, \bar x)\) be a pointed metric measure space. Then the pointed metric measure space \((Y,\nu,o)\) is a \textit{pointed measured Gromov--Hausdorff tangent} (pmGH-tangent in short) of \((X,\mu,\bar x)\) if there exists a sequence of scalars \(r_i\searrow 0\) such that \((Y,\nu,o)\) is the pmGH-limit of \(T_{\bar x,r_i}X\), \(i\in\N\). We denote by \({\rm Tan}_{pmGH}(X,\mu,\bar x)\) the family of all pmGH-tangents of \((X,\mu,\bar x)\).
\end{definition}

\subsection{Mappings between metric spaces}
\label{sec:mappings_ms}

Let $(X,\mu)$ be a metric measure space and $Y$ a complete metric space. A mapping package is an \textit{essentially separably valued} (i.e. there exist a Borel set $N\subset X$, $\mu(N)=0$, and a separable set $Y_0\subset Y$ such that $f(X\setminus N)\subset Y_0$) $\mu$-measurable map $f:(X,\mu)\to Y$. If $(X,\mu,\bar x)\in\XX$ and $(Y,y)$ is a complete pointed metric space, we say that a mapping package $f:(X,\mu)\to Y$ is pointed if $f(\bar x)=y$. If no basepoint in $Y$ is specified we consider a mapping package $f:(X,\mu,\bar x)\to Y$ pointed by setting $y=f(\bar x)$ as basepoint in $Y$. We denote by $\M$ the collection of all pointed mapping packages. Moreover, given $L>0$ and collections $\mathcal F\subset \XX$ and $\mathcal C$ of complete (pointed) metric spaces, we denote
\begin{align*}
    \M(\mathcal F,\mathcal C)=\{(f:X\to Y)\in \M:\ X\in \mathcal F,\, Y\in \mathcal C\}\quad\mathrm{and}\quad \M_L(\mathcal F,\mathcal C)=\M_L\cap \M(\mathcal F,\mathcal C),
\end{align*}
where $\M_L=\{f\in\M:\ \operatorname{LIP}(f)\le L\}$.

\subsubsection*{Graphs} Let $f:(X,\mu,\bar x)\to Y$ be a mapping package. Consider the essentially separably valued $\mu$-measurable map $\bar f:X\to X\times Y$ given by $\bar f(x)=(x,f(x))$. The graph of $f$, denoted $G(f)$, is the pointed metric measure space 
\begin{align}\label{eq:graph}
    (\{\bar f(\bar x)\}\cup\operatorname{spt}\mu_f,\mu_f,\bar f(\bar x))\in\XX,\quad \mathrm{where}\quad \mu_f=\bar f_\ast\mu.
\end{align}
Since $\bar f$ is essentially separably valued, $\mu_f$ is concentrated on a separable subset of $X\times Y$. Moreover, since $\bar f\inv B(\bar f(x),r)\subset B(x,r)$, $\mu_f$ is boundedly finite. Thus, $G(f)$ is indeed a pointed metric measure space.

\begin{remark}
If $f:(X,\mu)\to Y$ is an $L$-Lipschitz map, then 
\begin{align*}
B\big(x,\frac{r}{\sqrt{1+L^2}}\big)\subset \bar f\inv B(\bar f(x),r)\subset B(x,r),\quad x\in X, \,r>0.
\end{align*}
In particular $\operatorname{spt}\mu_f=\bar f(\operatorname{spt}\mu)=\{(x,f(x)):\ x\in\operatorname{spt}\mu\}\subset X\times Y$.
\end{remark}

\subsubsection*{Convergence of mapping packages} Recall the notion of graphical convergence in Definition \ref{def:graphical-conv}, which uses the graph construction above. Here we define the stronger notions of pointed measured Gromov (pmG) and pointed measured Gromov--Hausdorff convergence of mapping packages for equi-Lipschitz sequences into a fixed proper metric space. Definition \ref{def:equi-lip-maps-conv} below stipulates the convergence $X_i\stackrel{pmG}{\longrightarrow}X_\infty$ (resp. $X_i\stackrel{pmGH}{\longrightarrow}X_\infty$) of the domain spaces, along with a suitable uniform convergence of the mappings.

Let $L>0$ and $Y$ be a proper metric space.
\begin{definition}\label{def:equi-lip-maps-conv}
A sequence $f_i:(X_i,\mu_i,\bar x_i)\to Y$ of $L$-Lipschitz mapping packages converges to a mapping package $f_\infty:(X_\infty,\mu_\infty,\bar x_\infty)\to Y$ 
\begin{itemize}
    \item[(pmG)] in the pointed measured Gromov sense, denoted $f_i\stackrel{pmG}{\longrightarrow}f_\infty$, if there exist isometric embeddings $\iota_i:X_i\to (Z,\bar z)$ ($i\in\N\cup\{\infty\}$) into a complete separable pointed metric space so that $F_{(Z,\bar z)}(\iota_{i\ast}\mu_i,\iota_{\infty\ast}\mu_\infty)\stackrel{i\to\infty}{\longrightarrow}0$ and, for each $x\in \{\bar x_\infty\}\cup\operatorname{spt}\mu_\infty$, we have
    \begin{align}\label{eq:map-precpt}
        f_i(x_i)\stackrel{i\to \infty}{\longrightarrow} f_\infty(x)\quad\mathrm{whenever}\quad x_i\in \{\bar x_i\}\cup\operatorname{spt}\mu_i, \quad \iota_i(x_i)\stackrel{i\to\infty}{\longrightarrow}\iota_\infty(x);
    \end{align}
    \item[(pmGH)] in the pointed measured Gromov--Hausdorff sense, denoted $f_i\stackrel{pmGH}{\longrightarrow}f_\infty$, if there exist isometric embeddings $\iota_i:X_i\to (Z,\bar z)$ ($i\in\N\cup\{\infty\}$) into a complete separable pointed metric space so that $H_{(Z,\bar z)}(\iota_i(X_i),\iota_\infty(X_\infty))+F_{(Z,\bar z)}(\iota_{i\ast}\mu_i,\iota_{\infty\ast}\mu_\infty)\stackrel{i\to\infty}{\longrightarrow}0$ and, for each $x\in X_\infty$, we have
    \begin{align}\label{eq:map-precpt-pmGH}
        f_i(x_i)\stackrel{i\to \infty}{\longrightarrow} f_\infty(x)\quad\mathrm{whenever}\quad x_i\in X_i, \quad \iota_i(x_i)\stackrel{i\to\infty}{\longrightarrow}\iota_\infty(x).
    \end{align}
\end{itemize}
\end{definition}
In Section \ref{sec:mapping-package} we consider a distance metrizing pmG and graphical convergence of equi-Lipschitz mapping packages, see Definitions \ref{def:graphical-metric} and \ref{def:mapping-pmG-conv}.
\begin{remark}\label{rmk:pmGH-vs-guy}
An equi-Lipschitz sequence of maps $f_i:(X_i,\mu_i,\bar x_i)\to \R^n$ pmGH-converges to $f_\infty:(X_\infty,\mu_\infty,\bar x_\infty)\to \R^n$ if and only if it converges in the sense of \cite[Section 2]{DG2015} and the approximate isometries $\phi_i:X_i\to X_\infty$ in that definition can be chosen to satisfy $\phi_{i\ast}\mu_i\stackrel{i\to\infty}{\rightharpoonup}\mu_\infty$.
\end{remark}

\subsubsection*{Haj\l asz--Sobolev maps}
Let $p\in(1,\infty]$. Given a mapping package $f:(X,\mu,\bar x)\to Y$, a Borel function $g\in L^p_{loc}(\mu)$ is a Haj\l asz gradient of $f$ if there exists a $\mu$-null set $N\subset X$ with 
\begin{align}\label{eq:hajlasz}
    d(f(x),f(y))\le d(x,y)[g(x)+g(y)],\quad x,y\in X\setminus N.
\end{align}
Here $L^p_{loc}(\mu)$ denotes the $\mu$-measurable functions $g$ on $X$ with $\displaystyle\int_{B(\bar x,R)}g^p\ud\mu<\infty$ for all $R>0$. The Haj\l asz--Sobolev space $M^{1,p}_{loc}(X,Y)$ consists of maps $f:(X,\mu)\to Y$ with a Haj\l asz gradient $g\in L^p_{loc}(\mu)$. We call elements of $M^{1,p}_{loc}(X,Y)$ Haj\l asz--Sobolev maps. Observe that, by redefining $g$ on a $\mu$-null set we may assume that \eqref{eq:hajlasz} holds for all $x,y\in X$. Moreover, if $E_\lambda=\{g\le \lambda\}$, then $f|_{E_\lambda}$ is $2\lambda$-Lipschitz, and thus has a $2\lambda$-extension to $\overline{E_\lambda}$. A Haj\l asz--Sobolev map $f$ with Haj\l asz gradient $g$ for which $f|_{\overline{E_\lambda}}$ is $2\lambda$-Lipschitz is called a \emph{good} (or \emph{$g$-good}) representative of $\mu$. For simplicity we consider pointwise defined maps and not equivalence classes in this article, however our notions and results are independent of the chosen representative. We do not insist on this point. 

\section{Arzela--Ascoli theorems}
\label{sec:AR-ASC}

\subsection{Ultralimits in the space of spaces}

In the sequel, by a precompact sequence $(X_i)$ in $(\XX,d_{pmG})$ we mean a sequence of pointed metric measure spaces such that every of its subsequences has a further pmG-convergent subsequence.

\begin{lemma}\label{lem:precpt}
    Let $(X_i)\subset \XX$ be a precompact sequence in $(\XX,d_{pmG})$. Then there exists a complete separable metric space $(Z,z)$ and pointed isometric embeddings $\iota_i:X_i\to Z$ so that $\iota_i(x_i)=z$ for all $i\in\N$ and $\bar\mu_i:=\iota_{i\ast}\mu_i$ is precompact in $\mathcal M_{loc}(Z)$.
\end{lemma}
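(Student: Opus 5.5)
Using Remark~\ref{rem:precompactinXX}, precompactness of $(X_i)$ in $(\XX,d_{pmG})$ is equivalent to UBF and BMTB. The task is therefore to build a single target space $Z$ in which the pushforward measures are boundedly tight in the sense of Prokhorov's Theorem~\ref{thm:prokhorov}. The natural candidate is the \emph{wedge} (disjoint union glued at basepoints): let $Z=\bigsqcup_i X_i/\!\sim$ with all $\bar x_i$ identified to a single point $z$. Set $d(x,y)=d_i(x,y)$ if $x,y\in X_i$, and $d(x,y)=d_i(x,\bar x_i)+d_j(y,\bar x_j)$ if $x\in X_i$, $y\in X_j$, $i\neq j$. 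This is a metric, each $\iota_i$ is a pointed isometric embedding, and $Z$ is complete and separable as a countable union of complete separable pieces glued at one point. However, in the wedge the measures $\bar\mu_i$ live on disjoint pieces and cannot be tight, since no compact set captures mass from infinitely many branches. So the wedge alone fails, and the spaces must genuinely be glued along their approximate common structure.

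The plan is instead to use precompactness directly via a diagonal gluing. First, I would use that $\XX_{pmG}$ with $d_{pmG}$ is a complete metric space, so precompactness gives total boundedness of $\{X_i\}$. Choose finite $2^{-k}$-nets: for each $k$, finitely many spaces $Y^k_1,\dots,Y^k_{N_k}$ from the sequence such that every $X_i$ is within $d_{pmG}$-distance $2^{-k}$ of some $Y^k_j$. By the definition \eqref{eq:metric_d_pmG}, each such closeness is witnessed by embeddings into a common pointed space in which the measures are $F$-close. Then I would glue inductively, in the spirit of the proof of completeness of $d_{pmG}$ in \cite{Bate22} (or \cite[Theorem 11.4]{SP21}). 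Attach each $X_i$ to a chain $Y^{1}_{j_1(i)},Y^2_{j_2(i)},\dots$ of net elements, where consecutive elements are close. Form $Z$ as the metric gluing (quotient of the disjoint union by the infimum-over-chains pseudometric) of all the witnessing spaces along the identified copies of the net spaces and basepoints, and then complete it. Gluing along isometrically embedded subspaces preserves isometry of each piece, and countably many separable pieces yield a separable $Z$.

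The main step, and the expected obstacle, is verifying bounded tightness of $\{\bar\mu_i\}$ in $Z$. Condition (i) follows from UBF, since $\iota_i$ is pointed. For (ii), fix $R,\varepsilon$. The finitely many net spaces at level $k$ carry finite measures on $B(z,R)$, so each is inner-regular and has an almost-full compact set. Each $\bar\mu_i$ is $F$-close to some net measure within $2^{-k}$, which controls mass outside a neighbourhood of that compact set. Summing over $k$ with errors $\varepsilon 2^{-k}$, I would produce a closed, totally bounded, hence compact (as $Z$ is complete), set $K=\bigcap_k \overline{B(K_k,2^{-k})}$. Here $K_k$ is the finite union of the level-$k$ compacts, and $\sup_i\bar\mu_i(B(z,R)\setminus K)\lesssim\varepsilon$. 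The delicate point is that $F$-closeness only controls mass of Lipschitz test functions on balls of radius $1/\varepsilon$. I would therefore use $2^k$-Lipschitz cutoffs $\max(0,1-2^k d(\cdot,K_k))$ to pass from $F$-control to measure of neighbourhoods, and choose $k\ge$ something depending on $R$ so that the balls are covered. Prokhorov's Theorem~\ref{thm:prokhorov} then gives precompactness of $(\bar\mu_i)$ in $\mathcal M_{loc}(Z)$.
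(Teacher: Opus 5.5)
The overall strategy is the paper's: glue witnesses of $d_{pmG}$-closeness into one space $Z$ and deduce precompactness of the pushforwards from total boundedness. The step that fails is the gluing itself. The principle you invoke, that gluing along isometrically embedded subspaces keeps every piece isometric, holds only for tree-like (iterated) amalgamations. Your gluing graph has cycles in general, for three reasons. Each $X_i$ hangs on its own chain $Y^1_{j_1(i)},Y^2_{j_2(i)},\dots$. Different chains pass through the same net spaces; for instance, four indices can realise all edges between $\{Y^1_1,Y^1_2\}$ and $\{Y^2_1,Y^2_2\}$. And $X_i$ is attached to chain members that are themselves glued to each other.

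Around a cycle the infimum-over-chains pseudometric can collapse distances inside a piece. Take $X_1=X_2=X_3=X$ and let $\sigma\neq\operatorname{id}$ be a measure-preserving pointed isometry of $X$. Glue the pairs $(1,2)$ and $(2,3)$ via the identity and $(1,3)$ via $\sigma$; all three couplings have $F=0$. Then $d_Z(\iota_1(x),\iota_1(\sigma(x)))=0$ for all $x$. So your $\iota_i$ need not be isometric embeddings, and the $F$-closeness you want to transport into $Z$ is not available. The same example shows that no single $Z$ can satisfy $d_Z\le d_{i,j}$ for all pairs $i\le j$ with arbitrarily chosen (even optimal) pairwise couplings. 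That simultaneous inequality is exactly what the paper's proof asserts when it glues all pairs at once, so the repair below is needed there as well.

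To repair it, glue along a tree. Proceed as follows.
\begin{enumerate}
\item Choose finite index sets $N_1\subset N_2\subset\cdots$ with $\{1,\dots,k\}\subset N_k$ and $\{X_j:j\in N_k\}$ a $2^{-k}$-net of $(X_i)$ for $d_{pmG}$.
\item Give each $i\in N_{k+1}\setminus N_k$ a single parent $p(i)\in N_k$ with $d_{pmG}(X_i,X_{p(i)})\le 2^{-k}$.
\item Take a near-optimal pointed coupling only for the edges $(i,p(i))$, amalgamate along the resulting tree (wedging any components at the basepoint), and complete.
\end{enumerate}
Each coupling space then sits isometrically in $Z$ with its basepoint at $z$. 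Restricting test functions therefore gives $F_{(Z,z)}(\bar\mu_i,\bar\mu_{p(i)})\lesssim 2^{-k}$. Summing along ancestor paths, every $\bar\mu_i$ lies within $\lesssim 2^{-k}$ of the finite set $\{\bar\mu_j: j\in N_k\}$.

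From here the paper's ending is shorter than yours. Total boundedness in the complete metric $F_{(Z,z)}$ \cite[Lemma 2.14]{Bate22} already gives precompactness, so the explicit compact set $\bigcap_k\overline{B(K_k,2^{-k})}$ and the appeal to Prokhorov are unnecessary. Your tightness argument is the standard proof of that implication and does work, with one adjustment: at step $k$ the $F$-scale must be chosen much finer than the neighbourhood scale $2^{-k}$. Then the $2^k$-Lipschitz cutoffs are admissible test functions, and the errors are at most $\varepsilon 2^{-k}$.
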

In the proof, we use the metric $ d_{pmG}$ defined in \eqref{eq:metric_d_pmG}. 
\begin{proof}
For each pair $i,j\in\N$ let $\tilde\iota_i:X_i\to Z_{i,j}$, $\tilde\iota_j:X_j\to Z_{i,j}$ be isometric embeddings with $\tilde\iota_i(x_i)=z_{i,j}=\tilde\iota_j(x_j)$ such that 
\[
F_{(Z_{i,j},z_{i,j})}(\tilde\iota_{i\ast}\mu_i,\tilde\iota_{j\ast}\mu_j)<2d_{pmG} (X_i,X_j).
\]
Let $(Z,z)$ be the completion of the space obtained by \cite[Lemma 2.17]{Bate22}. Note that $z=\iota_i(x_i)$ for all $i$ by construction. There exists isometric embeddings $\iota_i:X_i\to Z$ so that $d_Z(\iota_i(\tilde x_i),\iota_j(\tilde x_j))\le d_{i,j}(\tilde \iota_i(\tilde x_i),\tilde\iota_j(\tilde x_j))$ for all $\tilde x_i\in X_i$, $\tilde x_j\in X_j$, $i\le j$. Thus $F_{(Z,z)}\le F_{(Z_{i,j},z_{i,j})}$ for $i\le j$. 

We prove that the collection $(\iota_{i\ast}\mu_i)\subset \mathcal M_{loc}(Z)$ is totally bounded with respect to $F_{(Z,z)}$. Indeed, given $\varepsilon>0$, let $M\in \N$ and $\iota_m$, $m=1,\ldots,  M$ be such that 
\[
(X_i)\subset\bigcup_{m=1}^MB_{d_{pmG}}(X_{j_m},\varepsilon/4).
\]
Define $I_m:=\{i\in\N: X_i\in B_{d_{pmG}}(X_{j_m},\varepsilon/4)\}$ and note that $\N=\bigcup_{m=1}^M I_m$. 
For each $m$ let $i_m\in I_m$ be the smallest element. Then $B_{d_pmG}(X_{j_m},\varepsilon/4)\subset B_{d_{pmG}}(X_{i_m},\varepsilon/2)$, $m=1,\ldots,M$ and moreover 
\[
F_{(Z,z)}(\iota_{i_m\ast}\mu_{i_m},\iota_{i\ast}\mu_i)\le 2d_{pmG}(X_{i_m},X_i)<\varepsilon,\quad i\in I_m.
\]
Thus $\displaystyle (\iota_{i\ast}\mu_i)\subset \bigcup_{m=1}^MB_{F_{(Z,z)}}(\iota_{i_m\ast}\mu_{i_m},\varepsilon)$. This completes the proof of the Lemma.
\end{proof}

\begin{lemma}\label{lem:omega-limit-p-int-funct}
Let $Z$ be a complete separable metric space. Suppose $(\tilde \mu_i)\subset \mathcal M_{loc}(Z)$ be a precompact family of measures, $z\in Z$ and $\tilde g_i\in L^p_{loc}(\tilde\mu_i)$ satisfy
    \begin{align*}
        \sup_i\int_{B(z,R)}\tilde g_i^p\ud\tilde\mu_i<\infty,\quad R>0.
    \end{align*}
    Then there exists $\tilde g_\omega\in L^p_{loc}(\tilde\mu_\omega)$ such that 
    \begin{align*}
        \int \varphi \tilde g_\omega\ud\tilde\mu_\omega=\lim_\omega\int \varphi \tilde g_i\ud\tilde\mu_i,\quad \varphi\in C_{bbs}(Z)
    \end{align*}
    and moreover
    \begin{align*}
        \int_{B(z,R)}\tilde g_\omega^p\ud\tilde \mu_\omega\le \lim_\omega \int_{B(z,R)}\tilde g_i^p\ud\tilde\mu_i,\quad R>0.
    \end{align*}
Here $\tilde\mu_\omega\in \mathcal M_{loc}(Z)$ is the $\omega$-limit of the sequence $(\tilde\mu_i)$.
\end{lemma}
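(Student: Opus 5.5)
Fix $R$ and apply H\"older on balls: for $\varphi\in C_{bbs}(Z)$ with $\operatorname{spt}\varphi\subset B(z,R)$,
\[
\Big|\int\varphi\tilde g_i\,\d\tilde\mu_i\Big|\le \|\varphi\|_{L^{p'}(\tilde\mu_i)}\Big(\int_{B(z,R)}\tilde g_i^p\,\d\tilde\mu_i\Big)^{1/p}.
\]
By precompactness, $\sup_i\tilde\mu_i(B(z,R))<\infty$, so the signed measures $\nu_i:=\tilde g_i\tilde\mu_i$ have uniformly bounded total variation on bounded sets (for $p=\infty$ this is immediate). Hence the $\omega$-limit $\Lambda(\varphi):=\lim_\omega\int\varphi\tilde g_i\,\d\tilde\mu_i$ exists in $\R$ for every $\varphi\in C_{bbs}(Z)$. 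It is linear in $\varphi$, since $\omega$-limits of bounded real sequences are linear.

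The key estimate passes H\"older to the limit. Since $\tilde\mu_i\rightharpoonup\tilde\mu_\omega$ along $\omega$ and $|\varphi|^{p'}\in C_{bbs}(Z)$, we have $\lim_\omega\|\varphi\|_{L^{p'}(\tilde\mu_i)}=\|\varphi\|_{L^{p'}(\tilde\mu_\omega)}$. For $p<\infty$ we therefore get
\[
|\Lambda(\varphi)|\le \|\varphi\|_{L^{p'}(\tilde\mu_\omega)}\,C_R,\qquad C_R:=\Big(\lim_\omega\int_{B(z,R)}\tilde g_i^p\,\d\tilde\mu_i\Big)^{1/p},
\]
for all $\varphi$ supported in $B(z,R)$. (For $p=\infty$, read $\|\varphi\|_{L^1}$ and the essential sup bound.) Continuous functions with bounded support in the open ball $B(z,R)$ are dense in $L^{p'}(\tilde\mu_\omega\llcorner B(z,R))$ when $1\le p'<\infty$, because $\tilde\mu_\omega$ is Radon on a metric space. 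So $\Lambda$ extends uniquely to a bounded functional on that space with norm at most $C_R$. By Riesz duality (for $p=\infty$, $(L^1)^*=L^\infty$ on a $\sigma$-finite space), there is $h_R\in L^p(\tilde\mu_\omega\llcorner B(z,R))$ with $\|h_R\|_p\le C_R$ and $\Lambda(\varphi)=\int\varphi h_R\,\d\tilde\mu_\omega$.

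Uniqueness of the representing function makes $h_R=h_{R'}$ a.e. on $B(z,R)$ for $R<R'$. Thus the $h_R$ glue to a single $\tilde g_\omega\in L^p_{loc}(\tilde\mu_\omega)$, giving the first identity for all $\varphi\in C_{bbs}(Z)$. If the $\tilde g_i\ge0$ (the case of interest), then $\Lambda(\varphi)\ge0$ for $\varphi\ge0$, so $\tilde g_\omega\ge0$. The second claim, $\int_{B(z,R)}\tilde g_\omega^p\,\d\tilde\mu_\omega\le C_R^p$, is just the norm bound $\|h_R\|_{L^p(B(z,R))}\le C_R$.

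The main obstacle is the limiting H\"older step. One has to make sure the $L^{p'}$ norms of $\varphi$ converge along $\omega$, which uses the weak convergence of $\tilde\mu_i$ to $\tilde\mu_\omega$ against $|\varphi|^{p'}$. One also needs the open ball $B(z,R)$, not the closed one, in the density argument, so that the estimate gives the bound on exactly the ball appearing in the statement.
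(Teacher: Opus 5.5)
Your proposal is correct and follows essentially the same route as the paper: define the functional as the $\omega$-limit of $\int\varphi\tilde g_i\,\ud\tilde\mu_i$, and pass H\"older's inequality to the limit using $\lim_\omega\int|\varphi|^{q}\,\ud\tilde\mu_i=\int|\varphi|^{q}\,\ud\tilde\mu_\omega$. Then extend by density of $C_{bbs}(Z)$ and represent by Riesz duality, with the norm bound of the representative giving the $L^p$ estimate. Your ball-by-ball duality on the finite measures $\tilde\mu_\omega|_{B(z,R)}$, followed by gluing, is simply a more careful version of the paper's extension of the functional to $L^q_{loc}(\tilde\mu_\omega)$.
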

\begin{proof}
Consider the functional
\begin{align*}
    L_\omega(\varphi)=\lim_\omega\int \varphi\tilde g_i\ud\tilde\mu_i,\quad \varphi\in C_{bbs}(Z).
\end{align*}
Since $\lim_\omega F_{(Z,z)}(\tilde \mu_i,\tilde\mu_\omega)=0$, we have 
\begin{align}\label{eq:omega-limit-meas}
\int \varphi\ud\tilde\mu_\omega=\lim_\omega\int\varphi\ud\tilde\mu_i
\end{align}
for every $\varphi\in \mathcal{C}_{bbs}(Z)$, due to the continuity of the function $\mathcal{M}_{loc}(Z)\ni \mu\mapsto \int\varphi\,\d\mu$ for a fixed $\varphi\in \mathcal{C}_{bbs}(Z)$. Thus, we have
\begin{align}\label{eq:norm-est}
    |L_\omega(\varphi)|&\le \lim_\omega\Big(\int |\varphi|^q\ud\tilde\mu_i\Big)^{1/q}\Big(\int_{B(z,R)} \tilde g_i^p\ud\tilde \mu_i\Big)^{1/p}\nonumber\\
    &\le \Big(\int |\varphi|^q\ud\tilde\mu_\omega\Big)^{1/q}\cdot\lim_\omega\Big(\int_{B(z,R)}\tilde g_i^p\ud\tilde\mu_i\Big)^{1/p}
\end{align}
for all $\varphi\in C_{bbs}(Z)$ supported in a ball $B(z,R)$. Here $q\in [1,\infty)$ is the dual exponent of $p\in (1,\infty]$. Since $C_{bbs}(Z)$ is dense in $L^q_{loc}(\tilde\mu_\omega)$\footnote{in the sense that for all $h\in L^q_{loc}(\tilde\mu_\omega)$ there exists $(\varphi_i)\subset C_{bbs}(Z)$ so that $\int_{B(z,R)}|\varphi_i-h|^q\ud\tilde\mu_\omega\to 0$ as $i\to\infty$ for all $R$.}, it follows that $L_\omega$ extends to a bounded linear functional on $L^q_{loc}(\tilde\mu_\omega)$. Therefore, there exists a function $\tilde g_\omega\in L^p_{loc}(\tilde\mu_\omega)$ so that
\begin{align*}
    L_\omega(\varphi)=\int \varphi \tilde g_\omega\ud\tilde\mu_\omega,\quad \varphi\in C_{bbs}(Z).
\end{align*}
Moreover, from \eqref{eq:norm-est} and the dual characterization of the $L^p$-norm it follows that 
\begin{align*}
    \int_{B(z,R)} \tilde g_\omega^p\ud\tilde\mu_\omega\le \lim_\omega\int_{B(z,R)} \tilde g_i^p \ud \tilde\mu_i,\quad R>0.
\end{align*}
\end{proof}

The following corollary is analogous to \cite[Lemma 2.1]{IW26}, and the proof proceeds the same way. We nevertheless record it here for the readers' convenience.
\begin{corollary}\label{cor:omega-set}
Let $Z$ be a complete separable metric space. Suppose $(\tilde\mu_i)\subset \mathcal M_{loc}(Z)$ is a precompact family, and denote by $\tilde \mu_\omega$ its $\omega$-limit. Let $E_i\subset Z$ be Borel sets and set $E_\omega=\{\lim_\omega w_i\in Z:\ w_i\in E_i\}$. Then $E_\omega$ is a closed set satisfying
\begin{align*}
\tilde\mu_\omega(\bar B(z,R)\cap E_\omega)\ge \lim_\omega\tilde\mu_i(\bar B(z,R)\cap E_i),\quad \textnormal{for all } R>0,\,z\in Z.
\end{align*}
\end{corollary}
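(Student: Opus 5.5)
We prove closedness first, then the measure inequality; throughout, $\lim_\omega w_i$ is taken in $Z$. For closedness, let $w^{(k)}\in E_\omega$ with $w^{(k)}\to w$, and write $w^{(k)}=\lim_\omega w^{(k)}_i$ with $w^{(k)}_i\in E_i$. We build a diagonal sequence. Let $A_k=\{i:\ d(w^{(k)}_i,w^{(k)})<2^{-k}\}\in\omega$, and set $B_k=A_1\cap\dots\cap A_k\cap\{i\ge k\}$, which is a decreasing sequence in $\omega$ with empty intersection (this uses that $\omega$ is non-principal). For $i\in B_k\setminus B_{k+1}$ put $w_i=w^{(k)}_i$, and for $i\notin B_1$ choose $w_i\in E_i$ arbitrarily (if some $E_i$ is empty on an $\omega$-large set, then $E_\omega=\varnothing$ and the claim is trivial). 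Then for $i\in B_k$ we have $d(w_i,w)\le 2^{-k'}+d(w^{(k')},w)$ for some $k'\ge k$, so $\lim_\omega w_i=w$ and $w\in E_\omega$.

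For the measure inequality, fix $z,R$ and $\varepsilon>0$. We first show that for all sufficiently small $\delta>0$, $\omega$-a.e. $i$ satisfies $\bar B(z,R)\cap E_i\subset B(E_\omega\cap\bar B(z,R+1),\delta)$, or at least that the part of $\bar B(z,R)\cap E_i$ outside this neighbourhood has $\tilde\mu_i$-measure at most $\varepsilon$. We do this using precompactness and bounded tightness (Theorem \ref{thm:prokhorov}): there is a compact $K$ with $\tilde\mu_i(\bar B(z,R+1)\setminus K)<\varepsilon$ for all $i$.

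Now let $U=B(E_\omega\cap \bar B(z,R+1),\delta)$ and suppose, for contradiction, that $\tilde\mu_i(K\cap\bar B(z,R)\cap E_i\setminus U)>0$ for $\omega$-a.e. $i$. Pick points $w_i$ in this set. Since $K$ is compact, $\lim_\omega w_i=w$ exists in $K\cap\bar B(z,R)$, and $w\in E_\omega$ by definition. But $d(w_i,w)<\delta$ for $\omega$-a.e. $i$, which contradicts $w_i\notin U$. Hence $\tilde\mu_i(\bar B(z,R)\cap E_i)\le \tilde\mu_i(\bar U)+\varepsilon$ for $\omega$-a.e. $i$; to be safe we replace $U$ by the closed neighbourhood $\{d(\cdot,E_\omega\cap\bar B(z,R+1))\le\delta\}\cap \bar B(z,R+\delta)$.

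Next we pass to the limit. Choose a Lipschitz cutoff $\psi\in C_{bbs}(Z)$ that equals $1$ on that closed $\delta$-neighbourhood and is supported in the $2\delta$-neighbourhood. Since $\int\psi\,\ud\tilde\mu_\omega=\lim_\omega\int\psi\,\ud\tilde\mu_i$ (as in \eqref{eq:omega-limit-meas}), we obtain
\[
\lim_\omega\tilde\mu_i(\bar B(z,R)\cap E_i)\le \tilde\mu_\omega\big(\{d(\cdot,E_\omega\cap\bar B(z,R+1))<2\delta\}\cap \bar B(z,R+2\delta)\big)+\varepsilon .
\]
Let $\delta\to0$. Since $E_\omega$ is closed, the right-hand sets decrease to $E_\omega\cap\bar B(z,R)$, and $\tilde\mu_\omega$ is finite on bounded sets, so the measures converge. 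Then let $\varepsilon\to0$.

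The main obstacle is the contradiction step. Extracting a single point $w_i$ only shows that the set is empty or has small measure, and one must handle the measure-theoretic version correctly. Working inside the compact $K$ makes the argument pointwise, which is why tightness is essential: every point of $K\cap \bar B(z,R)\cap E_i$ lies within $\delta$ of $E_\omega$ for $\omega$-a.e. $i$.
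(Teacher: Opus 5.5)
Your proof is correct, but it follows a different route from the paper's.

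\textbf{The paper's route.} It applies Lemma \ref{lem:omega-limit-p-int-funct} to $h_i=\chi_{E_i}$. This produces a density $0\le h_\omega\le 1$ such that $h_\omega\tilde\mu_\omega$ is the $\omega$-limit of $\tilde\mu_i|_{E_i}$, which gives $\lim_\omega\tilde\mu_i(\bar B(z,R)\cap E_i)\le\int_{\bar B(z,R)}h_\omega\,\ud\tilde\mu_\omega$. The proof then finishes with a pointwise argument: using nested index sets $G_k\in\omega$, and no compactness, it shows that every point of $\operatorname{spt}(\tilde\mu_\omega|_{\{h_\omega>0\}})$ is an $\omega$-limit of points $w_i\in E_i$.

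\textbf{Your route.} You bypass the duality lemma entirely. Bounded tightness supplies a compact $K$. Compactness forces $K\cap\bar B(z,R)\cap E_i$ into the $\delta$-neighbourhood of $E_\omega$ for $\omega$-a.e.\ $i$. A single continuous cutoff transfers the bound to $\tilde\mu_\omega$, and closedness of $E_\omega$ together with continuity from above removes $\delta$.

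\textbf{What each buys.} Your argument is more elementary and self-contained. It also actually proves that $E_\omega$ is closed, which the paper asserts without proof and which your $\delta\to0$ step genuinely needs. The paper's argument reuses machinery already in place. It also yields extra information: the $\omega$-limit of $\tilde\mu_i|_{E_i}$ is absolutely continuous with respect to $\tilde\mu_\omega$, has density at most $1$, and is concentrated on $E_\omega$.

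One small remark: your contradiction step in fact shows $K\cap\bar B(z,R)\cap E_i\setminus U=\varnothing$ for $\omega$-a.e.\ $i$. So your closing worry about ``small measure'' versus emptiness is unnecessary.
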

\begin{proof}
Consider $h_i:=\chi_{E_i}$. By Lemma \ref{lem:omega-limit-p-int-funct} there exists $h_\omega\in L^{p}_{loc}(\tilde\mu_\omega)$ so that
\begin{align*}
    \int h_\omega \varphi \ud\tilde\mu_\omega=\lim_\omega\int h_i\varphi\ud\tilde\mu_i,\quad \varphi\in C_{bbs}(Z).
\end{align*}
In particular $0\le h_\omega\le 1$ $\tilde\mu_\omega$-a.e. and moreover
\begin{align*}
\int \varphi h_\omega\ud\tilde \mu_\omega \ge \lim_\omega\int_{\bar B(z,R)}h_i\ud\tilde \mu_i=\lim_\omega \tilde \mu_i(\bar B(z,R)\cap E_i)
\end{align*}
for all $\varphi$ with $\varphi\ge \chi_{\bar B(z,R)}$. Taking infimum over all such $\varphi$ we obtain 
\begin{align}\label{eq:pre-est}
    \int_{\bar B(z,R)}h_\omega\ud\tilde\mu_\omega\ge \lim_\omega\tilde\mu_i(\bar B(z,R)\cap E_i).
\end{align}

Suppose $w\in Z$ is such that $\tilde\mu_\omega(B(w,r)\cap \{h_\omega>0\})>0$ for all $r>0$. For each $k$, let $\chi_{B(w,1/(2k))}\le \varphi_k\le \chi_{B(w,1/k)}$ be  a continuous function, and estimate
\begin{align*}
0<\int_{B(w,1/(2k))}h_\omega\ud\tilde\mu_\omega\le \lim_\omega\int\varphi_kh_i\ud\tilde\mu_i\le \lim_\omega\tilde\mu_i(B(w,1/k)\cap E_i).
\end{align*}
In particular $G_k:=\{i\ge k: \tilde\mu_i(E_i\cap B(w,1/k))>0\}$ satisfies $G_{k+1}\subset G_k$ and $\omega(G_k)=1$ for all $k$. Letting $w_i\in E_i\cap B(w,1/k)$ where $k$ is the unique integer with $i\in G_k\setminus G_{k+1}$ we obtain a sequence $w_i\in E_i$ with $w=\lim_\omega w_i$. We conclude that $\operatorname{spt}(\tilde\mu_\omega|_{\{h_\omega>0\}})\subset E_\omega$, whence \eqref{eq:pre-est} implies the claim.
\end{proof}

\begin{lemma}\label{lem:omega-ineq}
    Let $Z$ be a complete separable metric space. Let $\tilde\mu_i$ and $\tilde g_i$ be as in Lemma \ref{lem:omega-limit-p-int-funct}, $\tilde\mu_\omega=\lim_\omega\tilde\mu_i$ in $\mathcal M_{loc}(Z)$, and $\tilde g_\omega\in L_{loc}^p(\mu_\omega)$ given by Lemma \ref{lem:omega-limit-p-int-funct}. For any $\lambda>0$ we have $\{\tilde g_\omega < \lambda\}$ is $\mu_\omega$-essentially contained in $\lim_\omega\{\tilde g_i\leq \lambda\}$, i.e. $\mu_\omega(\{\tilde g_\omega< \lambda\}\setminus\lim_\omega\{\tilde g_i\leq \lambda\})=0$.
\end{lemma}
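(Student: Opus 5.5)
Set $E_i=\{\tilde g_i\le\lambda\}$ and let $E_\omega=\lim_\omega E_i$ be the closed set from Corollary~\ref{cor:omega-set}. We must show that $A:=\{\tilde g_\omega<\lambda\}\setminus E_\omega$ is $\tilde\mu_\omega$-null. The idea is that, off $E_\omega$, the approximating measures see only the region where $\tilde g_i>\lambda$ locally, so any weak limit of $\tilde g_i\tilde\mu_i$ must have density $\ge\lambda$ there.

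Since $E_\omega$ is closed, its complement $U=Z\setminus E_\omega$ is open. Fix $w\in U$ and $r>0$ with $B(w,2r)\subset U$. First we show that the set $G=\{i:\ E_i\cap B(w,r)\neq\varnothing\}$ satisfies $\omega(G)=0$. If instead $\omega(G)=1$, pick $w_i\in E_i\cap B(w,r)$ for $i\in G$. The sequence $(w_i)$ lies in the ball $B(w,r)$, but it need not have an $\omega$-limit in $Z$, because $Z$ is not proper. This is the main obstacle.

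To get around it, we use only points near the support. Compactness is available through the measures: by bounded tightness of $(\tilde\mu_i)$ (Prokhorov, Theorem~\ref{thm:prokhorov}), for any $\varepsilon>0$ there is a compact $K$ with $\tilde\mu_i(B(w,r)\setminus K)<\varepsilon$ for all $i$. Consider a nonnegative $\varphi\in C_{bbs}(Z)$ with $\operatorname{spt}\varphi\subset B(w,r)$. We estimate
\begin{align*}
\int\varphi\tilde g_i\ud\tilde\mu_i\ge\lambda\int_{B(w,r)\setminus E_i}\varphi\ud\tilde\mu_i=\lambda\int\varphi\ud\tilde\mu_i-\lambda\int_{E_i}\varphi\ud\tilde\mu_i.
\end{align*}
It therefore suffices to show $\lim_\omega\tilde\mu_i(E_i\cap B(w,r))=0$. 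This is exactly what Corollary~\ref{cor:omega-set} provides, applied with closed balls inside $U$. If $\lim_\omega\tilde\mu_i(E_i\cap\bar B(w,r))=c>0$, the corollary gives $\tilde\mu_\omega(\bar B(w,r)\cap E_\omega)\ge c>0$, which contradicts $\bar B(w,r)\subset U$.

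This sidesteps the compactness issue entirely: the corollary handles it internally by working with measure rather than with points. We then pass to the $\omega$-limit, using Lemma~\ref{lem:omega-limit-p-int-funct} on the left and weak convergence \eqref{eq:omega-limit-meas} on the right. This gives
\begin{align*}
\int\varphi\tilde g_\omega\ud\tilde\mu_\omega\ge\lambda\int\varphi\ud\tilde\mu_\omega
\end{align*}
for all such nonnegative $\varphi$.

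Finally, continuous functions supported in the open set $B(w,r)$ approximate $\chi_F$ for Borel $F\subset B(w,r)$ in $L^q(\tilde\mu_\omega)$, so $\int_F(\tilde g_\omega-\lambda)\ud\tilde\mu_\omega\ge0$ for every such $F$. Hence $\tilde g_\omega\ge\lambda$ $\tilde\mu_\omega$-a.e. on $B(w,r)$. Covering the open set $U$ by countably many such balls, which is possible since $Z$ is separable, gives $\tilde g_\omega\ge\lambda$ a.e. on $U$. Therefore $\tilde\mu_\omega(\{\tilde g_\omega<\lambda\}\cap U)=0$, which is the claim.
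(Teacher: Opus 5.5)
Your argument is correct, but it takes a different route from the paper's.

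\textbf{The paper's route.} The paper works pointwise. For $z\notin\lim_\omega\{\tilde g_i\le\lambda\}$ it shows that there is a radius $r_z>0$ such that $\tilde g_i>\lambda$ at every point of $B(z,r_z)$ for $\omega$-a.e.\ $i$. Otherwise, for every $n$ there are points of $\{\tilde g_i\le\lambda\}$ in $B(z,1/n)$, and a diagonal choice gives a sequence whose $\omega$-limit is $z$ itself. The inequality $\lambda\int\varphi\,\ud\tilde\mu_\omega\le\int\varphi\tilde g_\omega\,\ud\tilde\mu_\omega$ for $\varphi\ge0$ supported in $B(z,r_z)$ is then immediate.

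So the compactness obstacle you identify is real only for a \emph{fixed} radius. For fixed $r$ one can indeed have $\omega(G)=1$ when $Z$ is not proper. Letting the radius shrink around a fixed centre removes the obstacle, because the constructed sequence then converges to that centre.

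\textbf{Your route.} You replace the pointwise statement by the measure statement $\lim_\omega\tilde\mu_i(E_i\cap\bar B(w,r))=0$. You obtain it from the closedness of $E_\omega$ and the inequality of Corollary~\ref{cor:omega-set}. There is no circularity, since that corollary relies only on Lemma~\ref{lem:omega-limit-p-int-funct}.

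\textbf{What each buys.} Your route is shorter given the corollary, and it shows the lemma is essentially a consequence of it. It is not more elementary, however: the corollary's proof of $\operatorname{spt}(\tilde\mu_\omega|_{\{h_\omega>0\}})\subset E_\omega$ uses exactly the same shrinking-ball diagonal construction. The paper's route is self-contained. It also gives a slightly stronger intermediate fact: $\tilde g_i>\lambda$ everywhere on a neighbourhood, rather than off a set of vanishing $\tilde\mu_i$-measure.

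\textbf{Minor remarks.} Your first displayed inequality tacitly uses $\tilde g_i\ge0$ on $E_i$, which is harmless here. The Prokhorov/compact-set digression is never used and can be dropped.
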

\begin{proof}
Let $z\in Z\setminus \lim_\omega\{\tilde g_i\leq \lambda\}$. We first claim there exists $r_x>0$ so that $\{i\in\N:\ \tilde g_i>\lambda\,\,\mathrm{on}\,\, B(z,r_z)\}\in\omega$. Indeed, otherwise, for every $n\in\N$ we have
\begin{align*}
    \mathcal{F}_n\coloneqq\{i\in\N:\ \exists y_i^{n}\in B(z,n\inv)\,\, \mathrm{with}\,\, \tilde g_i(y_i^{n})\leq \lambda\}\in \omega.
\end{align*}
Define $y_i=y_i^{n_i}$ where $n_i\in\N$ is maximal $n\in\N$ such that $i\in \mathcal{F}_n$ and $y_i=y_{\tilde i}^1$ for arbitrary $\tilde i\in \mathcal{F}_1$ if such $n_i$ does not exist. This implies $z=\lim_\omega y_i\in \lim_\omega\{\tilde g_i\leq \lambda\}$, which is a contradiction. Now for any non-negative $\varphi\in C_{bbs}(Z)$ supported in $B(z,r_z)$ we have
\begin{align*}
\lambda\int\varphi\ud\tilde\mu_\omega=\lim_\omega\lambda\int\varphi\ud\tilde\mu_i\le \lim_\omega\int \tilde g_i\varphi\ud\tilde\mu_i=\int \varphi \tilde g_\omega\ud\tilde\mu_\omega.
\end{align*}
This implies that $\tilde g_\omega\ge \lambda$ $\mu_\omega$-a.e. on $B(z,r_z)$. This shows that $Z\setminus \lim_\omega\{\tilde g_i\le \lambda\}$ is $\mu_\omega$-essentially contained in $\{\tilde g_\omega\ge \lambda\}$, which implies the claim.
\end{proof}
 
\begin{lemma}
\label{lem:Hausdorff_limit_K_omega}
    Fix a non-principal ultrafilter $\omega$. Let $(Z,d)$ be a compact metric space and  $K_i\subset Z$ be compact subsets, $i\in \N$. Then $\lim_\omega d_H(K_i,K_\omega)=0$, where $K_\omega\coloneqq \{\lim_{\omega}x_i\,:\,x_i\in K_i\}$ and $d_H(K_i,K_\omega)\coloneqq \inf\{\delta>0\,:\,K_i\subset K_\omega^\delta\,\land\,K_\omega\subset K_i^\delta\}$ is the Hausdorff distance between compact sets.
\end{lemma}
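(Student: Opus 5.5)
The plan is to prove the two inclusions $K_i\subset K_\omega^\delta$ and $K_\omega\subset K_i^\delta$ separately, each for $\omega$-a.e. $i$, for a fixed $\delta>0$. Intersecting the two $\omega$-large index sets shows that $\{i:\ d_H(K_i,K_\omega)\le\delta\}\in\omega$ for every $\delta>0$, which is exactly $\lim_\omega d_H(K_i,K_\omega)=0$.

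First I would record two preliminary facts about $K_\omega$.
\begin{itemize}
\item[(a)] $K_\omega$ is nonempty, provided the $K_i$ are nonempty. Pick any $x_i\in K_i$; since $Z$ is compact, the ultralimit $\lim_\omega x_i$ exists and lies in $K_\omega$.
\item[(b)] $K_\omega$ is closed, hence compact. I would use the same diagonal argument as in the proof of Lemma \ref{lem:omega-ineq}. If $w$ is a limit of points of $K_\omega$, then for each $n$ the set $\{i:\ K_i\cap B(w,1/n)\ne\varnothing\}$ belongs to $\omega$. Choosing $x_i\in K_i\cap B(w,1/n_i)$ with $n_i$ maximal (capped at $i$) gives a sequence with $w=\lim_\omega x_i$.
\end{itemize}

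For the inclusion $K_\omega\subset K_i^\delta$ I would use compactness of $K_\omega$. Cover $K_\omega$ by finitely many balls $B(y_k,\delta/2)$ with centres $y_k\in K_\omega$, $k=1,\dots,M$. Write $y_k=\lim_\omega x_i^k$ with $x_i^k\in K_i$. Then the set $A_k=\{i:\ d(x_i^k,y_k)<\delta/2\}$ lies in $\omega$. On the finite intersection $\bigcap_k A_k\in\omega$, every point of $K_\omega$ is within $\delta$ of some $x_i^k\in K_i$.

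For the inclusion $K_i\subset K_\omega^\delta$ I would argue by contradiction, and this is the step needing the most care. Suppose the set $B=\{i:\ \exists x_i\in K_i,\ d(x_i,K_\omega)\ge\delta\}$ belongs to $\omega$. For $i\in B$ pick such a point $x_i$, and for $i\notin B$ pick $x_i\in K_i$ arbitrarily. By compactness of $Z$, $x=\lim_\omega x_i$ exists, and by definition $x\in K_\omega$. On the other hand, the function $d(\cdot,K_\omega)$ is $1$-Lipschitz, so it passes to the $\omega$-limit, and $d(x_i,K_\omega)\ge\delta$ on the $\omega$-large set $B$. This gives $d(x,K_\omega)\ge\delta>0$, contradicting $x\in K_\omega$. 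Hence $\N\setminus B\in\omega$, which is the desired inclusion.

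The only real subtlety is the choice of sequences on $\omega$-null index sets and the use of compactness of $Z$ to guarantee that the ultralimits exist. Empty $K_i$ can be handled separately: either they occur $\omega$-a.e., in which case $K_\omega=\varnothing$ and the claim is trivial, or they occur on an $\omega$-null set and can be ignored.
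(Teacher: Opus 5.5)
Your proof is correct and follows the paper's overall structure. The paper also reduces to the two inclusions, phrased as a contradiction via the sets $\mathcal U^1_\delta$ and $\mathcal U^2_\delta$. Your argument for $K_i\subset K_\omega^\delta$ is exactly the paper's first case.

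The only difference is in the inclusion $K_\omega\subset K_i^\delta$. The paper again argues by contradiction: it takes witnesses $x_i\in K_\omega$ with $\operatorname{dist}(x_i,K_i)\ge\delta$ and passes to $x=\lim_\omega x_i$. It then uses $x\in K_\omega$ to reach a contradiction at scale $2\delta/3$. You argue directly instead, with a finite $\delta/2$-net of $K_\omega$ and a finite intersection of $\omega$-large sets.

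Both routes use the same ingredients: compactness of $Z$, and closedness of $K_\omega$. The paper uses closedness tacitly when it asserts $\lim_\omega x_i\in K_\omega$ for a sequence lying in $K_\omega$ $\omega$-a.e. Your item (b) proves it explicitly, so your write-up is slightly more complete on that point. Otherwise the two arguments are interchangeable.
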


\begin{proof}
    We argue by contradiction. Suppose that the claim is not true. Then there exists $\delta>0$ such that $\mathcal{U}_{\delta}\coloneqq \{i\in\N\,:\, d_H(K_i,K_\omega)>\delta\}\in\omega$. Since we have $\mathcal{U}_\delta\subset\mathcal{U}_\delta^1\cup \mathcal{U}_\delta^2$, where 
    \[\mathcal{U}_\delta^1\coloneqq \{i\in \N\,:\, (\exists x_i\in K_i)({\rm dist} (x_i,K_\omega)\geq \delta)\},\quad \mathcal{U}_\delta^2\coloneqq \{i\in \N\,:\, (\exists x_i\in K_\omega)({\rm dist}(x_i,K_i)\geq \delta)\},
    \]
    by the non-principal ultrafilter properties, at least one of the sets $\mathcal{U}_\delta^1$ and $\mathcal{U}_\delta^2$ belongs to $\omega$. Suppose first that $\mathcal{U}_\delta^1\in \omega$. Define the sequence 
    \[
    \tilde x_i\coloneqq \begin{cases}
        x_i, & \text{if } i\in \mathcal{U}_\delta^1,\\
        a_i, & \text{if } i\notin \mathcal{U}_\delta^1
    \end{cases},
    \]
    where $x_i\in K_i$ is from the definition of $\mathcal{U}_\delta^1$ and $a_i\in K_i$ is an arbitrary point in $K_i$. Then $\lim_\omega \tilde x_i\in K_\omega$ by the definition of $K_\omega$, contradicting the fact that  ${\rm dist}(K_\omega,\lim_\omega \tilde x_i)=\lim_\omega {\rm dist}(K_\omega, \tilde x_i)\geq \delta$. Suppose now that $\mathcal{U}_\delta^2\in\omega$. Define the sequence $\tilde x_i$ as above, just taking $\mathcal{U}_\delta^2$ instead of $\mathcal{U}_\delta^1$. Denote $x=\lim_\omega\tilde x_i\in K_\omega$. Then $\mathcal{U}_\delta^3=\{i\in \mathcal{U}_\delta^2\,:\,d(x_i,x)<\delta/3\}\in\omega$. Moreover, for $i\in \mathcal{U}_\delta^3$ we have
    \[
    {\rm dist}(K_i,x)\geq {\rm dist}(K_i,x_i)-d(x_i,x)\geq \frac{2\delta}{3},
    \]
    which is a contradiction since $x\in K_\omega$. This completes the proof.    
\end{proof}

\subsection{Arzela--Ascoli theorems}

Given a map $f:X\to Y$ between metric spaces, we consider the following \textit{coarse co-uniform condition}. Let $h:[0,\infty)\to [0,\infty)$ be a continuous non-decreasing function\footnote{Note that we do not require $h(0)=0$, which is why this condition is called coarse.}. We say $f:X\to Y$ is \emph{$h$-co-uniform} if 
\begin{align}\label{eq:coarse-co-unif}
 \operatorname{diam}_X(f\inv (B))\le h(\operatorname{diam}_YB), \quad B\subset Y.
\end{align}
We say that $f$ is \textit{coarsely co-uniform} if it satisfies \eqref{eq:coarse-co-unif} for some $h$. In the following statement, we use the observation that, if $\mu$ is a boundedly finite Borel measure on $X$ and $f:X\to Y$ is $\mu$-measurable Lipschitz and coarsely co-uniform, then $f_\ast\mu$ is a boundedly finite measure on $Y$.

We remark that in the sequel by $\omega$-limit of a sequence $(X_i=(X_i,\mu_i,\bar x_i))\subset \XX$ we consider a fixed representative of $\omega$-limit of the sequence $([X_i])$ of equivalence classes of pointed metric measure spaces $X_i$ (see Section \ref{sec:conv_tangents_mms}) in the metric space $(\XX_{pmG},d_{pmG})$. Usually, as a representative we take the space $(\{w\}\cup{\rm spt}\mu_\omega,\mu_\omega,w)$, where $\mu_\omega$ is $\omega$-limit in $\mathcal{M}_{loc}(W)$ of the sequence of pushforward measures $\iota_{i\ast}\mu_i$ under isometric embeddings $\iota_i$ into a complete and separable ambient metric space $W$.

\begin{theorem}\label{thm:arzela-ascoli-ultra}
    Suppose $X_i=(X_i,\mu_i,\bar x_i)$ and $Z_i=(Z_i,\nu_i,\bar z_i)$ be precompact families in $\XX$, with $\omega$-limits $X_\omega$, $Z_\omega$ in $\XX$ respectively. Let $f_i:X_i\to Z_i$ be $L$-Lipschitz maps satisfying \eqref{eq:coarse-co-unif} for some $h$ with $f_i(\bar x_i)=\bar z_i$ and $f_{i\ast}\mu_i=\nu_i$. Given effective realizations $\iota_i:X_i\to (W,w)$, $\tilde\iota_i:Z_i\to (\widetilde W,\tilde w)$ of $(X_i)$ and $(Z_i)$, respectively,
    there exist isometric embeddings $\iota_\omega:X_\omega\to (W,w)$, $\tilde\iota_\omega:Z_\omega\to (\widetilde W,\tilde w)$ so that the map
    $f_\omega:\{x_\omega\}\cup\operatorname{spt}\mu_\omega\to \{z_\omega\}\cup\operatorname{spt}\nu_\omega$ given by
    \begin{align}\label{eq:omega-limit}
    \tilde\iota_\omega\circ f_\omega(x)=\lim_\omega\tilde\iota_i\circ f_i( x_i),\quad\mathrm{whenever}\ x\in \operatorname{spt}\mu_\omega,\ x_i\in X_i,\quad \iota_\omega(x)=\lim_\omega \iota_i(x_i)
    \end{align}
    is well-defined and satisfies the following properties: 
    \begin{itemize}
        \item[(i)] $f_\omega$ is $L$-Lipschitz and $h$-co-uniform;
        \item[(ii)] $f_\omega(x_\omega)=z_\omega$ and $f_{\omega\ast}\mu_\omega=\nu_\omega$.
    \end{itemize}
    Moreover, there exists a subsequence $i_k$ so that $X_{i_k}\stackrel{pmG}{\longrightarrow}X_\omega$, $Z_{i_k}\stackrel{pmG}{\longrightarrow}Z_\omega$ and 
    \begin{align}\label{eq:subseq-omega}
    \lim_\omega\tilde\iota_i\circ f_i( x_i)=\lim_{k\to\infty}\tilde\iota_{i_k}\circ f_{i_k}(x_{i_k})
    \end{align}
    for any sequence $ x_{i_k}\in X_{i_k}$ with $\iota_{i_k}( x_{i_k})\to \iota_\omega(x)$, $x\in \{x_\omega\}\cup\operatorname{spt}\mu_\omega$.
\end{theorem}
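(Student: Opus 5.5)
We take $\mu_\omega:=\lim_\omega\iota_{i\ast}\mu_i$ in $\mathcal M_{loc}(W)$ and $\nu_\omega:=\lim_\omega\tilde\iota_{i\ast}\nu_i$ in $\mathcal M_{loc}(\widetilde W)$. These limits exist by Lemma \ref{lem:precpt}: "effective realization" is understood to mean that the pushforwards are precompact and the basepoints are mapped to $w$, $\tilde w$. We let $\iota_\omega,\tilde\iota_\omega$ be the inclusions of $\{w\}\cup\operatorname{spt}\mu_\omega$ and $\{\tilde w\}\cup\operatorname{spt}\nu_\omega$, so that $x_\omega=w$ and $z_\omega=\tilde w$.

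\textbf{Definition and Lipschitz bound.} Take $x\in\operatorname{spt}\mu_\omega$. Applying the ultrafilter version of Lemma \ref{lemma:support_point} (argue as in the proof of Corollary \ref{cor:omega-set} with $E_i=\iota_i(X_i)$), we obtain $x_i\in X_i$ with $\lim_\omega\iota_i(x_i)=x$. Since $d(\iota_i(x_i),w)$ is $\omega$-bounded and $f_i$ is $L$-Lipschitz with $f_i(\bar x_i)=\bar z_i$, the points $\tilde\iota_i f_i(x_i)$ lie $\omega$-a.e. in a bounded set $\bar B(\tilde w,R)$. The ultralimit $\lim_\omega\tilde\iota_if_i(x_i)$ is not automatic in the noncompact space $\widetilde W$; we will obtain it from precompactness of $(\tilde\iota_{i\ast}\nu_i)$ together with co-uniformity. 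Once it exists, well-definedness and the $L$-Lipschitz property follow from
\[
d(\tilde\iota_if_i(x_i),\tilde\iota_if_i(x_i'))\le L\,d(\iota_i(x_i),\iota_i(x_i')).
\]
Equality at the basepoints is immediate.

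\textbf{Existence of the limit point (main obstacle).} We expect this step to be hardest. Fix $\varepsilon>0$. Bounded tightness of $(\tilde\iota_{i\ast}\nu_i)$ (Theorem \ref{thm:prokhorov}) and of $(\iota_{i\ast}\mu_i)$ gives compact sets. We want to show that $\tilde\iota_if_i(x_i)$ is $\omega$-a.e. within distance $\sim\varepsilon$ of a fixed compact set $K\subset\widetilde W$; then the ultralimit exists by total boundedness and completeness.

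Since $x\in\operatorname{spt}\mu_\omega$, for a small $r$ the mass $\mu_i(B(x_i,r))\ge c>0$ holds $\omega$-a.e. All points of $B(x_i,r)$ are mapped into $B(f_i(x_i),Lr)$. Choose $K$ with $\nu_i(B(\bar z_i,R)\setminus \tilde\iota_i^{-1}K^{\varepsilon})<c$. Then
\[
\nu_i(\tilde\iota_i^{-1}K^{\varepsilon}\cap B(f_i(x_i),Lr))\ge \nu_i(B(f_i(x_i),Lr))-\text{small}>0,
\]
which uses $f_{i\ast}\mu_i=\nu_i$. Hence $d(\tilde\iota_if_i(x_i),K)<\varepsilon+Lr$. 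Letting $r,\varepsilon\to0$ along a diagonal argument makes the sequence $\omega$-Cauchy-localized, which yields the limit.

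Measure preservation also forces $f_\omega(\operatorname{spt}\mu_\omega)\subset\operatorname{spt}\nu_\omega$: every ball around $\tilde\iota_if_i(x_i)$ carries $\nu_i$-mass at least $\mu_i(B(x_i,r))$, and we pass to the limit.

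\textbf{Co-uniformity and pushforward.} Co-uniformity passes to the limit. Let $f_\omega(x),f_\omega(x')\in B$ with $\operatorname{diam}B=s$. Then $d(f_i(x_i),f_i(x'_i))<s+\delta$ $\omega$-a.e., and applying \eqref{eq:coarse-co-unif} to the ball containing both gives $d(x_i,x_i')\le h(s+2\delta)$. Continuity of $h$ then gives $d(x,x')\le h(s)$.

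For $f_{\omega\ast}\mu_\omega=\nu_\omega$, take $\psi\in C_{bbs}(\widetilde W)$ Lipschitz and show
\[
\int\psi\circ f_\omega\,\d\mu_\omega=\lim_\omega\int\psi\circ f_i\,\d\mu_i=\lim_\omega\int\psi\,\d\nu_i=\int\psi\,\d\nu_\omega.
\]
For the first equality, extend $\psi\circ\tilde\iota_\omega\circ f_\omega$ and the functions $\psi\circ\tilde\iota_i\circ f_i$ to equicontinuous functions on $W$ via McShane extension on $\bigcup_i\iota_i(X_i)\cup\operatorname{spt}\mu_\omega$. Their $\omega$-limit agrees with $\psi\circ f_\omega$ on the support by \eqref{eq:omega-limit}. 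Co-uniformity makes the supports uniformly bounded, and tightness on compact sets gives uniform convergence there, which yields the equality.

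\textbf{Subsequence.} Choose $i_k\in\omega$-large sets so that $F_{(W,w)}(\iota_{i_k\ast}\mu_{i_k},\mu_\omega)<1/k$ and the analogous bound holds for $\nu$, and so that on a countable dense subset $\{y_j\}$ of $\operatorname{spt}\mu_\omega$ with chosen approximants, both distances are within $1/k$ for $j\le k$. Equi-Lipschitz continuity then extends the convergence \eqref{eq:subseq-omega} from the dense set to arbitrary sequences $x_{i_k}\to x$.
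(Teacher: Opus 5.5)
Your argument is correct, but it reaches the two substantive points by a different and more elementary route than the paper.

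\textbf{Existence of $\lim_\omega\tilde\iota_i f_i(x_i)$.} The paper builds tightness compacts $C^m\subset W$ and $K^m\subset\widetilde W$, and sets $C^m_i=C^m\cap f_i^{-1}(K^m)$. It then shows $\mu_\omega(\bar B(w,R_m)\setminus C^m_\omega)\le 2^{-m+1}$, defines $f_\omega$ on $\bigcup_m C^m_\omega$ (where the images lie in the compact $K^m$), and extends diagonally to the closure. You argue pointwise instead. At $x\in\operatorname{spt}\mu_\omega$ the balls $B(x_i,r)$ carry mass $\ge c(r)>0$ for $\omega$-a.e.\ $i$. By $f_{i\ast}\mu_i=\nu_i$ and the Lipschitz bound, $B(f_i(x_i),Lr)$ then carries $\nu_i$-mass $\ge c(r)$, so it must meet a tightness compact whose complement has mass $<c(r)$. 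A finite net and the ultrafilter then trap the images $\omega$-a.e.\ in a single small ball, and completeness gives the limit. This treats every approximating sequence at once. It also gives $f_\omega(\operatorname{spt}\mu_\omega)\subset\operatorname{spt}\nu_\omega$, which the paper leaves implicit. Note that co-uniformity is not needed in this step, contrary to your announcement.

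\textbf{The pushforward $f_{\omega\ast}\mu_\omega=\nu_\omega$.} The paper passes to the subsequence and uses Hausdorff convergence $C^m_{i_k}\to C^m_\omega$, disjoint copies in $W\times\mathbb R$, and a Tietze extension. It thus needs only continuity of the glued map on good compacts. You instead use that $\psi\circ\tilde\iota_i\circ f_i$ is $L\operatorname{LIP}(\psi)$-Lipschitz and extend each such function separately from $\iota_i(X_i)$ to $W$. One cannot extend from the union $\bigcup_i\iota_i(X_i)$, where the prescribed values conflict. No extension of $\psi\circ f_\omega$ is needed either, since the pointwise $\omega$-limit of the extensions already equals $\psi\circ f_\omega$ on $\operatorname{spt}\mu_\omega$. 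You then combine uniform $\omega$-convergence on compacta of equi-Lipschitz functions with bounded tightness and a cut-off supplied by co-uniformity.

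\textbf{Comparison.} Your version is shorter and avoids the subsequence. The paper's version is more robust, since continuity on good sets suffices, and it yields the compact exhaustion $C^m_\omega$ as a by-product. In its application to Haj\l asz--Sobolev maps, however, the restricted maps are Lipschitz anyway.

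\textbf{Minor fix.} In the co-uniformity step, apply the hypothesis to the two-point set $\{f_i(x_i),f_i(x_i')\}$, whose diameter is $<s+\delta$. A ball containing both points may have diameter up to $2(s+\delta)$.
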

\begin{proof}
    Let $\iota_i:X_i\to (W,w)$, $\tilde\iota_i:Z_i\to (\widetilde W,\tilde w)$ be isometric embeddings into complete separable metric spaces given by Lemma \ref{lem:precpt}, and denote $\bar\mu_i=\iota_{i\ast}\mu_i$, $\bar\nu_i=\tilde \iota_{i\ast}\nu_i$. Then $(\bar\mu_i)\subset \mathcal M_{loc}(W)$ and $(\bar\nu_i)\subset\mathcal M_{loc}(\widetilde W)$ are precompact. We identity $X_i$ and $Z_i$ with their isometric images, so that $f_i:\{w\}\cup\operatorname{spt}\bar\mu_i\to \{\tilde w\}\cup\operatorname{spt}\bar\nu_i$ are regarded as maps between subsets of $W$ and $\widetilde W$, respectively, with $f_i(w)=\tilde w$ and $f_{i\ast}\bar\mu_i=\bar\nu_i$. Denote $F=F_{(W,w)}$ and $\widetilde F=F_{(\widetilde W,\tilde w)}$ and the $\omega$-limits of $(\bar\mu_i)$ and $(\bar\nu_i)$ by $\mu_\omega$ and $\nu_\omega$, respectively. Then $\bar X_\omega=(\{w\}\cup\operatorname{spt}\mu_\omega,\mu_\omega,w)$ and $\bar Z_\omega=(\{\tilde w\}\cup\operatorname{spt}\nu_\omega,\nu_\omega,\tilde w)$ are the $\omega$-limits of $(X_i)$ and $(Z_i)$ in $\XX$. We may define $\iota_\omega$ and $\tilde\iota_\omega$ simply as the inclusion maps $\bar X_\omega\hookrightarrow W$, $\bar Z_\omega\hookrightarrow \widetilde W$.

    Now \eqref{eq:omega-limit} reads as 
    \begin{align*}
    f_\omega(x)=\lim_\omega \tilde\iota_i\circ f_i( x_i),\quad x=\lim_\omega\iota_i(x_i).
    \end{align*}
    We claim that this is well-defined. Firstly, note that if $x=\lim_\omega \iota_i(\hat x_i)=\lim_\omega\iota_i( x_i)$, then $\lim_\omega d_{\widetilde W}(\tilde\iota_i\circ f_i(\hat x_i),\tilde\iota_i\circ f_i(x_i))\le L\lim_\omega d_W(\iota_i(\hat x_i),\iota_i(x_i))=0$, so that the $\omega$-limit, if it exists, is uniquely defined. Next, we show that the $\omega$-limit of $\tilde\iota_i\circ f_i(x_i)$ exists for any sequence $x_i\in \operatorname{spt}\bar\mu_i$ with $x=\lim_\omega\iota_i(x_i)$. By the  statement of uniqueness above, to this end it suffices to prove the existence of a sequence $x_i\in\operatorname{spt}\mu_i$ with $x=\lim_\omega\iota_i(x_i)$  and $\tilde\iota_i\circ f_i(x_i)$ precompact for any $x\in \operatorname{spt}\mu_\omega$.

    For each $m$ choose $R_m\in [2^m,2^m+1]$ so that $\bar \mu_i(S(w,R_m))=\bar \nu_i(S(\tilde w,R_m))=0=\mu_\omega(S(w,R_m))=\nu_\omega(S(\tilde w,R_m))$ for each $i$. Using the precompactness of $(\bar\mu_i)$ and $(\bar\nu_i)$ we find for each $m\in \N$  increasing compact subsets $C^m\subset \bar B(w,R_m)$, $K^m\subset \bar B(\tilde w,LR_m)$ so that 
    \[
    \sup_i\bar\mu_i(\bar B(w,R_m)\setminus C^m)<2^{-m},\quad \sup_i \bar\nu_i(\bar B(\tilde w,LR_m)\setminus K^m)<2^{-m}.
    \]
    Denote $C_i^m:=C^m\cap f_i\inv (K^m)\cap ({\rm spt}\mu_i\cup\{\bar x_i\})$.
We may estimate 
\begin{align*}
\bar\mu_i(\bar B(w,R_m)\setminus C_i^m)&\le \bar\mu_i(\bar B(w,R_m)\setminus C^m)+\bar\mu_i(\bar B(w,R_m)\setminus f_i\inv(K^m))\\
&\le 2^{-m}+\bar\mu_i(f_i\inv(\bar B(\tilde w,LR_m)\setminus K^m))\le 2^{-m+1},\quad i\in\N.
\end{align*}
We set $C_\omega^m$ to be the collection of $\omega$-limits of sequences $x_i\in C^m_i$. Note that, since $C_i^m\subset C^m$ for all $i$ we have $C_\omega^m\subset C^m$. It is not difficult to see that $C_\omega^m$ is closed, and thus compact. We prove that 
\begin{align}\label{eq:omega-est}
\mu_\omega(\bar B(w,R_m)\setminus C_\omega^m)\le 2^{-m+1}.
\end{align}
To this end, let $K\subset B(w,R_m)\setminus C_\omega^m$ be compact. There exists $\delta>0$ so that $\operatorname{dist}(K,C_\omega^m)\ge 2\delta$ and $B(K,\delta)\subset B(w,R_m)$. We claim that the set $\{i\in\N:\ C_i^m\subset B(C_\omega^m,\delta)\}\in \omega$. Indeed, otherwise, its complement would be in $\omega$, yielding a sequence $\tilde x_i\in C_i^m$ with $\{i\in\N:\ \operatorname{dist}(\tilde x_i,C_\omega^m)\ge \delta\}\in\omega$. Then $\tilde x:=\lim_\omega\tilde x_i$ would satisfy $\operatorname{dist}(\tilde x,C_\omega^m)\ge \delta$, contradicting the definition of $C_\omega^m$. From this we obtain $\{i\in\N:\ B(K,\delta)\subset B(w,R_m)\setminus C_i^m\}\in\omega$ and so 
\begin{align*}
    \mu_\omega(K)\le \mu_\omega(B(K,\delta))\le\lim_\omega\bar\mu_i(B(K,\delta))\le \lim_\omega \bar\mu_i(B(w,2^m)\setminus C_i^m)\le 2^{-m+1}.
\end{align*}
Taking the supremum over $K$ and using that $S(w,R_m)$ has zero $\mu_\omega$-measure, we obtain \eqref{eq:omega-est}. It follows that $\mu_\omega(W\setminus \bigcup_mC_\omega^m)=0$. Thus $\operatorname{spt}\mu_\omega\subset \overline{\bigcup_mC_\omega^m}$ and, for any $x\in \overline{\bigcup_mC_\omega^m}$, we can find $x^m\in C_\omega^m$ with $x^m\to x$ as $m\to\infty$. For each $m\in\N$ take $x_i^m\in C_i^m$ with $x^m=\lim_\omega x_i^m$. Note that $(\tilde\iota_i\circ f_i( x_i^m))_{i}\subset K^m$ is precompact and thus its $\omega$-limit exists and belongs to $K^m$. The sets $\mathcal F^m\subset \N$ defined inductively by $\mathcal F^1=\N$,
\begin{align*}
    \mathcal F^m=\{i\in\mathcal F^{m-1}:\ d(f_\omega(x^m),\tilde\iota_i\circ f_i(x_i^m))<2^{-m}\}\cap \{i\in \mathcal{F}^{m-1}: d(x^m,x_i^m)<2^{-m}\}\cap [m,\infty)
\end{align*}
belong to $\omega$ for each $m$ and are decreasing. Defining $x_i=x_i^m$ for the largest $m\in \N$ such that $i\in \mathcal{F}^m$ and $x_i=w$ if such $m$ does not exist,
we obtain a sequence $x_i$ with $x=\lim_\omega x_i$ and $(\tilde\iota_i\circ f_i(x_i))$ precompact with $\omega$-limit $\lim_\omega \tilde\iota_i\circ f_i(x_i)=\lim_{m\to\infty}f_\omega(x^m)$. Note that we have defined $f_\omega$ on the set $S:=\{w\}\cup\overline{\bigcup_mC_\omega^m}$, which contains $\{w\}\cup\operatorname{spt}\mu_\omega$.

Next we prove (i). The estimate
\begin{align*}
    d(f_\omega(x),f_\omega(y))=\lim_\omega d(\tilde\iota_i\circ f_i(x_i),\tilde\iota_i\circ f_i(y_i))\le L\lim_\omega d(\iota_i(x_i),\iota_i(y_i))=Ld(x,y)
\end{align*}
for $x,y\in S$ 
shows that $f_\omega$ is $L$-Lipschitz. Let $B\subset \widetilde W$ be bounded, and $A:=f_\omega\inv(B)$. For $x,y\in A$ we have $f_\omega(x),f_\omega(y)\in B$. If $x=\lim_\omega \iota_i(x_i)$, $y=\lim_\omega \iota_i(y_i)$ we have 
$\tilde\iota_i(f_i(x_i)),\tilde\iota_i(f_i(y_i))\in B(B,\varepsilon)$ for $\omega$-a.e. $i\in\N$ for each $\varepsilon$. By the $h$-co-uniformity of $f_i$ we have 
\[
d(x_i,y_i)\le h(\operatorname{diam} B(B,\varepsilon))\le h(\operatorname{diam} B+2\varepsilon)\quad\omega-\mathrm{a.e. }\ i\in\N
\]
Since $\varepsilon>0$ is arbitrary we obtain $d(x,y)\le h(\operatorname{diam} B)$. This proves that $f_\omega$ is $h$-co-uniform.

To see \eqref{eq:subseq-omega} consider a countable dense subset $\{y^1,y^2,\ldots\}$ of $S$ 
and for each $y^m$ a sequence $\tilde y_i^m\in \{\bar x_i\}\cup\operatorname{spt}\mu_i$ with $y^m=\lim_\omega\iota_i(\tilde y_i^m)$. Let 
\begin{align*}
\mathcal F_m = &\{ i\ge m:\ F(\bar\mu_i,\mu_\omega)+\widetilde F(\bar\nu_i,\nu_\omega)+d_H(C_i^m,C_\omega^m)<1/m\}\\
& \cap \{i\geq m: d(y^j,\iota_i(\tilde y^j_i))+d(f_\omega(y^j),\tilde\iota_i(f_i(\tilde y_i^j))<1/m,\ j=1,\ldots,m\}
\end{align*}
for each $m$. Then $\omega(\mathcal F_m)=1$ for each $m\in\N$, and choosing a subsequence  with $i_m\in \mathcal F_{m}$ we obtain \eqref{eq:subseq-omega}.

It remains to prove (ii). The claim $f_\omega(x_\omega)=z_\omega$ follows directly from the fact that $f_i(w)=\tilde w$ for all $i$. To prove that $f_{\omega\ast}\mu_\omega=\nu_\omega$, pass to a subsequence $i_k$ satisfying \eqref{eq:subseq-omega}, and note that by the definition of $\mathcal F_k$ above we have that $\bar\mu_{i_k}\rightharpoonup\mu_\omega$, $\bar \nu_{i_k}\rightharpoonup \nu_\omega$.
According to Lemma \ref{lem:Hausdorff_limit_K_omega}, by possibly passing to a further subsequence, we may also assume that $C_{i_k}^m\to C_\omega^m$ in the Hausdorff distance in $C^m\subset W$. Arguing as above, we obtain the estimate
\[
\mu_\omega(\bar B(w,R_m)\setminus  C^m_\omega)\le 2^{-m+1}.
\]
Replacing $W$ by $W\times \R$ and considering the copies $\hat\mu_k$ of $\bar\mu_{i_k}$ on $W\times\{1/k\}$ (and $\hat\mu_\omega$ on $W\times\{0\}$) we have that $\operatorname{spt}\hat\mu_k\subset W\times\{1/k\}\subset W\times\R$ are disjoint. Then $\hat\mu_k\rightharpoonup \hat\mu_\omega$. Set
\begin{align*}
X^m:=\widehat C_\omega^m\cup \bigcup_k \widehat C_{i_k}^m\subset W\times\R\quad (\widehat C^{m}_{i_k}=C^m_{i_k}\times\{1/k\},\ \widehat C^m_\omega=C^m_\omega\times\{0\})
\end{align*}
and note that $X^m$ is a closed subset of $\bar B(w,R_m)\times[0,1]$ and furthermore $\operatorname{spt}\hat\mu_\omega\subset \overline{\bigcup_mX^m}$. Note, moreover, that
\begin{align*}
    F:X^m\to \widetilde W,\quad F|_{\widehat C_{i_k}^m}=f_{i_k}|_{C^m_{i_k}},\quad F|_{\widehat C_\omega^m}=f_\omega|_{C_\omega^m}
\end{align*}
is continuous by \eqref{eq:omega-limit} and \eqref{eq:subseq-omega}. Fix $\varphi\in C_{bbs}(\widetilde W)$. If $\varphi$ is supported in a ball $B(\tilde w,R_0)$, then by the co-uniformity condition 
\[
\operatorname{spt}(\varphi\circ f_i)\subset f_i\inv(\operatorname{spt}\varphi)\subset f_i\inv(B(\tilde w,R_0))\subset B(w,h(2R_0))
\]
for all $i\in\N\cup\{\omega\}$. Thus, for large enough $m$ the functions $\varphi\circ f_{i_k}$ and $\varphi\circ f_\omega$ are supported in $B(w,R_m)$ for all $k$. By Tietze's extension theorem and multiplying by a cut-off function (see \cite[Lemma 2.46, Theorem 2.47]{AliprantisBorder99}) there is a continuous function $\Phi^m:W\times\R\to \R$ with $\Phi^m|_{X^m}=\varphi\circ F$, $\|\Phi^m\|\le \|\varphi\|_\infty$ and $\operatorname{spt}\Phi^m\subset \bar B(w,R_m)\times[-1,2]$. By the weak convergence $\hat\mu_k\rightharpoonup\hat\mu_\omega$ and $\bar\nu_{i_k}=f_{i_k\ast}\bar \mu_{i_k}\rightharpoonup \nu_\omega$ we have
\begin{align}\label{eq:limits}
\int \Phi^m\ud\hat \mu_\omega=\lim_{k\to\infty}\int\Phi^m\ud\hat\mu_k,\quad \int\varphi\ud\nu_\omega=\lim_{k\to\infty}\int\varphi\circ f_{i_k}\ud\bar\mu_{i_k}.
\end{align}
However
\begin{align*}
&\left| \int\Phi^m\ud\hat\mu_\omega-\int \varphi\circ f_\omega\ud\mu_\omega \right|=\left| \int_{\bar B(w,R_m)\times\{0\}}\Phi^m\ud\hat\mu_\omega-\int_{\bar B(w,R_m)\times\{0\}} \varphi\circ f_\omega\ud\hat\mu_\omega \right|\\
\le &\int_{(B(w,R_m)\setminus C^m_\omega)\times\{0\}}|\Phi^m-\varphi\circ f_\omega|\ud\hat\mu_\omega\le 2\|\varphi\|_\infty\mu_\omega(B(w,R_m)\setminus C^m_\omega)\le 2^{-m+2}\|\varphi\|_\infty,
\end{align*}
and similarly
\begin{align*}
&\left| \int\Phi^m\ud\hat\mu_k-\int \varphi\circ f_{i_k}\ud\bar\mu_{i_k} \right|=\left| \int_{B(w,R_m)\times\{1/k\}}\Phi^m\ud\hat\mu_k-\int_{B(w,R_m)\times\{1/k\}} \varphi\circ f_{i_k}\ud\hat\mu_{k} \right|\\
\le & \int_{B(w,R_m)\times\{1/k\}}|\Phi^m-\varphi\circ f_{i_k}|\ud\hat\mu_{k}\le 2\|\varphi\|_\infty\bar\mu_{i_k}(B(w,R_m)\setminus C^m_{i_k})\le 2^{-m+2}\|\varphi\|_\infty.
\end{align*}
Sending $m\to\infty$, these two estimates together with \eqref{eq:limits} yield $\int\varphi\ud\nu_\omega=\int\varphi\circ f_\omega\ud\mu_\omega$. Since $\varphi\in C_{bbs}(\widetilde W)$ is arbitrary, this completes the proof of (ii) and the theorem. 
\end{proof}

\begin{remark}
The coarse co-uniformity condition \eqref{eq:coarse-co-unif} in Theorem \ref{thm:arzela-ascoli-ultra} cannot be removed, as the following example shows. Let 
\begin{align*}
X_i&=[0,1]^2\cup\{(x,y):\ 0\le x\le 1/i,\ 1\le y\le i+1\}\subset \R^2,\ \mu_i=\mathcal L^2|_{X_i},\ x_i^0=(0,0),\\
f_i&:X_i\to \R, \ f_i: (x,y)\mapsto x.
\end{align*}
Then $(X_i,\mu_i,x_i^0)\stackrel{pmG}{\longrightarrow}([0,1]^2,\mathcal L^2|_{[0,1]^2},\bar 0)$ and $f_i\to f_\infty : (x,y)\mapsto x$, whence $f_{\infty\ast}\mu_\infty=\mathcal{L}^1|_{[0,1]}$. However, 
$$f_{i\ast}\mu_i=\mathcal L^1|_{[0,1]}+i\mathcal L^1|_{[0,1/i]}\rightharpoonup \mathcal L^1|_{[0,1]}+\delta_0=\nu_\infty,$$
which means that the equality $f_{\infty\ast}\mu_\infty=\nu_\infty$ does not hold. 
\end{remark}

\begin{figure}[H]
    \centering    
    \begin{subfigure}{0.22\textwidth}
        \centering
        \includegraphics[scale=0.05]{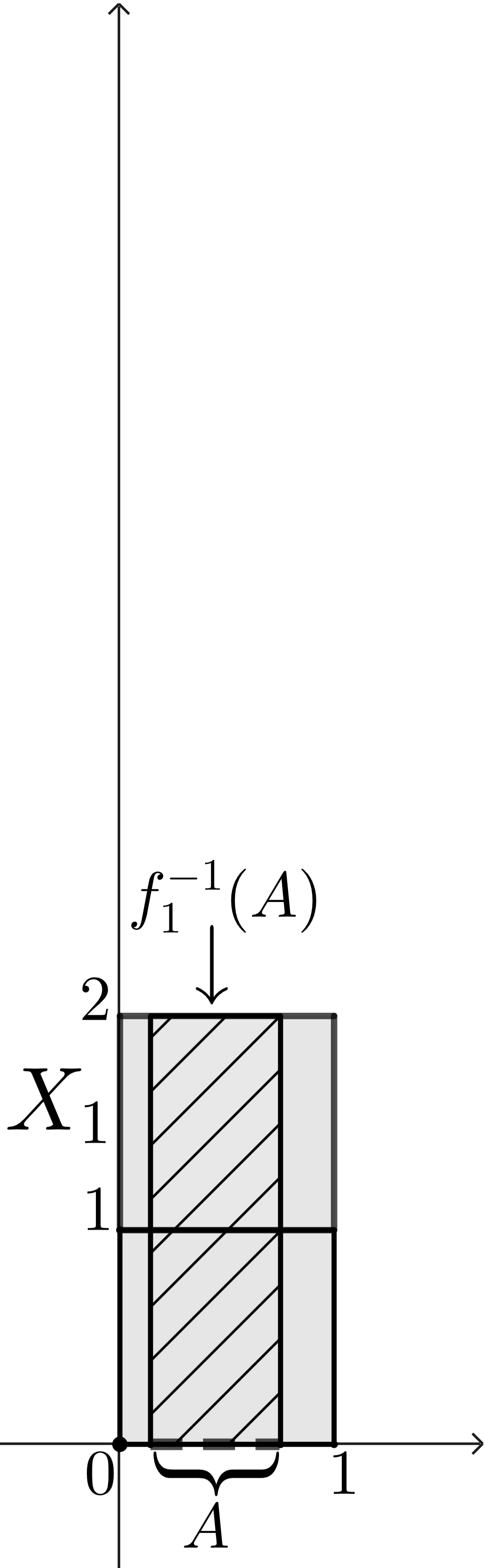}
    \end{subfigure}
    \hfill
    \begin{subfigure}{0.22\textwidth}
        \centering
        \includegraphics[scale=0.05]{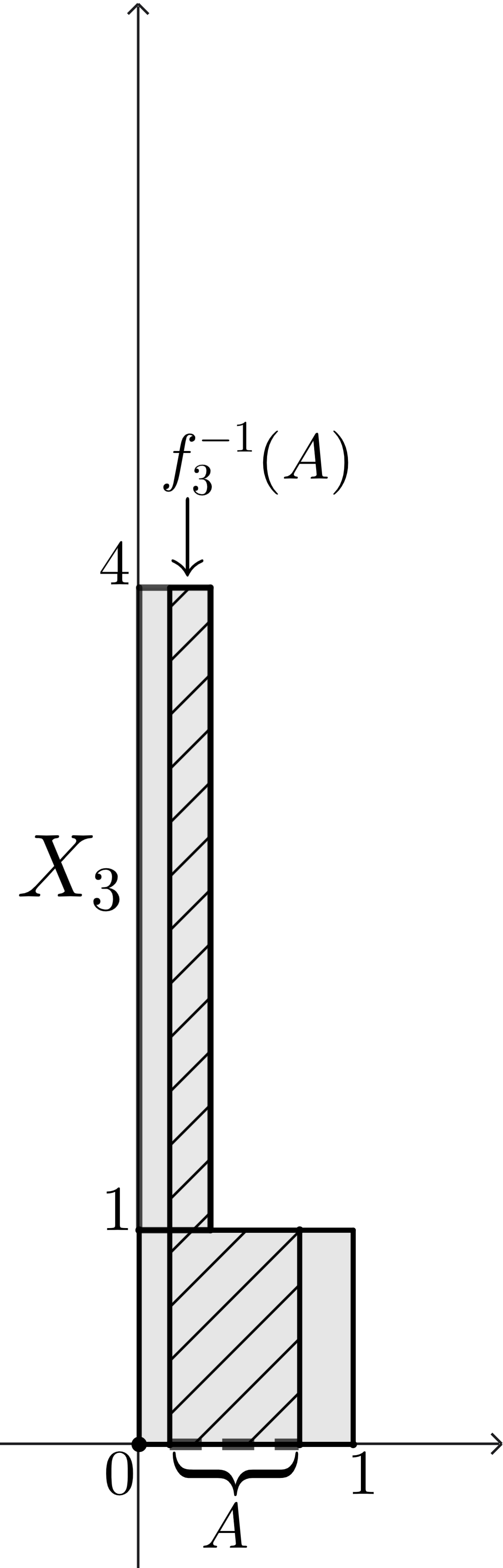}
    \end{subfigure}
    \hfill
    \begin{subfigure}{0.22\textwidth}
        \centering
        \includegraphics[scale=0.05]{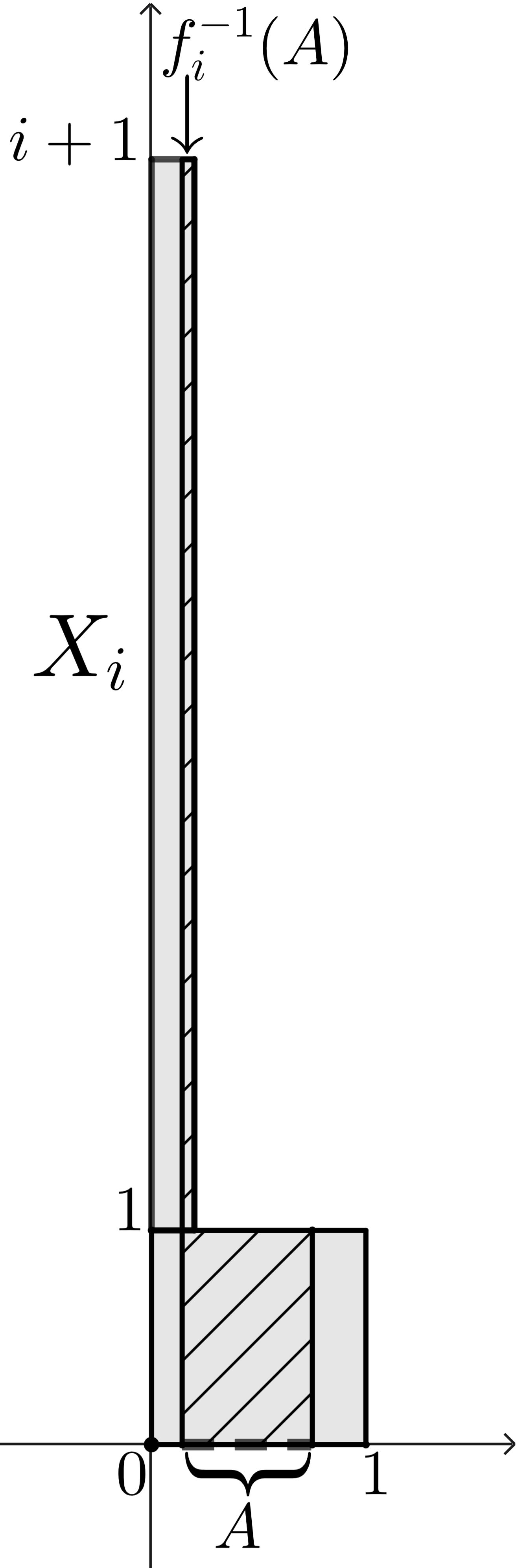}
    \end{subfigure}
    \hfill
    \begin{subfigure}{0.22\textwidth}
        \centering
        \includegraphics[scale=0.05]{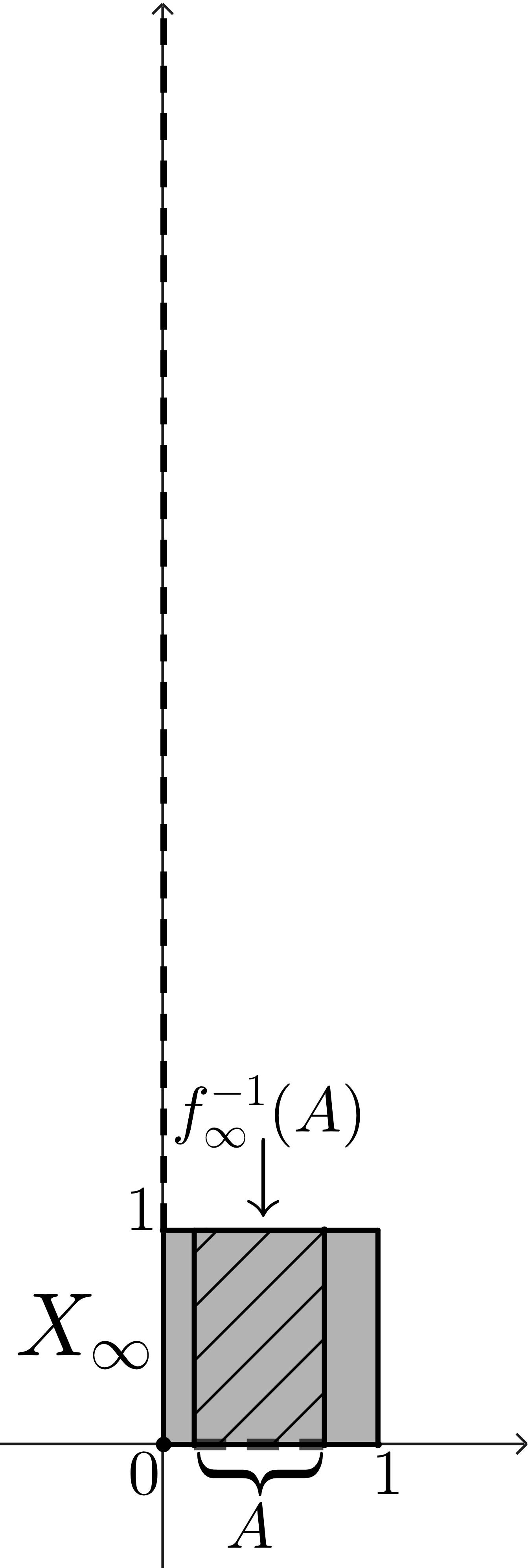}
    \end{subfigure}
    \caption{${f_{i}}_{\ast}\mu_i=\mathcal L^1|_{[0,1]}+i\mathcal L^1|_{[0,1/i]}\rightharpoonup \mathcal{L}^1|_{[0,1]}+\delta_0\neq {f_{\infty}
}_\ast\mu_\infty$}
\end{figure}

We also need the following version of the Arzela--Ascoli theorem for Haj\l asz--Sobolev maps. In the statement, $\omega$ is a fixed non-principal ultrafilter. We also consider good representatives of Haj\l asz--Sobolev maps: if $g$ is a Haj\l asz gradient of $f$, then $f$ is $2t$-Lipschitz on $\overline{\{g\le t\}}$ for all $t$.

\begin{theorem}\label{thm:haj-arzela-ascoli}
Suppose $(X_i=(X_i,\mu_i,\bar x_i))$ and $(Z_i=(Z_i,\nu_i,\bar z_i))$ are precompact families in $\XX$ with $\omega$-limits $X_\omega$ and $Z_\omega$, respectively. Suppose $f_i\in M^{1,p}_{loc}(X_i,Z_i)$ satisfy \eqref{eq:coarse-co-unif} for some $h$ and $f_{i\ast}\mu_i=\nu_i$, $f_i(\bar x_i)=\bar z_i$ for all $i$. Suppose moreover each $f_i$ has a Haj\l asz gradient $g_i\in L^p_{loc}(\mu_i)$ satisfying
\begin{align*}
    \sup_i\int_{B(\bar x_i,R)}g_i^p\ud\mu_i<\infty,\quad R>0,
\end{align*}
and $x_i\in \overline{\{g_i\le t\}}$ for some $t$. Then there exists $f_\omega\in M^{1,p}_{loc}(X_\omega,Z_\omega)$ and a Haj\l asz gradient $g_\omega\in L^p_{loc}(\mu_\omega)$ of $f_\omega$ so that
\begin{itemize}
    \item[(i)] $f_{\omega\ast}\mu_\omega=\nu_\omega$,
    \item[(ii)] $\displaystyle \int_{B(x_\omega,R)}g_\omega^p\ud\mu_\omega\le \lim_\omega\int_{B(\bar x_i,R)}g_i^p\ud\mu_i$ for all $R>0$.
\end{itemize}
Given isometric embeddings $\iota_i:X_i\to (Z,z)$ and $\tilde\iota_i:Z_i\to (\widetilde Z,\tilde z)$ with $\iota_{\omega\ast}\mu_\omega=\lim_\omega\iota_{i\ast}\mu_i$ and $\tilde\iota_{\omega\ast}\nu_\omega=\lim_\omega\tilde\iota_{i\ast}\nu_i$, the following holds: for all $\lambda>0$ and $\mu_\omega$-a.e. $x\in \{g_\omega< \lambda\}$ we have
\begin{align}\label{eq:haj-arzela-ascoli}
\tilde\iota_\omega(f_\omega(x))=\lim_\omega \tilde\iota_i(f_i(x_i))\quad \mathrm{whenever}\quad x_i\in \{g_i\le \lambda\},\quad \iota_\omega(x)=\lim_\omega\iota_i(x_i).
\end{align}
\end{theorem}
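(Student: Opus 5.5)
We reduce to Theorem \ref{thm:arzela-ascoli-ultra} by restricting the maps to the sublevel sets of their gradients, where they are Lipschitz. Then we glue the resulting limits and use Lemmas \ref{lem:omega-limit-p-int-funct}, \ref{lem:omega-ineq} and Corollary \ref{cor:omega-set} to control the exceptional sets.

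First, embed everything via Lemma \ref{lem:precpt} into $(W,w)$ and $(\widetilde W,\tilde w)$, so that $\bar\mu_i=\iota_{i\ast}\mu_i$ and $\bar\nu_i$ are precompact with $\omega$-limits $\mu_\omega$ and $\nu_\omega$. Apply Lemma \ref{lem:omega-limit-p-int-funct} to $\tilde g_i=g_i\circ\iota_i\inv$. This yields $g_\omega\in L^p_{loc}(\mu_\omega)$ with the energy bound (ii) and $\int\varphi g_\omega\ud\mu_\omega=\lim_\omega\int\varphi g_i\ud\bar\mu_i$.

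For each $\lambda\ge t$ set $E_i^\lambda=\overline{\{g_i\le\lambda\}}$. It contains $\bar x_i$, and we take $f_i$ to be good representatives, so that $f_i|_{E_i^\lambda}$ is $2\lambda$-Lipschitz. Let $E_\omega^\lambda=\lim_\omega E_i^\lambda$, which is a closed set. Exactly as in the well-definedness part of the proof of Theorem \ref{thm:arzela-ascoli-ultra}, define
\[
f_\omega^\lambda(x)=\lim_\omega f_i(x_i),\qquad x_i\in E_i^\lambda,\quad x=\lim_\omega x_i .
\]
Uniqueness follows from the $2\lambda$-Lipschitz bound. For existence, we rerun the compact-exhaustion argument, using tightness of $(\bar\mu_i)$ and $(\bar\nu_i)$ with the sets $C_i^m\cap E_i^\lambda$. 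Here $f_i\inv(K^m)$ has small complement because $f_{i\ast}\mu_i=\nu_i$, and $f_i$ maps $B(w,R)$ into a bounded set since $\bar x_i\in E^t_i$, $f_i(\bar x_i)=\bar z_i$ and the maps are Lipschitz on $E_i^\lambda$. This gives existence of the $\omega$-limit on a set of full $\mu_\omega$-measure in $E_\omega^\lambda$, or rather on the closure of $\bigcup_m(C_i^m\cap E_i^\lambda)_\omega$.

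Consistency is automatic: for $\lambda<\lambda'$ we have $E^\lambda_i\subset E^{\lambda'}_i$, so $f_\omega^{\lambda'}=f_\omega^\lambda$ on the common domain. By Lemma \ref{lem:omega-ineq}, $\{g_\omega<\lambda\}$ is $\mu_\omega$-essentially contained in $E_\omega^\lambda$. Hence $f_\omega:=f_\omega^\lambda$ on $\{g_\omega<\lambda\}$ is defined $\mu_\omega$-a.e., and \eqref{eq:haj-arzela-ascoli} holds by construction.

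Next comes the Haj\l asz inequality. Take $\mu_\omega$-a.e. $x,y$ and pick $x_i,y_i$ in the appropriate sublevel sets with $\omega$-limits $x$ and $y$. Then
\[
d(f_\omega(x),f_\omega(y))=\lim_\omega d(f_i(x_i),f_i(y_i))\le d(x,y)\lim_\omega\big(g_i(x_i)+g_i(y_i)\big).
\]
So we need $\lim_\omega g_i(x_i)\le g_\omega(x)$ for a suitable choice of $x_i$, and this is the main obstacle: $g_\omega$ is only a weak limit. My plan is to choose, for each rational $\lambda$ and each $x\in\{g_\omega<\lambda\}\cap E^\lambda_\omega$, approximating sequences from $\{g_i\le\lambda\}$. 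Choosing $\lambda$ rational and slightly larger than $g_\omega(x)$ gives $\lim_\omega g_i(x_i)\le\lambda$. Letting $\lambda\downarrow g_\omega(x)$ then yields
\[
d(f_\omega(x),f_\omega(y))\le d(x,y)\,[g_\omega(x)+g_\omega(y)]
\]
off a null set, after a countable union over rationals. If this has a gap, the fallback is to replace $g_\omega$ by $\inf\{\lambda\in\mathbb Q: x\in E^\lambda_\omega\}$. This function is pointwise below $g_\omega$ a.e. by Lemma \ref{lem:omega-ineq}, so it keeps (ii), and it serves directly as a Haj\l asz gradient.

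Finally, (i), $f_{\omega\ast}\mu_\omega=\nu_\omega$, follows from the pushforward argument of part (ii) of Theorem \ref{thm:arzela-ascoli-ultra}. Apply it on the sets $E^\lambda_{i}\cap C^m_i$ along a subsequence satisfying \eqref{eq:subseq-omega}, and use Tietze extension from the closed set $X^m$. The extra error is $\mu_i(B(\bar x_i,R)\setminus E^\lambda_i)\le\lambda^{-p}\sup_i\int_{B(\bar x_i,R)}g_i^p\ud\mu_i$, which is uniformly small by Chebyshev's inequality. The corresponding error for $\mu_\omega$ is controlled by Corollary \ref{cor:omega-set}. Co-uniformity \eqref{eq:coarse-co-unif} keeps the supports of $\varphi\circ f_i$ in a fixed ball.
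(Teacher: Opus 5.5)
Your outline is sound, and in two places it takes a different route from the paper, one of them shorter. However, one step does not follow as written: the $\mu_\omega$-a.e.\ definedness of $f^\lambda_\omega$ on $\{g_\omega<\lambda\}$.

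\textbf{The gap.} The compact-exhaustion argument does not produce ultralimits on a full-measure part of $E^\lambda_\omega$. For fixed $\lambda$ it produces them only on $S^\lambda=\overline{\bigcup_m(C^m_i\cap E^\lambda_i)_\omega}$. The bound it gives is $\mu_\omega(B(w,R)\setminus S^\lambda)\lesssim\lambda^{-p}C(R)$, with $C(R)=\sup_i\int_{B(\bar x_i,R)}g_i^p\ud\mu_i$, not $0$. The set $E^\lambda_\omega$ is not controlled by the measures: it can have large $\mu_\omega$-measure while $\lim_\omega\bar\mu_i|_{E^\lambda_i}=0$ (e.g.\ $\{g_i\le\lambda\}$ a finite $1/i$-net of a fixed region). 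Lemma \ref{lem:omega-ineq} only places a.e.\ $x\in\{g_\omega<\lambda\}$ in $E^\lambda_\omega$, so it does not supply existence of $\lim_\omega f_i(x_i)$ there.

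\textbf{The repair (as in the paper).} By Chebyshev, the sets $S^{\lambda'}$ exhaust $\mu_\omega$ as $\lambda'\to\infty$. Take $x\in S^{\lambda'}\cap\lim_\omega\{g_i\le\lambda\}$. Any sequence in $\{g_i\le\lambda\}$ and the witnessing sequence in $E^{\lambda'}_i$ both lie in $E^{\max\{\lambda,\lambda'\}}_i$, where $f_i$ is $2\max\{\lambda,\lambda'\}$-Lipschitz. Hence the ultralimit exists along every such sequence and does not depend on it. This is what actually makes \eqref{eq:haj-arzela-ascoli} hold ``by construction''.

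An alternative repair: let $h_\omega$ be the weak limit of $\chi_{\{g_i\le\lambda\}}$ from Lemma \ref{lem:omega-limit-p-int-funct}. Testing against $\lambda\chi_{\{g_i>\lambda\}}\le g_i$ gives $g_\omega\ge\lambda(1-h_\omega)$. Hence $\{g_\omega<\lambda\}\subset\{h_\omega>0\}\subset\operatorname{spt}\lim_\omega\bar\mu_i|_{E^\lambda_i}$ up to a null set.

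\textbf{Comparison with the paper.} For the Haj\l asz inequality, your primary plan already works and the fallback is unnecessary. Discard the null sets $\{g_\omega<\lambda\}\setminus\lim_\omega\{g_i\le\lambda\}$ for $\lambda\in\mathbb Q_{>0}$, the set $\{g_\omega=\infty\}$, and the exceptional sets of \eqref{eq:haj-arzela-ascoli}. For the remaining $x,y$ and rationals $\lambda>g_\omega(x)$, $\lambda'>g_\omega(y)$, you get $d(f_\omega(x),f_\omega(y))\le d(x,y)(\lambda+\lambda')$. Infimizing shows that $g_\omega$ itself is a gradient.

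The paper instead uses geometric levels $\rho^k$. It combines the neighbourhood form of Lemma \ref{lem:omega-ineq} ($g_\omega\ge\rho^k$ a.e.\ near points outside $E^k_\omega$) with Lusin's theorem and density points to get $g_\omega(x)\ge\rho^{k-1}$ at the minimal level $k$ of $x$. That only shows $\rho g_\omega$ is a gradient, so $\rho\downarrow1$ is needed at the end. Your argument uses the same information, read off directly from the a.e.\ form of the lemma, and avoids Lusin and the $\rho$-limit.

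For (i), the paper applies Theorem \ref{thm:arzela-ascoli-ultra} as a black box to $\mu_i|_{E^k_i}$ and $\nu^k_i=f_{i\ast}\mu^k_i$. It then lets $k\to\infty$, using Chebyshev and co-uniformity to get $\mu^k_\omega\to\mu_\omega$ and $\nu^k_\omega\to\nu_\omega$. You instead rerun the Tietze argument with the full measures and absorb a Chebyshev error on each side, using Corollary \ref{cor:omega-set} on the limit side. Both work: the paper's is more modular, while yours never needs the restricted target measures.
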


In the proof we denote $\overline A^\mu:=\{x\in X:(\forall r>0)( \mu(B(x,r)\cap A)>0)\}$ for a subset $A\subset (X,\mu)$. 

\begin{proof}
Fix a parameter $\rho>1$ and set $E_i^k=\{g_i\le \rho^k\}$, $i,k\in \N$. We apply Theorem \ref{thm:arzela-ascoli-ultra} to the metric measure spaces $X_i^k:=(\{\bar x_i\}\cup \operatorname{spt}(\mu_i^k),\mu_i^k, \bar x_i)$ and $Z_i^k=(\{\bar z_i\}\cup\operatorname{spt}(\nu_i^k),\nu_i^k,\bar z_i)$, where $\mu_i^k=\mu_i|_{E_i^k}$, $\nu_i^k=f_{i\ast}\mu_i^k$. Note that $f_i|_{E_i^k}$ is $2\rho^k$-Lipschitz by \eqref{eq:hajlasz}, satisfies \eqref{eq:coarse-co-unif} with the same $h$ as in the assumption of the statement, and $f_i(x_i)=z_i$. Since $f_i$ is a good representative (with respect to $g_i$) and $x_i\in \overline{\{g_i\le t\}}$, it follows that $\sup_i\operatorname{LIP}(f_i|_{\{x_i\}\cup\operatorname{spt}\mu_i^k})<\infty$ for large enough $k$.

Fix isometric embeddings $\iota_i:X_i\to (Z,z)$ and $\tilde\iota_i:Z_i\to (\widetilde Z,\tilde z)$ as in Lemma \ref{lem:precpt}, and denote $\bar\mu_i=\iota_{i\ast}\mu_i$, $\bar\mu_i^k=\iota_{i\ast}\mu_i^k$, $\bar\nu_i=\tilde\iota_{i\ast}\nu_i$, $\bar\nu_i^k=\tilde \iota_{i\ast}\nu_i^k$, and $\bar f_i$ the mappings from isometric images of $X_i$, into the isometric images of $Z_i$. Note that $\bar f_{i\ast}\bar\mu_i=\bar\nu_i$ and $\bar f_{i\ast}\bar\mu_i^k=\bar\nu_i^k$ for all $i$ and $k$. Denote $\bar f_i^k\coloneqq \bar f_i|_{\{z\}\cup {\rm spt}\bar \mu_i^k}:\{z\}\cup\operatorname{spt}\bar\mu_i^k\to \{\tilde z\}\cup \operatorname{spt}\bar\nu_i^k$. Since $\bar\mu_i^k\le \bar \mu_i$ and $\bar\nu_i^k\le \bar\nu_i$ for all $i$ and $k$, the collections $(\bar\mu_i^k)_i\subset \mathcal M_{loc}(Z)$ and $(\bar\nu_i^k)_i\subset \mathcal M_{loc}(\widetilde Z)$ are precompact for each $k$. Thus they have $\omega$-limits $\mu_\omega^k$, $\nu_\omega^k$ for each $k$. By Theorem \ref{thm:arzela-ascoli-ultra} the ultralimit $f_\omega^k:\{z\}\cup\operatorname{spt}\mu_\omega^k\to \{\tilde z\}\cup\operatorname{spt}\nu_\omega^k$ satisfies $f_{\omega\ast}^k\mu_\omega^k=\nu_\omega^k$. Since $E_i^k\subset E_i^{k+1}$ we have that $\operatorname{spt}\mu_\omega^k\subset \operatorname{spt}\mu_\omega^{k+1}\subset \operatorname{spt}\mu_\omega$, where $\mu_\omega$ is the $\omega$-limit of $(\bar \mu_i)$ in $\mathcal{M}_{loc}(Z)$.\footnote{Also $\operatorname{spt}\nu_\omega^k\subset \operatorname{spt}\nu_\omega^{k+1}\subset \operatorname{spt}\nu_\omega$, where $\nu_\omega$ is the $\omega$-limit of $(\bar\nu_i)$ in $\mathcal{M}_{loc}(\tilde Z)$.} Moreover, since $(f_i|_{E_i^{k+1}})|_{E_i^k}=f_i|_{E_i^k}$ we have $f_\omega^{k+1}|_{\operatorname{spt}\mu_\omega^k}=f_\omega^k$. We may thus define
\begin{align*}
f_\omega:\{z\}\cup \bigcup_k\operatorname{spt}\mu_\omega^k\to \{\tilde z\}\cup \operatorname{spt} \nu_\omega,\quad f_\omega=\lim_{k\to\infty}f_\omega^k.
\end{align*}
It is not difficult to see that $f_\omega$ satisfies \eqref{eq:coarse-co-unif} with $h$. Since
\begin{align*}
\mu_i(\bar B(\bar x_i,R)\cap E_i^k)=\mu_i(\bar B(\bar x_i,R))-\mu_i(\bar B(\bar x_i,R)\setminus E_i^k)\ge \mu_i(\bar B(\bar x_i,R))-\rho^{-kp}\int_{B(\bar x_i,R)}g_i^p\ud\mu_i
\end{align*}
and similarly
\begin{align*}
\nu_i^k(B(f_i(\bar x_i),R))& = \nu_i(B(f_i(\bar x_i),R))-\mu_i(f_i\inv (B(f_i(\bar x_i),R))\setminus E_i^k)\\
&\ge \nu_i(B(f_i(\bar x_i),R))-\rho^{-kp}\int_{B(\bar x_i,h(2R))}g_i^p\ud\mu_i,
\end{align*}
it follows that 
\begin{align*}
\lim_{k\to\infty}\int \varphi\ud\mu_\omega^k=\int\varphi\ud\mu_\omega\quad\mathrm{and}\quad \lim_{k\to\infty}\int\tilde \varphi\ud\nu_\omega^k=\int\tilde \varphi\ud\nu_\omega
\end{align*}
for all $\varphi\in C_{bbs}(Z)$, $\tilde\varphi\in C_{bbs}(\widetilde Z)$. Since $f_{\omega\ast}\mu_\omega^k=\nu_\omega^k$ for all $k$ it follows that $f_{\omega\ast}\mu_\omega=\nu_\omega$. This proves (i). Note that by Lemma \ref{lem:omega-ineq} for $\mu_\omega$-a.e. $x\in \{g_\omega< \lambda\}$ there exists a sequence $x_i\in \{g_i\leq\lambda\}$ such that $\iota_\omega(x)=\lim_\omega\iota_i(x_i)$, where $g_\omega\in L^{p}_{loc}(\mu_\omega)$ is as in Lemma \ref{lem:omega-limit-p-int-funct}. Since $\{g_i\leq \lambda\}\subset E_i^k$ for $k>\log \lambda/\log\rho$, \eqref{eq:haj-arzela-ascoli} follows from the construction.

It remains to prove that $f_\omega$ is a Haj\l asz--Sobolev map with Haj\l asz gradient satisfying (ii). By Lemma \ref{lem:omega-limit-p-int-funct}, we have that 
\[
\int\varphi g_\omega\,\d\mu_\omega=\lim_{\omega}\int\varphi g_i\,\d\bar\mu_i,
\]
for all $\varphi\in \mathcal{C}_{bbs}(Z)$, where we consider $g_i$ as defined on isometric images of $\iota_i(X_i)\subset Z$.
The inequality (ii) follows from the definition of $g_\omega$ and Lemma \ref{lem:omega-limit-p-int-funct}. Thus the proof is complete once we show $g_\omega$ is a Haj\l asz gradient of $f_\omega$.  

Let $E_\omega^k$ denote the set of $\omega$-limits in $Z$ of sequences $\iota_i(x_i)$ with $x_i\in E_i^k$. The Lemma \ref{lem:omega-ineq} implies that $g_\omega\ge \rho^k$ $\mu_\omega$-a.e. in a neighborhood of $x$ whenever $x\in Z\setminus E_\omega^k$. 
Now let $C\subset \operatorname{spt}\mu_\omega$ be a closed set so that $g_\omega|_C$ is continuous and $\mu_\omega(X_\omega\setminus C)<\varepsilon$ (see e.g. \cite[Lusin's Theorem 12.8]{AliprantisBorder99}). For $x,y\in C\cap \bigcup_kE_\omega^k$ belonging to $\overline C^{\mu_\omega}$, let $k$ be the smallest integer such that $x\in E_\omega^k$ and $m$ the smallest integer such that $y\in E_\omega^m$. Then $g_\omega(x)\ge \rho^{k-1}$ and $g_\omega(y)\ge \rho^{m-1}$ by the claim above (and continuity). We may estimate
\begin{align}\label{eq:haj-est1}
d_{Z_\omega}(f_\omega(x),f_\omega(y))&=\lim_\omega d_{\widetilde Z}(\tilde\iota_i(f_i(\tilde x_i)),\tilde\iota_i(f_i(\tilde y_i)))\le \lim_\omega d_i(\tilde x_i,\tilde y_i)[g_i(\tilde x_i)+g_i(\tilde y_i)]\le d(x,y)[\rho^k+\rho^m]\nonumber\\
&\le \rho d(x,y)(g_\omega(x)+g_\omega(y)).
\end{align}
By the arbitrariness of $C$ it follows that \eqref{eq:haj-est1} holds $\mu_\omega$-a.e.. Thus, $\rho g_\omega$ is a Hajlasz gradient of $f_\omega$. Since $\rho>1$ is arbitrary, it follows that $g_\omega$ is a Haj\l asz gradient of $f_\omega$. 
\end{proof}

We close this section with the proof of Theorem \ref{thm:ultralimit+graph}. First, we remark which ultralimits of spaces are considered.

\begin{remark}
    Note that in  Theorem \ref{thm:ultralimit+graph} $X_\omega$ is the $\omega$-limit of the precompact sequence $(X_i)$ in $\XX$. On the other hand, the target spaces $Y_i$ are assumed to be only pointed metric spaces, not elements of $\XX$, and without any precompactness assumption. Thus, $Y_\omega$ is the $\omega$-limit of the sequence of pointed metric spaces $(Y_i)$ in the sense defined in Section \ref{sec:ultralimits}.
    \end{remark}

\begin{proof}[Proof of Theorem \ref{thm:ultralimit+graph}] We first prove the existence of $f_\omega:X_\omega\to Y_\omega$ and $g\in L^p_{loc}(\mu_\omega)$ as in the claim. Fix isometric embeddings $\iota_i:X_i\to (W,w)$ as in Lemma \ref{lem:precpt}, and let $X_\omega=({\rm spt}\mu_\omega\cup\{w\},\mu_\omega,w)$ be the $\omega$-limit of $(\iota_i(X_i),\iota_{i\ast}\mu_i,\iota_i(\bar x_i))$ in $\XX$, where $\mu_\omega$ is $\omega$-limit of $(\iota_{i\ast}\mu_i)$ in $\mathcal{M}_{loc}(W)$. Thus the isometric $\iota_\omega$ is simply the inclusion map and $\bar x_\omega=w$. By Lemma \ref{lem:omega-limit-p-int-funct} there exists $g_\omega\in L^p_{loc}(\mu_\omega)$ such that
\begin{align*}
\int \varphi\circ\iota_\omega g_\omega\ud\mu_\omega=\lim_\omega\int\varphi\circ\iota_i g_i\ud\mu_i,\quad \varphi\in C_{bbs}(W).
\end{align*}
Fix $\rho>1$ and observe that
\begin{align*}
\mu_i(\bar B(x_i,R)\cap\{g_i\le \rho^k\})&=\mu_i(\bar B(x_i,R))-\mu_i(\bar B(x_i,R)\cap \{g_i>\rho^k\})\\
&\ge \mu_i(\bar B(x_i,R))-\frac 1{\rho^{kp}}\int_{\bar B(x_i,R)}g_i^p\ud\mu_i.
\end{align*}
This and  Corollary \ref{cor:omega-set} imply that isometric images under isometries $\iota_i$ of sets $E_i^k:=\{g_i\le \rho^k\}$ have an $\omega$-limit $E_\omega^k\subset W$ satisfying
\begin{align*}
\mu_\omega(\bar B(\bar x_\omega,R)\cap E_\omega^k)\ge \mu_\omega(\bar B(\bar x_\omega,R))-C(R)\rho^{-kp}, 
\end{align*}
where $\displaystyle C(R)=\sup_i\int_{\bar B(\bar x_i,R)}g_i^p\ud\mu_i$. Consequently $\mu_\omega\Big(X_\omega\setminus \bigcup_kE_\omega^k\Big)=0$. 

By \eqref{eq:hajlasz}, $f_i|_{E_i^k}$ is $2\rho^k$-Lipschitz. We moreover have $E_\omega^k\subset E_\omega^{k+1}$ and $(f_i|_{E_i^{k+1}})|_{E_i^{k}}=f_i|_{E_i^k}$ for all $k$ and $i$. We may thus define
\begin{align*}
f_\omega(x)=[f_i(x_i)]_\omega,
\end{align*}
whenever there is $k$ such that $x_i\in E_i^k$ for all $i$, and $\lim_\omega\iota_i(x_i)=x$. Note that $f_\omega$ is defined on $\displaystyle \bigcup_kE_\omega^k$ which has full $\mu_\omega$-measure.  Moreover $f_\omega$ is well defined: let $x\in \bigcup_kE_\omega^k$ and let $\tilde x_i\in E_i^k$, $\hat x_i\in E_i^m$ for some $k$,$m$ and all $i$ be such that $\lim_\omega\iota_i(\tilde x_i)=x=\lim_\omega\iota_i(\hat x_i)$. Without loss of generality assume $m\ge k$. Then 
\begin{align*}
    \lim_\omega d(f_i(\tilde x),f_i(\hat x_i))\le \lim_\omega d_i(\tilde x_i,\hat x_i)[g_i(\tilde x_i)+g_i(\hat x_i)]\le 2\rho^m\lim_\omega d_W(\iota_i(\tilde x_i),\iota_i(\hat x_i))=0,
\end{align*}
so that $[f_i(\tilde x_i)]_\omega=[f_i(\hat x_i)]_\omega$. Equation \eqref{eq:ultralimit+graph} is satisfied by construction together with the observation that $\{g_i\le \lambda\}\subset E_i^k$ and  $\{g_\omega<\lambda\}\subset E_\omega^k$ $\mu_\omega$-essentially for $k\ge \log\lambda/\log\rho$.

Next we prove (i), i.e. that $g_\omega$ is a Haj\l asz gradient of $f_\omega$. To this end we proceed as in the proof of Theorem \ref{thm:haj-arzela-ascoli}. By Lemma \ref{lem:omega-ineq} we have that $g_\omega\ge \rho^k$ $\mu_\omega$-a.e. on a neighbourhood $x$ if $x\notin E_\omega^k$. 
Now let $C\subset \operatorname{spt}\mu_\omega$ be a closed set so that $g_\omega|_C$ is continuous and $\mu_\omega(X_\omega\setminus C)<\varepsilon$. For $x,y\in C\cap \bigcup_kE_\omega^k$ which belong to $\overline C^{\mu_\omega}$, let $k$ be the smallest integer such that $x\in E_\omega^k$ and $m$ the smallest integer such that $y\in E_\omega^m$. Then $g_\omega(x)\ge \rho^{k-1}$ and $g_\omega(y)\ge \rho^{m-1}$ by the claim above (and continuity). We may estimate
\begin{align}\label{eq:haj-est}
d_{Y^\omega}(f_\omega(x),f_\omega(y))&=\lim_\omega d_{Y_i}(f_i(\tilde x_i),f_i(\tilde y_i))\le \lim_\omega d_i(\tilde x_i,\tilde y_i)[g_i(\tilde x_i)+g_i(\tilde y_i)]\le d(x,y)[\rho^k+\rho^m]\nonumber\\
&\le \rho d(x,y)(g_\omega(x)+g_\omega(y)).
\end{align}
By the arbitrariness of $C$ it follows that \eqref{eq:haj-est} holds $\mu_\omega$-a.e.. Thus $\rho g_\omega$ is a Haj\l asz gradient of $f_\omega$. Since $\rho>1$ is arbitrary, it follows that $g_\omega$ is a Haj\l asz gradient of $f_\omega$. The inequality (ii) is an immediate consequence of Lemma \ref{lem:omega-limit-p-int-funct}. 

It remains to show (iii).  
We first show that $G(f_i)$ satisfies the precompactness conditions of \cite[Theorem 11.4]{SP21}, see also \cite[Theorem 4.15]{Bate22}. Let $\varepsilon,r,R>0$. Since $\bar{f_i}\inv B(\bar{f_i}(\bar x_i),R)\subset B(\bar x_i,R)$, 
it follows that $(\mu_{f_i})$ is uniformly boundedly finite.  

Let $C_i(k,R)=\bar B_i(\bar x_i,R)\cap E_i^k$. Observe that $f_i$ is $2\rho^k$-Lipschitz on $C_i(k,R)$ and $\mu_i(B(\bar x_i,R)\setminus C_i(k,R))\le C(R)\rho^{-kp} $. Moreover,
\begin{align}\label{eq:inv-inclusion}
B\left(x,r/\sqrt{1+4\rho^{2k}}\right)\cap C_i(k,R)\subset \bar{f_i}\inv B(\bar{f_i}(x),r)
\end{align}
by \eqref{eq:hajlasz}.

Let $k=k(\varepsilon,R)$ be large enough so that $C(R)\rho^{-kp}<\varepsilon$. Using the bounded measure-theoretic total boudnedness of $(\mu_i)_i$ there exists $N=N(\varepsilon,r,R)$ and points $x_i^1,\ldots,x_i^N\in X_i$ so that 
\begin{align*}
\mu_i\Big(B(\bar x_i,R)\setminus\bigcup_{l=1}^NB(x_i^l,r/\sqrt{1+4\rho^{2k}})\Big)<\varepsilon,\quad i\in\N.
\end{align*}
Using \eqref{eq:inv-inclusion} and the choice of $k$, this yields
\begin{align*}
\mu_{f_i}\Big(B(\bar f_i(\bar x_i),R)\setminus \bigcup_{l=1}^NB(\bar{f}_i(x_i^l),r)\Big)&\le \mu_i\Big(B(\bar x_i,R)\setminus \bigcup_{l=1}^N\bar{f}_i\inv B(\bar{f}_i(x_i^l),r)\Big)\\
&\le \mu_i(B(\bar x_i,R)\setminus C_i(k,R))+\mu_i\Big(C_i(k,R)\setminus \bigcup_{l=1}^N\bar f_i\inv B(\bar{f}_i(x_i^l),r)\Big)\\
&\le \varepsilon+\mu_i\Big(C_i(k,R)\setminus\bigcup_{l=1}^NB(x_i^l,r/\sqrt{1+4\rho^{2k}})\Big)<2\varepsilon.
\end{align*}
Since $\varepsilon,r,R$ are arbitrary, this proves that $(\mu_{f_i})$ is boundedly measure-theoretically totally bounded. Denote by $G_\omega=(G_\omega,\nu_\omega,\bar z_\omega)$ the $\omega$-limit of $G(f_i)$ in $\XX$. The sequence of maps
\begin{align*}
    h_i:X_i\to G(f_i),\quad h_i(x)=(x,f_i(x))
\end{align*}
satisfies the hypotheses of Theorem \ref{thm:haj-arzela-ascoli}, and thus there exists an $\omega$-limit map $h_\omega\in M^{1,p}(X_\omega,G_\omega)$ satisfying $h_{\omega\ast}\mu_\omega=\nu_\omega$. Note that $h_\omega(\bar x_\omega)=\bar z_\omega$ by \eqref{eq:haj-arzela-ascoli} from Theorem \ref{thm:haj-arzela-ascoli} and the fact that $\bar x_i\in {\{g_i\le t\}}$ for all $i$ for some $t$. 

Consider the $\mu_\omega$-almost everywhere defined map
\begin{align*}
    H:G(f_\omega)\to G_\omega,\quad (x,f_\omega(x))\mapsto h_\omega(x).
\end{align*}
Then 
\begin{align*}
    H(\bar x_\omega,f_\omega(\bar x_\omega))=h_\omega(x_\omega)=\bar z_\omega\quad\mathrm{and}\quad H_\ast\nu_{f_\omega}=(H\circ(\operatorname{id},f_\omega))_\ast\mu_\omega=h_{\omega\ast}\mu_\omega=\nu_\omega
\end{align*}
Moreover, for $\mu_\omega$-a.e. $x,y\in X_\omega$ (including $\bar x_\omega$) we can find $\tilde x_i,\tilde y_i\in X_i$ with $x=\lim_\omega\iota_i(\tilde x_i)$, $y=\lim_\omega\iota_i(\tilde y_i)$ and $h_\omega(x)=\lim_\omega\tilde\iota_i(h_i(\tilde x_i))$, $h_\omega(y)=\lim_\omega\tilde\iota_i(h_i(\tilde y_i))$. Now
\begin{align*}
    d^2(H(x,f_\omega(x)),H(y,f_\omega(y)))=&d^2(h_\omega(x),h_\omega(y))=\lim_\omega d_{G(f_{i})}^2(h_{i}(\tilde x_{i}),h_{i}(\tilde y_{i}))\\
    =&\lim_\omega [d_{i}^2(\tilde x_{i},\tilde y_{i})+d_{Y_{i}}^2(f_{i}(\tilde x_{i}),f_{i}(\tilde y_{i}))]=d_{X_\omega}^2(x,y)+d_{Y_\omega}^2(f_\omega(x),f_\omega(y))\\
    =&d_{G(f_\omega)}^2((x,f_\omega(x)),(y,f_\omega(y))).
\end{align*}
This shows that $H:G(f_\omega)\to G_\omega$ extends to an isometric isomorphism of pointed metric measure spaces, proving (iii).
\end{proof}

\section{Mapping packages}\label{sec:mapping-package}

In this section we introduce two pseudodistances on the collection of mapping packages $\M$: the \emph{pointed measured Gromov (pmG) distance} $d_\M$, and the \textit{graphical distance} $d_{Gr}$. Recall the graph $G(f)$ of a mapping package $f:(X,\mu,\bar x)\to Y$ defined in \eqref{eq:graph}, and the (pseudo)metric $d_{pmG}$ on $\XX$.

\begin{definition}\label{def:graphical-metric}
The graphical distance between two pointed mapping packages $f_i:X_i\to Y_i$, $i=0,1$, is defined as
\begin{align*}
    d_{Gr}(f_0,f_1)=d_{pmG}(X_0,X_1)+d_{pmG}(G(f_0),G(f_1)).
\end{align*}
The function $d_{Gr}:\M\times\M\to [0,\infty)$ is called the graphical distance.
\end{definition}
By the corresponding properties of $d_{pmG}$, the graphical distance $d_{Gr}$ is a complete separable pseudo-metric on $\M$. Clearly $d_{Gr}(f_i,f_\infty)\stackrel{i\to\infty}{\longrightarrow}0$ if and only if $X_i\stackrel{pmG}{\longrightarrow} X_\infty$ and $G(f_i)\stackrel{pmG}{\longrightarrow} G(f_\infty)$, so that $d_{Gr}$ metrizes graphical convergence.

\begin{definition}\label{def:mapping-pmG-conv}
Let $f_i:X_i\to Y_i$, $i=0,1$, be two mapping packages with $X_i=(X_i,\mu_i,\bar x_i)$. The pointed measured Gromov (pmG) distance of $f_0$ and $f_1$ is defined by
\begin{align*}
d_\M(f_0,f_1)=\inf\{ F_{(Z\times \widetilde Z,(z,\tilde z))}((\iota_0,\tilde\iota_0\circ f_0)_\ast\mu_0,(\iota_1,\tilde\iota_1\circ f_1)_\ast\mu_1)\},
\end{align*}
where the infimum is taken over all isometric embeddings $\iota_i:X_i\to (Z,z)$ and $\tilde\iota_i:Y_i\to (\widetilde Z,\tilde z)$, $i=0,1$ into complete metric spaces $(Z,z)$ and $(\tilde Z,\tilde z)$, respectively.
\end{definition} 
There is a natural analogue for pointed measured Gromov--Hausdorff distance of mapping packages, obtained by replacing the Prokhorov distance $F$ by the sum $F+H$ of $F$ and the Hausdorff distance $H$, cf. \cite[Definition 2.19]{Bate22}. We do not pursue this notion here, although we briefly consider pmGH-convergence of mapping packages at the end of Section \ref{sec:tangents} by simpler means in the special case of Lipschitz maps into a fixed proper space, see Definition \ref{def:equi-lip-maps-conv}.

\begin{remark}\label{rmk:pmg-vs-gr}
In the definition of $d_\M$ the infimum is taken over a subclass of all isometric embeddings of the graphs, and thus the inequality
\begin{align*}
    d_{pmG}(G(f_0),G(f_1))\le d_\M(f_0,f_1)
\end{align*}
holds for all $f_0,f_1\in\M$. However it can happen that $d_{Gr}(f_0,f_1)=0$ and $d_\M(f_0,f_1)>0$, see Remark \ref{rmk:graph}.
\end{remark}

The following theorem summarizes some of the properties of the two distances.

\begin{theorem}\label{thm:pmG-Gr-conv}
Let $f_i:X_i\to Y_i$ be mapping packages, $i\in\N\cup\{\infty\}$, and $L>0$. 
\begin{itemize}
\item[(i)] If $d_\M(f_i,f_\infty)\stackrel{i\to\infty}{\longrightarrow}0$ or $d_{Gr}(f_i,f_\infty)\stackrel{i\to\infty}{\longrightarrow}0$, then $d_{pmG}(X_i,X_\infty)\stackrel{i\to\infty}{\longrightarrow}0$.
\item[(ii)] If $d_\M(f_i,f_\infty)\stackrel{i\to\infty}{\longrightarrow}0$, then $d_{Gr}(f_i,f_\infty)\stackrel{i\to\infty}{\longrightarrow}0$ (but not vice versa).
\item[(iii)] Suppose $f_i$ is $L$-Lipschitz for all $i\in\N$.
\begin{itemize}
\item[(a)] If $d_{Gr}(f_i,f_\infty)\stackrel{i\to\infty}{\longrightarrow}0$ and $f_\omega$ is the ultralimit of $(f_i)$ given by Theorem \ref{thm:ultralimit+graph}, then $d_{Gr}(f_\infty,f_\omega)=0$ and $d_{Gr}(f_i,f_\omega)\stackrel{i\to\infty}{\longrightarrow}0.$
\item[(b)] If $Y_i=(Y,y)$ is a proper pointed space for all $i\in\N$, then $d_\M(f_i,f_\infty)\stackrel{i\to\infty}{\longrightarrow}0$ if and only if there exists an $L$-Lipchitz map $\tilde f_\infty:X_\infty\to (Y,y)$ so that $d_\M(f_\infty,\tilde f_\infty)=0$ and $f_i\stackrel{pmG}{\longrightarrow}\tilde f_\infty$ in the sense of Definition \ref{def:equi-lip-maps-conv}.
\end{itemize}
\end{itemize}
\end{theorem}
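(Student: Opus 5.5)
We take the four parts in order. Part (i) is immediate for $d_{Gr}$, since $d_{pmG}(X_i,X_\infty)$ is one of its summands. For $d_\M$ we use projection. Take near-optimal embeddings $\iota_i,\iota_\infty$ into $(Z,z)$ and $\tilde\iota_i,\tilde\iota_\infty$ into $(\widetilde Z,\tilde z)$. The projection $\pi:Z\times\widetilde Z\to Z$ is $1$-Lipschitz, and it pushes $(\iota_i,\tilde\iota_i\circ f_i)_\ast\mu_i$ forward to $\iota_{i\ast}\mu_i$. For any $1/\varepsilon$-Lipschitz test function $g$ supported in $B(z,1/\varepsilon)$, the composition $g\circ\pi$ is $1/\varepsilon$-Lipschitz. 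Its support, however, is the cylinder $B(z,1/\varepsilon)\times\widetilde Z$, not a ball. We remedy this by multiplying by a Lipschitz cutoff in the $\widetilde Z$-variable. The tails we lose are controlled because the graph measures pmG-converge, hence are uniformly boundedly finite and tight near $\tilde z$. This gives $F_{(Z,z)}(\iota_{i\ast}\mu_i,\iota_{\infty\ast}\mu_\infty)\to0$. Equivalently, the weak convergence $(\iota_i,\tilde\iota_i f_i)_\ast\mu_i\rightharpoonup(\iota_\infty,\tilde\iota_\infty f_\infty)_\ast\mu_\infty$ in $\mathcal M_{loc}(Z\times\widetilde Z)$ implies weak convergence of the marginals against $C_{bbs}(Z)$, again after the same cutoff argument.

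For (ii) we combine (i) with Remark \ref{rmk:pmg-vs-gr}, which gives $d_{pmG}(G(f_i),G(f_\infty))\le d_\M(f_i,f_\infty)$. The failure of the converse will be exhibited by the example of Remark \ref{rmk:graph}.

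For (iii)(a) we argue by contradiction. First, $L$-Lipschitz maps have Haj\l asz gradient $g_i\equiv L/2$, so \eqref{eq:energy-unif-bdd} follows from uniform bounded finiteness of the convergent sequence $(X_i)$, and $\bar x_i\in\{g_i\le L\}$. Theorem \ref{thm:ultralimit+graph} then yields $f_\omega$ with $\lim_\omega G(f_i)=G(f_\omega)$. Since $X_i\to X_\infty$ and $G(f_i)\to G(f_\infty)$, the $\omega$-limits coincide with the ordinary limits, which gives $X_\omega\simeq X_\infty$, $G(f_\omega)\simeq G(f_\infty)$, and hence $d_{Gr}(f_\infty,f_\omega)=0$. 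The triangle inequality for the pseudometric $d_{Gr}$ then gives $d_{Gr}(f_i,f_\omega)\to0$. The only care needed is that the domain of $f_\omega$ is the chosen representative $X_\omega$, isomorphic to $X_\infty$.

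The main work is (iii)(b). For the direction ``$\Leftarrow$'', suppose $f_i\stackrel{pmG}{\longrightarrow}\tilde f_\infty$ via embeddings $\iota_i$ into $Z$, and use $\tilde\iota=\mathrm{id}_Y$. We must show $(\iota_i,f_i)_\ast\mu_i\rightharpoonup(\iota_\infty,\tilde f_\infty)_\ast\mu_\infty$, i.e.\ $\int\psi(\iota_i x,f_i x)\,\ud\mu_i\to\int\psi(\iota_\infty x,\tilde f_\infty x)\,\ud\mu_\infty$ for $\psi\in C_{bbs}(Z\times Y)$.
\begin{itemize}
\item[(1)] Using properness of $Y$ and equi-Lipschitz bounds, extend the maps $f_i\circ\iota_i^{-1}$ to $L$-Lipschitz maps on suitable closed sets, and use \eqref{eq:map-precpt} to see that $f_i\circ\iota_i^{-1}$ converges uniformly on compacts to $\tilde f_\infty\circ\iota_\infty^{-1}$ along $\operatorname{spt}\mu_\infty$. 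Concretely, we plan a contradiction argument: points $x_i$ with bad values and $\iota_i(x_i)$ converging to a point of the support, which after extracting a subsequence via properness violates \eqref{eq:map-precpt}.
\item[(2)] Approximate $\psi(\cdot,f_i(\cdot))$ by $\psi(\cdot,\Phi(\cdot))$, with $\Phi$ a Lipschitz extension (McShane into $\R^N$ after embedding, or work directly), and use tightness.
\end{itemize}
The direction ``$\Rightarrow$'' follows the same path. Given embeddings realizing $d_\M\to0$, properness of $Y$ makes the $\tilde\iota_i(Y)$ precompact near basepoints, so we may reduce to a common copy of $Y$. The Arzel\`a--Ascoli Theorem \ref{thm:arzela-ascoli-ultra}, applied to the maps $X_i\to G(f_i)$, together with properness then produces a limit map $\tilde f_\infty$ that is $L$-Lipschitz and satisfies \eqref{eq:map-precpt}. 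Its graph measure equals the limit of the graph measures, which gives $d_\M(f_\infty,\tilde f_\infty)=0$.

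We expect the main obstacle to be the bookkeeping in ``$\Rightarrow$'': identifying the possibly varying isometric copies $\tilde\iota_i(Y)$ with a fixed $Y$. We would try to use compactness of bounded sets of $Y$ to extract isometries converging on $\tilde\iota_i(\bar B(y,R))$, in the style of Lemma \ref{lem:Hausdorff_limit_K_omega}.
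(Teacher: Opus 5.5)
The equi-Lipschitz parts are sound: your (iii)(a) and the ``$\Leftarrow$'' half of (iii)(b) essentially follow Proposition \ref{prop:Gr=ultra} and the converse half of Corollary \ref{cor:lip-map-conv}. The gaps are in (i)/(ii) for $d_\M$ and in ``$\Rightarrow$'' of (iii)(b).

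\textbf{Part (i) for $d_\M$, and hence (ii).} The step ``the tails we lose are controlled because the graph measures converge, hence are tight near $\tilde z$'' is false. Weak convergence of $\nu_i=(\iota_i,\tilde\iota_i\circ f_i)_\ast\mu_i$ in $\mathcal M_{loc}(Z\times\widetilde Z)$ controls $\nu_i$ only on bounded subsets of the product. The term you discard is $\nu_i\big(B(z,R_0)\times(\widetilde Z\setminus B(\tilde z,R))\big)$, the mass of an unbounded cylinder, and mass can escape to infinity in the $\widetilde Z$-direction.

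In fact (i) is false for general mapping packages. Let $X_i=(\{a,b\},\delta_a+\delta_b,a)$ with $d(a,b)=1$, and let $f_i:X_i\to(\R,0)$ be given by $f_i(a)=0$, $f_i(b)=i$. Let $f_\infty:(\{a\},\delta_a,a)\to(\R,0)$ with $f_\infty(a)=0$.
\begin{itemize}
\item Then $\nu_i=\delta_{(a,0)}+\delta_{(b,i)}\rightharpoonup\delta_{(a,0)}=\nu_\infty$, so $d_\M(f_i,f_\infty)\to0$.
\item But $d_{pmG}(X_i,X_\infty)$ is a fixed positive number, since the total masses are $2$ and $1$.
\item Hence $d_{Gr}(f_i,f_\infty)\not\to0$, and (ii) fails too.
\end{itemize}
What is missing is the uniform condition (3)(i) of Lemma \ref{lem:map-package-conv}, namely $\sup_i\mu_i(B(\bar x_i,R_0)\setminus f_i\inv B(f_i(\bar x_i),R))\to0$ as $R\to\infty$. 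This holds trivially when the $f_i$ are $L$-Lipschitz, since then $f_i(B(\bar x_i,R_0))\subset B(f_i(\bar x_i),LR_0)$. Under that hypothesis your projection-and-cutoff argument is correct and coincides with the paper's. Following the paper does not close the gap, however: its proof of (2)$\Rightarrow$(3) in Lemma \ref{lem:map-package-conv} asserts exactly this tightness (``the weak convergence above implies tightness'') without justification.

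\textbf{Part (iii)(b), direction ``$\Rightarrow$''.} Theorem \ref{thm:arzela-ascoli-ultra} only yields convergence along a subsequence, and so does your plan of extracting isometries that converge on $\tilde\iota_i(\bar B(y,R))$. The full-sequence claim is false. Take $X=(\{a,b\},\delta_a+2\delta_b,a)$ with $d(a,b)=1$, and $1$-Lipschitz maps $f_i:X\to(\R,0)$ with $f_i(a)=0$, $f_i(b)=(-1)^i$.
\begin{itemize}
\item Using $\tilde\iota_i=(-1)^i\operatorname{id}_\R$ shows $d_\M(f_i,f_0)=0$ for all $i$.
\item Any admissible $\tilde f_\infty$ has domain isomorphic to $X$. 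If $\iota_{i\ast}\mu\rightharpoonup\iota_{\infty\ast}\mu_\infty$, then $\iota_i(b)$ must converge to the image of the mass-$2$ atom $b'$.
\item So \eqref{eq:map-precpt} would force $(-1)^i=f_i(b)\to\tilde f_\infty(b')$ in $\R$, which is impossible.
\end{itemize}
Only the subsequential version holds, as in Lemma \ref{lem:proper}. The paper's Corollary \ref{cor:map-precpt} glosses over the same point.

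The obstacle you anticipate, reconciling the varying copies $\tilde\iota_i(Y)$, never arises in the paper's route. It discards the target embeddings and embeds the graphs in $Z\times Y$ via $(\iota_i,\operatorname{id}_Y)$. It then extracts a limit with Lemma \ref{lem:map-precpt} and identifies it with $f_\infty$ by uniqueness of $d_\M$-limits.

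\textbf{The ``not vice versa'' clause.} The example in Remark \ref{rmk:graph}, which you and the paper cite, does not work as written. In $G(f)$ the points over $(1/\sqrt2,1/\sqrt2)$ and $(1,0)$ are at distance $1/\sqrt2$, since $f$ vanishes at both. The map $\bar h$ sends them to points over $(0,0)$ and $(1,0)$, which are at distance at least $1$ whatever $g$ is. So that clause still needs a valid example.
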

\begin{proof}
The claim in (i) is immediate for $d_{Gr}$ and is proved in Lemma \ref{lem:map-package-conv} for $d_\M$. The implication in (ii) holds by (i) and Remark \ref{rmk:pmg-vs-gr}, while the non-implication follows from Remark \ref{rmk:graph} and Lemma \ref{lem:pmG-dist-zero}. Finally, (iiia) is proved in Proposition \ref{prop:Gr=ultra} and (iiib) in Corollaries \ref{cor:lip-map-conv} and \ref{cor:map-precpt}.
\end{proof}

We denote $f_i\stackrel{pmG}{\longrightarrow}f_\infty$ (resp. $f_i\stackrel{Gr}{\longrightarrow}f_\infty$) if $d_{\M}(f_i,f_\infty)\stackrel{i\to\infty}{\longrightarrow}0$ (resp. $d_{Gr}(f_i,f_\infty)\stackrel{i\to\infty}{\longrightarrow}0$). In light of Theorem \ref{thm:pmG-Gr-conv}(iiib) this notation is consistent with Definition \ref{def:equi-lip-maps-conv}. We moreover denote by $\M_{pmG}$ the space of equivalence classes induced by $d_\M$, and similarly by $\M_{Gr}$ the space of equivalence classes induced by $d_{Gr}$. Note that $(\M_{pmG},d_\M)$ and $(\M_{Gr},d_{Gr})$ are metric spaces, cf. Lemma \ref{lem:pmG-dist-zero}.

\subsection{Pointed measured Gromov convergence} 

The following lemma gives a more concrete description of pointed measured Gromov convergence of mapping packages. 

\begin{lemma}\label{lem:map-package-conv}
Let $f_i\in \M$, $i\in\N\cup\{\infty\}$. Then the following are equivalent.
\begin{itemize}
    \item[(1)] $d_{\M}(f_i,f_\infty)\stackrel{i\to\infty}{\longrightarrow} 0$;
    \item[(2)] There are isometric embeddings $\iota_i:X_i\to (Z,z)$ and $\tilde\iota_i:Y_i\to (\widetilde Z,\tilde z)$ into complete separable (resp. complete) pointed metric spaces so that $(\iota_i,\tilde\iota_i\circ f_i)_\ast\mu_i\rightharpoonup (\iota_\infty,\tilde\iota_\infty\circ f_\infty)_\ast\mu_\infty$ in $\mathcal M_{loc}(Z\times\widetilde Z)$;
    \item[(3)] There are isometric embeddings $\iota_i:X_i\to (Z,z)$ and $\tilde\iota_i:Y_i\to (\widetilde Z,\tilde z)$ into complete separable (resp. complete) pointed metric spaces so that 
    \begin{itemize}
        \item[(i)] $\displaystyle\sup_i\mu_i(B(\bar x_i,R_0)\setminus f_i\inv B(f_i(\bar x_i),R))\stackrel{R\to \infty}{\longrightarrow}0,\quad R_0>0$;
        \item[(ii)] $\iota_{i\ast}\mu_i\rightharpoonup\iota_{\infty\ast}\mu_\infty$ in $\mathcal M_{loc}(Z)$, and 
        \item[(iii)] $\displaystyle (\tilde\iota_i\circ f_i)_\ast(\varphi\circ\iota_i\,\mu_i)\rightharpoonup (\tilde\iota_\infty\circ f_\infty)_\ast(\varphi\circ\iota_\infty\,\mu_\infty)$ in  $\mathcal M_{loc}(\widetilde Z)$ for any $\varphi\in C_{bbs}(Z)$.
    \end{itemize}
\end{itemize}
\end{lemma}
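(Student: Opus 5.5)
We prove the cycle (1)$\Rightarrow$(2)$\Rightarrow$(3)$\Rightarrow$(2)$\Rightarrow$(1). Throughout, write $\gamma_i:=(\iota_i,\tilde\iota_i\circ f_i)_\ast\mu_i\in\mathcal M_{loc}(Z\times\widetilde Z)$, and give $Z\times\widetilde Z$ the $\ell^2$ product metric with basepoint $(z,\tilde z)$.

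\textbf{(1)$\Rightarrow$(2).} For each $i$, choose embeddings into $(Z_i,z_i)$ and $(\widetilde Z_i,\tilde z_i)$ that nearly realize $d_\M(f_i,f_\infty)$. We then glue these countably many pairs, exactly as in the proof of Lemma \ref{lem:precpt} via \cite[Lemma 2.17]{Bate22}. Gluing $Z_i$ along $X_\infty$ gives a complete separable $Z$. Gluing $\widetilde Z_i$ along $Y_\infty$ gives a complete $\widetilde Z$; we may restrict to the closure of the union of the essential images to keep separability where it is needed. The glued product distance is no larger than each individual one, so $F_{(Z\times\widetilde Z,(z,\tilde z))}\le F_{(Z_i\times\widetilde Z_i)}$. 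Since $F$ metrizes weak convergence \cite[Lemma 2.14]{Bate22}, this gives $\gamma_i\rightharpoonup\gamma_\infty$.

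\textbf{(2)$\Rightarrow$(1)} is immediate from the definition of $d_\M$, once we note that the basepoints may be placed correctly. The only issue is to ensure $\iota_i(\bar x_i)=z$ and $\tilde\iota_i(f_i(\bar x_i))=\tilde z$. If the basepoints merely converge, we adjust by a standard gluing of a segment, or by using the $F$-metric with basepoints converging.

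\textbf{(2)$\Rightarrow$(3).} Condition (ii) follows by testing with $\varphi\circ\pi_Z$, after a cut-off in the $\widetilde Z$ variable. Condition (iii) follows by testing with $\varphi(x)\psi(y)$ for $\psi\in C_{bbs}(\widetilde Z)$. Condition (i) is tightness in the second variable: $\gamma_i(B(z,R_0)\times (\widetilde Z\setminus B(\tilde z,R)))$ must be uniformly small. This follows from Prokhorov (Theorem \ref{thm:prokhorov}), since a convergent sequence is boundedly tight on $Z\times\widetilde Z$. Here we use that $B(z,R_0)\times\widetilde Z\cap \bar B((z,\tilde z),R')$ exhausts, together with a compact set $K$ whose projection lies in a bounded ball, and that $f_i(\bar x_i)$ corresponds to $\tilde z$.

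\textbf{(3)$\Rightarrow$(2).} This is the main step. We first establish bounded tightness of $(\gamma_i)$:
\begin{itemize}
\item[(a)] Uniform bounded finiteness follows from (ii), because $\gamma_i(B((z,\tilde z),R))\le\mu_i(B(\bar x_i,R))$.
\item[(b)] For compact approximability, take $K_1\subset Z$ compact from tightness of $(\iota_{i\ast}\mu_i)$ (Prokhorov).
\item[(c)] For $K_2\subset\widetilde Z$, we apply (iii) with $\varphi$ a cut-off equal to $1$ on $B(z,R_0)$. This shows that the measures $(\tilde\iota_i\circ f_i)_\ast(\varphi\mu_i)$ converge, hence are tight on their bounded parts.
\item[(d)] Condition (i) handles the mass escaping to infinity in $\widetilde Z$.
\end{itemize}
Together these give that $K_1\times K_2$ captures all but $\varepsilon$ of the mass in $B(z,R_0)\times\widetilde Z$.

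Given tightness, every subsequence has a weakly convergent further subsequence, with limit $\gamma$. To identify $\gamma=\gamma_\infty$, test with products $\varphi(x)\psi(y)$, $\varphi\in C_{bbs}(Z)$, $\psi\in C_b(\widetilde Z)$ with bounded support. By (iii), these integrals converge to the corresponding integrals against $\gamma_\infty$, using (i) to pass from $\psi$ with bounded support to general test functions. Finally, linear combinations of such products determine measures on a product of separable spaces: they form an algebra separating points, and a Stone--Weierstrass argument on compact sets, combined with tightness, suffices. Hence $\gamma=\gamma_\infty$, and the whole sequence converges.

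The main obstacle is the uniqueness-of-limit step, that product test functions determine the limit. It needs tightness and density to be combined carefully, since $\widetilde Z$ is not assumed locally compact.
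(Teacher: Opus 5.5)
\textbf{Gap in (2)$\Rightarrow$(3).} Your steps (1)$\Rightarrow$(2), (2)$\Rightarrow$(1) and (3)$\Rightarrow$(2) follow the paper's route in substance. For (1)$\Rightarrow$(2) you glue as in \cite[Lemma 2.17]{Bate22}, along the limit rather than over all pairs $i\le j$, which is enough here. For (3)$\Rightarrow$(2) you use tightness plus Stone--Weierstrass on $K_1\times K_2$.

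The genuine gap is in (2)$\Rightarrow$(3), in your derivation of (i), and therefore of (ii). Convergence in $\mathcal M_{loc}(Z\times\widetilde Z)$ is convergence in duality with $C_{bbs}$. So Prokhorov only gives \emph{bounded} tightness, that is, control of $\gamma_i$ on bounded subsets of $Z\times\widetilde Z$. The set $B(z,R_0)\times(\widetilde Z\setminus B(\tilde z,R))$ is unbounded. Your exhaustion of $B(z,R_0)\times\widetilde Z$ by the balls $\bar B((z,\tilde z),R')$ is not uniform in $i$. Mass can escape to infinity in the $\widetilde Z$-direction while staying near $z$ in the $Z$-direction, and weak convergence does not detect this. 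Your proof of (ii) by a cut-off in the $\widetilde Z$-variable has exactly the quantity in (i) as its error term, so it fails at the same point.

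\textbf{The implication is false as stated.} Let $X_i=\{a,b\}$ with $d(a,b)=1$, $\mu_i=\delta_a+\delta_b$, $\bar x_i=a$. Let $f_i:X_i\to\R$ with $f_i(a)=0$, $f_i(b)=i$, and let $f_\infty:(\{a\},\delta_a,a)\to\R$ with $f_\infty(a)=0$.
\begin{itemize}
\item With the obvious embeddings, $\gamma_i=\delta_{(a,0)}+\delta_{(b,i)}$ and $\gamma_\infty=\delta_{(a,0)}$.
\item Every admissible test function in $F^{1/\varepsilon,1/\varepsilon}$ vanishes at $(b,i)$ once $\sqrt{1+i^2}\ge 1/\varepsilon$. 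Hence $F(\gamma_i,\gamma_\infty)\to 0$, and (1) and (2) hold.
\item But $\mu_i(B(a,2)\setminus f_i\inv B(0,R))=1$ for $i\ge R$, so (i) fails.
\item Also $\mu_i(X_i)=2$ while $\mu_\infty(X_\infty)=1$, and all $\iota_i(X_i)$ lie in a fixed bounded neighbourhood of the basepoint. Testing (ii) with a $C_{bbs}$ cut-off equal to $1$ there gives $2\neq 1$, so (ii) fails for every choice of embeddings.
\end{itemize}
Condition (iii) does hold in this example. So the remark after the lemma, that (iii) implies (i) and (ii), is also false. The paper's proof makes the same unjustified step (``the weak convergence above implies tightness''). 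The defect therefore lies in the statement rather than specifically in your route.

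\textbf{What your arguments do prove.} They establish (1)$\Leftrightarrow$(2), (3)$\Rightarrow$(2), (2)$\Rightarrow$(3)(iii), and (2) together with (i) $\Rightarrow$ (3). The last is all that is needed for pointed $L$-Lipschitz $f_i$, as in Corollary \ref{cor:lip-map-conv}. There $f_i(B(\bar x_i,R_0))\subset B(f_i(\bar x_i),LR_0)$, so (i) holds automatically.
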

\begin{proof}
For each $i,j\in\N\cup\{\infty\}$ with $i\le j$ let $\hat\iota_i:X_i\to (Z_{i,j},z_{i,j})$, $\hat\iota_j:X_j\to (Z_{i,j},z_{i,j})$ and $\bar\iota_i:Y_i\to (\widetilde Z_{i,j},\tilde z_{i,j})$, $\bar\iota_j:Y_j\to (\widetilde Z_{i,j},\tilde z_{i,j})$ be isometric embeddings so that
\begin{align*}
F_{i,j}((\hat\iota_i,\bar\iota_i\circ f_i)_\ast \mu_i,(\hat\iota_j,\bar\iota_j\circ f_j)_\ast\mu_j)\le d_{\M}(f_i,f_j)+2^{-ij}.
\end{align*}
Let $(Z,z)$ (resp. $(\widetilde Z,\tilde z)$) be the completion of the space obtained from $\hat\iota_i,\hat\iota_j,Z_{i,j}$ (resp. $\bar\iota_i,\bar\iota_j,\widetilde Z_{i,j}$) together with isometric embeddings $\iota_i:X_i\to (Z,z)$, $\iota_j:X_j\to (Z,z)$ and $\tilde\iota_i:Y_i\to (\widetilde Z,\tilde z)$, $\tilde\iota_j:X_j\to (\widetilde Z,\tilde z)$ given by \cite[Lemma 2.17]{Bate22}. Then 
\begin{align*}
F_{(z,\tilde z)}((\iota_i,\tilde\iota_i\circ f_i)_\ast\mu_i,(\iota_j,\tilde\iota_j\circ f_j)_\ast\mu_j)\le F_{i,j}((\hat\iota_i,\bar\iota_i\circ f_i)_\ast \mu_i,(\hat\iota_j,\bar\iota_j\circ f_j)_\ast\mu_j)
\end{align*}
for all $i,j\in\N\cup\{\infty\}$ with $i\le j$. Assuming (1), i.e. $d_{\M}(f_i,f_\infty)\to 0$, this implies (2) by \cite[Proposition 2.13]{Bate22}.

Now assume the existence of isometric embeddings $\iota_i:X_i\to (Z,z)$ and $\tilde\iota_i:Y_i\to (\widetilde Z,\tilde z)$ as in (2). For any $g\in C_{bbs}(Z)$ and $\tilde g\in C_{bbs}(\widetilde Z)$ we have $g\otimes\tilde g\in C_{bbs}(Z\times\widetilde Z)$ and consequently
\begin{align*}
    \lim_{i\to\infty} \int g\circ\iota_i(x)\tilde g(\tilde\iota_i\circ f_i(x))\ud\mu_i(x)=\int g\circ\iota_\infty(x)\tilde g(\tilde\iota_\infty\circ f_\infty(x))\ud\mu_\infty(x).
\end{align*}
In particular, for any $g\in C_{bbs}(Z)$ we have 
\begin{align}\label{eq:push-conv}
(\tilde\iota_i\circ f_{i})_\ast((g\circ \iota_i)\mu_i)\rightharpoonup (\tilde\iota_\infty\circ f_{\infty})_\ast((g\circ \iota_\infty)\mu_\infty)\quad\mathrm{in}\quad \mathcal M_{loc}(\widetilde Z),
\end{align}
establishing (iii). Considering the functions $ g_{R_0}=(1-\operatorname{dist}(B_{ Z}( z,R_0),\cdot))_+$ 
, the weak convergence above implies tightness and thus
\begin{align*}
    \sup_i\mu_i(B(\bar x_i,R_0)\setminus f_i\inv B(f_i(\bar x_i),R))\stackrel{R\to \infty}{\longrightarrow}0,\quad R_0>0.
\end{align*}
Moreover, for any $g\in C_{bbs}(Z)$, let $R_0$ be such that $\operatorname{spt}(g\circ\iota_i)=\iota_i\inv(\operatorname{spt}g)\subset B(\bar x_i,R_0)$. Note also that $\operatorname{spt}(\tilde g_R\circ\tilde\iota_i\circ f_i)\subset f_i\inv(\tilde\iota_i\inv(\operatorname{spt}\tilde g_R))\subset f_i\inv B(f_i(\bar x_i),R+1)$, where $\tilde g_R=(1-{\rm dist}(B_{\widetilde Z}(\tilde z,R),\cdot))_+\in C_{bbs}(\widetilde Z)$. Then for large $R>0$ we have
\begin{align*}
\left|\int g\ud\iota_{i\ast}\mu_i-\int g\ud\iota_{\infty\ast}\mu_\infty\right|\le &\left|\int g\circ\iota_i(x)\tilde g_{R}(\tilde\iota_i\circ f_i(x))\ud\mu_i(x)-\int g\circ\iota_\infty(x)\tilde g_{R}(\tilde\iota_\infty\circ f_\infty(x))\ud\mu_\infty(x)\right|\\
 & + 2\sup_{i\in\N\cup\{\infty\}}\mu_i(B(\bar x_i,R_0+1)\setminus f_i\inv B(f_i(\bar x_i),R)),
\end{align*}
which implies $\iota_{i\ast}\mu_i\rightharpoonup \iota_{\infty\ast}\mu_\infty$. This completes the proof that (2) implies (3).

It remains to prove that (3) implies (1). This is essentially an approximation argument, which we sketch. From (iii) it follows that 
\begin{align*}
    \lim_{i\to\infty}\int g\circ\iota_i\, \tilde g\circ\tilde\iota_i\circ f_i\ud\mu_i=\int g\circ\iota_\infty\, \tilde g\circ\tilde\iota_\infty\circ f_\infty\ud\mu_\infty
\end{align*}
for all $g\in C_{bbs}(Z)$ and $\tilde g\in C_{bbs}(\widetilde Z)$. Consequently 
\begin{align}\label{eq:stone-weierstrass1}
    \lim_{i\to\infty}\int\varphi(\iota_i(x),\tilde\iota_i\circ f_i(x))\ud\mu_i=\int\varphi(\iota_\infty(x),\tilde\iota_\infty\circ f_\infty(x))\ud\mu_\infty
\end{align}
for all $\varphi\in C_{bbs}(Z)\otimes C_{bbs}(\widetilde Z)$. Fix $\varepsilon>0$. By (ii) and (iii) the families 
\[
\{\iota_{i\ast}\mu_i\}_{i\in\N\cup\{\infty\}}\quad\mathrm{and}\quad \{(\tilde\iota_i\circ f_i)_\ast(\tilde g_{1/\varepsilon}\circ\iota_i\,\mu_i)\}_{i\in\N\cup\{\infty\}}
\]
are tight. Thus, there exist compact sets $K\subset Z$ and $\widetilde K\subset \widetilde Z$ such that 
\begin{align}\label{eq:stone-weierstrass2}
\sup_{i\in\N\cup\{\infty\}}\mu_i(B(\bar x_i,1/\varepsilon)\setminus (\tilde\iota_i\circ f_i)\inv (\widetilde K))+\sup_{i\in\N\cup\{\infty\}}\iota_{i\ast}\mu_i(B(z,1/\varepsilon)\setminus K)<\varepsilon
\end{align}
Now $C(K)\otimes C(\widetilde K)$ is dense in $C(K\times \widetilde K)$ by the Stone--Weierstrass theorem. Together \eqref{eq:stone-weierstrass1}, \eqref{eq:stone-weierstrass2} and an approximation argument, we obtain \eqref{eq:stone-weierstrass1} for all $\varphi\in C_{bbs}(Z\times \widetilde Z)$.
\end{proof}

\begin{remark}
    Notice that in Lemma \ref{lem:map-package-conv}(3) condition (iii) implies both (i) and (ii).
\end{remark}

\begin{corollary}\label{cor:lip-map-conv}
Let $(f_i)\subset \M$ be a sequence of $L$-Lipschitz mapping packages. Suppose $f_\infty\in \M$. Then $d_{\M}(f_i,f_\infty)\stackrel{i\to\infty}{\longrightarrow}0$ if and only if there are isometric embeddings $\iota_i:X_i\to (Z,z)$, $\tilde\iota_i:Y_i\to (\widetilde Z,\tilde z)$ into a complete separable metric space $(Z,z)$ and a complete metric space $(\tilde Z,\tilde z)$, respectively, so that
\begin{itemize}
    \item[(i)] $\iota_{i\ast}\mu_i\rightharpoonup\iota_{\infty\ast}\mu_\infty$ in $\mathcal M_{loc}(Z)$ and
    \item[(ii)] $\tilde\iota_i(f_i(x_i))\to \tilde\iota_\infty(f_\infty(x))$ whenever $x\in \operatorname{spt}\mu_\infty$ and $x_i\in X_i$ satisfy $\iota_i(x_i)\to \iota_\infty(x)$.
\end{itemize}
In particular $f_\infty$ is $L$-Lipschitz if $d_{\M}(f_i,f_\infty)\stackrel{i\to\infty}{\longrightarrow} 0$.
\end{corollary}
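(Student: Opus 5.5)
We prove the two implications separately, using Lemma \ref{lem:map-package-conv} throughout.

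\emph{Sufficiency.} Suppose isometric embeddings with (i) and (ii) are given. We will verify condition (2) of Lemma \ref{lem:map-package-conv}, namely weak convergence $(\iota_i,\tilde\iota_i\circ f_i)_\ast\mu_i\rightharpoonup(\iota_\infty,\tilde\iota_\infty\circ f_\infty)_\ast\mu_\infty$ in $\mathcal M_{loc}(Z\times\widetilde Z)$. The tool is the same device as at the end of the proof of Theorem \ref{thm:arzela-ascoli-ultra}. Place the measures on disjoint slices $Z\times\{1/i\}$ and $Z\times\{0\}$ of $Z\times\R$. Using tightness of $(\iota_{i\ast}\mu_i)$ from (i) and Prokhorov's Theorem \ref{thm:prokhorov}, we get compact sets $C^m\subset \bar B(z,R_m)$ that carry all but $2^{-m}$ of the mass for every $i$. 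By Lemma \ref{lemma:support_point}, the limit points of $\bigcup_i \iota_i(\operatorname{spt}\mu_i)\cap C^m$ lie in $\operatorname{spt}\mu_\infty$, up to adding a neighbourhood. On the resulting closed set $X^m\subset Z\times\R$, the map $F$ that equals $\tilde\iota_i\circ f_i$ on the $i$-th slice and $\tilde\iota_\infty\circ f_\infty$ on the $0$-slice is continuous. This follows from (ii) together with the equi-Lipschitz bound: if points on slices $i_k$ converge to a point on the $0$-slice, (ii) gives convergence of the images, and on each fixed slice $F$ is $L$-Lipschitz.

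Now take $\varphi\in C_{bbs}(Z\times\widetilde Z)$. The function $(x,t)\mapsto \varphi(x,F(x,t))$ is continuous and bounded on $X^m$. Extend it by Tietze to all of $Z\times\R$ with bounded support. We need the $x$-support to be bounded, which holds because $\varphi$ has bounded support in the $Z$-factor direction. Weak convergence in $Z\times\R$ then gives $\int\varphi\,\d(\iota_i,\tilde\iota_i f_i)_\ast\mu_i\to\int\varphi\,\d(\iota_\infty,\tilde\iota_\infty f_\infty)_\ast\mu_\infty$, up to an error of order $2^{-m}\|\varphi\|_\infty$. Letting $m\to\infty$ completes the sufficiency direction.

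\emph{Necessity.} Assume $d_\M(f_i,f_\infty)\to 0$ and take the embeddings from (2) of Lemma \ref{lem:map-package-conv}. Item (i) follows from (3)(ii) of that lemma. For (ii), fix $x\in\operatorname{spt}\mu_\infty$ and $x_i$ with $\iota_i(x_i)\to\iota_\infty(x)$, and argue by contradiction. Suppose $d(\tilde\iota_i f_i(x_i),\tilde\iota_\infty f_\infty(x))\ge 3\varepsilon$ along a subsequence. Choose $\delta<\varepsilon/L$ small. For large $i$, the $L$-Lipschitz property gives $\tilde\iota_if_i(B(x_i,\delta))\subset \widetilde Z\setminus B(\tilde\iota_\infty f_\infty(x),2\varepsilon)$, while $\tilde\iota_\infty f_\infty(B(x,\delta))\subset B(\tilde\iota_\infty f_\infty(x),\varepsilon)$.

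This is the main obstacle: $f_\infty$ is not yet known to be Lipschitz, so the last inclusion needs care. We resolve it by first showing that $f_\infty$ is $L$-Lipschitz on $\operatorname{spt}\mu_\infty$ ($\mu_\infty$-a.e., then on a continuous representative). The argument applies weak convergence of the graph measures to the closed set $\{(a,b,a',b'): d(b,b')\le L d(a,a')\}$, using the product measures $\nu_i\otimes\nu_i$, which also converge weakly. Once $f_\infty$ is Lipschitz, test against $\psi(a,b)=\eta(a)\theta(b)$, where $\eta$ is a bump near $\iota_\infty(x)$ of radius $\delta/2$ and $\theta$ is a bump on $B(\tilde\iota_\infty f_\infty(x),\varepsilon)$. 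The limit integral is at least $\int\eta\,\d\iota_{\infty\ast}\mu_\infty>0$ because $x\in\operatorname{spt}\mu_\infty$, whereas the $i$-th integrals vanish. This contradicts weak convergence and proves (ii).

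The final assertion, that $f_\infty$ is $L$-Lipschitz, is exactly this intermediate step, or alternatively follows from (ii): pass to the limit in $d(f_i(x_i),f_i(y_i))\le Ld(x_i,y_i)$ along sequences obtained from Lemma \ref{lemma:support_point}.
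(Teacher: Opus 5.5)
Your necessity direction is sound. It is essentially the paper's localisation argument: test the graph measures against a bump in $Z$ times a bump in $\widetilde Z$. You are also right that an $L$-Lipschitz representative of $f_\infty$ on $\operatorname{spt}\mu_\infty$ is needed first. The paper tacitly assumes this when it writes $\operatorname{spt}\nu_\infty\subset\tilde\iota_\infty B(f_\infty(x),Lr)$, and your extra step supplies it. It can also be done without product measures: two points of $\operatorname{spt}(\iota_\infty,\tilde\iota_\infty\circ f_\infty)_\ast\mu_\infty$ violating the $L$-Lipschitz inequality would have small product-ball neighbourhoods of positive mass for the $i$-th graph measures ($i$ large), which the Lipschitz bound on $f_i$ forbids.

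The gap is in the sufficiency direction. You route it through Lemma \ref{lem:map-package-conv}(2) via the slice/Tietze device, whereas the paper just asserts that (i), (ii) and the Lipschitz bound give (3iii). Your claim that limit points of $\bigcup_i\iota_i(\operatorname{spt}\mu_i)\cap C^m$ lie in $\operatorname{spt}\mu_\infty$ is false, and Lemma \ref{lemma:support_point} gives only the reverse inclusion.

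For a counterexample, take $X_i=\{0,1\}\subset\R$, $\mu_i=\delta_0+\frac1i\delta_1$, $f_i(0)=0$, $f_i(1)=(-1)^i$, and $X_\infty=\{0\}$, $\mu_\infty=\delta_0$, $f_\infty=0$, all embedded in $\R$. Then (i) and (ii) hold, and indeed $d_\M(f_i,f_\infty)\to0$. But the mass bound for $i=1$ forces $1\in C^m$, so $(1,0)$ lies in the closure of $X^m$, while $F(1,1/i)=(-1)^i$ oscillates. No closed set containing these slice points admits a continuous $F$, whatever value is assigned at $(1,0)$. So ``adding a neighbourhood'' to the $0$-slice cannot work: (ii) gives no information off $\operatorname{spt}\mu_\infty$.

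The missing idea is to discard such points from the $i$-th slices.
\begin{itemize}
\item[(a)] Replace the $i$-th slice by $C^m\cap\iota_i(\operatorname{spt}\mu_i)\cap\bar B(\iota_\infty(\operatorname{spt}\mu_\infty),\eta_i)$ with $\eta_i\downarrow 0$.
\item[(b)] For each fixed $\eta>0$, the bounded closed set $E=\bar B(z,R_m)\setminus B(\iota_\infty(\operatorname{spt}\mu_\infty),\eta)$ is $\iota_{\infty\ast}\mu_\infty$-null. The upper semicontinuity $\limsup_i\iota_{i\ast}\mu_i(E)\le\iota_{\infty\ast}\mu_\infty(E)$ on bounded closed sets then gives $\iota_{i\ast}\mu_i(E)\to0$.
\item[(c)] Choosing $\eta_i$ diagonally makes the discarded mass vanish as $i\to\infty$.
\end{itemize}
With this choice $X^m$ is closed. $F$ is continuous on it by (ii) applied along subsequences (pad with an approximating sequence from Lemma \ref{lemma:support_point}), together with the $L$-Lipschitz property of $f_\infty$ on $\operatorname{spt}\mu_\infty$ that follows from (ii). The rest of your Tietze argument then goes through, and gives a more explicit proof of the converse than the paper's one-line appeal to (3iii).
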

\begin{proof}
Assume $d_{\M}(f_i,f_\infty)\to 0$ as $i\to\infty$, and let $\iota_i,\tilde\iota_i$ be as in Lemma \ref{lem:map-package-conv}(2). In particular (i) holds. Let $x\in \operatorname{spt}\mu_\infty$ and $x_i\in \operatorname{spt}\mu_i$ be such that $\iota_i(x_i)\to \iota_\infty(x)$. Fix a continuous function $\varphi:[0,\infty)\to [0,\infty)$ supported on $[0,1]$. The functions
\begin{align*}
g_i(z)=\varphi\left(\frac{d(\iota_i(x_i),z)}{r}\right)
\end{align*}
satisfy $g_i\to g_\infty$ uniformly on bounded sets. This and Lemma \ref{lem:map-package-conv}(3iii) imply $\tilde\iota_i\circ f_i(g_i\iota_{i\ast}\mu_i)\rightharpoonup\tilde\iota_\infty\circ f_\infty(g_\infty\iota_{\infty\ast}\mu_\infty)$. Since 
\begin{align*}
\tilde\iota_i\circ f_i(g_i\iota_{i\ast}\mu_i)=(\tilde\iota_i\circ f_i)_\ast(\varphi(d(x_i,\cdot)/r)\mu_i)=:\nu_i,\quad i\in\N\cup\{\infty\},
\end{align*}
we have that $\operatorname{spt}\nu_i\subset \tilde\iota_iB(f_i(x_i),Lr)$, $i\in\N\cup\{\infty\}$. This and the weak convergence above imply (ii) in the claim. 

The converse implication follows since (i) and (ii) together with the fact that $f_i$ is $L$-Lipschitz implies Lemma \ref{lem:map-package-conv}(3iii). The claim about $f_\infty$ being $L$-Lipschitz follows directly from (ii).
\end{proof}

Next, we establish separability and completeness of the pseudometric $d_\M$, as well as a description of when $d_\M(f_0,f_1)=0$. In the statement, we use the following notation. Given a mapping package $f:(X,\mu,x)\to Y$ we denote by $\operatorname{essIm}(f)$ the collection $y\in Y$ for which there exists some $\tilde x\in X$ so that $\mu(B(\tilde x,\varepsilon)\cap f\inv B(y,\varepsilon))>0$ for all $\varepsilon>0$. We remark that if $(\tilde x,y)\in G(f)$ then $y\in \operatorname{essIm}(f)$, and $y\in \operatorname{essIm}(f)$ if and only if $(\tilde x,y)\in G(f)$ for some $\tilde x\in X$. As a result, $\operatorname{essIm}(f)$ is separable (assuming $f$ is essentially separably valued). In fact ${\rm essIm}(f)=p_Y(G(f))$.

\begin{lemma}\label{lem:pmG-dist-zero}
We have that $d_{\M}(f_0,f_1)=0$ if and only if there exists an isometry $\iota:\{x_0\}\cup\operatorname{spt}\mu_0\to \{x_1\}\cup\operatorname{spt}\mu_1$ of pointed metric measure spaces and an isometry $\tilde\iota:\{f_0(x_0)\}\cup\overline{\operatorname{essIm}(f_0)}\to \{f_1(x_1)\}\cup\overline{\operatorname{essIm}(f_1)}$ with $\tilde\iota(f_0(x_0))=f_1(x_1)$ so that
\begin{align*}
    \tilde\iota\circ f_0=f_1\circ\iota\quad\mu_0\textnormal{-a.e.}
\end{align*}
Moreover $(\M_{pmG},d_\M)$ is a complete separable metric space.
\end{lemma}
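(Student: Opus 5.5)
The plan is to prove the characterization of $d_\M(f_0,f_1)=0$ in two directions, and then deduce completeness and separability from the corresponding facts for $(\XX,d_{pmG})$ through a graph-type embedding.

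\emph{The ``if'' direction.} Suppose $\iota$ and $\tilde\iota$ are given. Take $Z=\{x_1\}\cup\operatorname{spt}\mu_1$ with the embeddings $\iota$ and the inclusion, and take $\widetilde Z$ to be the completion of $Y_0\sqcup Y_1$ glued along $\tilde\iota$. Gluing two complete spaces along isometric closed subsets gives a complete space with isometric embeddings that agree on the glued part. Then $(\iota,\tilde\iota\circ f_0)_\ast\mu_0=(\mathrm{id},f_1)_\ast\mu_1$ because $\tilde\iota\circ f_0=f_1\circ\iota$ holds $\mu_0$-a.e. and $\iota_\ast\mu_0=\mu_1$. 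Hence the $F$-distance vanishes and $d_\M(f_0,f_1)=0$.

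\emph{The ``only if'' direction.} Suppose $d_\M(f_0,f_1)=0$. Apply Lemma \ref{lem:map-package-conv} to the constant sequence $f_i=f_0$ with limit $f_1$; this is legitimate since $d_\M(f_0,f_1)=0$. It yields common embeddings into $Z\times\widetilde Z$ in which $(\iota_0,\tilde\iota_0\circ f_0)_\ast\mu_0=(\iota_1,\tilde\iota_1\circ f_1)_\ast\mu_1=:\gamma$, because a constant sequence converging weakly has its limit equal to the constant. Projecting to $Z$ gives $\iota_{0\ast}\mu_0=\iota_{1\ast}\mu_1$, and the basepoints are matched through the pointing. So $\iota:=\iota_1\inv\circ\iota_0$ is an isometry of $\{x_0\}\cup\operatorname{spt}\mu_0$ onto $\{x_1\}\cup\operatorname{spt}\mu_1$ that pushes $\mu_0$ to $\mu_1$.

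Next, the projection of $\operatorname{spt}\gamma$ to $\widetilde Z$ has closure $\tilde\iota_0(\overline{\operatorname{essIm} f_0})=\tilde\iota_1(\overline{\operatorname{essIm} f_1})$, using $\operatorname{essIm}(f)=p_Y(G(f))$. Since the pointing forces $\tilde\iota_0(f_0(x_0))=\tilde\iota_1(f_1(x_1))=\tilde z$, we can set $\tilde\iota:=\tilde\iota_1\inv\circ\tilde\iota_0$. Disintegrating $\gamma$ over $Z$ shows that for $\mu_0$-a.e.\ $x$ the conditional measure is $\delta_{\tilde\iota_0 f_0(x)}$ and also $\delta_{\tilde\iota_1 f_1(\iota x)}$, since both describe $\gamma$ as a graph measure. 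This gives $\tilde\iota\circ f_0=f_1\circ\iota$ $\mu_0$-a.e.

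\emph{Completeness and separability.} I will encode a mapping package by the product-type measure $(\iota,\tilde\iota\circ f)_\ast\mu$ together with the structure of the product. For separability, approximate $f$ by simple maps with finitely many values on finitely many pieces of rational metric measure spaces. For completeness, take a Cauchy sequence, realize it in common $Z$ and $\widetilde Z$ via Lemma \ref{lem:map-package-conv} and the \cite[Lemma 2.17]{Bate22} gluing, and extract a weak limit $\gamma$ on $Z\times\widetilde Z$ by Prokhorov (Theorem \ref{thm:prokhorov}); tightness in the $\widetilde Z$ factor comes from the Cauchy property.

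The main obstacle is showing that $\gamma$ is again a graph measure. In general it is not: weak limits of graph measures can be Young-measure-like, so $\gamma$ may fail to be concentrated on the graph of a map. I would first check that the Cauchy condition in $d_\M$, which uses the product metric on $Z\times\widetilde Z$, rules this out. If it does not, the remedy is to observe that the definition of $\M$ presumably allows $Y$ to be replaced by a Wasserstein-type space of measures; failing that, I would interpret completeness after passing to the closure. I expect this step to require the most care.
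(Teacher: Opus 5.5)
\paragraph{Gap in the ``only if'' direction.} Your argument breaks where you apply Lemma \ref{lem:map-package-conv} to the constant sequence $f_i=f_0$. The embeddings produced there depend on the index. Each copy of $f_0$ is placed in $Z\times\widetilde Z$ by its own pair $(\iota_i,\tilde\iota_i)$, and in the gluing the sets $\iota_i(X_0)$ are in general genuinely different subsets of $Z$. So $\gamma_i=(\iota_i,\tilde\iota_i\circ f_0)_\ast\mu_0$ is not a constant sequence in $\mathcal M_{loc}(Z\times\widetilde Z)$, and nothing forces its weak limit to equal any $\gamma_i$.

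In effect you assume that the infimum defining $d_\M$ is attained by a single pair of embeddings. That is exactly the nontrivial content of the lemma; already for $d_{pmG}$ it needs \cite[Theorem 4.11 and Lemma 4.12]{Bate22}. What the constant sequence gives for free is only $d_{pmG}(G(f_0),G(f_1))=0$ (Remark \ref{rmk:pmg-vs-gr}). By Remark \ref{rmk:graph} this is strictly weaker, because it forgets the product structure.

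The paper proceeds in two stages.
\begin{enumerate}
\item It bounds the discrepancy of the $X$-marginals by the $F$-distance of the joint measures plus tightness terms. This gives $d_{pmG}(X_0,X_1)=0$, hence the domain isometry $\iota$ via \cite{Bate22}.
\item It shows that $f_{0\ast}(\varphi\mu_0)$ and $(f_1\circ\iota)_\ast(\varphi\mu_0)$ are at $d_{pmG}$-distance zero. Taking $\varphi$ to be bumps around two points $x,y$ of a Lusin set where $f_0$ and $f_1\circ\iota$ are continuous, it deduces $d(f_0(x),f_0(y))=d(f_1(\iota(x)),f_1(\iota(y)))$. The graph isometry and $\tilde\iota$ are then read off.
\end{enumerate}
Your disintegration step would be a clean substitute for stage 2 once a single realization with equal joint measures exists. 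Producing such a realization requires an extraction argument that is missing. In the ``if'' direction, note also that $Z$ must receive all of $X_0$ and $X_1$, not just $\{x_1\}\cup\operatorname{spt}\mu_1$. Glue $X_0$ and $X_1$ along $\iota$, as you do for the targets.

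\paragraph{Completeness.} You leave the decisive step open, and your suspicion is correct. With $\M$ consisting of all essentially separably valued measurable maps, $(\M_{pmG},d_\M)$ is not complete, so this step cannot be closed as stated.

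Let $f_i(x)=\sin(2\pi i x)$ on $([0,1],\mathcal L^1,0)\to(\R,0)$. With identity embeddings, $(\operatorname{id},f_i)_\ast\mathcal L^1\rightharpoonup\mathcal L^1\otimes\sigma$ on $[0,1]\times\R$, where $\sigma$ is the arcsine law on $[-1,1]$. Hence $(f_i)$ is $d_\M$-Cauchy.

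Suppose $d_\M(f_i,g)\to0$ for some $g:(X_\infty,\mu_\infty,\bar x_\infty)\to W$. Then $X_\infty\simeq([0,1],\mathcal L^1,0)$. Take $x$ a density point of a Lusin set of $g$ and $\varphi=(1-d(\iota_\infty(x),\cdot)/r)_+$. Lemma \ref{lem:map-package-conv}(3)(iii) gives $(\tilde\iota_i\circ f_i)_\ast(\varphi\circ\iota_i\,\mathcal L^1)\rightharpoonup(\tilde\iota_\infty\circ g)_\ast(\varphi\circ\iota_\infty\,\mu_\infty)$.
\begin{itemize}
\item Left side: $\varphi\circ\iota_i$ is $r^{-1}$-Lipschitz and each period of $f_i$ pushes $\mathcal L^1$ to $\sigma/i$. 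So these measures are isometric images of $c_i\sigma$ up to total-variation error $O(1/(ri))$. They give mass at most $c_i\sup_y\sigma([y,y+2\delta])+o(1)$ to every open $\delta$-ball.
\item Right side: this measure gives mass at least $\frac{9}{10}c$ to $B(\tilde\iota_\infty(g(x)),\delta)$ once $r$ is small.
\end{itemize}
Since $c_i\to c>0$, lower semicontinuity on open sets gives a contradiction for small $\delta$.

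The remedies you mention (Young-measure or Wasserstein-valued targets, or passing to a closure) change the statement rather than prove it. The paper itself only asserts completeness by analogy with \cite[Proposition 2.20]{Bate22} and does not address this phenomenon. A correct version presumably needs a restriction such as equi-Lipschitz classes, where localized pushforwards have diameter $O(r)$ and limits are forced to be graph measures.
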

\begin{proof}
The proof of the triangle inequality and completeness of $d_\M$ are similar to the proof in \cite[Proposition 2.20]{Bate22} using Lemma \ref{lem:map-package-conv}. To see separability, note that we may always replace the target of a mapping package $f:(X,\mu,x)\to Y$ by $\operatorname{essIm}(f)$ since the resulting mapping package has zero $d_{\M}$-distance from $f$. Thus we may assume $f$ has separable image. Since every separable space admits an isometric embedding into $C([0,1])$, it is not difficult to see that the collection of maps $f:(X,\mu,x)\to C([0,1])$ where $X$ is finite and $\mu$ and $d$ take only rational values is dense in $d_{\M}$, cf. the proof of \cite[Corollary 4.13]{Bate22}.

Suppose $f_0,f_1\in \M$ satisfy $d_{\M}(f_0,f_1)=0$. Given isometric embeddings as in the definition of $d_{\M}$, we argue as in the proof of Lemma \ref{lem:map-package-conv}: let $\varepsilon<\delta$ be given, and let $g:Z\to [-1,1]$ be $1/\delta$-Lipschitz with $\operatorname{spt}g\subset B(z,1/\delta)$ and $\tilde g_R=(1-\operatorname{dist}(B(\tilde z,R),\cdot))_+\in C_{bbs}(\widetilde Z)$ where $R=\frac 1\varepsilon$. We have
\begin{align*}
\left|\int g\ud\iota_{0\ast}\mu_0-\int g\ud\iota_{1\ast}\mu_1\right|\le &\left|\int g\circ\iota_0(x)\tilde g_{R}(\tilde\iota_0\circ f_0(x))\ud\mu_0(x)-\int g\circ\iota_1(x)\tilde g_{R}(\tilde\iota_1\circ f_1(x))\ud\mu_1(x)\right|\\
& + \mu_0(B(x_0,1/\delta+1)\setminus f_0\inv B(f_0(x_0),1/\varepsilon))\\
&+\mu_1(B(x_1,1/\delta+1)\setminus f_1\inv B(f_1(x_1),1/\varepsilon))\\
\le & F_{(Z\times \tilde Z,(z,\tilde z))}^{2/\varepsilon,2/\varepsilon}((\iota_0,\tilde\iota_0\circ f_0)_\ast\mu_0,(\iota_1,\tilde\iota_1\circ f_1)_\ast\mu_1)\\
&+\mu_0(B(x_0,1/\delta+1)\setminus f_0\inv B(f_0(x_0),1/\varepsilon))\\
&+\mu_1(B(x_1,1/\delta+1)\setminus f_1\inv B(f_1(x_1),1/\varepsilon)).
\end{align*}
Since $d_{\M}(f_0,f_1)=0$, the first term in the RHS above can be made smaller than  $\varepsilon$ (upon infimizing over embeddings). Thus for some $\iota_0,\iota_1$ and small enough $\varepsilon>0$ we have $F_{(Z,z)}^{1/\delta,1/\delta}(\iota_{0\ast}\mu_0,\iota_{1\ast}\mu_1)<\delta$. This and the tightness of the measures $f_{i\ast}(\mu_i|_{B(x_i,R)})$ for $i=0,1$ and arbitrary $R>0$ imply $d_{pmG}((X_0,\mu_0,x_0),(X_1,\mu_1,x_1))=0$. By \cite[Theorem 4.11 and Lemma 4.12]{Bate22} this implies the existence of the required isometry $\iota:\{x_0\}\cup\operatorname{spt}\mu_0\to \{x_1\}\cup\operatorname{spt}\mu_1$. Denote shortly $(f_{0\ast}(\varphi\circ\iota\,\mu_0),\{f_0(x_0)\})=(\{f_0(x_0)\}\cup{\rm spt}f_{0\ast}(\varphi\circ\iota\,\mu_0),f_{0\ast}(\varphi\circ\iota\,\mu_0),\{f_0(x_0)\})$ and $(f_{1\ast}(\varphi\mu_1),\{f_1(x_1)\})=(\{f_1(x_1)\}\cup {\rm spt}f_{1\ast}(\varphi\mu_1),f_{1\ast}(\varphi\mu_1),\{f_1(x_1)\})$.
Observe that 
\begin{align*}
    d_{pmG}\big((f_{0\ast}(\varphi\circ\iota\,\mu_0),\{f_0(x_0)\}),(f_{1\ast}(\varphi\mu_1),\{f_1(x_1)\})\big)=0,
\end{align*}for any compactly supported Lipschitz function $\varphi:X_1\to \R$ by a similar argument as above. 
Indeed, the infimum in $d_{\M}(f_0,f_1\circ\iota)=d_{\M}(f_0,f_1)=0$ can be taken over $(Z,z)=(X_1,x_1)$ and embeddings $\iota:X_0\to X_1$, $\operatorname{id}: X_1\hookrightarrow X_1$. Note that, since $\mu_1=\iota_\ast\mu_0$ we have $f_{1\ast}(\varphi\mu_1)=(f_1\circ\iota)_\ast(\varphi\circ\iota\,\mu_0)$. Thus
\begin{align}\label{eq:push-equal}
    d_{pmG}\big((f_{0\ast}(\varphi\,\mu_0),\{f_0(x_0)\}),((f_{1}\circ\iota)_\ast(\varphi\,\mu_0),\{f_1(x_1)\})\big)=0,
\end{align}
for any compactly supported Lipschitz function $\varphi:X_0\to \R$.
Note that for $\mu_0$-a.e. $x\in X_0$ we have $(x,f_0(x))\in G(f_0)$ and $(\iota(x),f_1(\iota(x)))\in G(f_1)$. For such $x$ we have $f_0(x)\in \operatorname{essIm}(f_0)$ and $f_1(\iota(x))\in \operatorname{essIm}(f_1)$. We claim that the map 
\[
\bar\iota:G(f_0)\to G(f_1),\quad (x,f_0(x))\mapsto (\iota(x),f_1(\iota(x)))
\]
is an isometry (note that $\bar\iota$ is almost everywhere defined but can be extended to all of $G(f_0)$ after we show the claim).

Let $C_j\subset X_0$ be increasing closed sets with $\mu_0(X_0\setminus C_j)<2^{-j}$ and $f_0|_{C_j}, (f_1\circ\iota)|_{C_j}$ are continuous. Take $x,y\in C_j\cap \operatorname{spt}(\mu_0|_{C_j})=\operatorname{spt}(\mu_0|_{C_j})$ and consider $\varphi=(1-\varepsilon\inv d(x,\cdot))_++(1-\varepsilon\inv d(y,\cdot))_+$. By \eqref{eq:push-equal} there exists an isometry
\begin{align*}
\iota_\varepsilon:\{f_0(x_0)\}\cup\operatorname{spt}f_{0\ast}(\varphi\,\mu_0)\to \{f_1(x_1)\}\cup\operatorname{spt}(f_1\circ\iota)_\ast(\varphi\,\mu_0).
\end{align*}
For any $\delta>0$ we may choose $\varepsilon>0$ small enough so that $f_0(B(x,\varepsilon)\cap C_j)\subset B(f_0(x),\delta)$ and $(f_1\circ\iota)(B(x,\varepsilon)\cap C_j)\subset B(f_1(\iota(x)),\delta)$ and similarly for $y$. This implies $f_0(x),f_0(y)\in {\rm spt}f_{0\ast}(\varphi\mu_0)$ and $f_1(\iota(x)),f_1(\iota(y))\in{\rm spt}(f_1\circ \iota)_\ast(\varphi\mu_0)$. It follows that $\iota_\varepsilon(f_0(x))\in B(f_1(\iota(x)),\delta)$, $\iota_\varepsilon(f_0(y))\in B(f_1(\iota(y)),\delta)$ and 
\begin{align*}
    d(\iota_\varepsilon(f_0(x)),\iota_\varepsilon(f_0(y)))=d(f_0(x),f_0(y)).
\end{align*}

Consequently,
\begin{align*}
    d(f_0(x),f_0(y))-2\delta\le d(f_1(\iota(x)),f_1(\iota(y)))\le d(f_0(x),f_0(y))+2\delta.
\end{align*}
Since $\delta>0$ is arbitrary, we obtain the equality $d(f_0(x),f_0(y))=d(f_1(\iota(x)),f_1(\iota(y)))$. This yields 
\begin{align}\label{eq:graph-isom}
d^2(\bar\iota(x,f_0(x)),\bar\iota(y,f_0(y)))=&d^2(\iota(x),\iota(y))+d^2(f_1(\iota(x)),f_1(\iota(y)))\nonumber\\
=&d^2(x,y)+d^2(f_0(x),f_0(y)),
\end{align}
proving that $\bar\iota$ is an isometry. Now we may define
\begin{align*}
\tilde\iota:\{f_0(x_0)\}\cup\operatorname{essIm}(f_0)\to \{f_1(x_1)\}\cup\operatorname{essIm}(f_1),\quad y\mapsto p_{Y_1}(\bar\iota(p_{Y_0}\inv(y)))
\end{align*}
 This is a well-defined isometry by \eqref{eq:graph-isom}. Observe that $\tilde\iota (f_0(x_0))=f_1(x_1)$ and $\tilde\iota (f_0(x))=f_1(\iota(x))$ for $\mu_0$-a.e. $x\in X_0$ by the construction of $\bar\iota$. Thus the identity $\tilde\iota\circ f_0=f_1\circ\iota$ $\mu_0$-a.e. follows. 
\end{proof}

We have the following compactness property of pmG-convergence for families of maps into a fixed proper space.

\begin{lemma}\label{lem:map-precpt}
Suppose $L>0$, $\mathcal F\subset (\XX,d_{pmG})$ is precompact and $(Y,y)$ is a proper pointed metric space. Then the family $\M_{L}(\mathcal F,\{(Y,y)\})$ is precompact with respect to $d_\M$.
\end{lemma}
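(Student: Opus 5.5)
Given a sequence $f_i:X_i\to Y$ in $\M_L(\mathcal F,\{(Y,y)\})$, the plan is to produce a subsequence converging in $d_\M$. The most economical route goes through Corollary \ref{cor:lip-map-conv}: it suffices to find isometric embeddings $\iota_i:X_i\to(Z,z)$ into a complete separable space, a limit $X_\infty$ with $\iota_{i\ast}\mu_i\rightharpoonup\iota_{\infty\ast}\mu_\infty$, and an $L$-Lipschitz $f_\infty:X_\infty\to Y$ with $f_i(x_i)\to f_\infty(x)$ whenever $\iota_i(x_i)\to\iota_\infty(x)$, $x\in\operatorname{spt}\mu_\infty$. Throughout we take $\tilde\iota_i=\mathrm{id}_Y$.

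\textbf{Step 1: embed and extract.} By Lemma \ref{lem:precpt} we embed all $X_i$ into a common complete separable $(Z,z)$ with $\iota_i(\bar x_i)=z$ and $(\iota_{i\ast}\mu_i)$ precompact in $\mathcal M_{loc}(Z)$. Passing to a subsequence, $\iota_{i\ast}\mu_i\rightharpoonup\mu_\infty$, and we set $X_\infty=(\{z\}\cup\operatorname{spt}\mu_\infty,\mu_\infty,z)$ with $\iota_\infty$ the inclusion. The maps $F_i:=f_i\circ\iota_i\inv$ are $L$-Lipschitz on $A_i:=\iota_i(X_i)\subset Z$, with $F_i(z)=f_i(\bar x_i)=y$ (pointed mapping packages).

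\textbf{Step 2: produce the limit map.} A McShane-type extension is unavailable since $Y$ is merely proper, so we work on a countable dense set instead. Let $D=\{d^1,d^2,\dots\}$ be dense in $X_\infty$, containing $z$. By Lemma \ref{lemma:support_point} (and $z\in A_i$ for the basepoint), for each $d^m$ there are $a_i^m\in A_i$ with $a_i^m\to d^m$. The points $F_i(a_i^m)$ lie in $\bar B(y,L(d(a_i^m,z)))$, which is bounded uniformly in $i$ and hence compact since $Y$ is proper. A diagonal argument then gives a subsequence along which $F_i(a_i^m)\to f_\infty(d^m)$ for every $m$. Because
\[
d(F_i(a_i^m),F_i(a_i^n))\le L\,d(a_i^m,a_i^n),
\]
$f_\infty$ is $L$-Lipschitz on $D$, and so extends uniquely to an $L$-Lipschitz map on $X_\infty$ (completeness of $Y$), with $f_\infty(z)=y$.

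\textbf{Step 3: verify condition (ii) of Corollary \ref{cor:lip-map-conv}.} This is an $\varepsilon/3$ argument. Given $x\in X_\infty$ and $x_i$ with $\iota_i(x_i)\to x$, choose $d^m$ with $d(d^m,x)<\varepsilon$. For large $i$ we then have $d(\iota_i(x_i),a_i^m)<3\varepsilon$, so
\[
d(f_i(x_i),f_\infty(x))\le 3L\varepsilon+d(F_i(a_i^m),f_\infty(d^m))+L\varepsilon.
\]
Both (i) and (ii) now hold, and Corollary \ref{cor:lip-map-conv} gives $d_\M(f_{i},f_\infty)\to0$ along the subsequence.

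\textbf{Main obstacle.} The only delicate point is ensuring that $f_\infty$ is an honest mapping package in $\M_L(\mathcal F,\{(Y,y)\})$, or at least in its closure. It is Borel (indeed Lipschitz) and separably valued because $D$ is countable, and $X_\infty$ lies in the closure of $\mathcal F$; since precompactness only asks for limits in the ambient space, this suffices. The fact that Corollary \ref{cor:lip-map-conv} was proved with $\tilde Z$ complete, not necessarily separable, causes no trouble here, since $Y$ itself is separable being proper.
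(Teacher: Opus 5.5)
Your proof is correct, but it takes a different route from the paper's.

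\textbf{The paper's route.} The paper works with graphs. It embeds $G(f_i)\subset X_i\times Y$ into $Z\times Y$ via $(\iota_i,\mathrm{id})$. It then uses properness of $Y$, through the compact sets $K_Z\times\bar B_Y(y,LR)$, to show that $(\iota_i,f_i)_\ast\mu_i$ is precompact in $\mathcal M_{loc}(Z\times Y)$. Next it applies the general Arzela--Ascoli Theorem~\ref{thm:arzela-ascoli-ultra} to the graph maps $\bar f_i:X_i\to G(f_i)$, which are Lipschitz, co-uniform with $h(t)=t$, and push $\mu_i$ forward to $\mu_{f_i}$. It obtains $f_\infty$ by projecting the limit graph map to $Y$, and only at the end invokes Corollary~\ref{cor:lip-map-conv}, exactly as you do.

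\textbf{Your route.} You run the classical Arzela--Ascoli argument directly. You approximate a countable dense subset of $X_\infty$ by Lemma~\ref{lemma:support_point} and use properness only to make the values $f_i(a_i^m)$ relatively compact. You then diagonalize, extend by Lipschitz continuity and completeness of $Y$, and close with an $\varepsilon/3$ estimate.

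\textbf{What each buys.} Your argument is more elementary and self-contained. It bypasses the ultrafilter and measure-theoretic machinery behind Theorem~\ref{thm:arzela-ascoli-ultra} (the compact sets $C^m_\omega$, the Tietze extensions, the pushforward identity), and needs only the global Lipschitz bound on each $X_i$ together with the basepoint normalization $f_i(\bar x_i)=y$. The paper's route reuses its general theorem. It also directly produces convergence of the graph measures in $Z\times Y$, which is essentially characterization (2) of Lemma~\ref{lem:map-package-conv}. This is the same graph-lifting technique the paper uses for Haj\l asz--Sobolev maps in the proof of Theorem~\ref{thm:ultralimit+graph}(iii), where no global Lipschitz bound is available. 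By contrast, your $\varepsilon/3$ step uses that $f_i$ is $L$-Lipschitz on all of $X_i$.
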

\begin{proof}
Let $(f_i:X_i\to (Y,y))\subset \M_{L}(\mathcal F,\{(Y,y)\})$ be a sequence, where $(X_i=(X_i,\mu_i,\bar x_i))\subset \mathcal F$. By the precompactness of $\mathcal F$, there exists a subsequence (labeled with the same indices) and a pointed metric measure space $X_\infty=(X_\infty,\mu_\infty,\bar x_\infty)\in \XX$ together with isometric embeddings $\iota_i:X_{i}\to (Z,z)$, $\iota_\infty:X_\infty\to (Z,z)$ such that $F_{(Z,z)}(\iota_{i\ast}\mu_{i},\iota_{\infty\ast}\mu_\infty)\to 0$. Set $Z_i=G(f_i)\subset X_i\times Y$ and consider the isometric embeddings $\tilde\iota_i=(\iota_i,\operatorname{id}):X_i\times Y\to (Z\times Y,(z,y))$.
Note that $\tilde\iota_{i\ast}\mu_{f_i}=(\iota_i,f_i)_\ast\mu_i=:\nu_i$. Since $\{\iota_{i\ast}\mu_i:\ i\in\N\cup\{\infty\}\}\subset \mathcal M_{loc}(Z)$ is a precompact family, for every $\varepsilon,R>0$ there exists a compact set $K_Z\subset B_Z(z,R)$ with $\iota_{i\ast}\mu_i(B_Z(z,R)\setminus K_Z)<\varepsilon$ for all $i\in \N\cup\{\infty\}$. The compact set $K:=K_Z\times \bar B_Y(y,LR)\subset Z\times Y$ satisfies 
\[
(\iota_i,f_i)\inv K=\iota_i\inv K_Z\cap f_i\inv \bar B_Y(y,LR)\supset \iota_i\inv K_Z\cap \bar B(\bar x_i,R)=\iota_i\inv(\bar B_Z(z,R)\cap K_Z).
\]
Consequently
\begin{align*}
&\nu_i(B_{Z\times Y}((z,y),R)\setminus K)\le \nu_i((B_Z(z,R)\times B_Y(y,R))\setminus (K_Z\times \bar B_Y(y,LR)))\\
\le &\mu_i(\iota_i\inv B_Z(z,R)\cap f_i\inv B_Y(y,R)\setminus \iota_i\inv(\bar B_Z(z,R)\cap K_Z))\le \iota_{i\ast}\mu_i(B_Z(z,R)\setminus K_Z)<\varepsilon
\end{align*}
for all $i$. Thus $\{\nu_i:\ i\in\N\}\subset \mathcal M_{loc}(Z\times Y)$ is precompact. 

We may now apply Theorem \ref{thm:arzela-ascoli-ultra} to extract a subsequence of $\bar f_i:X_i\to Z_i$ converging to a limit map $\bar f_\infty:X_\infty\to Z_\infty$. (Note that since we have passed to suitable subsequences, the $\omega$-limits in \eqref{eq:omega-limit} are limits.) Set $f_\infty=p_Y\circ\tilde\iota_\infty\circ  \bar f_\infty:X_\infty\to (Y,y)$, where $\tilde \iota_\infty: Z_\infty\to Z\times Y$ is the isometric embedding from Theorem \ref{thm:arzela-ascoli-ultra} and $p_Y:Z\times Y \to Y$ the projection map. For any $x\in\operatorname{spt}\mu_\infty$ and $x_i\in X_i$ with $\iota_i(x_i)\to \iota_\infty(x)$ we have $\tilde\iota_i\circ \bar f_i(x_i)\to \tilde \iota_\infty\circ\bar f_\infty(x)$, and consequently $f_i(x_i)=p_Y\circ \tilde\iota_i\circ\bar f_i(x_i)\to f_\infty(x)$. By Corollary \ref{cor:lip-map-conv} this implies $d_{\M}(f_i,f_\infty)\to 0$, proving the claim.
\end{proof}

It is clear from Definition \ref{def:equi-lip-maps-conv} and Corollary \ref{cor:lip-map-conv} that pmG-convergence of a sequence $(f_i)\subset \M_L(\XX,\{(Y,y)\})$ to $f_\infty\in  \M_L(\XX,\{(Y,y)\})$ implies $d_\M(f_i,f_\infty)\to 0$. The proof of Lemma \ref{lem:map-precpt} also gives us the opposite implication, that is pmG-convergence (in the sense of Definition \ref{def:equi-lip-maps-conv} of a sequence $(f_i)\subset \M_L(\XX,\{(Y,y)\})$) is equivalent to $d_\M$-convergence of $(f_i)$. More precisely, we have the following Corollary.

\begin{corollary}\label{cor:map-precpt}
Suppose $(Y,y)$ is proper and $L>0$, and $(f_i)\subset \M_L(\XX,\{(Y,y)\})$ converges to $\tilde f_\infty\in \M$ with respect to $d_\M$. Then there exists $f_\infty\in \M_L(\XX,\{(Y,y)\})$ with $d_\M(\tilde f_\infty,f_\infty)=0$ so that $f_i\stackrel{pmG}{\longrightarrow}f_\infty$ in the sense of Definition \ref{def:equi-lip-maps-conv}. 
\end{corollary}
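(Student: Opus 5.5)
The plan is to rerun the compactness argument of Lemma \ref{lem:map-precpt} along the given $d_\M$-convergent sequence, and then use uniqueness of $d_\M$-limits up to zero distance.

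First, by Theorem \ref{thm:pmG-Gr-conv}(i), i.e.\ Lemma \ref{lem:map-package-conv}, the convergence $d_\M(f_i,\tilde f_\infty)\to 0$ gives $X_i\stackrel{pmG}{\longrightarrow}\tilde X_\infty$, the domain of $\tilde f_\infty$. Hence $\mathcal F=\{X_i\}_i$ is precompact in $(\XX,d_{pmG})$, and we may fix isometric embeddings $\iota_i:X_i\to(Z,z)$, $i\in\N\cup\{\infty\}$, into a complete separable space with $\iota_{i\ast}\mu_i\rightharpoonup\iota_{\infty\ast}\mu_\infty$, where $X_\infty:=\tilde X_\infty$.

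Second, we repeat the proof of Lemma \ref{lem:map-precpt} for the full sequence rather than a subsequence. Since $Y$ is proper and the $f_i$ are $L$-Lipschitz with $f_i(\bar x_i)=y$, the measures $\nu_i=(\iota_i,f_i)_\ast\mu_i$ on $Z\times Y$ are boundedly tight. The maps $\bar f_i=(\operatorname{id},f_i)$ are $\sqrt{1+L^2}$-Lipschitz, satisfy $\bar f_{i\ast}\mu_i=\mu_{f_i}$, and are $h$-co-uniform with $h(s)=s$. For any subsequence we therefore apply Theorem \ref{thm:arzela-ascoli-ultra} and pass to a further subsequence along which the $\omega$-limits are genuine limits. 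This produces an $L$-Lipschitz map $f_\infty:X_\infty\to(Y,y)$ such that $f_{i_k}(x_{i_k})\to f_\infty(x)$ whenever $\iota_{i_k}(x_{i_k})\to\iota_\infty(x)$ and $x\in\{\bar x_\infty\}\cup\operatorname{spt}\mu_\infty$. Here the domain limit is the fixed $X_\infty$, because weak limits in $\mathcal M_{loc}(Z)$ are unique. By Corollary \ref{cor:lip-map-conv}, $d_\M(f_{i_k},f_\infty)\to0$. The triangle inequality for $d_\M$ (Lemma \ref{lem:pmG-dist-zero}) then gives $d_\M(\tilde f_\infty,f_\infty)=0$.

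The step I expect to be the main obstacle is upgrading from a subsequence to the whole sequence while keeping the same $f_\infty$ and the same embeddings. Two different subsequential limits $f_\infty, f'_\infty$, built with the same $\iota_\infty$ and with target fixed to $Y$, both lie at $d_\M$-distance zero from $\tilde f_\infty$. So by Lemma \ref{lem:pmG-dist-zero} they differ only by an isometry of $\operatorname{spt}\mu_\infty$ and an isometry of the essential images. This alone does not force $f_\infty=f'_\infty$. To get equality I would pin down the limit directly from the fixed embeddings. By Corollary \ref{cor:lip-map-conv}(ii) applied along each subsequence, the value $f_\infty(x)$ is determined as the limit of $f_{i_k}(x_{i_k})$, and Lemma \ref{lem:map-package-conv}(3iii) says that the pushforwards $(f_i)_\ast(\varphi\circ\iota_i\,\mu_i)$ converge for every $\varphi$. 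Taking $\varphi$ to be bumps at $x$ of radius $r$, and using the Lipschitz bound (their supports lie in $B(f_i(x_i),Lr)$), I would show that $f_i(x_i)$ is Cauchy for the whole sequence. The fixed target $Y$ lets the ambient space $\widetilde Z$ be taken to be $Y$ after composing with an isometry; this is what makes a common target available. Then \eqref{eq:map-precpt} holds for the full sequence, $f_i\stackrel{pmG}{\longrightarrow}f_\infty$ in the sense of Definition \ref{def:equi-lip-maps-conv}, and $f_\infty\in\M_L(\XX,\{(Y,y)\})$.

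If identifying $\widetilde Z$ with $Y$ fails, the fallback is the standard subsequence argument. Every subsequence has a further subsequence converging in the sense of Definition \ref{def:equi-lip-maps-conv}, and the limits all agree with a single $f_\infty$ after the isometry normalization. This should be arranged by choosing all of them via the same countable dense set and a diagonal argument.
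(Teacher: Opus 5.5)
Your first two steps are the paper's proof. The paper reruns the argument of Lemma \ref{lem:map-precpt} on the given sequence, obtains an $L$-Lipschitz $f_\infty:X_\infty\to(Y,y)$ with convergence in the sense of Definition \ref{def:equi-lip-maps-conv}, deduces $d_\M(\tilde f_\infty,f_\infty)=0$, and asserts convergence of $(f_i)$ ``by construction''. You are right that this construction only gives convergence along a subsequence, since the proof of Lemma \ref{lem:map-precpt} extracts one when it invokes Theorem \ref{thm:arzela-ascoli-ultra}.

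\textbf{The gap} is in your upgrade to the full sequence. The target embeddings $\tilde\iota_i:Y\to\widetilde Z$ in Lemma \ref{lem:map-package-conv}(2) depend on $i$ and may differ by $i$-dependent isometries of $Y$. Your bump-function argument through Lemma \ref{lem:map-package-conv}(3iii) therefore shows at most that $\tilde\iota_i(f_i(x_i))$ converges in $\widetilde Z$. There is no single isometry identifying all the $\tilde\iota_i(Y)$ with $Y$, so nothing forces $f_i(x_i)$ to converge in $Y$. Your fallback has the same defect: subsequential limits that agree only up to isometries of $Y$ and of $\operatorname{spt}\mu_\infty$ do not combine into convergence of the whole sequence in $Y$.

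This cannot be repaired, because the full-sequence conclusion is false. Let $X=\{0,1\}\subset\R$ with $\mu=\delta_0+\delta_1$ and basepoint $0$, let $(Y,y)=(\R,0)$, and let $f_i(x)=(-1)^ix$, so $(f_i)\subset\M_1(\XX,\{(\R,0)\})$.

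\begin{itemize}
\item Taking $\tilde\iota=-\mathrm{id}$ on the target for odd $i$ gives $d_\M(f_i,f_0)=0$ for all $i$, so $f_i\to\tilde f_\infty:=f_0$ in $d_\M$.
\item Suppose $f_i\stackrel{pmG}{\longrightarrow}f_\infty$ in the sense of Definition \ref{def:equi-lip-maps-conv} with $d_\M(f_\infty,f_0)=0$. By Lemma \ref{lem:pmG-dist-zero}, $\mu_\infty$ consists of two unit atoms at distance $1$.
\item The weak convergence $\iota_{i\ast}\mu\rightharpoonup\iota_{\infty\ast}\mu_\infty$ then yields, for large $i$, bijections $a_i:\operatorname{spt}\mu_\infty\to X$ with $\iota_i(a_i(q))\to\iota_\infty(q)$.
\item Hence $f_i(a_i(q))\to f_\infty(q)$ for both atoms $q$, so $f_i(X)$ converges in Hausdorff distance. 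But $f_i(X)$ alternates between $\{0,1\}$ and $\{0,-1\}$: the even and odd subsequences have limits $f_0$ and $-f_0$, whose images in $Y$ differ.
\end{itemize}

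So what both you and the paper actually establish is the statement ``up to a subsequence'', as in Lemma \ref{lem:proper}. That is what the paper's argument delivers. The $d_\M$-conclusions drawn from the corollary later (e.g.\ in Lemma \ref{lem:density-pt-tangent}) can be recovered from it by the usual subsequence-of-subsequence argument.
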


\begin{proof}
    Since $(f_i)$ is $d_\M$-convergent (and thus the sequence of domains $(X_i,\mu_i,\bar x_i)$ of $f_i$ is pmG-convergent), the proof of Lemma \ref{lem:map-precpt} gives us the $d_\M$-limit map $f_\infty\in \M_L(\XX,\{(Y,y)\})$. Hence $d_\M(\tilde f_\infty,f_\infty)=0$, and by construction of $f_\infty$ it is clear that $f_i\stackrel{pmG}{\longrightarrow} f_\infty$.
\end{proof}

\subsection{Graphical convergence} We begin with the following remark.

\begin{remark}[The isometry class of $G(f)$ does not determine $f$]\label{rmk:graph}
The following example shows the existence of functions $f,g:X\to\R$ such that $G(f)$ and $G(g)$ are isometric but there are no isometries $h:X\to X$, $A:\operatorname{Im}(f)\to \operatorname{Im}(g)$ for which $A\circ f=g\circ h$.

Let $X=\{(0,0),(1,0),(0,1),(1/\sqrt 2,1/\sqrt 2)\}\subset (\R^2,\|\cdot\|_\infty)$. Define $f,g:X\to\R$ by
\begin{align*}
f(0,1)&=f(1/\sqrt 2,1/\sqrt 2)=f(1,0)=0,\quad f(0,0)=1\\
g(0,1)&=g(1,0)=0,\quad g(0,0)=1/2,\quad f(1/\sqrt 2,1/\sqrt 2)=\sqrt{7/8}.
\end{align*}
Consider the map $\tilde h:X\to X$ which fixes $(0,1)$ and $(1,0)$ and swaps $(0,0)$ and $(1/\sqrt 2,1/\sqrt 2)$. Then $\tilde h$ is bi-Lipschitz but not isometric, however $\bar h:G(f)\to G(g)$, $(x,f(x))\mapsto (\tilde h(x),g(\tilde h(x)))$ is an isometry.

In particular choosing as basepoint either $(0,1)$ or $(1,0)$ and equipping $X$ with the measure $\mu=\sum_{x\in X}\delta_{x}$, we have that $d_{Gr}(f,g)=0$ but $d_\M(f,g)>0$.
\end{remark}

The main advantages of graphical convergence are its connection with ultralimits given by Theorem \ref{thm:ultralimit+graph}, and its good compactness properties for maps into arbitrary targets, which we describe below. 

\begin{proposition}\label{prop:Gr=ultra}
Suppose $L>0$ and $(f_i)\subset \M_L$ is such that $d_{Gr}(f_i,f_\infty)\stackrel{i\to\infty}{\longrightarrow}0$ for some $f_\infty\in\M$. Then $d_{Gr}(f_\infty,f_\omega)=0$, where $f_\omega\in\M_L$ denotes the ultra-limit of the sequence $(f_i)$ given by Theorem \ref{thm:ultralimit+graph}.
\end{proposition}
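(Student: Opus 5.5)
The plan is to use Theorem \ref{thm:ultralimit+graph}, which gives $\lim_\omega G(f_i)=G(f_\omega)$ in $(\XX,d_{pmG})$, and to compare this ultralimit with the honest limit $G(f_\infty)$.

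First we check that Theorem \ref{thm:ultralimit+graph} applies. Since $d_{Gr}(f_i,f_\infty)\to 0$, the domains satisfy $X_i\stackrel{pmG}{\longrightarrow}X_\infty$. In particular $(X_i)$ is precompact in $\XX$, and its $\omega$-limit is $X_\infty$: a convergent sequence in a metric space has its ordinary limit as its $\omega$-limit for any non-principal ultrafilter. Each $f_i$ is $L$-Lipschitz, so $g_i\equiv L/2$ is a Haj\l asz gradient in $L^\infty$. Taking $p=\infty$, or any $p$ using local finiteness of measures, the energy bound \eqref{eq:energy-unif-bdd} follows from the UBF property in Remark \ref{rem:precompactinXX}, and $\bar x_i\in\{g_i\le L/2\}$ trivially. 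Theorem \ref{thm:ultralimit+graph} therefore produces $f_\omega:X_\omega\to Y_\omega$, where $X_\omega$ is a representative of $\lim_\omega X_i=[X_\infty]$.

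We also want $f_\omega\in\M_L$. Because $g_i\equiv L/2$ everywhere, \eqref{eq:ultralimit+graph} holds on a full-measure set with $\lambda>L/2$, and there $d(f_\omega(x),f_\omega(y))=\lim_\omega d(f_i(x_i),f_i(y_i))\le L\,d(x,y)$. So $f_\omega$ agrees $\mu_\omega$-a.e. with an $L$-Lipschitz map, and we fix that representative. This step is not needed for the distance statement itself.

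For the comparison of graphs, note that $d_{pmG}(G(f_i),G(f_\infty))\to 0$ by assumption. Hence $[G(f_\infty)]$ is the ordinary limit, and therefore the $\omega$-limit, of $([G(f_i)])$ in the metric space $(\XX_{pmG},d_{pmG})$. By item (iii) of Theorem \ref{thm:ultralimit+graph}, $\lim_\omega G(f_i)=G(f_\omega)$ as well. Uniqueness of ultralimits in a metric (Hausdorff) space then gives $d_{pmG}(G(f_\infty),G(f_\omega))=0$. Likewise $d_{pmG}(X_\infty,X_\omega)=0$, since both are $\omega$-limits of $(X_i)$. Adding the two gives $d_{Gr}(f_\infty,f_\omega)=0$. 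The triangle inequality for the pseudometric $d_{Gr}$ then yields $d_{Gr}(f_i,f_\omega)\le d_{Gr}(f_i,f_\infty)\to 0$, which gives Theorem \ref{thm:pmG-Gr-conv}(iiia).

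The main point needing care is that the ultralimit in (iii) of Theorem \ref{thm:ultralimit+graph} is taken in the quotient $\XX_{pmG}$, so that uniqueness of $\omega$-limits applies. This requires $d_{pmG}$ to be a genuine metric there, which Section \ref{sec:conv_tangents_mms} provides. It also requires that the base point of $G(f_\omega)$ be $\bar f_\omega(\bar x_\omega)$, which matches the base point used in that theorem. Everything else reduces to the routine fact that a convergent sequence has its limit as its $\omega$-limit.
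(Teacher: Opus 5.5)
Your proposal is correct and follows the paper's argument: both apply Theorem \ref{thm:ultralimit+graph}(iii) and use that the $\omega$-limit of a $d_{pmG}$-convergent sequence coincides, up to zero $d_{pmG}$-distance, with its ordinary limit, for both $(X_i)$ and $(G(f_i))$. Your explicit check of the hypotheses (the Haj\l asz gradient $g_i\equiv L/2$ together with the UBF condition) and the triangle-inequality remark giving $d_{Gr}(f_i,f_\omega)\to 0$ only spell out details the paper leaves implicit.
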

\begin{proof}
Fix a non-principal ultrafiler $\omega$ and note that the ultralimit $f_\omega:X_\omega\to Y_\omega$ of the sequence $(f_i)$ given by Theorem \ref{thm:ultralimit+graph} is an $L$-Lipschitz map and satisfies $G(f_i)\stackrel{\omega}{\longrightarrow}G(f_\omega)$. But since $d_{Gr}(f_i,f_\infty)\to 0$, it follows that $X_i\stackrel{pmG}{\longrightarrow} X_\infty$ and $G(f_i)\stackrel{pmG}{\longrightarrow} G(f_\infty)$. It follows that $d_{pmG}(X_\omega,X_\infty)=0$ and $d_{pmG}(G(f_\omega),G(f_\infty))=0$. This completes the proof.
\end{proof}
The same argument yields the claim for sequences of Haj\l asz--Sobolev maps with \eqref{eq:energy-unif-bdd} but we do not formulate this here for the sake of simplicity. Proposition \ref{prop:Gr=ultra} has the following powerful corollary.

\begin{corollary}\label{cor:graph-precpt}
Suppose $L>0$, $\mathcal F\subset (\XX,d_{pmG})$ is precompact, and $\mathcal C$ is a collection of complete pointed spaces. Then the collection $\M_{L}(\mathcal F,\mathcal C)$ is precompact with respect to $d_{Gr}$ and any limit is $L$-Lipschitz. 
\end{corollary}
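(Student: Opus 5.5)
To prove Corollary \ref{cor:graph-precpt}, take an arbitrary sequence $(f_i)\subset\M_L(\mathcal F,\mathcal C)$, $f_i:(X_i,\mu_i,\bar x_i)\to Y_i$. The goal is a subsequence that is $d_{Gr}$-Cauchy, and in fact $d_{Gr}$-convergent to an $L$-Lipschitz mapping package. Since $d_{Gr}=d_{pmG}(X_\cdot,X_\cdot)+d_{pmG}(G(\cdot),G(\cdot))$, it suffices to find a subsequence along which both the domains $X_i$ and the graphs $G(f_i)$ converge in $(\XX,d_{pmG})$, and to identify the limit of the graphs as the graph of a Lipschitz map.

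First I would pass to a subsequence so that $X_i\stackrel{pmG}{\longrightarrow}X_\infty$; this is possible by the precompactness of $\mathcal F$. Next I would show that the graphs $G(f_i)$ are precompact in $\XX$, arguing exactly as in the proof of Theorem \ref{thm:ultralimit+graph}(iii), but in the simpler Lipschitz setting.
\begin{itemize}
\item[(a)] \emph{Uniform bounded finiteness.} This follows from $\bar f_i\inv B(\bar f_i(\bar x_i),R)\subset B(\bar x_i,R)$.
\item[(b)] \emph{Bounded measure-theoretic total boundedness.} This follows from the Remark in Section \ref{sec:mappings_ms}, namely $B(x,r/\sqrt{1+L^2})\subset\bar f_i\inv B(\bar f_i(x),r)$. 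Hence a cover of $B(\bar x_i,R)$ by $N$ balls of radius $r/\sqrt{1+L^2}$, up to $\mu_i$-measure $\varepsilon$, pushes forward to a cover of $B(\bar f_i(\bar x_i),R)$ by $N$ balls of radius $r$, up to $\mu_{f_i}$-measure $\varepsilon$.
\end{itemize}
By Remark \ref{rem:precompactinXX}, (a) and (b) give precompactness, so a further subsequence has $G(f_i)\stackrel{pmG}{\longrightarrow}G_\infty$ for some $G_\infty\in\XX$. Equivalently, one can use the Lipschitz case $g_i\equiv L/2$, $p=\infty$ of Theorem \ref{thm:ultralimit+graph}, whose hypotheses (energy bound and $\bar x_i\in\{g_i\le t\}$) are trivially satisfied. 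The domain is precompact by assumption, so $X_\omega$ exists, and part (iii) gives $G(f_i)\to G(f_\omega)$ along $\omega$.

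To identify the limit, fix a non-principal ultrafilter $\omega$ and apply Theorem \ref{thm:ultralimit+graph} to the subsequence. This gives $f_\omega:X_\omega\to Y_\omega$ with $\lim_\omega G(f_i)=G(f_\omega)$. Since the subsequence already converges in $d_{pmG}$, the $\omega$-limits coincide with the honest limits: $d_{pmG}(X_\omega,X_\infty)=0$ and $d_{pmG}(G(f_\omega),G_\infty)=0$. Hence $d_{Gr}(f_i,f_\omega)\to0$ along the subsequence, and $f_\omega\in\M$ is a $d_{Gr}$-limit. The map $f_\omega$ is $L$-Lipschitz on $\operatorname{spt}\mu_\omega$, since its gradient is the constant $L/2$ (alternatively, directly from $d(f_\omega(x),f_\omega(y))=\lim_\omega d(f_i(x_i),f_i(y_i))\le L\,d(x,y)$ via \eqref{eq:ultralimit+graph}).

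Finally, for the clause that \emph{any} limit is $L$-Lipschitz, I would invoke Proposition \ref{prop:Gr=ultra}. If $d_{Gr}(f_i,f_\infty)\to0$ for some $f_\infty\in\M$, then $d_{Gr}(f_\infty,f_\omega)=0$ with $f_\omega$ $L$-Lipschitz, so $f_\infty$ is $L$-Lipschitz up to $d_{Gr}$-equivalence. This subtlety, that a $d_{Gr}$-equivalent representative need not be literally Lipschitz as a pointwise map, is the main point requiring care. I would interpret ``$L$-Lipschitz'' modulo $d_{Gr}$-equivalence, i.e.\ that the limit class contains an $L$-Lipschitz representative, consistent with working in $\M_{Gr}$. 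I expect no real obstacle beyond checking that the basepoint hypothesis of Theorem \ref{thm:ultralimit+graph} holds, which it does since $\{g_i\le L/2\}=X_i$.
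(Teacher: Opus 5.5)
Your proposal is correct and is essentially the paper's argument. You pass to a subsequence along which the domains converge, apply Theorem \ref{thm:ultralimit+graph} in the Lipschitz case $g_i\equiv L/2$ to get an $L$-Lipschitz $f_\omega$ with $\lim_\omega G(f_i)=G(f_\omega)$, and conclude $d_{Gr}(f_i,f_\omega)\to 0$ along a subsequence. The only difference is that you first verify precompactness of the graphs by hand via UBF/BMTB (which is exactly the first step of the proof of Theorem \ref{thm:ultralimit+graph}(iii)) and then identify the limit, whereas the paper extracts the convergent subsequence directly from the $\omega$-convergence; your reading of ``any limit is $L$-Lipschitz'' modulo $d_{Gr}$-equivalence via Proposition \ref{prop:Gr=ultra} is consistent with what the paper actually establishes.
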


\begin{proof}
It suffices to show any sequence $(f_i)\subset \M_L(\mathcal F,\mathcal C)$ has a further subsequence converging with respect to $d_{Gr}$. Passing to a subsequence, we may assume that domains $X_i$ of $f_i$ pmG-converge to $X_\infty$. The ultralimit $ f_\omega:X_\infty\to Z$ of this subsequence is a $L$-Lipschitz map for which $G(f_i)\stackrel{\omega}{\longrightarrow}G(f_\omega)$. Thus, for a further subsequence we have $X_i\to X_\infty$ and $G(f_i)\to G(f_\omega)$, implying $d_{Gr}(f_i, f_\omega)\to 0$. 
\end{proof}

\subsubsection*{Relationship between graphical and pmG convergence} While it follows from Remark \ref{rmk:pmg-vs-gr} and Lemma \ref{lem:map-package-conv} that 
\begin{align}\label{eq:pmg-implies-gr}
d_{Gr}(f_i,f)\stackrel{i\to\infty}{\longrightarrow}0\quad\mathrm{if}\quad d_\M(f_i,f)\stackrel{i\to\infty}{\longrightarrow}0,
\end{align}
the converse does not necessarily hold, cf. Remark \ref{rmk:graph}. For maps into a fixed proper target, however, we have a weak version of the converse implication.

\begin{lemma}\label{lem:proper}
Let $L>0$ and $(Y,y)$ be a proper pointed metric space. If $(f_i)\subset \M_L(\XX,\{(Y,y)\})$, $\tilde f\in \M$ satisfy $d_{Gr}(f_i,\tilde f)\stackrel{i\to\infty}{\longrightarrow}0$, then there exists $f\in \M_L(\XX,\{(Y,y)\})$ with $d_{Gr}(f,\tilde f)=0$ such that $f_i\stackrel{i\to\infty}{\longrightarrow}f$ (up to a subsequence) in the sense of Definition \ref{def:equi-lip-maps-conv}.
\end{lemma}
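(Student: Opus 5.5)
Since $d_{Gr}(f_i,\tilde f)\to 0$, Theorem \ref{thm:pmG-Gr-conv}(i) gives $X_i\stackrel{pmG}{\longrightarrow}\tilde X$, where $\tilde X$ denotes the domain of $\tilde f$. In particular $(X_i)$ is precompact in $(\XX,d_{pmG})$. The plan is to use the compactness of Lemma \ref{lem:map-precpt} for the pmG-distance $d_\M$, and then identify the $d_\M$-limit with $\tilde f$ up to $d_{Gr}$-equivalence via the implication \eqref{eq:pmg-implies-gr}.

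\textbf{Step 1 (extracting a pmG-limit).} Apply Lemma \ref{lem:map-precpt} with $\mathcal F=\{X_i\}$ and the proper target $(Y,y)$. This gives a subsequence $(f_{i_k})$ and a limit $\hat f\in\M$ with $d_\M(f_{i_k},\hat f)\to 0$. Corollary \ref{cor:map-precpt} then provides $f\in\M_L(\XX,\{(Y,y)\})$ with $d_\M(\hat f,f)=0$ and $f_{i_k}\stackrel{pmG}{\longrightarrow}f$ in the sense of Definition \ref{def:equi-lip-maps-conv}. Its domain is a pmG-limit of $X_{i_k}$, hence pmG-isomorphic to $\tilde X$. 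The Lipschitz bound $L$ is inherited, by the last assertion of Corollary \ref{cor:lip-map-conv}.

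\textbf{Step 2 (identifying the limit).} By \eqref{eq:pmg-implies-gr}, i.e. Theorem \ref{thm:pmG-Gr-conv}(ii), pmG-convergence implies graphical convergence, so $d_{Gr}(f_{i_k},f)\to 0$. The hypothesis also gives $d_{Gr}(f_{i_k},\tilde f)\to 0$. Since $d_{Gr}$ is a pseudometric (a sum of two copies of the pseudometric $d_{pmG}$), the triangle inequality yields
\begin{align*}
d_{Gr}(f,\tilde f)\le d_{Gr}(f,f_{i_k})+d_{Gr}(f_{i_k},\tilde f)\stackrel{k\to\infty}{\longrightarrow}0 .
\end{align*}
Hence $d_{Gr}(f,\tilde f)=0$, which is the claim.

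\textbf{Main obstacle.} The delicate point is Step 1, specifically that the $d_\M$-limit can be realized as a genuine $L$-Lipschitz map into the fixed space $(Y,y)$ rather than into some isometric copy of a subset of $Y$. This is exactly what the construction in Lemma \ref{lem:map-precpt} provides: there the limit is $f=p_Y\circ\tilde\iota_\infty\circ\bar f_\infty$, obtained from Theorem \ref{thm:arzela-ascoli-ultra} applied in $Z\times Y$ with embeddings $(\iota_i,\operatorname{id})$. I will check that this construction fits the present setting: the graph maps $\bar f_i$ are $\sqrt{1+L^2}$-Lipschitz and $1$-co-uniform, since $\bar f_i\inv(B)$ has diameter at most $\operatorname{diam}B$, and the precompactness of the push-forward measures was verified in that proof using properness of $Y$. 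The remaining steps are formal.
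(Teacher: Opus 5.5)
Your proposal is correct and follows the paper's proof. Like the paper, you get precompactness of the domains from $X_i\stackrel{pmG}{\longrightarrow}\widetilde X$, extract a $d_\M$-convergent subsequence with an $L$-Lipschitz limit into $(Y,y)$ via Lemma \ref{lem:map-precpt} (the construction in its proof, as recorded in Corollary \ref{cor:map-precpt}), and conclude $d_{Gr}(f,\tilde f)=0$ from \eqref{eq:pmg-implies-gr}; your explicit triangle-inequality step for the pseudometric $d_{Gr}$ only spells out what the paper leaves implicit.
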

In particular, under the assumptions of Lemma, \ref{lem:proper} we have $d_\M(f_i,f)\stackrel{i\to\infty}{\longrightarrow}0$ up to a subsequence.
\begin{proof}
Suppose $d_{Gr}(f_i,\tilde f)\to 0$. Then the domains $X_i$ of $f_i$ pmG-converge to a limit $\widetilde X\in \XX$. Thus, the family $\mathcal F=\{X_i:\ i\in\N\}$ is precompact in $\XX$. Take any subsequence (not relabeled) of $(f_i)$. By Lemma \ref{lem:map-precpt} a further subsequence of $(f_i)$ (not relabeled) converges with respect to $d_\M$ to a limit $f\in \M_L(\XX,\{(Y,y)\})$. It follows from \eqref{eq:pmg-implies-gr} that $d_{Gr}(\tilde f,f)=0$. 
\end{proof}

\begin{remark}
    It is not necessarily true that we can find $f$ with $d_\M(f,\tilde f)=0$ for which $d_\M(f_i,f)\stackrel{i\to\infty}{\longrightarrow}0$, nor does the convergence claimed in the statement of the Lemma need to hold for the full sequence; both of these claims follow from the example in Remark \ref{rmk:graph}.
\end{remark}

\section{Tangents}\label{sec:tangents}

\subsection{Compactness and measurability of tangents of mapping packages}
\label{sec:Tangents1}
The main result of this subsection is the following theorem.
\begin{theorem}\label{thm:tan-meas+cpt}
Suppose $(X,\mu)$ is a pointwise doubling metric measure space and $f:(X,\mu)\to V$ a Lipshitz map into a Banach space $V$. There exists a $\mu$-null Borel set $N\subset X$ so that
\begin{itemize}
    \item[(i)] $\operatorname{Tan}_{Gr}(f,x)$ is non-empty and compact in $(\M,d_{Gr})$ for $x\in X\setminus N$, and the correspondence $x\mapsto \operatorname{Tan}_{Gr}(f,x):X\setminus N\to (\M,d_{Gr})$ is measurable;
    \item[(ii)] If $V$ is finite dimensional, then $\operatorname{Tan}_{pmG}(f,x)$ is non-empty and compact in $(\M,d_{\M})$ for $x\in X\setminus N$, and the correspondence $x\mapsto \operatorname{Tan}_{pmG}(f,x):X\setminus N\to (\M,d_\M)$ is measurable.
\end{itemize}
\end{theorem}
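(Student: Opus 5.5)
We will prove Theorem \ref{thm:tan-meas+cpt} by combining the compactness of rescalings of the domain at pointwise doubling points with Corollary \ref{cor:graph-precpt} (for (i)) and Lemma \ref{lem:map-precpt} (for (ii)). Measurability is handled as a standard "closure of a continuous image" argument.

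\emph{Step 1: precompactness of domains.} Let $N_0$ be the $\mu$-null set where $C_\mu(x)=\infty$ or $x\notin\operatorname{spt}\mu$. Fix $x\in X\setminus N_0$ and choose $r_0(x)>0$ with $\mu(B(x,2r))\le 2C_\mu(x)\mu(B(x,r))$ for $r<r_0(x)$. By \eqref{eq:normalizing-const}, the family $\mathcal F_x=\{T_{x,r}X:\ 0<r<r_0(x)\}$ is uniformly boundedly finite:
\[
\mu_{x,r}(B(x,Rr))\lesssim C_\mu(x)^{\log_2 R+c}.
\]
The BMTB condition of Remark \ref{rem:precompactinXX} is the substantive point, since pointwise doubling controls only balls centred at $x$, not at arbitrary points. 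We would try to obtain it as follows. Decompose $X$, up to a null set, into countably many Borel sets $A_{C,R}$ along which $\mu$ is $(C,R)$-doubling. At a Lebesgue density point $x$ of $A_{C,R}$, the mass of $B(x,Rr)\setminus A_{C,R}$ is a small fraction of $\mu(B(x,r))$. Covering $B(x,Rr)\cap A_{C,R}$ by a maximal $\rho r$-separated net and using doubling at the net points bounds the number of balls needed. This is the standard argument used for $\operatorname{Tan}_{pmG}(X,\mu,x)$ in \cite{GMR15,Bate22}, and we expect it to be the main technical obstacle. Enlarging $N_0$ by the non-density points gives a null Borel set $N$. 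The basepoint lies in $\operatorname{spt}\mu_\infty$ by the lower mass bound from \eqref{eq:normalizing-const}.

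\emph{Step 2: non-emptiness and compactness.} $T_{x,r}f$ is $\operatorname{LIP}(f)$-Lipschitz on $T_{x,r}X$, so $\{T_{x,r}f\}_{r<r_0}\subset\M_L(\mathcal F_x,\{V\})$.
\begin{itemize}
\item For (i): Corollary \ref{cor:graph-precpt} shows this family is $d_{Gr}$-precompact, so every $r_i\to0$ has a graphically convergent subsequence. Hence $\operatorname{Tan}_{Gr}(f,x)\neq\emptyset$.
\item For (ii): if $V$ is finite dimensional it is proper, and Lemma \ref{lem:map-precpt} gives the same conclusion for $d_\M$.
\end{itemize}
In both cases the tangent set is the set of accumulation points, as $r\to0$, of the precompact curve $r\mapsto T_{x,r}f$ in a complete pseudometric space:
\[
\operatorname{Tan}(f,x)=\bigcap_{k}\overline{\{T_{x,r}f:\ 0<r<1/k\}}.
\]
This is a decreasing intersection of nonempty compact sets, hence nonempty and compact. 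Completeness comes from Lemma \ref{lem:pmG-dist-zero} and the paragraph after Definition \ref{def:graphical-metric}.

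\emph{Step 3: measurability.} First check that $(x,r)\mapsto T_{x,r}f$ is Borel into $(\M,d_{Gr})$, respectively $(\M,d_\M)$, restricted to $x\in\operatorname{spt}\mu$:
\begin{itemize}
\item $\varphi_\mu(x,r)$ is continuous in $(x,r)$ because $\varphi$ is continuous;
\item changing the basepoint of a fixed space to a nearby point, or rescaling slightly, changes $d_{pmG}$, and hence $d_\M$ and $d_{Gr}$, by a small amount. Thus the map is continuous on $\operatorname{spt}\mu\times(0,\infty)$ along continuity points of $f$. Since $f$ is Lipschitz, it is continuous everywhere.
\end{itemize}
So $\Phi(x,r)=T_{x,r}f$ is continuous. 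Then $x\mapsto\overline{\Phi(x,(0,1/k)\cap\mathbb Q)}$ is a lower semicontinuous, hence measurable, closed-valued correspondence. The tangent correspondence is the countable decreasing intersection of these compact-valued measurable correspondences on $X\setminus N$, which is measurable by standard results on measurable multifunctions (e.g. \cite{AliprantisBorder99}). The same argument applies verbatim for $d_\M$ in (ii).
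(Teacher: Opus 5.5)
Your proposal is correct and follows essentially the same route as the paper. The steps match: a.e. precompactness of $\{T_{x,r}X\}$ (which the paper simply cites from \cite[Proposition 5.3]{Bate22}, adjusting the normalization via \eqref{eq:normalizing-const}, rather than re-deriving BMTB as you sketch), then Corollary \ref{cor:graph-precpt} resp.\ Lemma \ref{lem:map-precpt}, $\operatorname{Tan}$ as a decreasing intersection of compact closures, and measurability from continuity of $x\mapsto T_{x,r}f$, weak measurability of the closure correspondences, and countable intersection of compact-valued measurable correspondences.

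The differences are cosmetic. The paper phrases weak measurability through upper semicontinuity of $x\mapsto d_\ast(g,T(f,x,R))$ (Lemma \ref{lem:T-weakly-meas}) and uses only continuity in $x$ for each fixed real $r$. Your restriction to rational radii and the joint continuity in $(x,r)$ are therefore unnecessary, although joint continuity is one way to see that the closures are compact also for $r\ge r_0(x)$.
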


We recall here that a \emph{correspondence} $\varphi:X\twoheadrightarrow Y$, i.e. a set valued map $\varphi:X\to 2^Y$, is measurable if $\varphi^\ell(F)\in\mathscr{B}(X)$ for every closed $F\subset Y$,
where 
\begin{align*}
    \varphi^\ell(A)=\{x\in X:\ \varphi(x)\cap A\ne \varnothing\},\quad A\subset Y,
\end{align*}
see \cite[Chapter 18]{AliprantisBorder99}.

 More precisely, the compactness and measurability of the previous Theorem are considered with respect to the spaces of the equivalence classes, namely $(\M_{Gr}, d_{Gr})$ and $(\M_{pmG},d_\M)$. In the sequel, we similarly suppress the equivalence class notation throughout for simplicity, as the intended meaning is clear from the context. 

\subsubsection*{Rescaling of Lipschitz maps into Banach spaces}

Recall that given a metric measure space $(X,\mu)$ and $\bar x\in X$, $T_{\bar x,r}X$ denotes the pointed metric measure space $\big(r\inv X,\frac{\mu}{\varphi_\mu(\bar x,r)},\bar x\big)$ (if $\varphi_\mu(\bar x,r)=0$ we set $T_{\bar x,r}X=(r\inv X,0,\bar x)$). Similarly, given a mapping package $f:(X,\mu)\to V$ into a Banach space $V$ and $\bar x\in\operatorname{spt}\mu$, $T_{\bar x,r}f$ denotes the mapping package
\begin{align*}
    T_{\bar x,r}f:T_{\bar x,r}X\to V,\quad T_{\bar x,r}f=\frac{f-f(\bar x)}{r}.
\end{align*}
Moreover, we denote 
\[
 \widetilde T_{ x,r}G(f)=\big(r\inv G(f),\frac{\mu_f}{\varphi_\mu( x,r)},\bar f( x)\big).
\]

\begin{remark}\label{rmk:lip}
Note that if $f:(X,\mu)\to V$ is $L$-Lipschitz, then the rescaling $T_{x,r}f$ is $L$-Lipschitz for all $x\in {\rm spt}\mu$ and $r>0$.
\end{remark}
Before establishing some continuity properties of the operator $T_{x,r}$, we record the following useful observation. We denote $T_{a,r}:V\to V$, $y\mapsto \frac{y-a}{r}$ for $a\in V$.
\begin{lemma}\label{lem:graph-vs-T}
The map $\iota=(\operatorname{id},T_{f(\bar x),r})\inv:G(T_{\bar x,r}f)\to \widetilde T_{\bar x,r}G(f)$ is a pointed isometry that satisfies
\begin{align*}
    \iota_\ast\mu_{T_{\bar x,r}f}=\frac{\mu_f}{\varphi_\mu(\bar x,r)}=\frac{\varphi_{\mu_f}(\bar f(\bar x),r)}{\varphi_\mu(\bar x,r)}\cdot (\mu_f)_{\bar f(\bar x),r}
\end{align*}
\end{lemma}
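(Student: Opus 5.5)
The plan is to write out both sides explicitly and check three things for the map $\iota$: that it is well defined and bijective between the underlying sets, that it is isometric with the right basepoint, and that it pushes the measure forward as claimed.

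\emph{The underlying sets.} The graph $G(T_{\bar x,r}f)$ lives in $r\inv X\times V$. Its measure is $\mu_{T_{\bar x,r}f}=(\operatorname{id},T_{\bar x,r}f)_\ast\frac{\mu}{\varphi_\mu(\bar x,r)}$, and its basepoint is $(\bar x,0)$. The space $\widetilde T_{\bar x,r}G(f)$ lives in $r\inv(X\times V)$, carries the measure $\frac{\mu_f}{\varphi_\mu(\bar x,r)}$, and has basepoint $(\bar x,f(\bar x))$. Since $T_{\bar x,r}f=T_{f(\bar x),r}\circ f$, we have
\[
(\operatorname{id},T_{f(\bar x),r})\circ\bar f=(\operatorname{id},T_{\bar x,r}f).
\]
Hence the inverse map $\iota(x,v)=(x,f(\bar x)+rv)$ sends graph points $(x,T_{\bar x,r}f(x))$ to $(x,f(x))$ and the basepoint $(\bar x,0)$ to $(\bar x,f(\bar x))$.

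\emph{Isometry.} Write $d_r$ for the product metric on $r\inv X\times V$. The metric on the graph $G(T_{\bar x,r}f)$ is the restriction of $d_r$, namely
\[
d_r\big((x,v),(y,w)\big)^2=r^{-2}d(x,y)^2+\|v-w\|^2.
\]
On the image side,
\[
r^{-1}d_{X\times V}\big(\iota(x,v),\iota(y,w)\big)=r^{-1}\big(d(x,y)^2+r^2\|v-w\|^2\big)^{1/2},
\]
which is the same quantity. This uses the same Euclidean-product convention as in the proof of Theorem \ref{thm:ultralimit+graph}; any other product norm works identically, because the norm is homogeneous. So $\iota$ is an isometry on all of $r\inv X\times V$, and in particular on the graph.

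\emph{Measures and supports.} For the first equality, push forward and use the identity above:
\[
\iota_\ast\mu_{T_{\bar x,r}f}=\frac{1}{\varphi_\mu(\bar x,r)}\,\big(\iota\circ(\operatorname{id},T_{\bar x,r}f)\big)_\ast\mu=\frac{\bar f_\ast\mu}{\varphi_\mu(\bar x,r)}=\frac{\mu_f}{\varphi_\mu(\bar x,r)}.
\]
Since $\iota$ is a homeomorphism of the ambient spaces, it maps supports to supports. Thus it identifies $\{(\bar x,0)\}\cup\operatorname{spt}\mu_{T_{\bar x,r}f}$ with $\{\bar f(\bar x)\}\cup\operatorname{spt}\mu_f$, as a pointed isometry of pointed metric measure spaces.

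The second equality is the definition $(\mu_f)_{\bar f(\bar x),r}=\mu_f/\varphi_{\mu_f}(\bar f(\bar x),r)$, multiplied by $\varphi_{\mu_f}(\bar f(\bar x),r)/\varphi_\mu(\bar x,r)$. It is harmless to note, though not needed, that
\[
\varphi_{\mu_f}(\bar f(\bar x),r)=\int\varphi\big(d(\bar f(y),\bar f(\bar x))/r\big)\,d\mu(y).
\]

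\emph{Where care is needed.} No step is a real obstacle. The only points needing attention are the degenerate cases where $\varphi_\mu(\bar x,r)=0$ or $\varphi_{\mu_f}(\bar f(\bar x),r)=0$, which are covered by the convention that the measure is then $0$, and keeping track of the rescaled metric convention.
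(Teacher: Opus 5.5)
Your proposal is correct and is essentially the paper's proof: the pushforward identity comes from $\iota\circ(\operatorname{id},T_{\bar x,r}f)=\bar f$, and the isometry comes from the same homogeneity computation $r^{-2}\big(d(x,y)^2+r^2\|v-w\|^2\big)=r^{-2}d(x,y)^2+\|v-w\|^2$. The only minor difference is that you check the isometry on the whole ambient space $r^{-1}X\times V$, whereas the paper checks it for $\mu$-a.e.\ pair of graph points; your version makes the identification of supports and basepoints immediate.
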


\begin{proof}
Since $\iota(\bar x,0)=(\bar x,f(\bar x))=\bar f(\bar x)$, $\iota$ preserves basepoints. We have for $\mu$-a.e. $x,y\in X$ that
\begin{align*}
&\left(\frac{d_{G(f)}(\iota(x,T_{\bar x,r}f(x)),\iota(y,T_{\bar x,r}f(y)))}{r}\right)^2=\left(\frac{d_{G(f)}((x,f(x)),(y,f(y)))}{r}\right)^2\\
=&\left(\frac{d(x,y)}{r}\right)^2+\left(\frac{\|f(x)-f(y)\|_V}{r}\right)^2=(r\inv d(x,y))^2+\|T_{\bar x,r}f(x)-T_{\bar x,r}f(y)\|_V^2\\
=&d_{G(T_{\bar x,r}f)}((x,T_{\bar x,r}f(x)),(y,T_{\bar x,r}f(y)))^2.
\end{align*}
This proves that $\iota$ is an isometry. Finally, note that 
\begin{align*}
\mu_{T_{\bar x,r}f}=(\overline{T_{\bar x,r}f})_\ast\big(\frac{\mu}{\varphi_\mu(\bar x,r)}\big)=\frac{1}{\varphi_\mu(\bar x,r)}((\operatorname{id},T_{f(\bar x),r})\circ\bar f)_\ast\mu=\frac{(\iota\inv)_\ast\mu_f}{\varphi_\mu(\bar x,r)},
\end{align*}
from which it follows that $\displaystyle    \iota_\ast\mu_{T_{\bar x,r}f}=\frac{\mu_f}{\varphi_\mu(\bar x,r)}$. 
\end{proof}

\begin{lemma}\label{lem:T-cont}
Suppose $r,L>0$, $V$ is a Banach space, $(f_i)\subset \M_L(\XX,\{(V,v_0)\})$ and $\tilde f\in \M$ with $\varphi_{\tilde\mu}(\tilde x,r)\ne 0$.
\begin{itemize}
    \item[(i)] If $d_{Gr}(f_i,\tilde f)\stackrel{i\to\infty}{\longrightarrow}0$ then there exists $f\in\M_L(\XX,V^\omega)$ with $d_{Gr}(f,\tilde f)=0$ for which\\ $d_{Gr}(T_{\bar x_i,r} f_i,T_{\bar x,r}f)\stackrel{i\to\infty}{\longrightarrow}0$ (for any non-principal ultrafilter $\omega)$;
    \item[(ii)] If $V$ is finite dimensional and $d_\M(f_i,\tilde f)\stackrel{i\to\infty}{\longrightarrow}0$, then there exists $f\in\M_L(\XX,V)$ with $d_\M(f,\tilde f)=0$ for which $T_{\bar x_i,r}f_i\to T_{\bar x,r}f$ in the sense of Definition \ref{def:equi-lip-maps-conv}.
\end{itemize}
In particular, under the assumptions of (ii) we have $d_\M(T_{\bar x_i,r}f_i,T_{\bar x,r}f)\stackrel{i\to\infty}{\longrightarrow}0$.
\end{lemma}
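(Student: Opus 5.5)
We first replace $\tilde f$ by a convenient representative. For (i), Proposition \ref{prop:Gr=ultra} gives $d_{Gr}(\tilde f,f_\omega)=0$, where $f_\omega:X_\omega\to V^\omega$ is the $L$-Lipschitz ultralimit from Theorem \ref{thm:ultralimit+graph}; we take $f:=f_\omega$. For (ii), Corollary \ref{cor:map-precpt} (after translating so that all maps are pointed at $v_0$; since $V$ is finite dimensional it is proper) gives an $L$-Lipschitz $f:X\to V$ with $d_\M(f,\tilde f)=0$ and $f_i\stackrel{pmG}{\longrightarrow}f$ in the sense of Definition \ref{def:equi-lip-maps-conv}. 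Applying the corollary along every subsequence, together with the uniqueness of limits up to $d_\M$-null classes, removes the need to pass to subsequences. In both cases we fix isometric embeddings $\iota_i:X_i\to(Z,z)$ with $\iota_{i\ast}\mu_i\rightharpoonup\iota_{\ast}\mu$ and $\iota_i(\bar x_i)=z=\iota(\bar x)$.

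\textbf{Normalising constants.} The key scalar step is $\varphi_{\mu_i}(\bar x_i,r)\to\varphi_\mu(\bar x,r)$. In $Z$ this reads $\int\varphi(d(z,\cdot)/r)\,\ud\iota_{i\ast}\mu_i\to\int\varphi(d(z,\cdot)/r)\,\ud\iota_\ast\mu$, which follows from weak convergence because $\varphi(d(z,\cdot)/r)\in C_{bbs}(Z)$. Since the limit is nonzero, the rescaled measures $\mu_i/\varphi_{\mu_i}(\bar x_i,r)$ converge weakly to $\mu/\varphi_\mu(\bar x,r)$ under the same embeddings, now regarded as maps into the rescaled space $r^{-1}Z$. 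Hence $T_{\bar x_i,r}X_i\stackrel{pmG}{\longrightarrow}T_{\bar x,r}X$.

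\textbf{Case (ii).} Compose the target embeddings with $T_{f_i(\bar x_i),r}$. Since $f_i(\bar x_i)\to f(\bar x)$ by \eqref{eq:map-precpt} applied to $x=\bar x$, we have
\[
T_{\bar x_i,r}f_i(x_i)=\frac{f_i(x_i)-f_i(\bar x_i)}{r}\longrightarrow\frac{f(x)-f(\bar x)}{r}
\]
whenever $\iota_i(x_i)\to\iota(x)$. Together with the previous step, this is exactly Definition \ref{def:equi-lip-maps-conv} for the rescaled maps. The final claim, $d_\M(T_{\bar x_i,r}f_i,T_{\bar x,r}f)\to0$, then follows from Corollary \ref{cor:lip-map-conv}.

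\textbf{Case (i).} By Lemma \ref{lem:graph-vs-T}, $G(T_{\bar x_i,r}f_i)$ is isometric to $\widetilde T_{\bar x_i,r}G(f_i)$. The latter is the graph $G(f_i)$ with its metric scaled by $r^{-1}$ and its measure divided by $\varphi_{\mu_i}(\bar x_i,r)$. The hypothesis $d_{Gr}(f_i,f)=0$-limit means $G(f_i)\to G(f)$ in pmG, so we may embed the graphs into a common space $(W,w)$ with weakly converging measures and basepoints mapped to $w$. Scaling the metric of $W$ by $r^{-1}$ preserves weak convergence, and by the normalising-constant step the scalar factors $1/\varphi_{\mu_i}(\bar x_i,r)$ converge to $1/\varphi_\mu(\bar x,r)$. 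Hence
\[
\widetilde T_{\bar x_i,r}G(f_i)\stackrel{pmG}{\longrightarrow}\widetilde T_{\bar x,r}G(f).
\]
Combined with $T_{\bar x_i,r}X_i\to T_{\bar x,r}X$, this gives $d_{Gr}(T_{\bar x_i,r}f_i,T_{\bar x,r}f)\to0$.

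\textbf{Main obstacle.} The only delicate point is the normalisation: it is taken with respect to the domain measure $\mu$, not with respect to $\mu_f$. I expect the cleanest route is to apply the domain convergence directly, which handles it. An alternative would be to express the factor via $\mu_f$ using the relation $\bar f^{-1}B(\bar f(x),s)\subset B(x,s)$, but this is unnecessary.
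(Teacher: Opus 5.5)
Your proposal is correct and follows essentially the same route as the paper. You pick the representative via Proposition~\ref{prop:Gr=ultra} (resp.\ Corollary~\ref{cor:map-precpt}), obtain $\varphi_{\mu_i}(\bar x_i,r)\to\varphi_\mu(\bar x,r)$ by testing against $\varphi(d(z,\cdot)/r)\in C_{bbs}(Z)$, reuse the same embeddings in the rescaled space, and pass to graphs through Lemma~\ref{lem:graph-vs-T}. Your explicit remark that $\widetilde T_{\bar x_i,r}G(f_i)$ is normalised by the domain constant $\varphi_{\mu_i}(\bar x_i,r)$, whose convergence comes from the domain step, usefully spells out the paper's ``arguing exactly as above''. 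The translation to $v_0$ and the subsequence remark are superfluous: the $f_i$ are already pointed at $v_0$, and Corollary~\ref{cor:map-precpt} is stated for the full sequence.
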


\begin{proof}
    We first prove that $d_{pmG}(T_{\bar x_i,r}X_i,T_{\bar x_\infty,r}X_\infty)\stackrel{i\to\infty}{\longrightarrow}0$ if $d_{pmG}(X_i,X_\infty)\stackrel{i\to\infty}{\longrightarrow}0$. Suppose the isometric embeddings $\iota_i:X_i\to (Z,\bar z)$ ($i\in\N\cup\{\infty\}$) effectively realize the convergence $X_i\stackrel{pmG}{\longrightarrow}X_\infty$. We claim that the same isometric embeddings $\iota_i:r\inv X_i\to (r\inv Z,\bar z)$, $i\in\N\cup\{\infty\}$ realize the convergence $T_{\bar x_i,r}X_i\stackrel{pmG}{\longrightarrow}T_{\bar x_\infty,r}X_\infty$,i.e. that $\iota_{i\ast}(\mu_i)_{\bar x_i,r}\rightharpoonup \iota_{\infty\ast}\mu_{\bar x_\infty,r}$. 

Define
\[
\Psi(z)\coloneqq \varphi\left(\frac{d_Z(\bar z,z)}{r}\right), \quad z\in Z,
\]
Then $\Psi\in C_{bbs}(Z)$, and thus the weak convergence $\iota_{i\ast}\mu_i\rightharpoonup \iota_{\infty\ast}\mu_\infty$ implies 
\begin{align*} \left|\varphi_{\mu_\infty}(\bar x_\infty,r)-\varphi_{\mu_i}(\bar x_i,r)\right|=|\varphi_{\iota_{\infty\ast}\mu_\infty}(\bar z,r)-\varphi_{\iota_{i\ast}\mu_i}(\bar z,r)|=\left|\int \Psi\,\d\iota_{\infty\ast}\mu_\infty-\int\Psi\,\d\iota_{i\ast}\mu_i\right|\overset{i\to\infty}{\longrightarrow} 0.
\end{align*}
Thus for all $\psi\in C_{bbs}(Z)$ we have
\[
\int \psi\circ\iota_i\ud\mu_{\bar x_i,r}=\frac{1}{\varphi_{\mu_i}(\bar x_i,r)}\int_X\psi\circ\iota_i\,\d\mu_i\to \frac{1}{\varphi_{\mu_\infty}(\bar x_\infty,r)}\int_Z \psi\circ\iota_\infty\,\d\mu_\infty=\int\psi\circ\iota_\infty\ud\mu_{\bar x_\infty,r},
\]
proving that $\iota_{i\ast}(\mu_i)_{\bar x_i,r}\rightharpoonup \iota_{\infty\ast}\mu_{\bar x_\infty,r}$ as claimed.

Now we prove (i). Suppose $d_{Gr}(f_i,\tilde f)\to 0$. By Proposition \ref{prop:Gr=ultra} the ultralimit $f_\omega\in \M_L(\XX,V^\omega)$ satisfies $d_{Gr}(\tilde f,f_\omega)=0$ and $d_{Gr}(f_i,f_\omega)\to 0$. Since $X_i\stackrel{pmG}{\longrightarrow}X_\omega$ we have $T_{\bar x_i,r}X_i\stackrel{pmG}{\longrightarrow}T_{\bar x_\omega,r}X_\omega$ by the argument above. 
Arguing exactly as above, due to the fact that $G(f_i)\stackrel{pmG}{\longrightarrow} G(f_\omega)$, we have that 
\begin{align*}
    \widetilde T_{\bar x_i,r}G(f_i)\stackrel{pmG}{\longrightarrow}\widetilde T_{\bar x_\omega,r}G(f_\omega).
\end{align*}
By Lemma \ref{lem:graph-vs-T} we have $\widetilde T_{\bar x_i,r}G(f_i)= G(T_{\bar x_i,r}f_i)$ and $\widetilde T_{\bar x_\omega,r}G(f_\omega)= G(T_{\bar x_\omega,r}f_\omega)$ as pointed metric measure spaces. This proves that $d_{Gr}(T_{\bar x_i,r}f_i,T_{\bar x_\omega,r}f_\omega)\to 0$ as $i\to\infty$.

To prove (ii) we employ Corollary \ref{cor:map-precpt}. Indeed, if $d_\M(f_i,\tilde f)\to 0$, Corollary \ref{cor:map-precpt} implies the existence of  $f_\infty\in\M_L(\XX,\{(V,v_0)\})$ with $d_\M(\tilde f,f_\infty)=0$ and isometric embeddings $\iota_i:X_i\to (Z,\bar z)$ ($i\in\N\cup\{\infty\}$) satisfying $\iota_{i\ast}\mu_i\rightharpoonup\iota_{\infty\ast}\mu_\infty$ and \eqref{eq:map-precpt}. As above, the same isometric embeddings $\iota_i:r\inv X_i\to (r\inv Z,\bar z)$ satisfy $\iota_{i\ast}(\mu_i)_{\bar x_i,r}\rightharpoonup\iota_{\infty\ast}(\mu_\infty)_{\bar x_\infty,r}$ and moreover if $\iota_i(x_i)\to \iota_\infty(x)$ then \eqref{eq:map-precpt} implies
\begin{align*}
T_{\bar x_i,r}f_i(x_i)=\frac{f_i(x_i)-f_i(\bar x_i)}{r}\stackrel{i\to\infty}{\longrightarrow}\frac{f_\infty(x)-f_\infty(\bar x_\infty)}{r}=T_{\bar x_\infty,r}f_\infty(x).
\end{align*}
This proves (ii), and the last claim follows from this and Corollary \ref{cor:lip-map-conv}, which implies that $d_\M(T_{\bar x_i,r}f_i,T_{\bar x_\infty,r}f_\infty)\to 0$.
\end{proof}

\subsubsection*{Measurability and compactness} Next, given a Lipschitz map $f:(X,\mu)\to V$, denote
\begin{align*}
    T(f,x,R)=\{T_{x,r}f:\ r\in (0,R]\}\subset \M_{\operatorname{LIP}(f)}(\XX,\{(V,0)\}).
\end{align*}
Moreover for each $L>0$ and $g\in\M_L(\XX,V)$, denote
\begin{align*}
    \Phi(g)=\{T_{x,r}g:\ x\in \operatorname{spt}\mu,\,r>0\}\subset\M_L(\XX,\{(V,0)\})
\end{align*}
The following two lemmata establish the weak measurability and lower hemicontinuity of $x\mapsto T(f,x,R)$ and $g\mapsto \Phi(g)$, respectively, while the third yields the precompactness of $T(f,x,R)$. Recall that a correspondence $\varphi:X\twoheadrightarrow Y$ is \textit{weakly measurable} if $\varphi^\ell(U)\subset X$ is Borel for every open $U\subset Y$, and \emph{lower hemicontinuous} if $\varphi^\ell(U)\subset X$ is open for every open $U\subset Y$; see \cite[Chapter 18]{AliprantisBorder99}.

\begin{lemma}\label{lem:Phi-lhc}
Let $L>0$, $V$ be a Banach space and $v_0\in V$.
\begin{itemize}
    \item[(i)]    The correspondence $\Phi_{Gr}:(\mathbb M_L(\XX,\{(V,v_0)\}),d_{Gr})\twoheadrightarrow (\M_L(\XX,\{(V,0)\}),d_{Gr})$ and its closure correspondence $g\mapsto \overline{\Phi_{Gr}}(g)$ are lower hemicontinuous.
    \item[(ii)] the correspondence $\Phi_{pmG}:(\mathbb M_L(\XX,\{(V,v_0)\}),d_{pmG})\twoheadrightarrow (\M_L(\XX,\{(V,0)\}),d_{pmG})$ and its closure correspondence $g\mapsto \overline{\Phi_{pmG}}(g)$ are lower hemicontinuous, if $V$ is finite dimensional.
\end{itemize}
\end{lemma}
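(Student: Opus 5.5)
Lower hemicontinuity of $\Phi$ means the following. Fix $g$, an element $T_{x,r}g\in\Phi(g)$, and a sequence $g_i\to g$ in the relevant metric. We have to find elements of $\Phi(g_i)$ that converge to $T_{x,r}g$. Concretely, it suffices to produce, for every $i$ along the sequence, points $x_i\in\operatorname{spt}\mu_i$ and radii $r_i\to r$ such that $T_{x_i,r_i}g_i\to T_{x,r}g$. Then $\Phi^\ell(U)$ is open for open $U$: otherwise some sequence $g_i\to g$ would have $\Phi(g_i)\cap U=\varnothing$, contradicting the convergence just described. For the closure correspondence I would use the standard fact (\cite[Lemma 17.22]{AliprantisBorder99}) that $\overline\varphi$ is lower hemicontinuous whenever $\varphi$ is, since $\overline{\varphi}^\ell(U)=\varphi^\ell(U)$ for open $U$.

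\textbf{Step 1: re-basing.} Changing the basepoint is harmless: $T_{x,r}g$ only depends on $g$ through the unpointed package re-pointed at $x$. The plan is therefore to show that if $g_i\to g$ and $x\in\operatorname{spt}\mu$, then there are $x_i\in\operatorname{spt}\mu_i$ with $g_i$ re-pointed at $x_i$ converging to $g$ re-pointed at $x$. After that I apply Lemma \ref{lem:T-cont} with the fixed radius $r$, taking the new basepoints; note $\varphi_\mu(x,r)>0$ because $x\in\operatorname{spt}\mu$ and $\varphi(0)=1$.

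\textbf{Step 2: the graphical case (i).} Let $\iota_i$ realize $X_i\to X$ and $\hat\iota_i$ realize $G(g_i)\to G(g)$ in some ambient spaces. Since $g$ is $L$-Lipschitz, $\bar g(x)\in G(g)=\operatorname{spt}\mu_g$. Lemma \ref{lemma:support_point} then gives points $(x_i,g_i(x_i))\in\operatorname{spt}\mu_{g_i}$ with $\hat\iota_i(x_i,g_i(x_i))\to\hat\iota(\bar g(x))$, and $x_i\in\operatorname{spt}\mu_i$ by the Remark on Lipschitz graphs.

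The main obstacle is that these $x_i$ must also converge to $x$ in the domain embedding; the two embeddings $\iota_i$ and $\hat\iota_i$ are unrelated. I would sidestep this with Proposition \ref{prop:Gr=ultra} and Theorem \ref{thm:ultralimit+graph}, which let me replace $g$ by the ultralimit $g_\omega$ (up to $d_{Gr}$-zero). There the graph embedding is built from the domain embedding itself, since $h_i=(\operatorname{id},g_i)$. So if $x_i$ is chosen with $\iota_i(x_i)\to x$ along the $\omega$-subsequence (again by Lemma \ref{lemma:support_point}), then $\bar g_i(x_i)\to\bar g_\omega(x)$ in the graph ambient space. Moving basepoints in pmG convergence is then immediate: the same embeddings with new basepoints converge, because $F_{(Z,z)}$ and $F_{(Z,z')}$ induce the same weak topology. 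Lemma \ref{lem:T-cont}(i) finishes the argument, with a standard subsequence argument to handle the full sequence.

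\textbf{Step 3: the finite dimensional case (ii).} By Corollary \ref{cor:map-precpt} I may assume $g_i\to g$ in the sense of Definition \ref{def:equi-lip-maps-conv}. I pick $x_i\in\operatorname{spt}\mu_i$ with $\iota_i(x_i)\to\iota(x)$, and \eqref{eq:map-precpt} gives $g_i(x_i)\to g(x)$. Re-pointing both domains and targets at these points keeps the convergence of Definition \ref{def:equi-lip-maps-conv}, so Lemma \ref{lem:T-cont}(ii) applies and gives $T_{x_i,r}g_i\to T_{x,r}g$ in $d_\M$.
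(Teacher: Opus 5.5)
\textbf{Verdict.} Your framework matches the paper's: the sequential criterion, the identity $\overline{\Phi}^\ell(\mathcal U)=\Phi^\ell(\mathcal U)$ for open $\mathcal U$, and re-basing followed by Lemma \ref{lem:T-cont}. The obstacle you isolate in (i) is genuine. However, the ultralimit does not remove it; it only relocates it, and the argument for (i) has a gap that cannot be closed.

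\textbf{Where Step 2 fails.} Lower hemicontinuity at $g$ concerns the elements $T_{x,r}g$ of $\Phi(g)$. Your Step 2 instead gives $T_{x_i,r}g_i\to T_{x,r}g_\omega$ (along a subsequence) for $x\in\operatorname{spt}\mu_\omega$. To pass from $\Phi(g_\omega)$ back to $\Phi(g)$ you need, for each $x\in\operatorname{spt}\mu$, some $x'$ with $d_{Gr}(T_{x',r}g_\omega,T_{x,r}g)=0$. That is, the isometries $X\simeq X_\omega$ and $G(g)\simeq G(g_\omega)$ given by $d_{Gr}(g,g_\omega)=0$ must match over $x$. This is exactly the compatibility problem you set out to avoid. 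Rescaling at the basepoint, as in Lemma \ref{lem:T-cont}(i), is immune to it, because both isometries fix basepoints; re-pointing is not.

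\textbf{Why it cannot be repaired.} The map $\Phi_{Gr}$ is not constant on $d_{Gr}$-classes. Take $X=\{a,b,c\}$ with $d(a,b)=3$, $d(a,c)=4$, $d(b,c)=5$, counting measure, basepoint $a$, and $V=\R^2$. Let $g(a)=0$ and let $g(a),g(b),g(c)$ be the vertices of an equilateral triangle of side $5$. Pick $h$ with $h(a)=0$, $|h(b)|^2=18$, $|h(c)|^2=32$, $|h(b)-h(c)|^2=25$, and set $g'(a)=0$, $g'(b)=h(c)$, $g'(c)=h(b)$. Both maps are $2$-Lipschitz with $v_0=0$.
\begin{itemize}
\item[(a)] The squared graph distances over the pairs $ab,ac,bc$ are $34,41,50$ for $G(g)$ and $41,34,50$ for $G(g')$. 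Hence $(x,g(x))\mapsto(\sigma x,g'(\sigma x))$, with $\sigma$ swapping $b$ and $c$, is a pointed measure-preserving isometry, so $d_{Gr}(g,g')=0$. Since the three distances are distinct, it is the only isometry $G(g)\to G(g')$.
\item[(b)] The space $X$ is rigid, so a pointed isomorphism between the domains of $T_{x',r'}g'$ and $T_{b,r}g$ forces $r'=r$ and $x'=b$. Then $d_{Gr}(T_{x',r'}g',T_{b,r}g)=0$ would need an isometry $G(g)\to G(g')$ sending $(b,g(b))$ to $(b,g'(b))$, and none exists.
\item[(c)] The domains of $T_{x',r'}g'$ degenerate to a unit Dirac mass as $r'\to0$ or $r'\to\infty$. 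Compactness in $r'$ therefore gives $\inf_{x',r'}d_{Gr}(T_{x',r'}g',T_{b,r}g)>0$.
\end{itemize}
Since $g'$ lies in every $d_{Gr}$-ball around $g$, $\Phi_{Gr}^\ell(\mathcal U)$ is not open for a small ball $\mathcal U$ around $T_{b,r}g$. So (i) fails as stated.

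\textbf{Comparison with the paper.} The paper's proof has the same hole. It picks $z_j$ through the domain realization and then simply asserts $G(f_j-f_j(z_j))\to G(f-f(z))$. What your Step 2 does prove is the weaker statement with $\Phi(g_\omega)$ in place of $\Phi(g)$.

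\textbf{Part (ii).} Your Step 3 is sound and matches the paper. You should, however, say why replacing $g$ by the $d_\M$-equivalent limit $\tilde g$ of Corollary \ref{cor:map-precpt} is harmless. Lemma \ref{lem:pmG-dist-zero} gives isometries $\iota,\tilde\iota$ with $\tilde\iota\circ g=\tilde g\circ\iota$, and by continuity this holds on all of $\operatorname{spt}\mu$. Hence $T_{x,r}g$ and $T_{\iota(x),r}\tilde g$ are at $d_\M$-distance zero. This is precisely the compatibility that $d_{Gr}$ lacks.
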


\begin{lemma}\label{lem:T-weakly-meas}
    Suppose $f:(X,\mu)\to V$ is a Lipschitz map and $R>0$.
    \begin{itemize}
        \item[(i)] For any $g\in \M$ the function $x\mapsto d_{Gr}(g,T(f,x,R)):\operatorname{spt}\mu\to [0,\infty)$ is upper semicontinuous     
        \item[(ii)] If $V$ is finite dimensional, then for every $g\in\M$ the function $x\mapsto d_\M(g,T(f,x,R)):\operatorname{spt}\mu\to [0,\infty)$ is upper semicontinuous.
    \end{itemize}
In particular the correspondence $x\mapsto T(f,x,R):\operatorname{spt}\mu\to (\M,d_{Gr})$ (respectively $x\mapsto T(f,x,R):\operatorname{spt}\mu\to (\M,d_{\M})$ when $V$ is finite dimensional) is weakly measurable.
\end{lemma}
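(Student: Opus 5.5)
The plan is to prove upper semicontinuity of $x\mapsto d_{Gr}(g,T(f,x,R))$ directly from the continuity of the rescaling operator in the base point. The distance to the set is an infimum over $r\in(0,R]$,
\[
d_{Gr}(g,T(f,x,R))=\inf_{r\in(0,R]}d_{Gr}(g,T_{x,r}f).
\]
An infimum of upper semicontinuous functions is upper semicontinuous. It therefore suffices to show that, for each fixed $r\in(0,R]$, the map $x\mapsto T_{x,r}f$ is continuous from $\operatorname{spt}\mu$ into $(\M,d_{Gr})$, and likewise into $(\M,d_\M)$ when $V$ is finite dimensional. By the triangle inequality, $x\mapsto d_{Gr}(g,T_{x,r}f)$ is then continuous, and the infimum is upper semicontinuous.

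For this continuity I will fix $r$ and let $x_i\to x$ in $\operatorname{spt}\mu$. The mapping packages $f_i:=f:(X,\mu,x_i)\to V$ differ only in the base point, so I take all embeddings to be the identity of $X$. The pointed spaces $(X,\mu,x_i)$ converge to $(X,\mu,x)$ in the pmG sense, with the identity embedding, constant measures and $x_i\to x$. Since $f$ is $L$-Lipschitz, $f(x_i)\to f(x)$. Then Corollary \ref{cor:lip-map-conv} gives $d_\M(f_i,f_x)\to0$, where $f_x$ denotes $f$ pointed at $x$; the pointwise condition (ii) holds trivially because $f(y_i)\to f(y)$ whenever $y_i\to y$. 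Also $\varphi_\mu(x,r)\ge\frac12\mu(B(x,\delta_0r))>0$ because $x\in\operatorname{spt}\mu$.

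Next I apply Lemma \ref{lem:T-cont}. Rather than the statement itself, I use its proof directly, since the embeddings here are explicit. Weak convergence of $\mu$ to $\mu$ with base points $x_i\to x$ gives $\varphi_\mu(x_i,r)\to\varphi_\mu(x,r)$: the function $\varphi(d(\cdot,x_i)/r)$ converges uniformly to $\varphi(d(\cdot,x)/r)$ and is supported in a fixed ball. Hence $\mu/\varphi_\mu(x_i,r)\rightharpoonup\mu/\varphi_\mu(x,r)$ on $r^{-1}X$. In addition,
\[
T_{x_i,r}f(y_i)=\frac{f(y_i)-f(x_i)}{r}\longrightarrow\frac{f(y)-f(x)}{r}=T_{x,r}f(y)\quad\text{whenever } y_i\to y.
\]
Corollary \ref{cor:lip-map-conv} then yields $d_\M(T_{x_i,r}f,T_{x,r}f)\to0$. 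This holds for any Banach space $V$, since that corollary only needs a complete target, and we may use $\tilde Z=V$ itself. It gives (ii) and, through \eqref{eq:pmg-implies-gr}, also (i). So both parts reduce to continuity in $d_\M$, and no ultralimit is needed because the target is fixed.

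The last step is weak measurability. For an open $U$, the set $\{x:T(f,x,R)\cap U\ne\varnothing\}$ should be written using the distance functions. Since $(\M,d)$ is a pseudometric space, $U$ is a countable union of closed sets of the form $\{d(\cdot,g_n)\le 1/m\}$, or we may argue directly. The cleaner route: $T(f,x,R)\cap U\neq\varnothing$ if and only if $T_{x,r}f\in U$ for some $r$, and by continuity in both $x$ and $r$ this set is open. The only delicate point is joint continuity in $(x,r)$, which follows by the same argument with $r_i\to r>0$, rescaling the ambient metric. I expect the main obstacle to be this bookkeeping about rescaled embeddings when $r$ varies, together with making sure $\varphi_\mu(\cdot,r)$ stays positive. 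The Lipschitz bound makes the pointwise convergence routine.
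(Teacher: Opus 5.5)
Your argument is correct and is essentially the paper's. You write
\[
d(g,T(f,x,R))=\inf_{r\in(0,R]}d(g,T_{x,r}f),
\]
with each term continuous in $x\in\operatorname{spt}\mu$ by (the proof of) Lemma \ref{lem:T-cont}; the paper says the same thing as ``sublevel sets are open''. Your explicit check via identity embeddings and Corollary \ref{cor:lip-map-conv} fills in a step the paper leaves implicit. After replacing $V$ by the separable closed span of $f(X)$, that check works in $d_\M$ for any Banach $V$, and the $d_{Gr}$ case then follows from \eqref{eq:pmg-implies-gr}.

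For weak measurability the paper cites \cite[Theorem 18.5]{AliprantisBorder99}. You instead show directly that $\{x:T(f,x,R)\cap U\ne\varnothing\}=\bigcup_{r\in(0,R]}\{x:T_{x,r}f\in U\}$ is relatively open, which is also valid. This union only needs continuity in $x$ for each fixed $r$, so the joint continuity in $(x,r)$ you flag as the delicate point is not needed.
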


\begin{lemma}\label{lem:tan-precpt}
Suppose $(X,\mu)$ is pointwise doubling and $f:(X,\mu)\to V$ is Lipschitz.  
\begin{itemize}
    \item[(i)] Then $T(f,x,R)$ is precompact for all $R>0$ in $(\M,d_{Gr})$ for $\mu$-a.e. $x\in X$;
    \item[(ii)] If $V$ is finite dimensional, then $T(f,x,R)$ is precompact for all $R>0$ in $(\M,d_\M)$ for $\mu$-a.e. $x\in X$.
\end{itemize}
\end{lemma}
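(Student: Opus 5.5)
The statement to prove is Lemma \ref{lem:tan-precpt}. I plan to reduce the precompactness of $T(f,x,R)$ to the precompactness of the rescaled domains $\{T_{x,r}X:\ r\in(0,R]\}$ in $(\XX,d_{pmG})$. Part (i) then follows from Corollary \ref{cor:graph-precpt}. Part (ii) follows from Lemma \ref{lem:map-precpt} applied with the proper target $(V,0)$; this is where finite dimensionality of $V$ enters. Indeed, by Remark \ref{rmk:lip} every $T_{x,r}f$ is $\operatorname{LIP}(f)$-Lipschitz and pointed into $(V,0)$, so $T(f,x,R)\subset\M_{\operatorname{LIP}(f)}(\mathcal F_x,\{(V,0)\})$ with $\mathcal F_x=\{T_{x,r}X:\ r\in(0,R]\}$. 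So the whole task is to show that $\mathcal F_x$ is precompact for $\mu$-a.e. $x$ and every $R$.

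For that I will use the criterion in Remark \ref{rem:precompactinXX}, namely UBF and BMTB.

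\emph{Reduction to small radii.} Fix $x$ with $C_\mu(x)<\infty$ and $x\in\operatorname{spt}\mu$. There is $r_0>0$ such that $\mu(B(x,2s))\le (C_\mu(x)+1)\mu(B(x,s))$ for all $s\le r_0$. On the compact parameter range $r\in[r_0,R]$, the map $r\mapsto T_{x,r}X$ is continuous into $\XX$, by the argument at the start of the proof of Lemma \ref{lem:T-cont}: rescaling the metric continuously, with $\varphi_\mu(x,r)$ continuous and positive in $r$. Its image is therefore compact, and it suffices to treat $r\le r_0$.

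\emph{UBF.} By \eqref{eq:normalizing-const} and iterating the doubling at $x$,
\[
\mu_{x,r}(B_{r^{-1}d}(x,\rho))=\frac{\mu(B(x,\rho r))}{\varphi_\mu(x,r)}\le \frac{2\mu(B(x,\rho r))}{\mu(B(x,\delta_0 r))},
\]
which is bounded uniformly in $r$ whenever $\rho r\le r_0$. For $\rho r>r_0$ we have $r\in(r_0/\rho,r_0]$, again a compact range where the quantity is bounded. This gives UBF.

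\emph{BMTB.} This is the main obstacle, because pointwise doubling only controls balls centred at $x$, not at nearby points. I would first try the standard device (as in \cite{Bate2018,Bate22}). Decompose $X$, up to a null set, into countably many sets $A$ along which $\mu$ is $(C,R_0)$-doubling, as in the footnote on uniform pointwise doubling. At $\mu$-a.e. $x\in A$, which is a density point of $A$ by \eqref{eq:leb-dens}, the set $A$ carries all but a vanishing fraction of the mass of $B(x,\rho r)$, relative to $\mu(B(x,r))$, as $r\to0$.

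The covering argument then runs as follows. Take a maximal $sr$-separated set in $A\cap B(x,\rho r)$. The balls $B(\cdot,sr/2)$ around its points are disjoint, and by doubling along $A$ each has measure at least $C^{-k}\mu(B(\cdot,2\rho r))\ge C^{-k}\mu(B(x,\rho r))$ with $k\approx\log_2(4\rho/s)$. Hence the number of points is at most $C^k$, uniformly in small $r$. The balls of radius $sr$ cover $A\cap B(x,\rho r)$, and the remainder $B(x,\rho r)\setminus A$ has small normalized measure for small $r$. Shrinking $r_0$ depending on $\varepsilon,\rho,s$ handles all small $r$; the compact range of larger $r$ is again covered by continuity.

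Taking a countable sequence $R_n\to\infty$ gives a single full-measure set of good points $x$. This yields precompactness of $\mathcal F_x$ and hence both (i) and (ii).
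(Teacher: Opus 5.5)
Your proof is correct, and its top-level structure is exactly the paper's. By Remark~\ref{rmk:lip}, $T(f,x,R)\subset\M_{\operatorname{LIP}(f)}(\mathcal F_x,\{(V,0)\})$; then Corollary~\ref{cor:graph-precpt} gives (i) and Lemma~\ref{lem:map-precpt} gives (ii).

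The only difference is how you get the key input, the $\mu$-a.e.\ precompactness of $\{T_{x,r}X:\ r\in(0,R]\}$. The paper does not prove it. It cites \cite[Proposition 5.3]{Bate22} (or \cite[Theorem B.5]{SP21}), which use the normalization $\mu(B(x,r))^{-1}$. It then transfers the result to the $\varphi_\mu(x,r)^{-1}$ normalization via \eqref{eq:normalizing-const}, since the two normalizations differ by a factor in $[1,C(x,\delta_0)]$ for small $r$. Your UBF/BMTB argument is essentially a self-contained proof of that cited result, carried out directly with $\varphi_\mu$. It uses the decomposition into sets along which $\mu$ is uniformly doubling, and density points of those sets. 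This buys independence from the references and avoids the normalization conversion, at the cost of length.

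Two small points, neither a gap.

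First, the start of the proof of Lemma~\ref{lem:T-cont} gives continuity in the space at fixed $r$, not continuity in $r$. Continuity in $r$ is true: the identity map is a near-isometry on bounded sets, and $\varphi_\mu(x,\cdot)$ is continuous. You can avoid it altogether, though. On a range $r\in[r_1,R]$, UBF is immediate. BMTB follows from inner regularity of $\mu$ on $B(x,\rho R)$, covering a compact set by finitely many $d$-balls of radius $sr_1$, together with $\inf_{r\in[r_1,R]}\varphi_\mu(x,r)\ge\frac12\mu(B(x,\delta_0 r_1))>0$.

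Second, the disjoint balls $B(a,sr/2)$ lie in $B(x,(\rho+s/2)r)$, not in $B(x,\rho r)$. So the packing bound is $C^k\mu(B(x,(\rho+s/2)r))/\mu(B(x,\rho r))$ rather than $C^k$. This is still uniformly bounded for small $r$ by doubling at $x$.
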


\begin{proof}[Proof of Lemma \ref{lem:Phi-lhc}]
We prove (i) as (ii) is similar. Let $\mathcal U\subset (\M_L(\XX,\{(V,0)\}), d_{Gr})$ be open and $f\in \Phi^\ell(\mathcal U)$ be a mapping package in $\M_L(\XX,\{(V,v_0)\})$ with domain $(X,\mu,\bar x)$. By assumption there exists $z\in \operatorname{spt}\mu$ and $r>0$ so that $T_{z,r}f\in \mathcal U$. Suppose no neighbourhood of $f$ in $(\M_L(\XX,\{(V,v_0)\}),d_{Gr})$ is contained in $\Phi^\ell(\mathcal U)$. Then there exist $f_j\in \M_L(\XX,\{(V,v_0)\})$ with $d_{Gr}(f_j,f)\to 0$ and $\Phi(f_j)\cap \mathcal U=\varnothing$. If $X_j=(X_j,\mu_j,\bar x_j)$ denotes the domain of the map $f_j$, then by the convergence $X_j\stackrel{pmG}{\longrightarrow}X$ -- realized by isometric embeddings $\iota_i:X_i\to (Z,\bar z)$, $\iota:X\to (Z,\bar z)$ -- there exist points $z_j\in \operatorname{spt}\mu_j$ with $\iota_j(z_j)\to \iota(z)$, cf. Lemma \ref{lemma:support_point}. Thus $(X_j,\mu_j,z_j)\stackrel{pmG}{\longrightarrow}(X,\mu,z)$ and $G(f_j-f_j(z_j))\stackrel{pmG}{\longrightarrow} G(f-f(z))$, so
$d_{Gr}(f_j-f_j(z_j),f-f(z))\stackrel{j\to\infty}{\longrightarrow}0$. Then by Lemma \ref{lem:T-cont}(i) we have $d_{Gr}(T_{z_j,r}f_j,T_{z,r}f)\to 0$. Since $T_{z,r}f\in\mathcal U$ and $\mathcal{U}$ is open it follows that $T_{z_j,r}f_j\in \mathcal U$ for large $j$, contradicting the assumption $\Phi(f_j)\cap \mathcal U=\varnothing$. Thus $\Phi^\ell(\mathcal U)$ is open. This proves that $\Phi_{Gr}$ is lower hemicontinuous. The lower hemicontinuity of $\overline{\Phi_{Gr}}$ follows by \cite[Lemma 17.22]{AliprantisBorder99}.
\end{proof}

\begin{proof}[Proof of Lemma \ref{lem:T-weakly-meas}]
By \cite[Theorem 18.5]{AliprantisBorder99} the weak measurability statement follows from the upper semicontinuity of $x\mapsto d_{Gr}(g,T(f,x,R))$ (resp. $x\mapsto d_\M(g,T(f,x,R))$). We prove upper semicontinuity for the metric $d_\M$ as the case $d_{Gr}$ is similar.

    Let \(\lambda >0\) and \(y\in \{x\in {\rm spt}\mu : d_\mathbb M(g,T(f,x,R))<\lambda\}\). Thus, there exists \(r\in (0,R)\) such that \(d_{\mathbb M}(g,T_{y,r}f)<\lambda\). Since by Lemma \ref{lem:T-cont} the map \(x \mapsto T_{x,r}f\) is continuous, there exists \(r_0>0\) such that for all \(z\in B(y,r_0)\cap {\rm spt}\mu\) we have 
    \[
   d_\mathbb M(T_{y,r}f, T_{z,r}f)<\lambda-d_\mathbb M (g,T_{y,r}f).
    \]
    Therefore, for all \(z\in B(y,r_0)\cap {\rm spt}\mu\) we have 
    \[
    d_{\M}(g, T(f,z,R))\leq d_\mathbb M(g, T_{z,r}f)<\lambda,
    \]
    and thus \(B(y,r_0)\cap {\rm spt}\mu\subseteq \{x\in {\rm spt}\mu : d_\mathbb M(g,T(f,x,R))<\lambda\}.\) That means that \( \{x\in {\rm spt}\mu : d_\mathbb M(g,T(f,x,R))<\lambda\}\) is open for every \(\lambda >0\), implying the upper semicontinuity of \(x\mapsto d_\mathbb M(g,T(f,x,R))\).  
\end{proof}

\begin{proof}[Proof of Lemma \ref{lem:tan-precpt}]
By \cite[Proposition 5.3]{Bate22} (see also \cite[Theorem B.5]{SP21}) for $\mu$-a.e. $x$ we have that $\mathcal F_R:=\{T_{x,r}X:\ r\in (0,R]\}$ is precompact in $(\XX_{pmG},d_{pmG})$ for all $R>0$. Note that in both references the authors use normalizing constants $\mu(B(x,r))\inv$ instead of $\varphi_\mu(x,r)\inv$ for the measure in $X_{x,r}$,  but \eqref{eq:normalizing-const} implies $\mu_{x,r}=\lambda \frac{\mu}{\mu(B(x,r))}$ with $\lambda=\frac{\mu(B(x,r))}{\varphi_\mu(x,r)}\in [1,C(x,\delta_0)]$ for small $r$. This implies the claim for $T_{x,r}X$ with our normalizing constant. 

Since $T_{x,r}f\in \M_{\operatorname{LIP}(f)}(\mathcal F_R,\{(V,0)\})$ for all $r>0$ (Remark \ref{rmk:lip}), for $x\in X$ such that $\mathcal F_R$ is precompact, it follows from Corollary \ref{cor:graph-precpt} that $T(f,x,R)$ is precompact with respect to $d_{Gr}$, proving (i). Similarly if $V$ is finite dimensional it follows from Lemma \ref{lem:map-precpt} that $T(f,x,R)$ is precompact with respect to $d_\M$, proving (ii).
\end{proof}

\begin{proof}[Proof of Theorem \ref{thm:tan-meas+cpt}]
Denote $$\overline T_{Gr}(f,x,R)=\operatorname{cl}_{(\M,d_{Gr})}T(f,x,R)\quad \mathrm{and}\quad \overline T_{\M}(f,x,R)=\operatorname{cl}_{(\M,d_{\M})}T(f,x,R).$$
By the definition of $\operatorname{Tan}_{Gr}(f,x)$ and $\operatorname{Tan}_{pmG}(f,x)$ it follows that 
\begin{align}\label{eq:intersection_tangent}
    {\rm Tan}_{pmG}(f,x)=\bigcap_{j\in \N}\overline T_\M(f,x,1/j),\quad {\rm Tan}_{Gr}(f,x)=\bigcap_{j\in \N}\overline T_G(f,x,1/j).
\end{align}
By Lemma \ref{lem:T-weakly-meas} and \cite[Lemma 18.3]{AliprantisBorder99} the correspondences $x\mapsto \overline T_{Gr}(f,x,R)$ and $x\mapsto \overline T_{pmG}(f,x,R)$ are weakly measurable, and by Lemma \ref{lem:tan-precpt} there is a $\mu$-null Borel set $N\subset X$ so that they are non-empty and compactly valued on $X\setminus N$. By \cite[Lemma 18.2]{AliprantisBorder99} the correspondences $x\mapsto \overline T_{Gr}(f,x,R):X\setminus N\to (\M,d_{Gr})$ (resp. $x\mapsto \overline T_{pmG}(f,x,R):X\setminus N\to (\M,d_\M)$ if $V$ is finite dimensional) are measurable for all $R$. Thus \cite[Lemma 18.4]{AliprantisBorder99} yields that $x\mapsto \operatorname{Tan}_{Gr}(f,x):X\setminus N\to (\M,d_{Gr})$ (resp.  $x\mapsto \operatorname{Tan}_{pmG}(f,x):X\setminus N\to (\M,d_\M)$ if $V$ is finite dimensional) is measurable. The fact that $\operatorname{Tan}_{Gr}(f,x)$ (resp. $\operatorname{Tan}_{pmG}(f,x)$ if $V$ is finite dimensional) is non-empty and compact for $x\in X\setminus N$ follows from \eqref{eq:intersection_tangent} and the compactness of $\overline T_{Gr}(f,x,R)$ (resp. $\overline T_{pmG}(f,x,R)$ when $V$ is finite dimensional). 
\end{proof}

\subsection{Preiss's phenomenon for pmG-tangents} In this subsection we prove the validity of Preiss's phenomenon for pmG-tangents of mapping packages.
\label{sec:Tangents2}

\begin{theorem}\label{thm:preiss-phenomenon}
    Suppose $(X,\mu)$ is pointwise doubling, and $f:(X,\mu)\to V$ is a Lipschitz map into a Banach space. For $\mu$-a.e. $x\in X$ the following holds:
\begin{itemize}
    \item[(i)]  if $g:(Y,\nu,o)\to V^\omega$ belongs to $\operatorname{Tan}_{Gr}(f,x)$ and $y\in \operatorname{spt}\nu$, $\rho>0$, then 
    \[\frac{g-g(y)}{\rho}:\big(\rho\inv Y,\frac{\nu}{\varphi_\nu(y,\rho)},y\big)\to V^\omega\quad\mathrm{belongs\ to}\quad \operatorname{Tan}_{Gr}(f,x);\]
    \item[(ii)] assuming $V$ is finite dimensional, if $g:(Y,\nu,o)\to V$ is in $\operatorname{Tan}_{pmG}(f,x)$ and $y\in \operatorname{spt}\nu$, $\rho>0$, then \[\frac{g-g(y)}{\rho}:\big(\rho\inv Y,\frac{\nu}{\varphi_\nu(y,\rho)},y\big)\to V\quad\mathrm{belongs\ to}\quad\operatorname{Tan}_{pmG}(f,x).\]
\end{itemize}
\end{theorem}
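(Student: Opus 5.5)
This is the standard Preiss-type argument of \cite{Bate22,GMR15,LeDonne2011}, run in the spaces $(\M,d_{Gr})$ and $(\M,d_\M)$. It uses the measurability and compactness of Theorem \ref{thm:tan-meas+cpt}, the lower hemicontinuity of $\Phi$ from Lemma \ref{lem:Phi-lhc}, and the continuity of $T_{\cdot,r}$ from Lemma \ref{lem:T-cont}. I describe case (i); case (ii) is identical with $d_\M$ and the finite dimensional statements replacing the graphical ones.

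Suppose the claim fails on a set of positive measure. Then there are points $x$ in a set $A$ with $\mu(A)>0$ for which some $g\in\operatorname{Tan}_{Gr}(f,x)$ and some $y,\rho$ give a rescaling $T_{y,\rho}g\notin\operatorname{Tan}_{Gr}(f,x)$.

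\textbf{Step 1: a uniform positive-measure set.} Using separability of $(\M_{Gr},d_{Gr})$, fix a countable dense family of balls $\mathcal U_k$. For each $x\in A$ there is $\delta>0$ with $d_{Gr}(T_{y,\rho}g,\operatorname{Tan}_{Gr}(f,x))>2\delta$, so I can choose a basic ball $\mathcal U=\mathcal U_k$ of radius less than $\delta$ containing $T_{y,\rho}g$ with $\overline{\mathcal U}$ at distance more than $\delta$ from $\operatorname{Tan}_{Gr}(f,x)$. Since $\operatorname{Tan}_{Gr}(f,x)$ is the decreasing intersection of the compact sets $\overline T_{Gr}(f,x,1/j)$, there is a radius $r_0>0$ with $T_{x,r}f\notin \mathcal U$ for all $r<r_0$. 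Countability and the measurability of $x\mapsto\operatorname{Tan}_{Gr}(f,x)$ give the following. After shrinking, there is a fixed $\mathcal U$, a fixed $r_0$, and a measurable $A'\subset A$ with $\mu(A')>0$ such that:
\begin{itemize}
\item for every $x\in A'$, some tangent $g_x$ satisfies $\Phi(g_x)\cap\mathcal U\neq\varnothing$;
\item $T_{z,r}f\notin\mathcal U$ for all $z\in A'$ and $r<r_0$.
\end{itemize}
I would verify the measurability of $\{x:\operatorname{Tan}_{Gr}(f,x)\cap\Phi^\ell(\mathcal U)\ne\varnothing\}$ as follows: $\Phi^\ell(\mathcal U)$ is open by Lemma \ref{lem:Phi-lhc}, and the correspondence $x\mapsto\operatorname{Tan}_{Gr}(f,x)$ is measurable, which in particular makes it weakly measurable.

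\textbf{Step 2: density and shifting.} Take $x\in A'$ that is a Lebesgue density point of $A'$ (pointwise doubling gives \eqref{eq:leb-dens}) and is outside the null set of Theorem \ref{thm:tan-meas+cpt}. Let $T_{x,r_i}f\to g$ graphically, with $y\in\operatorname{spt}\nu$ and $T_{y,\rho}g\in\mathcal U$. By the lower hemicontinuity of $\Phi$ (Lemma \ref{lem:Phi-lhc}), whose proof uses Lemma \ref{lemma:support_point} and Lemma \ref{lem:T-cont}, there are points $y_i\in\operatorname{spt}\mu$ with $d(x,y_i)\lesssim r_i$ such that $T_{y_i,\rho}(T_{x,r_i}f)\in\mathcal U$ for large $i$. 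A direct computation gives $T_{y_i,\rho}(T_{x,r_i}f)=T_{y_i,\rho r_i}f$. The normalising constants agree after renormalisation, since measures are renormalised by $\varphi$ at the new center.

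\textbf{Step 3: the main obstacle.} The difficulty is that $y_i$ need not lie in $A'$. Density alone places points $y_i'\in A'$ with $d(y_i,y_i')=o(r_i)$, because $\mu(B(y_i,\varepsilon r_i))$ is comparable to $\mu(B(x,r_i))$ by pointwise doubling at $x$ and the fact that $y_i$ is near the support of the limit. Then I need $d_{Gr}(T_{y_i',\rho r_i}f,T_{y_i,\rho r_i}f)\to 0$. For this I would use the same embedding, with basepoints moved by $o(1)$ in the rescaled metric, together with the $L$-Lipschitz bound, which gives $\|T_{y_i',\rho r_i}f-T_{y_i,\rho r_i}f\|\le 2L\,o(1)$. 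The step needing care is the convergence of the normalising factors, $\varphi_\mu(y_i',\rho r_i)/\varphi_\mu(y_i,\rho r_i)\to1$. I would obtain this from the continuity of $\varphi$ and the weak convergence of the rescaled measures along the subsequence, since both converge to $\varphi_\nu(y,\rho)>0$. Since $\mathcal U$ is open, $T_{y_i',\rho r_i}f\in\mathcal U$ for large $i$, with $\rho r_i<r_0$. This contradicts Step 1, because $y_i'\in A'$.

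Finally, the case $y\notin\operatorname{spt}\nu$ is excluded by the statement.
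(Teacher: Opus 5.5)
Your argument is correct. It has the same skeleton as the paper's proof: split the bad set into countably many pieces with uniform parameters, take a Lebesgue density point of a positive-measure piece, move the base points of a tangent back into that piece, and reach a contradiction. The two technical steps are done differently.

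\textbf{Measurability.} The paper covers $\M_\ast$ by closed sets $\M_n^k$ of diameter $\le\frac1{2k}$ and uses the Borel graph of the closure correspondence $\overline{\Phi\circ\operatorname{Tan}_\ast}$ (Lemma \ref{lem:PhicircTan-wm}). This yields only analytic, hence $\mu$-measurable, projections $B_{n,m,k}\supset A_{n,m,k}$. It also adds an approximation layer in the contradiction, since $g_\infty$ is only a limit of rescalings $g^j_{y_j,\rho_j}$ of tangents. Your countable base of open balls $\mathcal U$ makes each piece
\[
\operatorname{Tan}_{Gr}(f,\cdot)^\ell(\Phi^\ell(\mathcal U))\cap\{z\in\operatorname{spt}\mu:\ T_{z,r}f\notin\mathcal U\ \forall r<r_0\}
\]
Borel directly, and it gives you an actual tangent $g$ with $g_{y,\rho}\in\mathcal U$. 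You should state why the second set is measurable: it is relatively closed in $\operatorname{spt}\mu$ by the continuity of $z\mapsto T_{z,r}f$, as in Lemma \ref{lem:T-weakly-meas}.

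\textbf{Placing base points in the good set.} The paper uses Lemma \ref{lem:density-pt-tangent} to view $g^j$ as a tangent of $f|_B$, then picks $b_{j,i}\in B$ via Lemma \ref{lemma:support_point}. You take the points $y_i$ from the proof of Lemma \ref{lem:Phi-lhc} and push them a distance $o(r_i)$ into $A'$. The cost is the Lipschitz bound together with $\varphi_\mu(y_i',\rho r_i)/\varphi_\mu(y_i,\rho r_i)\to 1$.
\begin{itemize}
\item The paper's route avoids this perturbation estimate.
\item Your route compares the full rescalings $T_{y_i',\rho r_i}f$, which are exactly what the defining property of $A'$ constrains. The paper's closing inequality is written with $(f|_B)_{b,r}$ where $T_{b,r}f$ is needed, and tacitly treats the two as interchangeable.
\end{itemize}

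\textbf{Two small precisions in your Step 3.}
\begin{itemize}
\item The lower bound $\mu(B(y_i,\varepsilon r_i))\gtrsim\varphi_\mu(x,r_i)$ comes from weak convergence and $y\in\operatorname{spt}\nu$. Pointwise doubling at $x$ plays a different role: it lets you control $\mu(B(x,Cr_i)\setminus A')$ relative to $\varphi_\mu(x,r_i)$.
\item The membership $T_{y_i',\rho r_i}f\in\mathcal U$ should be deduced from $T_{y_i',\rho r_i}f\to g_{y,\rho}\in\mathcal U$. It does not follow just from $T_{y_i,\rho r_i}f\in\mathcal U$ and openness of $\mathcal U$.
\end{itemize}
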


Since the proof is formally the same for both graphical and pmG-tangents of mapping packages, we make the following notational convention. The symbol $d_*$ stands for either $d_{pmG}$ or $d_{Gr}$ and $\operatorname{Tan}_\ast(f,x)=\operatorname{Tan}_{d_\ast}(f,x)$. With $\M_\ast$ we denote either $\M_{Gr}$ or $\M_{pmG}$. In the case of $d_{pmG}$ we always tacitly make the additional assumption that the target $V$ is finite dimensional. We, moreover, denote by $g_{y,\rho}$ the mapping package
\[\frac{g-g(y)}{\rho}:\big(\rho\inv Y,\frac{\nu}{\varphi_\nu(y,\rho)},y\big)\to V^\omega. \]
Note that $V^\omega=V$ if (and only if) $V$ is finite dimensional.

For the proof, we need two auxiliary results, which we record here. In the statements, $(X,\mu)$ is a pointwise doubling space and $f:(X,\mu)\to V$ is a Lipschitz map. 
\begin{lemma}\label{lem:density-pt-tangent}
Let $A\subset X$ be closed, and suppose $x\in A$ is a Lebesgue density point of $A$ with $C_\mu(x)<\infty$. Then given a sequence $r_i\to 0$ and $f_\infty\in\M$ we have $d_\ast(T_{x,r_i}f,f_\infty)\stackrel{i\to\infty}{\longrightarrow}0$ if and only if $d_\ast((f|_A)_{x,r_i},f_\infty)\stackrel{i\to\infty}{\longrightarrow}0$.
\end{lemma}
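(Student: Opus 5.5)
The idea is to show directly that the two rescaled packages $T_{x,r_i}f$ and $(f|_A)_{x,r_i}$ are asymptotically at zero distance, i.e. $d_\ast(T_{x,r}f,(f|_A)_{x,r})\to 0$ as $r\to 0$. The triangle inequality for the pseudometric $d_\ast$ then gives the equivalence at once. Here $(f|_A)_{x,r}$ denotes the rescaling of the package $f|_A:(X,\mu|_A)\to V$, namely
\[
\frac{f-f(x)}{r}:\Big(r\inv X,\frac{\mu|_A}{\varphi_{\mu|_A}(x,r)},x\Big)\to V.
\]
Since $A$ is closed, $\operatorname{spt}(\mu|_A)\subset A$.

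Both packages are realized by the same map $\frac{f-f(x)}{r}$ on the same pointed space $r\inv X$; only the measures differ. I therefore take the obvious embeddings. For $d_{pmG}$, the identity embeds $r\inv X$ into $Z=r\inv X$, and the target is $V$ itself. For $d_{Gr}$, the graphs are both subsets of $r\inv X\times V$ with the graph metric, as in Lemma \ref{lem:graph-vs-T}. The pushed-forward measures are then $\bar f_{r\ast}\mu_{x,r}$ and $\bar f_{r\ast}(\mu|_A)_{x,r}$, where $\bar f_r=(\operatorname{id},T_{x,r}f)$. The distance $d_\ast$, and likewise $d_{pmG}$ on the domains, is bounded by $F_{(\cdot,\text{basepoint})}$ of these two measures. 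For a test function $g$ that takes values in $[-1,1]$, is $1/\varepsilon$-Lipschitz and is supported in the ball of radius $1/\varepsilon$, I estimate
\[
\Big|\int g\circ\bar f_r\,\d\mu_{x,r}-\int g\circ\bar f_r\,\d(\mu|_A)_{x,r}\Big|\le \frac{\mu(B(x,r/\varepsilon)\setminus A)}{\varphi_\mu(x,r)}+\mu(B(x,r/\varepsilon))\Big|\frac1{\varphi_\mu(x,r)}-\frac1{\varphi_{\mu|_A}(x,r)}\Big|.
\]
This uses that $\bar f_r^{-1}$ of a ball of radius $1/\varepsilon$ lies in $B(x,r/\varepsilon)$ in the original metric, and it handles both cases at once.

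The two terms are controlled as follows.

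\textbf{First term.} The density point assumption gives $\mu(B(x,R)\setminus A)/\mu(B(x,R))\to 0$ with $R=r/\varepsilon\to 0$. Iterating the pointwise doubling bound $C_\mu(x)<\infty$ $\lceil\log_2(1/(\varepsilon\delta_0))\rceil$ times, together with \eqref{eq:normalizing-const}, gives $\mu(B(x,r/\varepsilon))\le C(x,\varepsilon)\varphi_\mu(x,r)$ for small $r$. Hence the first term tends to $0$.

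\textbf{Second term.} We have $0\le\varphi_\mu(x,r)-\varphi_{\mu|_A}(x,r)\le\mu(B(x,r)\setminus A)=o(\mu(B(x,r)))$. Moreover $\varphi_{\mu|_A}(x,r)\ge\frac12\mu(B(x,\delta_0 r)\cap A)$, which is comparable to $\mu(B(x,r))$ by density and doubling. So the ratio $\varphi_{\mu|_A}/\varphi_\mu\to1$, and the second term, multiplied by the bounded factor $\mu(B(x,r/\varepsilon))/\varphi_\mu(x,r)$, also tends to $0$.

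Consequently, for every fixed $\varepsilon$ the quantity $F^{1/\varepsilon,1/\varepsilon}$ eventually drops below $\varepsilon$, so $d_\ast(T_{x,r}f,(f|_A)_{x,r})\to 0$ as $r\to0$. The triangle inequality then completes the proof in both directions.

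The main obstacle is bookkeeping rather than ideas. The comparison uses one common ambient space (for $d_{Gr}$, the graph in $r\inv X\times V$; for $d_\M$, the product $Z\times\widetilde Z$ of Definition \ref{def:mapping-pmG-conv}). The basepoint $x$ lies in $A\cap\operatorname{spt}\mu$, so it is admissible for both packages. The uniformity of the doubling estimate on the ball $B(x,r/\varepsilon)$ for each fixed $\varepsilon$ is exactly where $C_\mu(x)<\infty$ is used.
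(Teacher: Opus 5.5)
Your proof is correct. It uses the same two estimates as the paper but organizes them differently, and it handles the $d_\M$ case by a different route.

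\textbf{Where you agree with the paper.} The paper bounds $d_{pmG}\big(T_{x,r}X,(r^{-1}A,\varphi_\mu(x,r)^{-1}\mu|_A,x)\big)$ by the relative mass of $B(x,r/\rho)\setminus A$, citing \cite[(84)]{Bate22}. It separately shows $\varphi_\mu(x,r)/\varphi_{\mu|_A}(x,r)\to 1$, and then repeats the argument for graphs using $\mu_{f|_A}=\mu_f|_{G(f|_A)}$. These are exactly your two terms.

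\textbf{Where you differ.} The paper argues along the given sequence $r_i$. For $d_\M$ it passes through Corollary~\ref{cor:map-precpt}, i.e. the pointwise characterization of pmG-convergence for equi-Lipschitz maps into a proper target, which uses $\dim V<\infty$. It then asserts convergence of the restricted rescalings because $\operatorname{spt}(\mu|_A)_{x,r_i}\subset\operatorname{spt}\mu_{x,r_i}$.

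You instead estimate $d_\M$ directly, taking identity embeddings into $r^{-1}X$ and $V$. This yields the stronger, limit-free statement $d_\ast(T_{x,r}f,(f|_A)_{x,r})\to 0$ as $r\to 0$. The triangle inequality then gives both directions at once, for $d_{Gr}$ and $d_\M$ uniformly. This avoids the detour through pointwise convergence and never needs properness of the target. It is arguably cleaner than the paper's somewhat sketchy $d_\M$ step.

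\textbf{Two minor points to state explicitly.}
\begin{itemize}
\item For the domain part of $d_{Gr}$, redo your estimate with test functions on $r^{-1}X$, which are supported in $B(x,r/\varepsilon)$ directly. Do not try to deduce it from the graph bound: pulling back by the projection $r^{-1}X\times V\to r^{-1}X$ does not give boundedly supported test functions.
\item If separable ambient spaces are required, replace $V$ by the closed span of $f(X)$. This span is separable because $f$ is Lipschitz and $X$ is separable.
\end{itemize}
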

Here $f|_A:(A,\mu|_A,x)\to V$ is the restriction mapping package and $(f|_A)_{x,r}=\frac{f-f(x)}{r}:\big(r\inv A,\frac{\mu|_A}{\varphi_{\mu|_A}(x,r)},x\big)\to V^\omega$. For the next statement, recall that the composition $\psi\circ\varphi:X\twoheadrightarrow Z$ of two correspondences $\varphi:X\twoheadrightarrow Y$ and $\psi:Y\twoheadrightarrow Z$ is defined by
\[
\psi\circ\varphi(x)=\bigcup_{y\in\varphi(x)}\psi(y).
\]

\begin{lemma}\label{lem:PhicircTan-wm}
The graph 
\begin{align*}
    {\rm Gr}(\overline{\Phi\circ\operatorname{Tan}_\ast})=\{(x,g)\in X\setminus N\times \M:\ g\in \overline{\Phi\circ\operatorname{Tan}_\ast}(x)\}\subset X\setminus N\times (\M,d_\ast)
\end{align*}
is a Borel set.
\end{lemma}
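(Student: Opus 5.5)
We need to prove the last stated lemma: the graph of $\overline{\Phi\circ\operatorname{Tan}_\ast}$ is Borel in $(X\setminus N)\times(\M,d_\ast)$.

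The approach is to show that the correspondence $x\mapsto\overline{\Phi\circ\operatorname{Tan}_\ast}(x)$ is (weakly) measurable with closed values into the complete separable metric space $(\M_\ast,d_\ast)$. Then we invoke the standard fact that a weakly measurable, closed-valued correspondence into a separable metrizable space has a measurable graph (\cite[Theorem 18.6]{AliprantisBorder99}). Concretely, such a graph can be written as
\begin{align*}
\{(x,g):\ d_\ast(g,\overline{\Phi\circ\operatorname{Tan}_\ast}(x))=0\},
\end{align*}
where $(x,g)\mapsto d_\ast(g,\overline{\Phi\circ\operatorname{Tan}_\ast}(x))$ is Carath\'eodory: it is Borel in $x$ by weak measurability and $1$-Lipschitz in $g$. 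The graph is then the zero set of a jointly Borel function, hence Borel.

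The key steps are as follows.
\begin{itemize}
\item[(1)] By Theorem \ref{thm:tan-meas+cpt}, the correspondence $x\mapsto \operatorname{Tan}_\ast(f,x)$ is measurable on $X\setminus N$ with nonempty compact values. For compact-valued correspondences into a metric space, measurability and weak measurability agree (\cite[Lemma 18.2]{AliprantisBorder99}), so $\operatorname{Tan}_\ast$ is weakly measurable.
\item[(2)] By Lemma \ref{lem:Phi-lhc}, $\Phi$ is lower hemicontinuous. Each $g\in\operatorname{Tan}_\ast(f,x)$ is $\operatorname{LIP}(f)$-Lipschitz (Corollary \ref{cor:graph-precpt}, respectively Corollary \ref{cor:lip-map-conv}), so we can work inside $\M_{\operatorname{LIP}(f)}$. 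A lower hemicontinuous $\Phi$ is in particular weakly measurable as a correspondence on the Polish space $\M_{\operatorname{LIP}(f)}$.
\item[(3)] Show that the composition of a weakly measurable correspondence with a lower hemicontinuous one is weakly measurable. For open $U$ we have
\begin{align*}
(\Phi\circ\operatorname{Tan}_\ast)^\ell(U)=\{x:\ \operatorname{Tan}_\ast(f,x)\cap\Phi^\ell(U)\ne\varnothing\}=\operatorname{Tan}_\ast^\ell(\Phi^\ell(U)).
\end{align*}
Since $\Phi^\ell(U)$ is open, this set is Borel by weak measurability of $\operatorname{Tan}_\ast$.
\item[(4)] Pass to closures, noting that $\overline{\psi}^\ell(U)=\psi^\ell(U)$ for open $U$ (\cite[Lemma 18.3]{AliprantisBorder99}).
\item[(5)] Conclude via the distance-function/Carath\'eodory argument above.
\end{itemize}

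The main obstacle is step (2)–(3) bookkeeping: $\Phi$ is defined on $\M_L(\XX,\{(V,v_0)\})$ with a fixed basepoint target, while tangents in the graphical case take values in $V^\omega$. We must make sure the lower hemicontinuity of Lemma \ref{lem:Phi-lhc} applies to the class containing all tangents. I would first check that every tangent has a $d_\ast$-representative in $\M_L(\XX,\{(V^\omega,0)\})$, via Proposition \ref{prop:Gr=ultra}. I would then note that the proof of Lemma \ref{lem:Phi-lhc} uses only Lemma \ref{lem:T-cont} and works verbatim for the Banach space $V^\omega$.

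A secondary point is that $\Phi$ must be well defined on $d_\ast$-equivalence classes. Since $d_\ast(f_j,f)\to0$ implies convergence of rescalings by Lemma \ref{lem:T-cont}, equivalent packages have the same $\overline{\Phi}$, and this is all that the closure formulation needs.
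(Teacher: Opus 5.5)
Your proposal is correct and follows the paper's proof: the identity $(\Phi\circ\operatorname{Tan}_\ast)^\ell(\mathcal U)=\operatorname{Tan}_\ast^\ell(\Phi^\ell(\mathcal U))$, with $\Phi^\ell(\mathcal U)$ open by lower hemicontinuity, gives weak measurability of the composition, and the measurable graph theorem \cite[Theorem 18.6]{AliprantisBorder99} then applies to the closure correspondence. Your version is slightly more careful than the paper's. You rightly conclude that this lower inverse is Borel, whereas the paper writes ``open''. You also spell out the passage to closures and the target/representative bookkeeping ($V^\omega$ versus $V$), which the paper leaves implicit.
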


\begin{proof}[Proof of Lemma \ref{lem:PhicircTan-wm}]
Since $\Phi$ is lower hemicontinuous (Lemma \ref{lem:Phi-lhc}) and $\operatorname{Tan}_\ast$ is measurable (Theorem \ref{thm:tan-meas+cpt}), their composition is weakly measurable. Indeed,
\begin{align*}
(\Phi\circ\operatorname{Tan}_\ast)^\ell(\mathcal U)={\rm Tan}_\ast^\ell(\Phi^\ell(\mathcal U))
\end{align*}
is open whenever $\mathcal U\subset (\M,d_\ast)$ is open. By \cite[Theorem 18.6]{AliprantisBorder99} the graph of the  closure correspondence $\overline{\Phi\circ\operatorname{Tan}_\ast}$ is Borel.    
\end{proof}

\begin{proof}[Proof of Lemma \ref{lem:density-pt-tangent}]
Note that 
\begin{align*}
    d_{pmG}(T_{x,r}X,(r\inv A, \varphi_\mu(x,r)\inv\mu|_A,x))\le \inf\{\rho:\ \mu(B(x,r/\rho)\setminus A)<\rho\varphi_\mu(x,r)\}
\end{align*}
by \cite[(84)]{Bate22}. For any $\rho>0$ we have $\displaystyle\frac{\mu(B(x,r/\rho)\setminus A)}{\varphi_\mu(x,r)}\stackrel{r\to 0}{\longrightarrow} 0$
by \eqref{eq:normalizing-const}, the fact that $C_\mu(x)<\infty$ and that $x$ is a Lebesgue density point of $A$. Thus
\begin{align}\label{eq:density-pt-pmG}
    d_{pmG}\Big(T_{x,r}X,\big(r\inv  A, \frac{\mu|_A}{\varphi_\mu(x,r)},x\big)\Big)\stackrel{r\to 0}{\longrightarrow} 0,
\end{align}
and moreover 
\begin{align}\label{eq:density-normalization}
    \frac{\varphi_\mu(x,r)}{\varphi_{\mu|_A}(x,r)}\stackrel{r\to 0}{\longrightarrow}1
\end{align}
since $\varphi$ is continuous and $x$ a Lebesgue density point. Together \eqref{eq:density-pt-pmG} and \eqref{eq:density-normalization} imply 
\begin{align*}
    d_{pmG}\Big(T_{x,r}X,\big(r\inv A,\frac{\mu|_A}{\varphi_{\mu|_A}(x,r)},x\big)\Big)\stackrel{r\to 0}{\longrightarrow}0.
\end{align*}
Thus for any sequence $r_i\to 0$ we have $X_{x,r_i}\stackrel{pmG}{\longrightarrow}X_\infty$ if and only $\big(r_i\inv A,\frac{\mu|_A}{\varphi_{\mu|_A}(x,r_i)},x\big)\stackrel{pmG}{\longrightarrow}X_\infty$.  

Suppose $r_i\to 0$ and $d_\ast(T_{x,r_i}f,f_\infty)\stackrel{i\to\infty}{\longrightarrow} 0$. Then $T_{x,r_i}X\stackrel{pmG}{\longrightarrow}X_\infty$ and consequently by the argument above $\big(r_i\inv  A,\frac{\mu|_A}{\varphi_{\mu|_A}(x,r_i)},x\big)\stackrel{pmG}{\longrightarrow}X_\infty$.

 If $d_\ast=d_{Gr}$, it remains to prove $d_{pmG}(G((f|_A)_{x,r_i}),G(f_\infty))\to 0$. By Lemma \ref{lem:graph-vs-T} we have $G((f|_A)_{x,r_i})=\widetilde T_{x,r_i}G(f|_A)=\left(r_i\inv{\rm spt}_{\mu_{f|_A}},\varphi_{\mu|_A}(x,r_i)^{-1}\mu_{f|_A},\bar f(x)\right)$. Since $f$ and $f|_A$ are Lipschitz maps, and $A\subset X$ is closed, we have $G(f|_A)=\{(x,f(x)) : {\rm spt}\mu|_A\}\overset{closed}{\subset} \{(x,f(x)):x\in {\rm spt}\mu\}=G(f)$. Moreover, we have $\mu_{f|_A}=\mu_{f}|_{{\rm spt}\mu_{f|_A}}$. Hence, by \eqref{eq:density-pt-pmG} we have 
\[
d_{pmG}(G(T_{x,r_i}f),(r_i\inv G(f|_A),\varphi_\mu(x,r_i)\inv\mu_{f}|_{G(f|_A)}, \bar f(x))\to 0.
\]
Since $G(T_{x,r_i})\stackrel{pmG}{\longrightarrow} G(f_\infty)$ and \eqref{eq:density-normalization} holds, we have $d_{pmG}(G((f|_A)_{x,r_i}),G(f_\infty))\to 0$. The opposite implication is similar. Let now $d_\ast=d_\M$. Then Corollary \ref{cor:map-precpt} gives us $\M_{{\rm LIP}(f)}(\XX,\{(V,0)\})\ni\tilde f_\infty:X_\infty\to (V,0)$ such that $T_{x,r_i}f\stackrel{pmG}{\longrightarrow} \tilde f_\infty$ in the sense of Defintion \ref{def:equi-lip-maps-conv} and $d_\M(f_\infty,\tilde f_\infty)=0$. Then, since ${\rm spt}(\mu|_A)_{x,r_i}\subset {\rm spt}\mu_{x,r_i}$, it is clear that $(f|_A)_{x,r_i}\stackrel{pmG}{\longrightarrow}\tilde f_\infty$. Hence, $d_\M((f|_A)_{x,r_i},\tilde f_\infty)\to 0$, and since $d_\M(f_\infty,\tilde f_\infty)=0$ the conclusion $d_\M((f|_A)_{x,r_i}, f_\infty)\to 0$ follows.
\end{proof}

\begin{proof}[Proof of Theorem \ref{thm:preiss-phenomenon}]
Let $N$ be a $\mu$-null Borel set such that $\varnothing\ne {\rm Tan}_{\ast}(f,x)$ is compact for every $x\in \X\setminus N$. It suffices to show that the set 
\begin{align*}
  A\coloneqq \Big\{x\in \X\setminus N:\,  & \text{there exist } g:(Y,\nu,o)\to V^\omega\text{ belonging to } {\rm Tan}_\ast(f,x),\,y\in {\rm spt}\nu, \text{ and } \rho>0 \\
     & \text{ such that } g_{y,\rho}\notin {\rm Tan}_\ast(f,x)\Big\}.
\end{align*}
is $\mu$-negligible. Since \((\M_\ast,d_\ast)\) is a separable metric space, for any fixed \(k\in\N\) we can express $\M_\ast$ as a countable union \(\M_\ast=\bigcup_{n\in\N} \M_n^k\) of closed sets $\M_n^k\subset (\M_\ast,d_\ast)$ such that \({\rm diam}_\ast(\M_n^k)\leq \frac{1}{2k}\). Then we have \(A=\bigcup_{k,m,n\in\N}A_{k,m,n}\), where 
\begin{align*}
            A_{k,m,n}\coloneqq \Big\{x\in \X\setminus N:\,  & \text{there exist } g:(Y,\nu,o)\to V^\omega\text{ belonging to } {\rm Tan}_\ast(f,x),\,y\in {\rm spt}\nu, \text{ and }\rho>0   \\
            & \text{ such that }\, g_{y,\rho}\in \M_n^k\,\text{ and }\,d_\ast \left(g_{y,\rho},T_{x,r}f\right)\geq \frac{1}{2k}\, \text{  for all }\, r\in (0,1/m)\Big\}.
\end{align*}

$\rm STEP\,1$. Define the set $\mathcal S_{n,m,k}\subset X\times \M_\ast$ by
\begin{align*}
    \mathcal S_{n,m,k}=((X\setminus N)\times \M_n^k)\cap G_{\frac{1}{m},\frac{1}{k}}(f)\cap {\rm Gr}(\overline{\Phi\circ\operatorname{Tan}_\ast}),
\end{align*}
where $G_{a,b}(f)=\{(x,g):\ d_\ast(g,T_{x,r}f)\ge b,\, \forall r\in (0,a)\}$ is closed by Lemma \ref{lem:T-weakly-meas}, and $ {\rm Gr}(\overline{\Phi\circ\operatorname{Tan}_\ast})$ is Borel by Lemma \ref{lem:PhicircTan-wm}.  Since $\mathcal S_{n,m,k}$ is Borel, by \cite[Theorem 12.24]{AliprantisBorder99} the image $B_{n,m,k}:=\pi_X(\mathcal S_{n,m,k})$ of $\mathcal S_{n,m,k}$ under the projection map $\pi_X:X\times\M_\ast\to X$ is analytic and thus $\mu$-measurable (see \cite[Theorem 12.41]{AliprantisBorder99}). Note that $A_{n,m,k}\subset B_{n,m,k}$.

\({\rm STEP\,2}.\) We show that the \(B_{k,m,n}\) are \(\mu\)-null, for all $k,m,n\in\N$. Our argument closely follows the proof of \cite[Theorem 3.2]{GMR15}.  Suppose $\mu(B_{n,m,k})>0$ for some $k,m,n\in\N$. Let $B\subset B_{n,m,k}$ be a compact set of positive $\mu$-measure and $\bar b\in B$ a Lebesgue density point of $B$ with $C_\mu(\bar b)<\infty$. By assumption there exist $g_\infty\in\M^k_n$ with $d_\ast(g_{\infty},T_{x,r}f)\ge \frac{1}{k}$ for all $r\in (0,1/m)$ such that $d_\ast(g_\infty,g^j_{y_j,\rho_j})\stackrel{j\to\infty}{\longrightarrow}0$ for some $(g^j:(Y_j,\nu^j,o_j)\to V^\omega)\in \operatorname{Tan}_\ast(f,\bar b)$ and sequences $\rho_j\subset (0,\infty)$, $(y_j)\subset \operatorname{spt}\nu$. By Lemma \ref{lem:density-pt-tangent} we have $g^j\in \operatorname{Tan}_\ast(f|_B,\bar b)$. Fixing effective realizations $\iota_i^j:T_{\bar b,r_i^j}B\to (Z_j,z_j)$, $\iota_\infty^j:Y_j\to (Z_j,z_j)$ for the convergence $T_{\bar b,r_i^j}B\stackrel{pmG}{\longrightarrow}Y_j$ (for some sequence $r_i^j\to 0$), for each $y_j\in\operatorname{spt}\nu_j$ we can find sequences $(b_{j,i})_i\subset B $ so that $\iota_i^j(b_{j,i})\to \iota_\infty^j(y_j)$ (cf. Lemma \ref{lemma:support_point}). Note that the same isometric embeddings are an effective realization of the convergence $T_{ b_{i,j},\rho_jr_i^j}B\stackrel{pmG}{\longrightarrow} T_{y_j,\rho_j}Y$ in $(\rho_j\inv Z_j,z_j)$.   It follows that 
\begin{align*}
    d_\ast((f|_B)_{b_{j,i},\rho_jr_i^j},g^j_{y_j,\rho_j})\stackrel{i\to\infty}{\longrightarrow} 0.
\end{align*}
 Choose large enough $j_0\in\N$ so that $d_\ast(g_\infty,g^{j_0}_{y_{j_0},\rho_{j_0}})<\frac{1}{4k}$. Then choose $i_0\in\N$ large enough so that $r_{j_0,i_0}:=\rho_{j_0}r_{i_0}^{j_0}\in (0,1/m)$ and $d_\ast((f|_B)_{b_{j_0,i_0}r_{{j_0},{i_0}}},g^{j_0}_{y_{j_0},\rho_{j_0}})<\frac{1}{4k}$. Since $b_{j_0,i_0}\in B_{n,m,k}$ there exists $g_\infty^{j_0,i_0}\in \M_n^k$ with $d_\ast(g_\infty^{j_0,i_0},T_{b_{j_0,i_0},r}f)\ge \frac 1k$ for all $r\in (0,1/m)$. It follows that

\begin{align*}
    \frac 1k\le &d_\ast(g_\infty^{j_0,i_0},T_{b_{j_0,i_0},r_{j_0,i_0}}f)\le d_\ast(g_\infty^{j_0,i_0},g_\infty)+d_\ast(g_\infty,g^{j_0}_{y_{j_0},\rho_{j_0}})+d_\ast(g^{j_0}_{y_{j_0},\rho_{j_0}},(f|_B)_{b_{j_0,i_0},r_{j_0,i_0}})\\
    <&\frac{1}{2k}+\frac{1}{4k}+\frac{1}{4k}=\frac 1k, 
\end{align*}
which is a contradiction. Thus we obtain $\mu(A_{n,m,k})\le \mu(B_{n,m,k})=0$, which implies that $\mu(A)=0$ and completes the proof. 
\end{proof}

\subsection{Preiss's phenomenon for pmGH-tangents} 
\label{sec:Tangents3}
In this subsection we prove Theorem \ref{thm:pmGH-preiss}, that is Preiss's phenomenon for pmGH tangents. A key role in the proof is played by porosity (Section \ref{sec:MMTN}) and the behaviour of the underlying measure on porous sets.

\begin{lemma}\label{lem:porosity}
    Let $(X,d)$ be a metric space and $S\subset \X$. A set \(S\) is not porous at \(x\in S\), if and only if for every \(\varepsilon >0\) there exists \(R_{x,\varepsilon}>0\) such that \(S\cap B(x,(1+\varepsilon)r)\) is \(\varepsilon r\)-dense in \(B(x,r)\) for every \(r< R_{x,\varepsilon}\).
\end{lemma}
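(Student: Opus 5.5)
We prove both directions through the quantitative characterization of porosity in the preceding lemma (the one equivalent to Mor\'an--Preiss--Zaj\'i\v{c}ek porosity). By that lemma, $S$ is porous at $x$ if and only if $\limsup_{r\to 0}\gamma(S,x,r)/r>0$. Hence $S$ is \emph{not} porous at $x$ if and only if $\gamma(S,x,r)/r\to 0$ as $r\to 0$. We translate this vanishing of relative hole sizes into the density statement.

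For the direction ``not porous $\Rightarrow$ density'', fix $\varepsilon\in(0,1)$. Non-porosity gives $R_{x,\varepsilon}>0$ such that $\gamma(S,x,s)<\varepsilon s/(2+\varepsilon)$ for all $s<(1+\varepsilon)R_{x,\varepsilon}$ (we shrink the constant for room). Let $r<R_{x,\varepsilon}$ and $z\in B(x,r)$, and suppose for contradiction that $\operatorname{dist}(z,S\cap B(x,(1+\varepsilon)r))\ge\varepsilon r$. Then $B(z,\varepsilon r)\cap S=\varnothing$: any point $w\in S$ with $d(z,w)<\varepsilon r$ satisfies $d(x,w)<r+\varepsilon r$, so it lies in $S\cap B(x,(1+\varepsilon)r)$, contradicting the assumption. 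Put $s=(1+\varepsilon)r$. Since $d(z,x)+\varepsilon r<(1+\varepsilon)r=s$, the pair $(z,\varepsilon r)$ is admissible in the definition of $\gamma(S,x,s)$. Therefore $\gamma(S,x,s)\ge\varepsilon r=\varepsilon s/(1+\varepsilon)$, which contradicts the choice of $R_{x,\varepsilon}$. So every $z\in B(x,r)$ lies within $\varepsilon r$ of $S\cap B(x,(1+\varepsilon)r)$; that is, $S\cap B(x,(1+\varepsilon)r)$ is $\varepsilon r$-dense in $B(x,r)$.

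For the converse, assume the density property and suppose $S$ were porous at $x$. Definition \ref{def:porous_set} gives $\eta>0$ and points $x_n\to x$ with $S\cap B(x_n,\eta d(x_n,x))=\varnothing$; note $x_n\ne x$ because $x\in S$. Choose $\varepsilon<\min(\eta,1)/2$ and $r_n=2d(x_n,x)$, so that $x_n\in B(x,r_n)$ and $r_n<R_{x,\varepsilon}$ for large $n$. Density then provides $w\in S$ with $d(w,x_n)<\varepsilon r_n=2\varepsilon d(x_n,x)<\eta d(x_n,x)$, so $w\in S\cap B(x_n,\eta d(x_n,x))$, a contradiction. Hence $S$ is not porous at $x$.

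The only delicate point is the bookkeeping of constants in the first direction. The hole radius $\varepsilon r$ has to be compared with the scale $(1+\varepsilon)r$, and the strict-versus-nonstrict inequalities in the definitions of $\gamma$ and of $\varepsilon$-density need care. Choosing slightly smaller constants, as done above, handles this. The remaining steps are routine.
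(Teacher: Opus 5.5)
Your proof is correct. The converse direction is the paper's argument: you test the density property at the scale $r=2d(x,y)$ with $\varepsilon$ about $\eta/2$. The paper concludes directly that $S\cap B(y,\eta d(x,y))\neq\varnothing$ for all $y$ near $x$, while you argue by contradiction against a porosity sequence, which amounts to the same thing.

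The forward direction is where you take a different route. The paper negates the sequential definition directly. This gives, for each $\varepsilon$, a radius $R_{x,\varepsilon}$ with $S\cap B(y,\varepsilon d(x,y))\neq\varnothing$ for $y\in B(x,R_{x,\varepsilon})$. Then, for $y\in B(x,r)$, it picks $z\in S$ with $d(y,z)<\varepsilon d(x,y)\le\varepsilon r$, and automatically $z\in B(x,(1+\varepsilon)r)$. You instead use the preceding lemma to rewrite non-porosity as $\gamma(S,x,s)/s\to 0$. You then note that a hole $B(z,\varepsilon r)\cap S=\varnothing$ with $z\in B(x,r)$ makes $(z,\varepsilon r)$ admissible in $\gamma(S,x,(1+\varepsilon)r)$.

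The paper's route is a line shorter and self-contained. Yours hands the quantifier bookkeeping to an already proved lemma. As a side benefit, it has no trouble with the degenerate point $y=x$: there $B(y,\varepsilon d(x,y))=\varnothing$, so the paper's ``for every $y\in B(x,R_{x,\varepsilon})$'' must be read with $y\neq x$, the case $y=x$ being trivial since $x\in S$.

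Two minor points. First, your restriction to $\varepsilon\in(0,1)$ is never used, so your argument covers every $\varepsilon>0$, as the statement requires.

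Second, the justification ``$x_n\neq x$ because $x\in S$'' is not correct. Since $B(x,0)=\varnothing$, the condition $S\cap B(x_n,\eta d(x_n,x))=\varnothing$ holds vacuously when $x_n=x$, whether or not $x\in S$. The requirement $x_n\neq x_0$ has to be built into Definition~\ref{def:porous_set}; otherwise every set would be porous at each of its points. The paper tacitly reads the definition this way too, and under that reading your step is fine.
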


\begin{proof}[Proof of Lemma \ref{lem:porosity}]
First, fix a point \(x\in S\) at which \(S\) is not porous. By definition of non-porosity point \(x\), for every \(\varepsilon>0\) there exists \(R_{x,\varepsilon}>0\) such that for every \(y\in B(x,R_{x,\varepsilon})\) we have \(S\cap B(y,\varepsilon \d(x,y))\neq\varnothing\). Let \(r\in (0,R_{x,\varepsilon})\) and take any \(y\in B(x,r)\). Since \(d(x,y)<r\), we have \(B(y,\varepsilon\,\d(x,y))\subseteq B(y,\varepsilon r)\). Hence, there exists \(z\in S\cap B(y,\varepsilon r)\). Moreover, $d(x,z)\leq d(x,y)+d(y,z)<(1+\varepsilon)r$, whence $z\in B(x,(1+\varepsilon)r)$. Therefore, $z\in S\cap B(x,(1+\varepsilon)r)$ and $d(z,y)<\varepsilon r$. Thus, $B(x,r)\subset \big(B(x,(1+\varepsilon)r)\cap S\big)^{\varepsilon r}$.

Suppose now that for every \(\varepsilon>0\) there exists \(R_{x,\varepsilon}>0\) such that for all \(r\in (0,R_{x,\varepsilon})\) the set \(S\cap B(x,(1+\varepsilon)r)\) is \(\varepsilon r\)-dense in \(B(x,r)\). Fix an arbitrary \(\eta>0\). Then there exists \(R_{x,\eta}>0\) such that for all \(r\in (0,R_{x,\eta})\) the set \(S\cap B(x,(1+\eta/2)r)\) is \(\frac{\eta}{2}r\)-dense in \(B(x,r)\). Take \(\varepsilon=\frac{R_{x,\eta}}{2}\) and an arbitrary \(y\in B(x,\varepsilon)\). Then we have \(2d(x,y)<2\varepsilon=R_{x,\eta}\). Hence, \(S\cap B(x,2(1+\eta/2)d(x,y))\) is \(\eta\,d(x,y)\)-dense in \(B(x,2d(x,y))\). Since \(y\in B(x,2d(x,y))\), there exists \(z\in S\cap B(x,2(1+\eta/2)d(x,y))\) such that \(d(z,y)<\frac{\eta}{2}\cdot 2d(x,y)=\eta d(x,y)\). Therefore, \(z\in S\cap B(y,\eta\, d(x,y))\).  By the arbitrariness of \(y\in B(x,\varepsilon)\), we conclude that $x$ is a non-porosity point of $S$.  
\end{proof}

\begin{remark} \label{rem:non-porosity} Suppose that every porous set in a metric measure space \((\X,\d,\mu)\) has measure zero. Then for any set \(S \subset \X\) we have that \(S\) is not porous at \(x\) for \(\mu\)-a.e. \(x \in S\). Indeed, the set \(P_S\coloneqq\{x\in S\,:\, S \text{ is porous at } x\}\) is porous itself and therefore has measure zero. Hence, \(\mu\)-a.e. point of \(S\) is a non-porosity point of $S$.
\end{remark}

\begin{proposition}\label{prop:pmGpmGH}
Let $(X,\mu)$ be a metric measure space. Suppose $\mu$ is $(\bar C,\bar R)$-doubling along a closed set $A\subset X$ for some $\bar C,\bar R>0$. Let $\bar x\in {\rm spt}\mu \cap A$ be a point at which $A$ is not porous. Given a sequence $r_i\searrow0$, we have that $T_{\bar x,r_i}X\overset{pmG}{\longrightarrow}(X_\infty,\mu_\infty,\bar x_\infty)$ if and only if  $T_{\bar x,r_i}X\overset{pmGH}{\longrightarrow}(X_\infty,\mu_\infty,\bar x_\infty)$.
\end{proposition}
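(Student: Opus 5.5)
The direction pmGH $\Rightarrow$ pmG is \cite[Proposition 3.30]{GMS15}, so the content is the converse. Suppose $T_{\bar x,r_i}X\stackrel{pmG}{\longrightarrow}(X_\infty,\mu_\infty,\bar x_\infty)$, realized by isometric embeddings $\iota_i: r_i\inv X\to (Z,\bar z)$ and $\iota_\infty$. Since pmG-limits only see $\{\bar x_\infty\}\cup\operatorname{spt}\mu_\infty$, we take that set as the representative of $X_\infty$. The plan is to replace $X$ by the closed set $A$ and show that, after rescaling, $A$ Hausdorff-approximates both $\operatorname{spt}\mu_\infty$ and $r_i\inv X$ near the basepoint. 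The obvious obstruction to pmGH-convergence is that $r_i\inv X$ could contain points far from $\operatorname{spt}\mu_\infty$, or from any mass. We therefore rely on the precise sense of $X$ in the pmGH definition: \cite{GMS15} allows replacing $X$ by $\{\bar x\}\cup\operatorname{spt}\mu$, which is the convention used in the stated equivalence.

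The first step uses non-porosity. By Lemma \ref{lem:porosity}, for each $\varepsilon>0$ and $i$ large, $A\cap B(\bar x,(1+\varepsilon)R r_i)$ is $\varepsilon R r_i$-dense in $B(\bar x,Rr_i)$. After rescaling, $A$ is $\varepsilon R$-dense in the $R$-ball of $r_i\inv X$, and in particular in that of $\{\bar x\}\cup\operatorname{spt}\mu$. Thus it suffices to show the following two statements in $Z$:
\begin{itemize}
\item[(a)] points of $\iota_i(A\cap B(\bar x,Rr_i))$ lie within $o(1)$ of $\iota_\infty(\operatorname{spt}\mu_\infty)$;
\item[(b)] every point of $\iota_\infty(\operatorname{spt}\mu_\infty)\cap B(\bar z,R)$ is within $o(1)$ of $\iota_i(r_i\inv X)$.
\end{itemize}
Statement (b) is Lemma \ref{lemma:support_point} made uniform by a compactness argument. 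The limit ball $\operatorname{spt}\mu_\infty\cap \bar B(\bar z,R)$ is totally bounded, because by (a) it is Hausdorff-close to a uniformly doubling set, and uniformly doubling sets are uniformly totally bounded. One then applies Lemma \ref{lemma:support_point} to finitely many points of a net.

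The main step is (a), and the doubling along $A$ is what makes it work. Take $a\in A$ with $d(a,\bar x)<Rr_i$ and fix $\delta>0$. The ball $B(a,\delta r_i)$ is compared to $B(a,2Rr_i)\supset B(\bar x,Rr_i)$ by iterating $(\bar C,\bar R)$-doubling at $a\in A$, which is allowed since $2Rr_i<\bar R$ for large $i$. This gives
\[
\mu(B(a,\delta r_i))\ge \bar C^{-\lceil\log_2(2R/\delta)\rceil}\mu(B(\bar x,Rr_i))\gtrsim_{R,\delta}\varphi_\mu(\bar x,r_i).
\]
Hence $(\mu)_{\bar x,r_i}(B_{r_i\inv X}(a,\delta))\ge c(R,\delta)>0$ uniformly in $i$ and $a$.

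Now suppose (a) fails. Then, along a subsequence, there are points $a_i$ with $\operatorname{dist}(\iota_i(a_i),\iota_\infty(\operatorname{spt}\mu_\infty))\ge 3\delta$. Since $\{\iota_i(a_i)\}$ need not converge in $Z$, we test against the functions $\psi_i=(1-\operatorname{dist}(\cdot,B(\iota_i(a_i),\delta))/\delta)_+$. These are uniformly Lipschitz, bounded by $1$ and supported in $B(\bar z,R+2\delta)$. By the Prokhorov-type metric $F_{(Z,\bar z)}$, which is quantitative, $\int\psi_i\,\d\iota_{i\ast}\mu_i-\int\psi_i\,\d\iota_{\infty\ast}\mu_\infty\to 0$. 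The second integral vanishes, while the first is at least $c(R,\delta)$, a contradiction.

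The expected main obstacle is exactly this uniformity. Using $F$ rather than plain weak convergence avoids any limit point of the $a_i$, so this is where care is needed, together with checking that the normalizing constants $\varphi_\mu(\bar x,r_i)$ and $\mu(B(\bar x,Rr_i))$ are comparable. That comparability follows from doubling at $\bar x\in A$.

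Finally, we combine everything with the triangle inequality for Hausdorff distance. The set $\iota_i(r_i\inv(\{\bar x\}\cup\operatorname{spt}\mu))\cap B(\bar z,R)$ is within $\varepsilon R$ of $\iota_i(A)$, which by (a) is within $o(1)$ of $\iota_\infty(\operatorname{spt}\mu_\infty)$. Together with (b), this gives $H_{(Z,\bar z)}\to0$, which is pmGH-convergence.
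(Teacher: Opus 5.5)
Your overall route is sound and genuinely different from the paper's. The paper upgrades weak $(R_i,\varepsilon_i)$-approximations to $(R,\varepsilon)$-approximations by redefining them off the good sets $\tilde X_i$ (Steps 1--3). You instead verify Definition \ref{def:pmGH-conv} directly in one ambient space, using the quantitative metric $F_{(Z,\bar z)}$ with moving test functions. Your step (a) is correct and mirrors the paper's Step 1: non-porosity plus the lower bound $\mu(B(a,\delta r_i))\gtrsim\varphi_\mu(\bar x,r_i)$ coming from doubling along $A$.

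The gap is in (b). You justify total boundedness of $\operatorname{spt}\mu_\infty\cap\bar B(\bar z,R)$ by saying that, by (a), it is Hausdorff-close to a uniformly doubling set. But (a) only gives the inclusion $\iota_i(A\cap B_{r_i\inv X}(\bar x,R))\subset B(\iota_\infty(\operatorname{spt}\mu_\infty),o(1))$. To transfer total boundedness from the sets $\iota_i(A)$ to the limit support you need the reverse inclusion $\iota_\infty(\operatorname{spt}\mu_\infty)\cap B(\bar z,R)\subset B(\iota_i(A),o(1))$, and that is essentially (b) itself, so the argument is circular. Lemma \ref{lemma:support_point} applied pointwise gives no uniformity in $y$. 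This half of Hausdorff convergence (the limit not being larger than what the rescalings see) is exactly where the paper needs an extra input: doubling of $X_\infty$ via \cite[Lemma 5.6]{Bate22}, used to establish \eqref{eq:app4}.

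The repair uses only your own tools.
\begin{enumerate}
\item For $y\in\operatorname{spt}\mu_\infty\cap B(\bar z,R)$, Lemma \ref{lemma:support_point} gives $x_i\in r_i\inv X$ with $\iota_i(x_i)\to y$.
\item Non-porosity, applied at radius $(R+1)r_i$ with a diagonal choice of $\varepsilon$, lets you replace these by $a_i\in A$ with $\iota_i(a_i)\to y$.
\item Upper semicontinuity of weak convergence on closed bounded sets, together with your bound from (a), then gives
\[
\mu_\infty(\bar B(y,\delta))\ \ge\ \limsup_{i\to\infty}\mu_{\bar x,r_i}\big(B_{r_i\inv X}(a_i,\delta/2)\big)\ \ge\ c(R+1,\delta/2)>0 ,
\]
uniformly in $y$.
\item Now (b) follows from your $F$-argument with the roles swapped. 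If $\operatorname{dist}(y_i,\iota_i(r_i\inv X))\ge 3\delta$, the test function $(1-\operatorname{dist}(\cdot,\bar B(y_i,\delta))/\delta)_+$ has $\iota_{\infty\ast}\mu_\infty$-integral at least $c(R+1,\delta/2)$ and vanishes on $\iota_i(r_i\inv X)$. Alternatively, the uniform bound yields total boundedness by a packing argument.
\end{enumerate}
With this fix your proof is complete, and it is somewhat more self-contained than the paper's, since it never needs to know that the limit is doubling.

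Finally, your detour through $\{\bar x\}\cup\operatorname{spt}\mu$ is unnecessary. Doubling along $A$, which includes $\mu(B(x,2r))>0$, forces $A\subset\operatorname{spt}\mu$. Non-porosity then already makes all of $B_{r_i\inv X}(\bar x,R)$ asymptotically close to $A$, which is what Definition \ref{def:pmGH-conv} requires.
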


Proposition \ref{prop:pmGpmGH} has the following corollary. Recall the definition of pmG and pmGH-convergence of equi-Lipschitz mapping packages (Definition \ref{def:equi-lip-maps-conv}).

\begin{corollary}\label{cor:pmGpmGH}
Let $(X,\mu)$ be a metric measure space and $f:(X,\mu)\to V$ a Lipschitz map into a finite dimensional Banach space. Suppose $A\subset X$ is closed and $\mu$ is $(C,R)$-doubling along $A$. If $A$ is not porous at $\bar x\in A$ then, for any $r_i\searrow 0$, we have that $T_{\bar x,r_i}f\stackrel{pmGH}{\longrightarrow}f_\infty$ if and only if $T_{\bar x,r_i}f\stackrel{pmG}{\longrightarrow}f_\infty$.
\end{corollary}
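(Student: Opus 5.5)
We reduce Corollary \ref{cor:pmGpmGH} to Proposition \ref{prop:pmGpmGH} and Definition \ref{def:equi-lip-maps-conv}. One direction is formal. Suppose $T_{\bar x,r_i}f\stackrel{pmGH}{\longrightarrow}f_\infty$, realized by embeddings $\iota_i$ into $(Z,\bar z)$. Then $F_{(Z,\bar z)}(\iota_{i\ast}\mu_i,\iota_{\infty\ast}\mu_\infty)\to 0$. Condition \eqref{eq:map-precpt-pmGH} holds for all $x\in X_\infty$ and all $x_i\in X_i$, so in particular it holds for $x\in\{\bar x_\infty\}\cup\operatorname{spt}\mu_\infty$ and $x_i\in\{\bar x_i\}\cup\operatorname{spt}\mu_i$. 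This is exactly \eqref{eq:map-precpt}, so $T_{\bar x,r_i}f\stackrel{pmG}{\longrightarrow}f_\infty$.

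For the converse, assume $T_{\bar x,r_i}f\stackrel{pmG}{\longrightarrow}f_\infty$. In particular $T_{\bar x,r_i}X\stackrel{pmG}{\longrightarrow}X_\infty$, and Proposition \ref{prop:pmGpmGH} upgrades this to pmGH-convergence of the domains. The first issue is that pmGH-convergence needs a single ambient space $(Z,\bar z)$ that realizes both the Hausdorff convergence and the pointwise convergence of the maps. I would use the embeddings $\iota_i$ that realize the pmGH-convergence. I would first check that the domain $X_\infty$ can be taken to be $\{\bar x_\infty\}\cup\operatorname{spt}\mu_\infty$, i.e. that it is determined up to isomorphism by the pmG data. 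This should follow from the proof of Proposition \ref{prop:pmGpmGH}: non-porosity of $A$ at $\bar x$ together with the doubling of $\mu$ along $A$ makes $r_i\inv(\operatorname{spt}\mu\cap A)$ Hausdorff-close to the whole rescaled space near $\bar x$, and hence close to the support of the limit measure.

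Next I would define the limit map. By Remark \ref{rmk:lip} the maps $T_{\bar x,r_i}f$ are $L$-Lipschitz into the proper space $V$ and vanish at $\bar x$. By Arzela--Ascoli (compare Lemma \ref{lem:map-precpt}), after passing to a subsequence they converge pointwise, in the sense of \eqref{eq:map-precpt-pmGH}, to an $L$-Lipschitz map $\tilde f_\infty:X_\infty\to V$ defined on all of $X_\infty$. The point of passing to a subsequence is to get convergence at every point of $X_\infty$, not only at points of the support. Well-definedness and the Lipschitz bound follow from the uniform Lipschitz estimate exactly as in Theorem \ref{thm:arzela-ascoli-ultra}.

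I would then identify $\tilde f_\infty$ with $f_\infty$. On $\operatorname{spt}\mu_\infty$ the pmG-convergence hypothesis \eqref{eq:map-precpt} forces $\tilde f_\infty=f_\infty$. Since $X_\infty=\{\bar x_\infty\}\cup\operatorname{spt}\mu_\infty$ and $f_\infty$ is Lipschitz (Corollary \ref{cor:lip-map-conv}), the two maps agree on all of $X_\infty$. Because the limit is now identified, the subsequence argument upgrades to the full sequence: every subsequence has a further subsequence converging to the same $f_\infty$.

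It remains to check \eqref{eq:map-precpt-pmGH} for arbitrary $x_i\in X_i$, not only for $x_i$ in the supports. By Hausdorff convergence, together with the density of $r_i\inv(\operatorname{spt}\mu\cap A)$ near $\bar x$, I would choose $x_i'\in\operatorname{spt}\mu_i$ with $d(x_i,x_i')\to0$. The Lipschitz bound then gives $|T f(x_i)-T f(x_i')|\le L\,d(x_i,x_i')\to 0$, so the convergence at $x_i'$ transfers to $x_i$.

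The main obstacle is that points of $X_i$ far from the support of $\mu_i$ are not controlled by the measure. The non-porosity and doubling hypotheses are what ensure that, on bounded sets, the support is asymptotically dense in the rescaled spaces $r_i\inv X$. I would extract this density statement from Proposition \ref{prop:pmGpmGH} and Lemma \ref{lem:porosity}, using that $\mu$ is $(\bar C,\bar R)$-doubling along $A$ to ensure that $A\subset\operatorname{spt}\mu$ locally around $\bar x$.
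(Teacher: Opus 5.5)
The easy direction and your reduction to $X_\infty=\{\bar x_\infty\}\cup\operatorname{spt}\mu_\infty$ are fine. The gap is the identification step, and it is exactly the issue you raised at the outset. Condition \eqref{eq:map-precpt} in the hypothesis refers to the particular embeddings $j_i,j_\infty$ realizing $T_{\bar x,r_i}f\stackrel{pmG}{\longrightarrow}f_\infty$. Your $\tilde f_\infty$, however, is built in the embeddings $\iota_i,\iota_\infty$ supplied by Proposition \ref{prop:pmGpmGH}, which are unrelated to $j_i,j_\infty$. A sequence with $\iota_i(x_i)\to\iota_\infty(x)$ need not satisfy $j_i(x_i)\to j_\infty(x)$, so \eqref{eq:map-precpt} does not force $\tilde f_\infty=f_\infty$ on $\operatorname{spt}\mu_\infty$.

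Here is a concrete counterexample. Take $X=A=\R$, $\mu=\mathcal L^1$, $f=\mathrm{id}$ and $\bar x=0$. Every $T_{0,r}f$ is isomorphic to $\mathrm{id}:(\R,c\inv\mathcal L^1,0)\to\R$, where $c=\int_\R\varphi(|t|)\ud t$. The embeddings $\iota_i=\mathrm{id}$, $\iota_\infty=-\mathrm{id}$ into $\R$ realize pmGH-convergence of the domains, yet in them the maps converge to $-f_\infty$.

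The sub-subsequence argument does not close the gap either. At best, comparing the two families along a further subsequence gives $\tilde f_\infty=f_\infty\circ\sigma$ for a basepoint- and measure-preserving isometry $\sigma$ of $X_\infty$. This $\sigma$ depends on the subsequence, and such corrections of $\iota_\infty$ cannot be assembled into the single family of embeddings that Definition \ref{def:equi-lip-maps-conv} requires.

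The remedy is to stay in $j_i,j_\infty$ and show that these embeddings already give Hausdorff convergence on balls.

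\emph{Limit space is close to $j_i(X_i)$.} $X_\infty=\{\bar x_\infty\}\cup\operatorname{spt}\mu_\infty$ is doubling by \cite[Lemma 5.6]{Bate22}, hence boundedly compact. A finite $\varepsilon/2$-net of $X_\infty\cap\bar B(\bar x_\infty,R)$ therefore consists of centres of balls of positive $\mu_\infty$-measure. Weak convergence then gives $j_\infty(X_\infty)\cap B(\bar z,R)\subset B(j_i(X_i),\varepsilon)$ for large $i$.

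\emph{$j_i(X_i)$ is close to the limit space.} For large $i$, Lemma \ref{lem:porosity} puts every $y\in B_i(\bar x,R)$ within $\varepsilon/4$ of some $a\in A\cap B_i(\bar x,R+1)$. Doubling along $A$ gives $\mu_{\bar x,r_i}(B_i(a,\varepsilon/16))\ge c(R,\varepsilon)>0$, exactly as in Step 1 of the proof of Proposition \ref{prop:pmGpmGH}. Testing $F_{(Z,\bar z)}(j_{i\ast}\mu_{\bar x,r_i},j_{\infty\ast}\mu_\infty)\to 0$ against the $8/\varepsilon$-Lipschitz bumps $(1-8\varepsilon\inv d(j_i(a),\cdot))_+$ gives $\operatorname{dist}(j_i(a),j_\infty(X_\infty))<\varepsilon/8$, uniformly in $a$.

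Once the same embeddings realize both \eqref{eq:map-precpt} and the Hausdorff convergence, your last paragraph finishes the proof. That is: approximate $x_i$ by $\tilde x_i\in\operatorname{spt}\mu_i$ with $j_i(\tilde x_i)\to j_\infty(x)$ via Lemma \ref{lemma:support_point}, apply \eqref{eq:map-precpt}, and transfer by the Lipschitz bound. The Arzela--Ascoli and identification steps then become unnecessary.

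That last paragraph is exactly the paper's proof. Note that the paper applies \eqref{eq:map-precpt} in the embeddings realizing pmGH-convergence without comment. It thus tacitly uses the compatibility that the Hausdorff estimate above provides.
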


In the proof of Proposition \ref{prop:pmGpmGH} we use a characterization of pmG and pmGH-convergence in terms of  approximate isometries. Given $(X_i,\mu_i,\bar x_i),(X_\infty,\mu_\infty,\bar x_\infty)\in \XX$ and $\varepsilon_i,R_i>0$, a \textit{weak $(R_i,\varepsilon_i)$-approximation} $\varphi_i:X_i\to X_\infty$ is a Borel map for which there exists a Borel set $\tilde X_i\subset B(\bar x_i,R_i)$ that satisfies
 \begin{align}
    |d_i(x,y)-d_\infty(\varphi_i(x),\varphi_i(y))|\leq \varepsilon_i,\quad  \textnormal{for all } x,y\in \tilde X_i,\label{eq:weakapp1}\\
     \mu_i(B(\bar x_i,R_i)\setminus {\tilde X_i})\leq \varepsilon_i\label{eq:weakapp2}\\
     \mu_\infty(B(\bar x_\infty,R_i-\varepsilon_i)\setminus \varphi_i(\tilde X_i)^{\varepsilon_i})\leq \varepsilon_i.\label{eq:weakapp3}
 \end{align}

Similarly, $\varphi_i$ is called a \textit{$(R_i,\varepsilon_i)$-approximation} if the set $\tilde X_i$ can be chosen to be $B(\bar x_i,R_i)$ and \eqref{eq:weakapp3} is replaced by
\begin{align}
    B(\bar x_\infty,R_i-\varepsilon_i)\subset \varphi_i(B(\bar x_i,R_i))^{\varepsilon_i} \label{eq:app4}.
\end{align}

By \cite[Theorem 12.2]{SP21} we have that $X_i\stackrel{pmG}{\longrightarrow}X_\infty$ if and only if there exists a sequence of weak $(R_i,\varepsilon_i)$-approximations $\varphi_i:X_i\to X_\infty$ for some $R_i\nearrow\infty$ and $\varepsilon_i\searrow 0$, so that $\varphi_{{i}\ast}\mu_i\rightharpoonup \mu_\infty$ in duality with $C_{bbs}(X_\infty)$. We also have that $X_i\stackrel{pmGH}{\longrightarrow}X_\infty$ if and only if there exist $(R_i,\varepsilon_i)$-approximations $\varphi_i:X_i\to X_\infty$ for some $R_i\nearrow\infty$ and $\varepsilon_i\searrow 0$, so that $\varphi_{{i}\ast}\mu_i\rightharpoonup \mu_\infty$ in duality with $C_{bbs}(X_\infty)$ (see e.g. \cite[Proposition 3.28]{GMS15}).

\begin{proof}[Proof of Proposition \ref{prop:pmGpmGH}]
The convergence  $T_{\bar x,r_i}X\stackrel{pmGH}{\longrightarrow}X_\infty$ implies $T_{\bar x,r_i}X\stackrel{pmG}{\longrightarrow}X_\infty$ by definition. Thus it suffices to prove the converse implication. Let $\varphi_i:r_i\inv X\to X_\infty$ be weak $(R_i,\varepsilon_i)$-approximations, $i\in\N$ that establish convergence $(r_i\inv X,\mu_{\bar x,r_i},\bar x)\overset{\rm pmG}{\longrightarrow}(X_\infty,\mu_\infty,x_\infty)$, where $R_i\nearrow \infty$ and $\varepsilon_i\searrow 0$. That is, there exist Borel sets $\tilde X_i\subset B_i(\bar x,R_i)$, $i\in\N$ that satisfy properties \eqref{eq:weakapp1} - \eqref{eq:weakapp3}. Moreover by \cite[Lemma 5.6]{Bate22} we may assume that $(X_\infty,\mu_\infty,x_\infty)$ is doubling. 

\textsc{Step 1.} We claim that for any fixed $R>1$, $\varepsilon>0$ there exists $i_{R,\varepsilon}\in\N$ such that for all $i\geq i_{R,\varepsilon}$ and every $y\in B_i(\bar x,R)$ we have $\tilde X_i \cap B_i(y,\varepsilon)\neq \varnothing$. Fix $R>1$, $\varepsilon>0$. Suppose the opposite, that there exists a subsequence $i_k\to \infty$ such that for every $k\in \N$ there exists $y^{i_k}\in B_{i_k}(\bar x,R)$ such that $\tilde X_{i_k}\cap B_{i_k}(y^{i_k},\varepsilon)=\varnothing$. Therefore, we have $B_{i_k}(y^{i_k},\varepsilon)\subset B_{i_k}(\bar x,R+\varepsilon)\setminus \tilde X_{i_k}$. Moreover, since \(\bar x\in A\) is a point at which $A$ is not porous, there exists $R_{\bar x,\frac{\varepsilon}{2R}}>0$ such that for all $r<R_{\bar x,\frac{\varepsilon}{2R}}$ the set $A\cap B(\bar x,(1+\frac{\varepsilon}{2R})r)$ is $\frac{\varepsilon r}{2R}$-dense in $B(\bar x,r)$. Since  $r_{i_k}R<R_{\bar x,\frac{\varepsilon}{2R}}$ for $k$ large enough, it follows that $A\cap B(\bar x,r_{i_k}R(1+\frac{\varepsilon}{2R}))$ is $\frac{\varepsilon r_{i_k}}{2}$-dense in $B(\bar x,r_{i_k}R)$. So, for $y^{i_k}\in B_{i_k}(\bar x,R)$ there exists $z^{i_k}\in A\cap B(\bar x,r_{i_k}R(1+\frac{\varepsilon}{2R}))$ such that $d(y^{i_k},z^{i_k})<\frac{\varepsilon r_{i_k}}{2}$. Therefore, $B_{i_k}(z^{i_k},\varepsilon/2)\subset B_{i_k}(y^{i_k},\varepsilon)$. Hence, for $k$ large enough, we get $z^{i_k}\in A$ so that
\[
B(z^{i_k},\varepsilon/2)\subset B(y^{i_k},\varepsilon)\subset B_{i_k}(\bar x,R+\varepsilon)\setminus \tilde X_{i_k}.
\]
If we denote $\mu_i\coloneqq\mu_{\bar x,r_i}$, then for $k$ so large that $R_{i_k}>R+\varepsilon$, we have
\[
\mu_{i_k}(B_{i_k}(z^{i_k},\varepsilon/2))\leq \mu_{i_k}(B(\bar x,R+\varepsilon)\setminus \tilde X_{i_k})\leq \mu_{i_k}(B_{i_k}(\bar x,R_{i_k})\setminus \tilde X_{i_k})\leq \varepsilon_{i_k}.
\]
On the other hand, using $\varphi_\mu(x,r)\le \mu(B(x,r))$, we have 
\[
\mu_{i_k}(B_{i_k}(z^{i_k},\varepsilon/2))\underset{R>1}{\geq}\frac{\mu(B(z^{i_k},\frac{r_{i_k}\varepsilon}{2}))}{\mu(B(\bar x,(R+\varepsilon)r_{i_k}))}.
\]
Moreover, 
\[
A\ni z^{i_k}\in B(\bar x,(R+\varepsilon)r_{i_k})\subseteq B(z^{i_k},2(R+\varepsilon)r_{i_k})\subset B(z^{i_k},\bar R),
\]
for $k$ so large that $2(R+\varepsilon)r_{i_k}< \bar R$ and $\frac{\varepsilon r_{i_k}}{2}<\bar R$. Now we may apply \cite[Lemma 2.1]{Bate2018} and conclude that for $k$ large enough, we have
\[
\frac{\mu(B(z^{i_k},\frac{r_{i_k}\varepsilon}{2}))}{\mu(B(\bar x,(R+\varepsilon)r_{i_k}))}\geq 4^{-\log_2\bar C}\left(\frac{\varepsilon}{2(R+\varepsilon)}\right)^{\log_2\bar C}.
\]
Therefore, it follows 
\[
4^{-\log_2\bar C}\left(\frac{\varepsilon}{2(R+\varepsilon)}\right)^{\log_2\bar C}\leq \varepsilon_{i_k}\overset{k\to\infty}{\longrightarrow} 0.
\]
Thus, we get a contradiction.
This concludes the proof of \textsc{Step 1}. Moreover, since we proved the claim for an arbitrary $\varepsilon>0$, it follows that for any fixed $R>1$ and $\varepsilon>0$ there exists $i_{R,\varepsilon}$ such that for all $i\geq i_{R,\varepsilon}$ and every $y\in B_i(\bar x,R)$ we have $D_i\cap B_i(y,\varepsilon)\neq \varnothing$, where $D_i\subset \tilde X_i$ is a countable and dense subset with respect to the metric $r_i^{-1}d$. 

\textsc{Step 2.} In this step we prove that for every $R>1$, and every $\varepsilon>0$ such that $R-2\varepsilon>0$, there exists $\tilde i_{R,\varepsilon}\in\N$ so that for all $i\geq \tilde i_{R,\varepsilon}$ there exists an $(R,\varepsilon)$-approximation $\psi_i: (\X,r_i^{-1}\d)\to X_\infty$. Fix $R>1$ and $\varepsilon>0$ so that $R-2\varepsilon>0$. There exists $i_{R,\varepsilon}\in\N$ such that $\varepsilon_i<\varepsilon/3$,  $R<R_i$, and thus $B_i(\bar x,R)\subset B_i(\bar x,R_i)$ for all $i\geq i_{R,\varepsilon}$. For every $i\geq i_{R,\varepsilon}$ we define 
\[
\psi_i(x)=\begin{cases}
    \varphi_i(x), & x\in (\tilde X_i\cap B_i(\bar x,R))\cup (\X\setminus B_i(\bar x,R)),\\
    \varphi_i(\tilde x), & x\in B_i(\bar x,R)\setminus \tilde X_i,
\end{cases}
\]
where $\tilde x\coloneqq x_{j_x}^i\in D_i\subset \tilde X_i$ is the point in a countable dense subset $D_i=\{x^i_j: j\in\N\}$ of $\tilde X_i$ with  the minimal index $j$ satisfying $d_i(x,x_j^i)<\varepsilon/3$. Let us first justify that such a defined $\psi_i$ is a Borel map. Take an arbitrary Borel subset $V\subset X_\infty$. Then we have 
\[
\psi_i^{-1}(V)=\left[\varphi_i^{-1}(V)\cap((\X\setminus B_i(\bar x,R))\cup (\tilde X_i\cap B_i(\bar x,R)))\right]\cup \left[\psi_i^{-1}(V)\cap(B_i(\bar x,R)\setminus \tilde X_i)\right].
\]
The first set in the union on the right is Borel since $\varphi_i$ is a Borel map, and all of the other sets included are Borel. For the second set, to conclude that it is Borel notice that 
\[
\psi_i^{-1}(V)\cap(B_i(\bar x,R)\setminus \tilde X_i)=\bigcup_{j\,:\,\varphi_i(x^i_j)\in V}(Q_j\setminus \tilde X_i),
\]
where $W_j=\{x\in B_i(\bar x,R)\,:\, d_i(x,x^i_j)<\varepsilon/3\}$, $j\geq \N$ and $Q_1=W_1$, $Q_j=W_j\cap \big(\bigcap_{k<j}W_k^c\big)$, $j\geq 2$.

Further, let us prove that the distortion of $\psi_i$ on $B_i(\bar x,R)$ is less than $\varepsilon$. If $x\in B_i(\bar x,R)\setminus \tilde X_i$ and $y\in B_i(\bar x,R)\cap \tilde X_i$ we have 
\begin{align*}
    |d_i(x,y)-d_\infty(\psi_i(x),\psi_i(y))| & \leq |d_i(x,y)-d_i(\tilde x,y)|+|d_i(\tilde x,y)-d_\infty(\varphi_i(\tilde x),\varphi_i(y))|\\
    & \leq d_i(\tilde x,x)+\varepsilon_i<\varepsilon.
\end{align*}
Similarly, for $x,y\in B_i(\bar x,R)\setminus \tilde X_i$ we get 
\begin{align*}
   |d_i(x,y)-d_\infty(\psi_i(x),\psi_i(y))| & \leq |d_i(x,y)-d_i(\tilde x,y)|+|d_i(\tilde x,y)-d_i(\tilde x,\tilde y)|+|d_i(\tilde x,\tilde y)-d_\infty(\varphi_i(\tilde x),\varphi_i(\tilde y))|\\
   & \leq d_i(x,\tilde x)+d_i(y,\tilde y)+\varepsilon_i< \varepsilon.
\end{align*}
To conclude, in the case when $x,y\in B_i(\bar x,R)\cap \tilde X_i$ we immediately use the distortion of $\varphi_i$ on $\tilde X_i$.

Next, we are proving that there exists $\tilde i_{R,\varepsilon}\geq i_{R,\varepsilon}$ such that $B(x_\infty,R-\varepsilon)\subset \big(\psi_i(B_i(\bar x,R))\big)^\varepsilon$ for all $i\geq \tilde i_{R,\varepsilon}$. Suppose the opposite, that there exists a subsequence $i_k\to \infty$ so that for every $k\in \N$ there exists $y^{i_k}\in B(x_\infty,R-\varepsilon)$ such that for all $x\in B_{i_k}(\bar x,R)$ we have $d_\infty(\psi_{i_k}(x),y^{i_k})\geq \varepsilon$. That is, for all $k\in \N$ we have
\[
B_\infty(y^{i_k},\varepsilon/4)\subset B\left(x_\infty,R-\frac{3}{4}\varepsilon\right)\setminus \psi_{i_k}(B_{i_k}(\bar x,R))^{\varepsilon/4}.
\]
Let us now prove that for $k$ large enough, we have 
\[
B\left(x_\infty,R-\frac{3}{4}\varepsilon\right)\setminus \psi_{i_k}(B_{i_k}(\bar x,R))^{\varepsilon/4}\subset B(x_\infty, R_{i_k}-\varepsilon_{i_k})\setminus \varphi_{i_k}(\tilde X_{i_k})^{\varepsilon_{i_k}}.
\]
If not, then for a fixed $k_0\in \N$ there exists $k>k_0$ so that $R_{i_k}-\varepsilon_{i_k}>R-\frac{3}{4}\varepsilon$, $\varepsilon_{i_k}<\varepsilon/4$ and there exists $u_{i_k}\in \big(B\left(x_\infty,R-\frac{3}{4}\varepsilon\right)\setminus \psi_{i_{k}}(B_{i_k}(\bar x,R))^{\varepsilon/4}\big)\cap \varphi_{i_k}(\tilde X_{i_k})^{\varepsilon_{i_k}}$. Then it follows that there exists $\tilde x_{i_k}\in \tilde X_{i_k}$ such that $d_\infty(u_{i_k},\varphi_{i_k}(\tilde x_{i_k}))<\varepsilon_{i_k}$. Hence $d_\infty(\varphi_{i_k}(\tilde x_{i_k}),x_\infty)\leq d_\infty(\varphi_{i_k}(\tilde x_{i_k}),u_{i_k})+d_\infty(u_{i_k}, x_\infty)\leq \varepsilon_{i_k}+R-\frac{3}{4}\varepsilon$ and $d_{i_k}(\tilde x_{i_k},\bar x)\leq d_\infty(\varphi_{i_k}(\tilde x_{i_k}),\varphi_{i_k}(\bar x))+\varepsilon_{i_k}<R-\varepsilon/4$. Since $\varphi_{i_k}(\tilde x_{i_k})\in \varphi_{i_k}(\tilde X_{i_k}\cap B_{i_k}(\bar x,R-\varepsilon/4))\subset \psi_{i_k}(B_{i_k}(\bar x,R))$ and $\varphi_{i_k}(\tilde x_{i_k})\in B(u_{i_k},\varepsilon/4)$, we get a contradiction with the fact that $B(u_{i_k},\varepsilon/4)\cap \psi_{i_k}(B_{i_k}(\bar x,R))=\varnothing$. Hence, for $k$ large enough we have 
\begin{align*}
    \mu_\infty(B_\infty(y^{i_k},\varepsilon/4)) & \leq\mu_\infty \left( B\left(x_\infty,R-\frac{3}{4}\varepsilon\right)\setminus \psi_{i_k}(B_{i_k}(\bar x,R))^{\varepsilon/4}\right)\\
    & \leq \mu_\infty\left(B(x_\infty, R_{i_k}-\varepsilon_{i_k})\setminus \varphi_{i_k}(\tilde X_{i_k})^{\varepsilon_{i_k}}\right)\leq \varepsilon_{i_k}. 
\end{align*}
On the other hand, since the pmG-limit space is doubling, and $y^{i_k}\in B(x_\infty,R-\varepsilon)$ we have that 
\[
\mu_\infty(B(y^{i_k},\varepsilon/4))\geq 2^{-s}\left(\frac{\varepsilon}{8(R-\varepsilon)}\right)^s\mu_\infty(B(x_\infty,R-\varepsilon)),
\]
where $s=\log_2C$ and $C$ is a doubling constant of the limit space $X_\infty$ (see, e.g.  \cite{Heinonen2012}). Since $\varepsilon_{i_k}\to 0$ as $k\to\infty$, we have a contradiction. Finally, there exists $\tilde i_{R,\varepsilon}$ such that for all $i\geq \tilde i_{R,\varepsilon}$ $\varepsilon$-neighborhood of the set $\psi_i(B_i(\bar x,R))$ contains the ball
$B(x_\infty,R-\varepsilon)$.  

\textsc{Step 3.} In the last step we prove that starting from some $\tilde i\in \N$ we have $(\tilde R_i,\tilde \varepsilon_i)$-approximations $\tilde \psi_i:(\X,r_i^{-1}\d)\to X_\infty$, where $\tilde R_i\nearrow \infty$, $\tilde \varepsilon_i\searrow 0$ and $\tilde\psi_{i\ast}\mu_i\rightharpoonup \mu_\infty$ in duality with $C_{bbs}(X_\infty)$. Indeed, in the \textsc{Step 2} we proved that for every $n\in\N$, $n\geq 2$ there exists $i_n\in \N$ such that for all $i\geq i_n$ we have $R_i>n$ and $(n,1/n)$-approximation $\psi_i^n:(\X,r_i^{-1}\d)\to X_\infty$. We may assume $i_{n+1}>i_n$ and $i_n\nearrow \infty$. Starting with $i_2\in \N$ we define 
\[
\tilde R_i\coloneqq n,\quad \tilde \varepsilon_i\coloneqq 1/n, \quad\textnormal{for all }\, i_n\leq i<i_{n+1},\, n\in \N,\, n\geq 2.
\]
Clearly, we have $\tilde R_i\nearrow\infty$ and $\tilde \varepsilon_i\searrow 0$. Furthermore, we define 
\[
\tilde \psi_i\coloneqq \psi_i^n, \quad \textnormal{for all }\,i_n\leq i<i_{n+1},\, n\in\N,\,n\geq 2.
\]
That is, for every $i\geq \tilde i\coloneqq i_2$ we have an $(\tilde R_i,\tilde \varepsilon_i)$-approximation 
\[
\tilde \psi_i(x)=\begin{cases}
    \varphi_i(x), & x\in (\X\setminus B_i(\bar x,\tilde R_i))\cup (\tilde X_i\cap B_i(\bar x,\tilde R_i))\\
    \varphi_i(\tilde x), & x\in B_i(\bar x,\tilde R_i)\setminus \tilde X_i,
\end{cases}
\]
where $\tilde x\coloneqq x_{j_x}^i\in D_i\subset \tilde X_i$ is the point in a countable dense subset $D_i=\{x^i_j: j\in\N\}$ of $\tilde X_i$ with  the minimal index $j$ satisfying $d_i(x,x_j^i)<\tilde\varepsilon_i/3$. It remains to be proved that pushforward measures weakly converge to $\mu_\infty$. Hence, fix an arbitrary $f\in C_{bbs}(X_\infty)$. Then, for every $i\geq \tilde i$ we have 
\begin{align*}
    \left|\int_{X_\infty}f\,d\tilde\psi_{i\ast}\mu_i-\int_{X_\infty}f\,d\mu_\infty\right|\leq \left|\int_{X_\infty}f\,d\tilde\psi_{i\ast}\mu_i-\int_{X_\infty}f\,d\varphi_{i\ast}\mu_i\right|+\left|\int_{X_\infty}f\,d\varphi_{i\ast}\mu_i-\int_{X_\infty}f\,d\mu_\infty\right|.
\end{align*}
The second term on the right tends to $0$ as $i\to \infty$ due to the weak convergence $\varphi_{i\ast}\mu_i\rightharpoonup \mu_\infty$. To estimate the first term, we use the fact that $\tilde \psi_i$ and $\varphi_i$ might differ only on $B_i(\bar x,\tilde R_i)\setminus \tilde X_i$. Hence, 
\begin{align*}    \left|\int_{X_\infty}f\,d\tilde\psi_{i\ast}\mu_i-\int_{X_\infty}f\,d\varphi_{i\ast}\mu_i\right|& \,\,\,\,\leq \frac{1}{\varphi_\mu(\bar x,r_i)}\int_{B_i(\bar x,\tilde R_i)\setminus \tilde X_i}|f\circ \tilde \psi_i-f\circ\varphi_i|\,d\mu\\
& \,\,\,\,\leq \frac{1}{\varphi_\mu(\bar x,r_i)}\mu(B_i(\bar x,\tilde R_i)\setminus \tilde X_i)\cdot 2\sup_{X_\infty}|f|\\
& \underset{\underset{\tilde R_i<R_i}{\downarrow}}{\leq} \frac{1}{\varphi_\mu(\bar x,r_i)} \mu(B_i(\bar x,R_i)\setminus \tilde X_i)\cdot 2\sup_{X_\infty}|f|\\
& \,\,\,\,=\mu_i(B_i(\bar x,R_i)\setminus \tilde X_i)\cdot 2\sup_{X_\infty} |f|\leq \varepsilon_i\cdot 2\sup_{X_i\infty}|f|\overset{i\to\infty}{\longrightarrow}0.
\end{align*}
So, $\int_{X_\infty}f\,d\tilde\psi_{i\ast}\mu_i\to \int_{X_\infty}f\,d\mu_\infty$ for every $f\in C_{bbs}(X_\infty)$ and thus $\tilde \psi_{i\ast}\mu_i\rightharpoonup \mu_\infty$. This concludes the proof that $(r_i\inv X,\mu_{\bar x,r_i},\bar x)\stackrel{pmGH}{\longrightarrow}(X_\infty,\mu_\infty,x_\infty)$.
\end{proof}

\begin{proof}[Proof of Corollary \ref{cor:pmGpmGH}]
The convergence $T_{\bar x,r_i}f\stackrel{pmGH}{\longrightarrow}f_\infty$ implies $T_{\bar x,r_i}f\stackrel{pmG}{\longrightarrow}f_\infty$ by definition. Suppose that $T_{\bar x,r_i}f\stackrel{pmG}{\longrightarrow}f_\infty$. Then $T_{\bar x,r_i}X\stackrel{pmG}{\longrightarrow}X_\infty$. Proposition \ref{prop:pmGpmGH} yields $T_{\bar x,r_i}X\stackrel{pmGH}{\longrightarrow} X_\infty$. Let $\iota_i:T_{\bar x,r_i}X\to (Z,z)$ and $\iota_\infty:X_\infty\to (Z,z)$ be isometric embeddings realizing the convergence $T_{\bar x,r_i}X\stackrel{pmGH}{\longrightarrow} X_\infty$. Suppose $x\in X_\infty$ and $x_i\in T_{x,r_i}X$ be such that $\iota_i(x_i)\to \iota_\infty(x)$. By \cite[Lemma 5.6]{Bate22} we may assume $(X_\infty,\mu_\infty,x_\infty)$ to be doubling, and thus $X_\infty={\rm spt}\mu_\infty$, so there exits a sequence $\tilde x_i\in {\rm spt}\mu_i$ such that $\iota_i(\tilde x_i)\to \iota_\infty(x)$ (cf. Lemma \ref{lemma:support_point}).Therefore $d(x_i,\tilde x_i)/r_i\to 0$. Moreover, by \eqref{eq:map-precpt} we have $T_{\bar x,r_i}f(\tilde x_i)\to f_\infty(x)$. Finally, we have 
\begin{align*}   \|T_{x,r_i}f(x_i)-T_{x,r_i}f(\tilde x_i)\|_V=\frac{\|f(x_i)-f(\tilde x_i)\|_V}{r_i}\le \frac{Ld(x_i,\tilde x_i)}{r_i}\stackrel{i\to\infty}{\longrightarrow}0,
\end{align*}and thus $T_{\bar x,r_i}f(x_i)\to f_\infty(x)$. This proves the convergence $T_{\bar x,r_i}f\stackrel{pmGH}{\longrightarrow}f_\infty$.
\end{proof}

\begin{proof}[Proof of Theorem \ref{thm:pmGH-preiss}]
By \cite[Lemma 2.2 and 2.3]{Bate2018} $X$ can be decomposed into countably many disjoint sets $A_n\subset X$ with $\mu(X\setminus\bigcup_nA_n)=0$ so that $\mu$ is $(C_n,R_n)$-doubling along $A_n$ for each $n$. By the inner regularity of $\mu$ we can assume the sets $A_n$ to be closed (or compact). Note that for $\mu$-a.e. $x\in A_n$, $A_n$ is non-porous at $x$ (cf. Remark \ref{rem:non-porosity}). This and Corollary  \ref{cor:pmGpmGH} imply that
\begin{align}\label{eq:pmG=pmGH}
\operatorname{Tan}_{pmG}(f,x)=\operatorname{Tan}_{pmGH}(f,x)
\end{align}
$\mu$-a.e. $x\in X$. Since $\operatorname{Tan}_{pmG}(f,x)$ is compact and non-empty $\mu$-a.e. by Theorem \ref{thm:tan-meas+cpt}(ii), this yields (i). 

Let $N$ be a $\mu$-null set outside which Theorem \ref{thm:preiss-phenomenon}(ii) and \eqref{eq:pmG=pmGH} hold, and for which any $x\in X\setminus N$ is a non-porosity point of some $A_n$. Suppose $f_\infty:(Y,\nu,o)\to V$ belongs to $\operatorname{Tan}_{pmGH}(f,x)$ and $y\in \operatorname{spt}\nu$, $\rho>0$. Then $\frac{f_\infty-f_\infty(y)}{\rho}:T_{y,\rho}Y\to V$ belongs to $\operatorname{Tan}_{pmG}(f,x)$ by Theorem \ref{thm:preiss-phenomenon}(ii), and by Proposition \ref{prop:pmGpmGH} and Corollary \ref{cor:pmGpmGH} we have that $\frac{f_\infty-f_\infty(y)}{\rho}:T_{y,\rho}Y\to V$ belongs to $\operatorname{Tan}_{pmGH}(f,x)$. This completes the proof of the Theorem.
\end{proof}

\def\cprime{$'$} \def\cprime{$'$}

\end{document}